\documentclass[a4paper,11pt]{amsart}

\usepackage{amsmath}
\usepackage{amsfonts}
\usepackage{amssymb}
\usepackage{amsthm}
\usepackage{xcolor}
\usepackage{enumitem}
\usepackage{ifthen}
\usepackage{mathrsfs}
\usepackage[margin=1in]{geometry}
\usepackage{tikz}
\usetikzlibrary{shapes.geometric}

\definecolor{lightgray}{RGB}{220, 220, 220}

\DeclareMathOperator{\Aut}{\mathrm{Aut}}

\newtheorem{thm}{Theorem}[section]
\newtheorem{rem}[thm]{Remark}
\newtheorem{prop}[thm]{Proposition}
\newtheorem{exam}[thm]{Example}
\newtheorem{lem}[thm]{Lemma}
\newtheorem{defn}[thm]{Definition}

\newtheorem{cor}[thm]{Corollary}
\newtheorem{claim}[thm]{Claim}

\newcommand{\emptyword}{\perp}
\newcommand{\Z}{\mathbf{Z}}
\newcommand{\N}{\mathbf{N}}
\newcommand{\R}{\mathbf{R}}

\newcommand{\cL}{\mathcal{L}}
\newcommand{\lang}{\mathscr{L}}
\newcommand{\C}{\mathcal{C}}
\newcommand{\eps}{\varepsilon}

\newcommand{\K}{\mathrm{K}}
\newcommand{\Shp}{\K_p^\sigma(\C^{\Z})}

\DeclareMathOperator{\Homeo}{Homeo}
\DeclareMathOperator{\Cont}{Cont}
\DeclareMathOperator{\offset}{offset}
\newcommand{\W}{\mathcal{W}}
\newcommand{\V}{\mathcal{V}}
\DeclareMathOperator{\ext}{ext}
\newcommand{\g}{\mathfrak{g}}
\DeclareMathOperator{\prob}{Prob}
\newcommand{\concat}{\mathbin{\smallfrown}}
\DeclareMathOperator{\Seg}{Seg}
\DeclareMathOperator{\Min}{Min}

\newcommand{\depth}[1]{\textrm{d}( #1 )}
\newcommand{\parent}[1]{\textrm{par}( #1 )}
\newcommand{\angl}[1]{\langle #1 \rangle}
\newcommand{\treevertices}[2]{V_{#1} ({#2})}
\newcommand{\treegroup}[2]{G_{#1} ({#2})}
\newcommand{\treehom}[2]{\rho_{#1} ({#2})}
\newcommand{\gbar}[1]{\widehat{#1}}
\newcommand{\Auto}[1]{{\Aut}(#1)}
\newcommand{\AutQ}[1]{{\Aut}'(#1)}
\newcommand{\gengroup}[1]{\langle {#1} \rangle}
\newcommand{\rhofactor}[2]{\rho_{#1}({#2})}

\newcommand{\strongfactor}[0]{automorphism-compatible}
\newcommand{\rev}[1]{\textrm{rev}(#1)}
\newcommand{\revop}{\mathop{\mathrm{rev}}\nolimits}
\newcommand{\blockexp}[1]{
    \ifthenelse{ \equal{#1}{*} }{ ^* }{ ^{[#1]} }
}
\newcommand{\bA}{\mathbf{A}}
\newcommand{\Set}[2]{\left\{#1\mathrel{}\middle|\mathrel{}#2\right\}}
\newcommand{\Trees}{\mathit{Trees}}
\newcommand{\IllFounded}{\mathit{IF}}
\newcommand{\WellFounded}{\mathit{WF}}

\newcommand{\prefix}{\sqsubseteq}
\newcommand{\cont}{\textrm{cont}}

\newcommand{\bx}{\mathbf{x}}
\newcommand{\ShAp}{\K_p^{\sigma}(\bA^{\Z})}
\newcommand{\ctup}[2]{\boldsymbol{#2}^{({#1})}}
\newcommand{\Xother}[0]{\bar{X}}

\newcommand{\piother}[0]{\bar{\pi}}
\newcommand{\treevertex}[0]{t}

\begin{document}
\title{The conjugacy and flip conjugacy problem for Cantor minimal systems}
\author[K. Deka]{Konrad Deka}
\email{deka.konrad@gmail.com}
\author[F. Garc\'{\i}a-Ramos]{Felipe García-Ramos}
\address{Unidad Cuernavaca, Instituto de Matemáticas, Universidad Nacional Autónoma de México (UNAM)}
\address{Faculty of Mathematics and Computer Science, Jagiellonian University in Kraków, Poland}
\email{felipegra@gmail.com}
\author[K. Kasprzak]{Kosma Kasprzak}
\address{Faculty of Mathematics and Computer Science, Jagiellonian University in Kraków, Łojasie\-wicza 6, 30-348 Kraków, Poland}
\email{kosma.kasprzak@student.uj.edu.pl}
\author[P. Kunde]{Philipp Kunde}
\address{Department of Mathematics, Oregon State University, Kidder Hall 064, Corvallis, OR 97331, USA}
\email{kundep@oregonstate.edu}
\author[D. Kwietniak]{Dominik Kwietniak}
\address{Faculty of Mathematics and Computer Science, Jagiellonian University in Kraków, Łojasie\-wicza 6, 30-348 Kraków, Poland}
\email{dominik.kwietniak@uj.edu.pl}

\subjclass[2020]{Primary 37B05; Secondary 03E15, 37B10, 54H20, 54H05, 05C15}
\keywords{Cantor minimal system, topological conjugacy, flip conjugacy,
complete analytic set, Borel reducibility, automorphism group, Borel equivalence relation}
\begin{abstract}
We prove that topological conjugacy and flip conjugacy of minimal
homeomorphisms of a Cantor space are complete analytic relations and hence
are not Borel. We also prove that mutual reducibility by
injective continuous graph homomorphisms and topological graph isomorphism
are complete analytic relations on the space of nonempty compact graphs on a fixed
Cantor vertex space with continuous chromatic number two. Both relations remain complete analytic on the larger space of
nonempty compact graphs on the same Cantor vertex space with continuous
chromatic number two or three.
\end{abstract}

\maketitle

\section{Introduction}

We prove that conjugacy and flip conjugacy of Cantor minimal
systems are complete analytic subsets of the square of their Polish
parameter space. In particular, neither relation is Borel. Two
homeomorphisms are \emph{flip conjugate} if one is conjugate to the
other or to its inverse.

To compare classification problems, one seeks to encode objects from
one problem as objects of another while preserving and reflecting
equivalence. We require the encoding to be Borel so that Borel
assignments of invariants for the target problem pull back to Borel
assignments for the original problem. More precisely, let $E$ and $F$
be equivalence relations on topological (usually Polish) spaces $X$
and $Y$. We say that $E$ is \emph{Borel reducible} to $F$, written
$E\le_B F$, if there exists a Borel map $f\colon X\to Y$ such that
\[
x_1 E x_2
\quad\Longleftrightarrow\quad
f(x_1)Ff(x_2)
\]
for all $x_1,x_2\in X$. A classification of the $F$-classes by invariants
assigned by a Borel map then yields one for the $E$-classes by composition
with $f$. In this sense, $E\le_B F$ means that the classification problem
for $E$ is no more complicated than that for $F$. Upper bounds on an
isomorphism relation correspond to classification results; lower bounds
rule out specified types of classification.

This approach applies, for example, to countable graphs, countable
groups, Polish metric spaces up to isometry, compact metric spaces, ergodic automorphisms of the standard Lebesgue space, 
and Banach spaces up to isomorphism; see
\cite{FRW,GerberKunde,lecomte2023continuous,SabokCstar,ZielinskiCompact}.
For the general theory and more examples, see \cite{GaoBook}.

We also consider the descriptive complexity of an equivalence relation
as a subset of the square of its parameter space $X$. Saying that
conjugacy is \emph{complete analytic} in this paper means complete
analyticity as a subset of $X^2$.

We consider topological dynamical systems on Cantor spaces
(\emph{Cantor systems}), and in particular \emph{minimal systems}, in
which every orbit is dense. Write $\C=\{0,1\}^{\N}$ for the Cantor set,
$\Homeo(\C)$ for its group of homeomorphisms, and $\Min(\C)$ for the set
of its minimal homeomorphisms. For background on Cantor minimal systems,
see \cite{Putnam}. Conjugacy is the usual conjugacy in the group
$\Homeo(\C)$.

Camerlo and Gao \cite{CamerloGao} proved that conjugacy of Cantor systems
is Borel bireducible with a universal orbit equivalence relation of
$S_\infty$, the permutation group of $\N$ with the topology of pointwise
convergence. In particular, conjugacy is not Borel. Vejnar proved that
this maximal complexity persists for conjugacy of transitive Cantor
homeomorphisms with dense periodic points
\cite[Theorem~3.4]{VejnarChaotic}. Here transitivity means that some
forward orbit is dense. Vejnar's result does not settle the minimal case:
a Cantor minimal system has no periodic points.

For Cantor minimal systems, we prove the following. Here $\Trees$ is
the Polish space of
trees on $\N$ of unbounded depth defined in Section~\ref{section:trees};
a tree is \emph{ill-founded} if it has an infinite branch and
\emph{well-founded} otherwise. For $U, V \in \Homeo(\C)$ we write
$U \cong V$ if $U$ and $V$ are conjugate.
\begin{thm} \label{thm:main-thm}
There is a continuous map
$\Trees \ni T \mapsto \left( \Phi_1(T), \Phi_2(T) \right) \in \Min(\C)^2$
such that, if $T$ is ill-founded, then $\Phi_1(T) \cong \Phi_2(T)$,
and if $T$ is well-founded, then 
 $\Phi_1(T) \not \cong \Phi_2(T)$ and $\Phi_1(T) \not \cong \Phi_2(T)^{-1}$. 
\end{thm}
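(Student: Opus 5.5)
We plan to build $\Phi_1(T)$ and $\Phi_2(T)$ as minimal subshifts, which we then identify with points of $\Min(\C)$. The construction should behave like an inverse limit over finite approximations of $T$, so that continuity in $T$ is automatic.

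\textbf{Construction.} We fix an enumeration of finite sequences in $\N$. Along it we construct, by induction on levels $n$, two families of finite words $\W_n^1$ and $\W_n^2$ over a finite alphabet, in the style of substitutive or Toeplitz-type hierarchical constructions. At level $n$ the words depend only on $T\cap \N^{\le n}$, the finite part of the tree. Each level-$(n+1)$ word is a concatenation of level-$n$ words in which every level-$n$ word appears. This uniform recurrence gives minimality. We also arrange, via fixed markers, unique readability and uniformly growing lengths, so the subshift $X_i(T)$ generated by $\bigcup_n \W_n^i$ is infinite. Conjugating a fixed recipe then gives a continuous map into $\Min(\C)$, using a standard continuous embedding of infinite minimal subshifts over a fixed alphabet into $\Min(\C)$, for example via coding by clopen partitions.

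\textbf{Encoding the tree.} We encode $T$ through automorphism data. Vertices $t\in T$ of depth $n$ correspond to building blocks at level $n$, and the parent relation $\parent{t}$ dictates how level-$(n+1)$ blocks for children are assembled from the block for $t$. For ill-founded $T$ we use an infinite branch $b$ to define, level by level, compatible block maps $\W_n^1\to\W_n^2$, for instance relabelings of symbols indexed by $b|n$. These maps cohere into a sliding block code, which is invertible by the same recipe, giving $\Phi_1(T)\cong\Phi_2(T)$.

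\textbf{Non-conjugacy.} For well-founded $T$, suppose $\phi$ is a conjugacy, or a conjugacy to the inverse, in which case we compose with the reversal $\revop$ of words. By Curtis--Hedlund--Lyndon, $\phi$ is a block code of some radius $r$. Unique readability and recognizability then force $\phi$, at every sufficiently large level $n$, to map level-$n$ blocks of $X_1$ to level-$n$ blocks of $X_2$, up to a bounded shift. So $\phi$ induces a coherent sequence of choices of vertices $t_n\in T$ of depth $n$ with $t_{n}=\parent{t_{n+1}}$. That sequence is an infinite branch, contradicting well-foundedness.

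The flip part needs the words to be asymmetric under $\revop$: we make level markers non-palindromic and oriented, so that $\revop$ of a block of $X_1$ cannot be read as a block of $X_2$ unless the same branch data is present. Even better, we design the construction so that $\Phi_2(T)^{-1}$ already fails to be conjugate to $\Phi_1(T)$ for an orientation reason, independent of $T$.

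\textbf{Main obstacle.} I expect the hardest step to be the rigidity argument. A block code between hierarchically built minimal subshifts must respect the hierarchy from some level on, and this must hold uniformly enough to extract the branch. I would handle it with a recognizability lemma plus a rank or counting argument on the number of distinct level-$n$ words, which grows to prevent spurious conjugacies. I would try that first, ensuring the counts distinguish $\W_n^1$ from $\W_n^2$ exactly when no extension through depth $n$ exists.
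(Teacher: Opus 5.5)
\textbf{There is a genuine gap, and it cannot be repaired inside your framework: the outputs must not be subshifts over a finite alphabet.}

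\emph{Why the framework fails.} On subshifts over a fixed finite alphabet $A$, conjugacy is Borel. By Curtis--Hedlund--Lyndon, $X\cong Y$ if and only if there are block codes $\Phi,\Psi$ on $A^{\Z}$ with $\Phi(X)=Y$ and $\Psi\Phi=\mathrm{id}$ on $X$. There are only countably many such pairs, and both conditions are closed in $\K(A^{\Z})^2$, so conjugacy is $F_\sigma$. Flip conjugacy is $F_\sigma$ too, by composing with $\revop$. Re-coding the subshifts into $\Min(\C)$ does not help: the images are expansive, and conjugacy of expansive Cantor systems is Borel (Gao--Jackson--Seward, as the paper notes). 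So a continuous map $T\mapsto(X_1(T),X_2(T))$ with your properties would make $\IllFounded$ the preimage of a Borel set, contradicting its complete analyticity.

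\emph{Where it shows up in your argument.}
\begin{itemize}
\item A conjugacy is a block code of finite radius. Under the recognizability you plan to prove, from some level $n$ on it is induced by one map $\W_n^1\to\W_n^2$, and every higher-level map is just its concatenation extension.
\item So the ``coherent sequence $t_n$'' you extract is witnessed by a single finite object and cannot encode an arbitrary branch.
\item Conversely, level maps that genuinely follow a branch $b$ past every level do not assemble into a finite-radius code.
\item No count of level-$n$ words can detect ill-foundedness either, since well-founded trees in $\Trees$ have vertices at every depth.
\item Separately, $T\cap\N^{\le n}$ is usually infinite, so level-$n$ words cannot depend on all of it. Vertices have to be processed one at a time along a fixed enumeration.
\end{itemize}

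\emph{The missing idea.} You need non-expansive systems whose conjugacies consist of infinitely many independent choices. The paper uses inverse limits $X=\varprojlim(X_j,\pi_j)$ and $\Xother=\varprojlim(X_j,\alpha_j\pi_j)$ of infinite minimal subshifts. These are built (Lemmas~\ref{lem:word-construction} and~\ref{lem:getting-inv-system}) to satisfy two properties:
\begin{itemize}
\item \emph{Blended:} every factor map $X_i\to X_j$ is an automorphism of $X_j$ composed with $\pi_{i,j}$.
\item \emph{Adjacent to $\Psi(T)$:} the system $(\Aut'(X_j),\rho'_{\pi_j})$ is the $\Psi(T)$-directed system of groups of involutions.
\end{itemize}
A conjugacy $X\to\Xother$ is then exactly a sequence $f_j\in\Aut(X_j)$ with $f_j\pi_j=\alpha_j\pi_jf_{j+1}$ (Lemma~\ref{lem:inv-lim-isomorphism}). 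Modulo shift powers this becomes $g_j=\gbar{\ctup{j}{j}}+\rho_j(g_{j+1})$, which is solvable if and only if $T$ is ill-founded (Lemma~\ref{lem:trees-and-groups}); the shift exponents are corrected afterwards.

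\emph{What survives.} Your orientation idea for the flip part is essentially what the paper does. Conditions (D2$'$) and (D4$'$) exclude every factor map from every $X_j$ onto $\revop X_1$, for all $T$, and this rules out $\Phi_1(T)\cong\Phi_2(T)^{-1}$. Continuity, however, comes from Lemma~\ref{lem:inv-lim-continuity} and the transfer in Theorem~\ref{thm:subshifts-and-systems-are-bireducible}, not from a finite-alphabet coding.
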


\begin{cor}\label{cor:main}
The sets 
	\begin{align*}
		\{ (U, V) \in \Min(\C)^2 \colon U, V & \textrm{ are conjugate} 		\}, \\
		\{ (U, V) \in \Min(\C)^2 \colon U, V & \textrm{ are flip conjugate} 	\}
	\end{align*}
are complete analytic, hence they are not Borel.
\end{cor}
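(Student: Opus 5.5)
We derive Corollary~\ref{cor:main} from Theorem~\ref{thm:main-thm} by the standard reduction argument. There are two parts: an upper bound (both sets are analytic) and a lower bound (the set $\IllFounded$ of ill-founded trees, which is complete analytic, reduces continuously to each of them).

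For the upper bound, $\Min(\C)$ is a $G_\delta$ subset of the Polish group $\Homeo(\C)$, hence Polish. Conjugacy is the set
\[
\{(U,V)\in\Min(\C)^2 \colon \exists h\in\Homeo(\C),\ hUh^{-1}=V\}.
\]
This is the projection of a closed subset of $\Min(\C)^2\times\Homeo(\C)$, because group multiplication is continuous. So it is analytic. Flip conjugacy is the union of this set with its preimage under the continuous map $(U,V)\mapsto(U,V^{-1})$, so it is analytic as well.

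For the lower bound, we use that $\IllFounded$ is complete analytic in the Polish space $\Trees$. This is the classical fact that the ill-founded trees form a complete analytic set. Restricting to trees of unbounded depth does not affect it: for example, the map grafting a fixed infinite-depth well-founded tree onto any tree is a continuous reduction, and this is presumably recorded in Section~\ref{section:trees}. Let $\Phi=(\Phi_1,\Phi_2)$ be the continuous map of Theorem~\ref{thm:main-thm}.
\begin{itemize}
\item If $T\in\IllFounded$, then $\Phi_1(T)\cong\Phi_2(T)$, so $\Phi(T)$ lies in both sets.
\item If $T$ is well-founded, then $\Phi_1(T)\not\cong\Phi_2(T)$ and $\Phi_1(T)\not\cong\Phi_2(T)^{-1}$, so $\Phi(T)$ lies in neither set.
\end{itemize}
Hence $\IllFounded=\Phi^{-1}(\text{conjugacy})=\Phi^{-1}(\text{flip conjugacy})$.

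Any analytic set $A\subseteq\N^\N$ reduces continuously to $\IllFounded$, and composing with $\Phi$ gives a continuous reduction of $A$ to each of the two sets. Combined with the upper bound, both sets are complete analytic.

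Non-Borelness follows because Borel sets are closed under continuous preimages. If either set were Borel, $\IllFounded$ would be Borel, contradicting the existence of non-Borel analytic sets. The only step needing care is making sure the completeness of $\IllFounded$ is stated for the specific space $\Trees$ used in the paper.
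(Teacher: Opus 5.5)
Your proof is correct and follows the paper's argument. The continuous map of Theorem~\ref{thm:main-thm} reduces $\IllFounded\subseteq\Trees$ to both relations, and $\IllFounded$ is complete analytic on $\Trees$ via the grafting map $\Psi$ of Section~\ref{section:trees}; analyticity comes from the homeomorphism-witness projection together with the union with the preimage under $(U,V)\mapsto(U,V^{-1})$. Your alternative reduction of $\mathscr T$ to $\Trees$, adjoining a fixed well-founded tree of unbounded depth after relabelling first symbols, also works, and for analytic sets in an arbitrary Polish space (not only in $\N^{\N}$) you simply compose a Borel reduction to $\IllFounded$ with the continuous map.
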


This result solves two open questions of Gao \cite[Questions 1.6 and 1.7]{buzzi2023open}.

\subsection*{Related classification results}

In dynamics, classification usually concerns either measure-theoretic
isomorphism or topological conjugacy. The two settings run parallel in
many respects: ergodicity corresponds to minimality or transitivity,
Rokhlin towers to Kakutani--Rokhlin partitions, and cutting-and-stacking
to the concatenation of words. The parallel serves as a guide, not a dictionary:
a theorem in one setting requires a separate proof in the other.

For measure-preserving systems, fix a standard nonatomic probability
space $(X,\mu)$. We write $\Aut(X,\mu)$ for its group of invertible
measure-preserving transformations, identified modulo equality almost
everywhere and equipped with the weak topology.

In 1970, Ornstein proved that Bernoulli systems with the same entropy
are isomorphic \cite{Ornstein,OrnsteinInfinite}. Entropy is a complete
Borel invariant on the Bernoulli class, with values in $[0,\infty]$; hence, it gives a Borel reduction
to equality on $\R$; thus isomorphism of Bernoulli systems is
\emph{smooth}. Beyond the Bernoulli class, equal entropy does not imply
isomorphism. This raises the question of which other kinds of invariants
can classify measure-preserving systems.

An equivalence relation $E$ on a Polish space is \emph{classifiable by
countable structures} if $E \le_B {\cong_L}$, where $\cong_L$ is the
isomorphism relation on the Polish space of structures with underlying
set $\N$ in some countable language $L$. The classification of ergodic
transformations with discrete spectrum by their countable groups of
eigenvalues, due to Halmos and von Neumann, is of this kind. Here the
eigenvalue groups are subgroups of the unit circle, not merely abstract
groups; their embeddings can be recorded by countably many predicates
for rational arcs \cite[p.~278]{ForemanWeiss2004}.

Hjorth's notion of turbulence is an obstruction to such classifications.
A continuous action of a Polish group $G$ on a nonempty Polish space $Y$
is \emph{turbulent} if every orbit is dense and meager, and if, for every
$y \in Y$, every open neighborhood $U$ of $y$, and every open
neighborhood $V$ of the identity of $G$, the set of points reachable
from $y$ by finitely many steps $z \mapsto gz$ with $g \in V$ that all
stay in $U$ is somewhere dense. If the action is turbulent, then its
orbit equivalence relation is not classifiable by countable structures.
More precisely, every Borel map from $Y$ to countable structures that
sends points in the same orbit to isomorphic structures is constant
\emph{up to isomorphism} on a comeager set
\cite[Chapter~3]{HjorthBook}; see also \cite{GaoBook}.

Hjorth proved that conjugacy of arbitrary measure-preserving
transformations is non-Borel \cite[Theorem~1.1]{Hjorth1}. In particular,
there is no complete numerical Borel invariant. His proof of
non-Borelness uses nonergodic transformations. Separately, he proved
nonclassifiability by countable structures, even for ergodic
transformations of rank-two generalized discrete spectrum
\cite[Theorem~5.15 and Corollary~5.16(b)]{Hjorth1}.

Foreman and Weiss subsequently proved that the action of $\Aut(X,\mu)$
by conjugation on the ergodic transformations is turbulent, and that
isomorphism cannot be classified by countable structures on any dense
$G_\delta$ subset of the space of measure-preserving transformations
\cite[Theorem~12 and Corollary~13]{ForemanWeiss2004}.
Foreman, Rudolph, and Weiss later proved that conjugacy of ergodic
transformations is complete analytic and hence non-Borel
\cite[Theorem~7 and Corollary~8]{FRW}.

The anti-classification results of Foreman, Rudolph, and Weiss have since
been extended in several directions; see
\cite{ForemanWeiss2022,GerberKunde,KundeWeakMixing}.
For a survey of anti-classification results in ergodic theory and
further developments, see Gerber and Kunde \cite{GerberKundeICM}.

For topological conjugacy, Bruin and Vejnar proved that conjugacy of
continuous selfmaps of $[0,1]$ is Borel bireducible with isomorphism of
countable graphs. They also proved that conjugacy of Hilbert cube
homeomorphisms is Borel bireducible with a universal orbit equivalence
relation of Polish group actions \cite{BruinVejnar}.

By the Curtis--Hedlund--Lyndon theorem, every conjugacy between
finite-alphabet subshifts is given by a block code. Consequently,
conjugacy of subshifts is a countable Borel equivalence relation. 
Clemens proved that this relation is universal among countable Borel
equivalence relations: every countable Borel equivalence relation is
Borel reducible to it \cite{clemens2009isomorphism}.
For minimal subshifts, Gao, Jackson, and Seward proved that conjugacy
is not smooth, even over a binary alphabet
\cite[Corollary~1.5.4]{gao2016group}.
The Borel upper bound extends from subshifts to expansive Cantor
systems by Borel symbolic coding with finite clopen generating
partitions; see \cite[Theorem~1.5.6]{gao2016group}.
Finer bounds are known for several classes of Toeplitz subshifts
\cite{DekaPeng,GaoLiPengSun,KayaToeplitz,SabokTsankov}.

Kaya \cite{Kaya} proved that conjugacy of pointed Cantor minimal systems
is Borel bireducible with $=^+$ and hence Borel. The objects are triples
$(\C,T,p)$ with $T\in\Min(\C)$ and $p\in\C$, and conjugacies must
preserve the marked points. Here $=^+$ is the relation on $\R^{\N}$
defined by
$f=^+g$ if and only if $\{f(n):n\in\N\}=\{g(n):n\in\N\}$.

Corollary~\ref{cor:main} rules out extending the Borel classifications
of expansive and of pointed Cantor minimal systems to all Cantor minimal
systems. More generally, non-Borelness excludes any Borel assignment of
complete invariants whose comparison relation is Borel, including real
numbers, countable sets of reals, and countable structures with a Borel
isomorphism relation. It does not exclude classification by countable
structures whose isomorphism relation is non-Borel.

The group $\Homeo(\C)$, with the topology of uniform convergence, is
\emph{non-Archimedean}: for every finite clopen partition $\mathcal P$
of $\C$, the homeomorphisms mapping each atom of $\mathcal P$ onto itself
form an open subgroup, and these subgroups form a neighborhood basis
of the identity. Non-Archimedean Polish groups are precisely the Polish
groups topologically isomorphic to closed subgroups of $S_\infty$.
Every orbit equivalence relation of a Borel action of such a group on
a standard Borel space is classifiable by countable structures
\cite{BeckerKechris}.

For conjugacy of Cantor systems, the countable structure is explicit.
Let $\operatorname{Clop}(\C)$ be the countable Boolean algebra of clopen
subsets of $\C$, and assign to $T\in\Homeo(\C)$ the structure
$(\operatorname{Clop}(\C);\cup,\cap,\complement,T_*)$, where
$T_*(A)=T(A)$. By Stone duality, two Cantor systems are conjugate if and
only if the assigned structures are isomorphic. The assignment is
continuous once the clopen sets are enumerated because, for clopen
$A,B\subseteq\C$, the set of $S\in\Homeo(\C)$ with $S(A)=B$ is clopen.
Thus conjugacy of Cantor systems, and its restriction to $\Min(\C)$,
is classifiable by countable structures.

Together with Corollary~\ref{cor:main}, this gives a non-Borel conjugacy
relation that is classifiable by countable structures. The converse
separation also occurs in dynamics: conjugacy of rank-one ergodic
transformations is Borel \cite[Theorem~51]{FRW}, but it is not
classifiable by countable structures. Indeed, rank-one transformations
form a dense $G_\delta$ class \cite[Section~10]{FRW}, to which
\cite[Corollary~13]{ForemanWeiss2004} applies.

By Hjorth's theorem, no turbulent action on a nonempty Polish space has
an orbit equivalence relation Borel reducible to conjugacy of Cantor
minimal systems. In particular, the action of $\Homeo(\C)$ by conjugation
on $\Min(\C)$ is not turbulent, and neither is its restriction to any
nonempty invariant Polish subspace. The turbulence conclusion of
Foreman and Weiss therefore has no counterpart for conjugacy of Cantor
minimal systems.

The weak topology on $\Aut(X,\mu)$ differs from the topology on
$\Homeo(\C)$ in this respect. It is generated by the maps
$T\mapsto\mu(T(A)\cap B)$, with $A,B$ measurable, so a basic neighborhood
prescribes finitely many of these numbers up to a positive tolerance.
The group is contractible \cite{Keane}; hence it has no proper open
subgroup and every locally constant function on it is constant.
Every aperiodic conjugacy class is dense, and the ergodic transformations
form a dense $G_\delta$ \cite{Halmos}; see also
\cite[pp.~280--284]{ForemanWeiss2004}.
For the local-orbit condition, Foreman and Weiss combine approximation
by transformations with the same orbits almost everywhere with
conjugations by transformations close to the identity, keeping the
intermediate transformations in the prescribed neighborhood
\cite[Lemma~9, Theorem~12, and Claim~14]{ForemanWeiss2004}.

\subsection*{Methods and further results}

We adapt techniques of Foreman, Rudolph, and Weiss \cite{FRW} and of
Gerber and Kunde \cite{GerberKunde} to Cantor minimal systems. Their
symbolic cutting-and-stacking methods produce subshifts by concatenating
words, along with shift-invariant measures on them, and every
infinite minimal subshift is a Cantor minimal system.
Finite-alphabet symbolic models already suffice for the
measure-theoretic hardness result: the reduction in \cite{FRW} takes
values in zero-entropy ergodic transformations, and isomorphism on that
class is complete analytic \cite[Section~9.1.1]{FRW}.
In topological dynamics, finite-alphabet coding forces expansiveness,
and conjugacy of expansive Cantor systems is Borel
(\cite{clemens2009isomorphism} for subshifts,
\cite[Theorem~1.5.6]{gao2016group} for the expansive case). Thus,
finite-alphabet models support a
complete analytic measure-theoretic isomorphism relation but a Borel
topological conjugacy relation. 
We use inverse limits of subshifts instead. The passage to inverse limits
requires control of all factor maps
between the coordinates in the sequence. We construct an inverse system
$(X_j,\pi_j)$ of infinite minimal subshifts such that every factor map
$X_i\to X_j$, for $i\ge j$, is the composite bonding map followed by
an automorphism of $X_j$. This property forces every conjugacy between
two inverse limits with the same coordinate systems to be given
coordinatewise by compatible automorphisms
(Lemma~\ref{lem:inv-lim-isomorphism}).

Given a tree $T$, we realize the groups and homomorphisms associated
with the grafted tree $\Psi(T)$ as the reduced automorphism groups
(the automorphism groups modulo the powers of the shift) of the coordinate
systems and the maps induced by the bonding maps.
We then replace $\pi_j$ by $\alpha_j\pi_j$ for suitable automorphisms
$\alpha_j$ of $X_j$. Modulo shift powers, compatibility of coordinate
automorphisms becomes the system of group equations in
Lemma~\ref{lem:trees-and-groups}, whose solvability is equivalent to
the existence of an infinite branch of $T$. Conversely, a solution
lifts to exactly compatible automorphisms after correcting their
shift powers. Thus the two inverse limits are conjugate precisely
when $T$ is ill-founded (Lemma~\ref{lem:main-reduction}).

The word construction also excludes factor maps from any $X_j$ onto
$\rev{X_1}$. A conjugacy to the inverse of the second limit would
produce such a factor map, so this exclusion gives the flip-conjugacy
conclusion as well. Lemmas~\ref{lem:word-construction}
and~\ref{lem:getting-inv-system} establish these properties
simultaneously, and Lemma~\ref{lem:parameter-continuity} proves
continuous dependence of both limits on the tree.

Lemma~\ref{lem:main-reduction} separates this abstract inverse-limit
mechanism from the probabilistic word construction used to realize it.
The lemma may be useful for minimal Cantor systems with additional
dynamical conditions.

An \emph{automorphism} of a Cantor minimal system $T\colon \C \to \C$ is a homeomorphism of $\C$ that commutes with $T$. 
The automorphism group contains $\Set{T^n}{n\in \Z}$. We say that
$T$ has a trivial automorphism group if these are its only automorphisms.

\begin{thm} \label{thm:centralizer}  The set $\Set{T\in \Min(\C)}{T\text{ has nontrivial automorphism group}}$ is a complete analytic subset of $\Min(\C)$.\end{thm}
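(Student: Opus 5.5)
Two things have to be checked: that the set is analytic, and that the set of ill-founded trees continuously reduces to it. For analyticity, observe that $T$ has a nontrivial automorphism group exactly when there is some $S\in\Homeo(\C)$ with $ST=TS$ and $S\neq T^n$ for every $n\in\Z$. The set of pairs $(T,S)\in\Min(\C)\times\Homeo(\C)$ satisfying these conditions is Borel. Commuting is a closed condition. Each condition $S\neq T^n$ is open, so the countable intersection over $n$ is $G_\delta$. The set we want is the projection of this Borel set to the first coordinate, and is therefore analytic.

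For hardness, I will reuse the inverse-limit machinery that the introduction describes for Theorem~\ref{thm:main-thm}. That is, I will give a continuous map $\Trees\ni T\mapsto \Phi(T)\in\Min(\C)$ such that $\Phi(T)$ has a nontrivial automorphism exactly when $T$ is ill-founded. Since the ill-founded trees form a complete analytic subset of $\Trees$, this suffices.

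The construction of $\Phi(T)$ is the inverse limit of a system $(X_j,\pi_j)$ built as in Lemmas~\ref{lem:word-construction} and~\ref{lem:getting-inv-system}. Every factor map $X_i\to X_j$ must equal the bonding map followed by an automorphism of $X_j$. By Lemma~\ref{lem:inv-lim-isomorphism}, applied with both inverse limits equal to the same system, every automorphism of $\varprojlim X_j$ is then given coordinatewise by compatible automorphisms $\alpha_j\in\Aut(X_j)$. Passing to the reduced groups $\Aut(X_j)$ modulo shift powers, a compatible family gives a solution of the system of group equations from Lemma~\ref{lem:trees-and-groups} for the grafted tree $\Psi(T)$.

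The design requirement is this. The trivial solution, meaning all coordinates equal to the identity class, must correspond exactly to automorphisms that are powers of the shift. A nontrivial solution must exist if and only if $T$ has an infinite branch. The natural choice is to take a single system whose reduced automorphism groups and induced homomorphisms realize the groups of $\Psi(T)$. A nontrivial compatible thread of group elements should then correspond to a branch.

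The ill-founded direction should go as in Lemma~\ref{lem:main-reduction}. Take a branch, build the corresponding thread of nontrivial group elements, and lift it to exactly compatible automorphisms by correcting shift powers. The lifted automorphism is not a shift power, because its first coordinate is not one.

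For the well-founded direction, suppose the limit has an automorphism $S$ that is not a power of the shift. Then some coordinate $\alpha_j$ is nontrivial modulo shifts. Otherwise every $\alpha_j=\sigma^{n_j}$, compatibility forces all the $n_j$ to be equal, and $S$ is a shift power. The resulting nontrivial thread should produce a branch, giving a contradiction.

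Continuity of $\Phi$ will be handled by Lemma~\ref{lem:parameter-continuity}.

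I expect the main obstacle to be this last design point. Lemma~\ref{lem:trees-and-groups} is formulated for two systems, where the question is whether a system of equations is solvable. Here I need the one-system variant, where the question is whether a nontrivial thread exists. The first thing I would try is to take $\Phi(T)=\Phi_1(T)\times$-free variant: a single inverse system whose $j$-th reduced automorphism group is the group attached to level $j$ of $\Psi(T)$, with the induced maps being the tree's restriction homomorphisms. Then nontrivial threads are in bijection with infinite branches, provided the groups at the root-adjacent levels have trivial intersection with the kernels. If that direct formulation does not fit the existing lemmas, the fallback is to derive it from Theorem~\ref{thm:main-thm}, for instance by checking that $\Phi_1(T)$ and $\Phi_2(T)$ are built from the same coordinate subshifts. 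In that case a conjugacy between them could be repackaged as an automorphism of a suitably combined inverse limit.
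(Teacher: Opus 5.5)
Your main line is the paper's proof. For the untwisted limit $X(T)$ of Lemma~\ref{lem:getting-inv-system} (grafting by $\Psi$ is harmless but unnecessary), blendedness and adjacency give $\Aut'(X(T))\cong\varprojlim(G_j(T),\rho_j(T))$ as in Proposition~\ref{prop:limit-automorphisms}, and the step you flag as the main obstacle is exactly Lemma~\ref{lem_trivial}; so you need neither the inhomogeneous equations of Lemma~\ref{lem:trees-and-groups}, nor your kernel proviso, nor the fallback via $\Phi_1,\Phi_2$. One correction: nonzero threads are not in bijection with infinite branches in general, but what you need, and what Lemma~\ref{lem_trivial} supplies by repeatedly choosing a child with coefficient one, is that a nonzero thread exists if and only if $T$ is ill-founded.
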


We also apply the flip-conjugacy result to graphs on a fixed Cantor
space. Here, the edge relation is compact, colorings are continuous,
and graph isomorphisms are required to be homeomorphisms of the vertex
space. Lecomte relates injective continuous homomorphisms between the
graphs of minimal systems to flip conjugacy
\cite[Lemma~7.11 and Theorem~13.1]{lecomte2023continuous}.
Section~\ref{sec:graph-application} gives the Polish coding and proves
the following theorem.
\begin{thm}\label{thm:graph-intro}
On the space of nonempty compact graphs on $\C$ with continuous
chromatic number two, both topological graph isomorphism and mutual
injective continuous graph homomorphism are complete analytic relations.
Both relations remain complete analytic on the larger space of
nonempty compact graphs on $\C$ with continuous chromatic number
two or three.
\end{thm}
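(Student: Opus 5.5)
Two things need proving: the relations are analytic (upper bound), and they are hard (lower bound). Hardness will come from Theorem~\ref{thm:main-thm} composed with a continuous map $\Min(\C)\to\mathcal G$ sending $U$ to the graph $G_U=\Set{(x,U(x))}{x\in\C}\cup\Set{(U(x),x)}{x\in\C}$. Here $\mathcal G$ is the hyperspace $\K(\C^2)$ of nonempty compact symmetric irreflexive relations with the Vietoris topology, restricted to the relevant chromatic classes.

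For the upper bound, I would first check that each class is an analytic subset of $\K(\C^2)$, so that it is a standard Borel (indeed analytic) parameter space. The condition ``continuous chromatic number $\le 3$'' is existential over countably many clopen partitions, so it is open. The condition $\ge 2$ is just nonemptiness. Hence both spaces are Borel, and in fact $G_\delta$, hence Polish.

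Isomorphism is the set of pairs $(G,H)$ for which some $h\in\Homeo(\C)$ satisfies $(h\times h)(G)=H$. This is the projection of a closed subset of a Polish space, hence analytic. Mutual injective continuous homomorphism is likewise the projection of a closed condition over $(f,g)\in C(\C,\C)^2$ with $f,g$ injective. Injectivity is $G_\delta$ in $C(\C,\C)$, so the set is again analytic.

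For the lower bound, I would argue as follows.

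\begin{enumerate}
\item For minimal $U$, $G_U$ is a compact graph without loops, since $U$ has no fixed points. The map $U\mapsto G_U$ is continuous into $\K(\C^2)$ because it is the union of the graphs of $U$ and $U^{-1}$, and inversion is continuous in $\Homeo(\C)$.
\item I would show that $G_U$ has continuous chromatic number $2$ or $3$, so that it lands in the larger space. A clopen set $A$ with $A\cap U(A)=\emptyset$ and small complement of $A\cup U(A)$, built from a Kakutani--Rokhlin partition, gives a $3$-colouring by alternating colours along towers. I would quote Lecomte for the fact that graphs of minimal systems have continuous chromatic number exactly $3$ when they are not $2$-colourable. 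Then the larger-space claim follows from the reduction $T\mapsto(G_{\Phi_1(T)},G_{\Phi_2(T)})$.
\item Flip conjugacy versus the graph relations. If $hUh^{-1}=V^{\pm1}$, then $h$ is a graph isomorphism $G_U\to G_V$, and an isomorphism gives homomorphisms in both directions. Conversely, I would use Lecomte \cite[Lemma~7.11 and Theorem~13.1]{lecomte2023continuous}: for minimal $U,V$, an injective continuous homomorphism $G_U\to G_V$ forces a flip conjugacy. So for $T$ well-founded, Theorem~\ref{thm:main-thm} gives neither relation; for $T$ ill-founded, both hold. Since $\IllFounded$ is complete analytic, both relations are complete analytic on the larger space.
\end{enumerate}

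The chromatic-number-two case is the main obstacle. Graphs of minimal homeomorphisms are never continuously $2$-colourable, since $A$ and $U(A)=A^c$ would force $U^2(A)=A$, contradicting minimality. So a modified encoding is needed. I would use the bipartite double $D_U$ on $\C\times\{0,1\}\cong\C$, with edges $(x,0)\sim(U(x),1)$ and their symmetric pairs, which is $2$-coloured by the second coordinate. I then have to show that isomorphism or mutual homomorphism of $D_U,D_V$ still implies flip conjugacy. The plan is to observe that each connected component of $D_U$ is a bi-infinite path covering the orbit structure; a homeomorphism (or an injective homomorphism, which by compactness plus minimality maps components onto components) must either preserve or swap the two colour classes. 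After composing with the swap, it induces a map $\C\to\C$ carrying $U^2$-structure to $V^{\pm2}$ while intertwining the half-steps. From this one recovers a conjugacy of $U$ with $V^{\pm1}$ or $V^{\pm1}$ shifted by $U$, which is again conjugate to $V^{\pm1}$. Verifying that Lecomte's rigidity argument adapts to $D_U$ is where I expect the real work.
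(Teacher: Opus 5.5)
Your analytic upper bounds and your treatment of the larger space $C_{[2,3]}$ match the paper. The chromatic-number-two part, however, has a genuine gap, and it starts from a false claim. The condition $U(A)=\C\setminus A$ gives only $U^2(A)=A$. That makes $A$ invariant under $U^2$, and minimality of $U$ says nothing about $U^2$. For the odometer with scale $(2^n)$, the set of even elements is such an $A$. In general, a continuous proper $2$-colouring $c$ of $G_U$ is exactly an equivariant map onto $(\Z/2\Z,+1)$, since $c(Ux)\ne c(x)$ means $c(Ux)=c(x)+1$.

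Your substitute encoding does not work either. With only the edges $(x,0)\sim(U(x),1)$, every vertex of $D_U$ has degree one. So $D_U$ is a perfect matching whose components are single edges, not bi-infinite paths. The homeomorphism $(x,0)\mapsto(x,0)$, $(y,1)\mapsto(U^{-1}y,1)$ carries $D_U$ onto $D_{\mathrm{id}}$, so all these graphs are isomorphic and the reduction collapses. The genuine double cover, the graph of $\tilde U(x,i)=(Ux,1-i)$, does not repair this. If $h_0U^2=V^2h_0$, then $h_0$ on the $0$-sheet together with $Vh_0U^{-1}$ on the $1$-sheet conjugates $\tilde U$ to $\tilde V$. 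Your postponed rigidity step would therefore have to exclude $U^2\cong V^{\pm2}$, which Theorem~\ref{thm:main-thm} does not provide.

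The missing idea is that no new encoding is needed, because the systems $\Phi_i(T)$ already factor onto $(\Z/2\Z,+1)$. The construction in Lemma~\ref{lem:word-construction} has $L_1=6$. Hence the canonical odometer factor of $X_1(\Psi(T))$ from Lemma~\ref{lem:canonical-odometer} has first coordinate in $\Z/6\Z$. Reducing that coordinate mod $2$ gives a continuous $b$ with $b\circ\sigma=b+1$.

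Both $X(\Psi(T))$ and $\Xother(\Psi(T))$ project equivariantly onto $X_1(\Psi(T))$. Transporting $b$ through the conjugacies that define $\Phi_i(T)$ yields $c_i\colon\C\to\Z/2\Z$ with $c_i\circ\Phi_i(T)=c_i+1$. This is a continuous proper $2$-colouring of $G_{\Phi_i(T)}$, as in Lemma~\ref{lem:reduction-two-colorable}.

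So the same map $T\mapsto(G_{\Phi_1(T)},G_{\Phi_2(T)})$ already lands in $C_{\{2\}}$. Lecomte's rigidity for graphs of minimal homeomorphisms then applies unchanged, and the statement for $C_{[2,3]}$ follows by inclusion.
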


Appendix~\ref{sec:legacy-reversal} gives an alternative construction
using reversal.

\subsection*{Acknowledgments}

These results grew out of discussions in a joint research seminar
involving all the authors at the Jagiellonian University in 2022.
The main part of the proof appeared earlier in Konrad Deka's PhD
thesis \cite{DekaThesis}, defended at the Jagiellonian University
under the supervision of Dominik Kwietniak.

We thank Marcin Sabok for his enthusiasm and encouragement, and for
many fruitful conversations. We are also grateful to Matthew
Foreman, Su Gao, Marlies Gerber, Bo Peng, and Benjy Weiss for many useful discussions.

We thank Mar{\'\i}a Isabel Cortez, Fabien Durand, and Kostya Medynets,
the referees of Konrad Deka's PhD thesis, for their remarks, which
helped improve both the thesis and this text. A special thank-you goes
to Bo Peng for giving us the nudge we needed to finish a draft that
had waited rather too long for publication.

The mathematical content and the final text are the authors' own.
Computers were used to polish the English, proofread and organize the
manuscript, check arguments and references, and suggest revisions.
Any remaining errors are, of course, ours.

\section{Preliminaries}
We write $\N:=\{1,2,\ldots\}$ and $\N_0:=\{0,1,2,\ldots\}$.

\subsection{General topology}
A subset of a topological space is \emph{perfect} if it has no isolated points. A topological space is \emph{totally disconnected} if the only nonempty connected subsets are the singletons. A \emph{Cantor space} is a nonempty, compact, metrizable, totally disconnected, perfect topological space. 
The Cantor set $\C = \{0,1\}^ {\N}$ carries the product topology,
with $\{0,1\}$ discrete. By Brouwer's theorem
\cite[Theorem 7.4]{Kec}, every Cantor space is homeomorphic to $\C$.
For a topological space $X$, let $\K(X)$ be the collection of nonempty compact subsets of $X$, and let $\K_p(X)$ consist of its perfect elements. We equip $\K(X)$ with the \emph{Vietoris topology}. A basis for the Vietoris topology consists of sets
\[
\{A\in \K(X): A\subseteq (U_1\cup\ldots\cup U_k),\text{ and }A\cap U_1\neq\emptyset,\ldots,A\cap U_k\neq\emptyset\}
\]
where $k\in\N$ and $U_1,\ldots,U_k$ range over the nonempty open subsets of $X$. 
If $d$ is a metric for a compact metrizable space $X$, then the \emph{Hausdorff metric} $d_H$ on $\K(X)$ is compatible with the Vietoris topology. For $x\in X$ and nonempty $A\subseteq X$, define the \emph{distance from $x$ to $A$} by
\[
d_A(x):=\inf\{d(x,y):y\in A\}.
\]
The \emph{$\eps$-hull of $A$} is 
\[
A^{\eps}:=\{x\in X: d_A(x)<\eps\}.
\]
The Hausdorff metric between $A,B\in \K(X)$ is defined as
\[
d_H(A,B):=\inf\{\eps>0: A\subseteq B^{\eps}\text{ and }B\subseteq A^{\eps}\}.
\]
The set $\K_p(X)$ is a $G_{\delta}$-subset of $\K(X)$.

Every nonempty closed perfect subset of a Cantor space is a Cantor space in the subspace topology. Hence,
\[
\K_p(\C)=\{A\subseteq \C: A\text{ is nonempty, closed and perfect}\}.
\]

\subsection{Polish spaces and Borel reducibility}
A topological space is \emph{Polish} if it is separable and completely metrizable.

Let $X$ and $X'$ be Polish spaces. 
We say that $A\subseteq X$ is \emph{Borel (continuously) reducible} to $A'\subseteq X'$ if there is a Borel (continuous) function $g\colon X\to X'$ such that $x\in A$ if and only if $g(x)\in A'$. 
Equivalently, $g^{-1}(A')=A$. 
We call $g$ a \emph{Borel (continuous) reduction} and say that it
\emph{Borel (continuously) reduces} $A$ to $A'$.  
We also write $A\le_B A'$ ($A\le_{\cont} A'$). If $A$ is not Borel and $A\le_B A'$, then $A'$ is not Borel.
For Borel sets, reducibility formalizes comparison of complexity;
see \cite{Kec}. We say that $A$ and $A'$ are \emph{Borel bireducible}
if $A\le_B A'$ and $A'\le_B A$. 

A subset $A\subset X$ is \emph{analytic} if it is a continuous image
of a Polish space: there are a Polish space $Y$ and a continuous map
$f\colon Y\to X$ with $f(Y)=A$. An analytic set $A$ is
\emph{complete analytic} if every analytic set is Borel reducible to $A$. Every Borel set is analytic, but not every analytic set is Borel. In particular, complete analytic sets are not Borel. A classic example of a complete analytic set is the set of ill-founded trees, which we describe below.

For equivalence relations $R$ on $X$ and $R'$ on $X'$, we use
\emph{Borel (continuous) reducibility} in the following sense: there
is a Borel (continuous) map $g\colon X\to X'$ such that $x_1Rx_2$
is equivalent to $g(x_1)R'g(x_2)$ for every $(x_1,x_2)\in X\times X$.
We write $R\le^2_B R'$ ($R\le^2_{\cont} R'$). 
We say that $R$ and $R'$ are \emph{Borel bireducible} if $R\le^2_B R'$ and $R'\le^2_B R$. If $R\le^2_B R'$ via $g$, then $g\times g$ witnesses $R\le_B R'$. We say that $R$ and $R'$ are \emph{topologically isomorphic} if there is a homeomorphism $g\colon X\to X'$ that reduces $R$ to $R'$.

We call an equivalence relation $R$ on a Polish space $X$
\emph{complete analytic} when it is complete analytic as a subset of $X^2$.
This does not assert universality among analytic equivalence relations
under $\le^2_B$.

\subsection{Words}\label{subsec:words}
Let $\bA$ be a nonempty set, called the \emph{alphabet}.
A \emph{word over $\bA$} is a finite tuple of elements of $\bA$. We denote the empty word by $\emptyword$. The set of all words over $\bA$ is denoted by $\bA^{<\N}$; that is $\bA^{<\N}:=\bigcup_{n\in \N_0}\bA^n$, with the convention that $\bA^0=\{\emptyword\}$. The \emph{length} $|w|\in\N_0$ of a word $w$ is its number of entries. Given a word $s\in \bA^n$, 
we index its letters from zero and write $s=s_0s_1\dots s_{n-1}$, so that $s_i\in\bA$ is the $i$th coordinate of the word $s$ for $0\le i<n$.
 Given $s\in\bA^n$  and $t\in \bA^m$ we define the \emph{concatenation} of $s$ and $t$, $s \concat t\in\bA^{n+m}$,  by 
 \[
 (s \concat t)_j=\begin{cases}
     s_j&\text{ for }0\le j<n,\\
     t_{j-n}&\text{ for }n\le j<n+m.
 \end{cases}\] For simplicity we will often write $s \concat t = st= s_0\dots s_{n-1}t_0 \dots t_{m-1}$.
For words $s, s' \in \bA^{ < \N}$, we write $s \prefix s'$ if $s$ is a \emph{prefix} of $s'$, that is, if for some $t\in\bA^{<\N}$ we have $s'=s\concat t$.
Similarly, we define \emph{(bi-)infinite words over $\bA$} to be (bi-)infinite sequences with entries in $\bA$, that is elements of the Cartesian products $\bA^{\N}$ (or $\bA^{\Z}$).
For $\bx\in \bA^{\N}$ (respectively, $\bx\in \bA^{\Z}$), we write $\bx=(x_j)_{j\in\N}=x_1x_2\ldots$ (respectively, $\bx=(x_j)_{j\in\Z}=\ldots x_{-2}x_{-1}.x_0x_1x_2\ldots$), where $x_j\in \bA$ is the $j$th coordinate of $\bx$. Thus one-sided infinite words are indexed from one, like the points of $\C=\{0,1\}^{\N}$, whereas finite words are indexed from zero. The time coordinates of bi-infinite sequences are indexed by $\Z$, with zero as the distinguished origin. For $i,j\in \Z$, we let $[i,j]$ represent the set $\{i,i+1,\ldots,j\}$ of consecutive integers from $i$ to $j$ if $i\le j$ and the empty set $\emptyset$ otherwise. A similar convention applies to all other types of intervals: $[i,j)$, $(i,j)$, and $(i,j]$. The convention naturally extends to unbounded intervals, like $(-\infty,i]$, $(j,\infty)$, etc. Given $i,j\in\Z$ and a finite or (bi-)infinite word $x$ and $[i,j]$ contained in the domain of $x$, we let $x_{[i,j]}$ represent the word $x_ix_{i+1}\ldots x_j\in \bA^{j-i+1}$, if $i\le j$, or the empty word $\emptyword$ if $j<i$; thus $(x_{[i,j]})_k=x_{i+k}$ for $0\le k\le j-i$, and $w_{[0,n)}=w$ for a finite word $w$ of length $n$. The restriction of $x$ to other types of intervals is defined analogously. We define the \emph{cylinder} of $w=w_0\ldots w_{n-1}\in \bA^{n}$ in $\bA^{\N}$ as \[
[w]=\{x\in \bA^{\N}:x_{[1,n]}=w\}.
\]
\subsection{Trees} \label{section:trees}
Recall that $\N^{<\N}$ stands for the set of all finite words over $\N$. 
Fix an enumeration $(w^{(i)})_{i\in\N_0}$ of $\N^{<\N}$ with
$w^{(0)}=\emptyword$ such that $w^{(i)}\prefix w^{(j)}$ implies
$i\le j$. Thus every word occurs exactly once, after all its proper
prefixes.

A \emph{tree} is a nonempty subset $T$ of $\N^{< \N}$ such that if $t \in T$ and $s \prefix t$, then $s \in T$. 
By identifying a tree $T$ with its characteristic function,
we consider a tree to be an element of $2^{\N ^{ < \N}}$. 

The words $t\in T$ are the \emph{vertices of $T$}. Every tree contains
$\emptyword$, its \emph{root}. The \emph{depth} of a vertex $t$ is
$\depth{t}:=|t|$; the root is the unique vertex of depth $0$.
For $j\in \N_0$, define $$\treevertices{j}{T} := \{ t \in T \colon \depth{t} = j\}.$$ 
We say that a tree $T$ has \emph{unbounded depth} if $\treevertices{j}{T} \neq \emptyset$ for all $j \in\N_0$.
We define the \emph{depth of a tree} $T$ as $\depth T:=\sup\{j\in\N_0:\treevertices{j}{T} \neq \emptyset\}$.
We denote the set of all trees with unbounded depth by $\Trees$. 

An \emph{infinite branch} of $T$ is a sequence
$w=w_1w_2w_3\ldots\in \N^{\N}$ such that $w_{[1,n]}\in T$
for every $n\in\N$. A branch is not itself a vertex of $T$.
A tree is \emph{well-founded} if it does not have an infinite branch. A tree with at least one infinite branch is \emph{ill-founded}. We write $\IllFounded := \{ T \in \Trees : T \textrm{ is ill-founded} \}$ and $\WellFounded:=\Trees\setminus\IllFounded$.

Let $\mathscr T$ be the space of all trees on $\N$, including those
of bounded depth, with the topology inherited from $2^{\N^{<\N}}$ through the identification of a tree with its characteristic function.
Prefix closure and nonemptiness are closed conditions, so $\mathscr T$ is a closed subspace
of $2^{\N^{<\N}}$ and is compact metrizable. We give $\Trees$ the subspace topology. Since
\[
\Trees=\bigcap_{k\ge1}\bigcup_{|s|=k}\{T\in\mathscr T:s\in T\},
\]
the space $\Trees$ is a $G_\delta$ subspace of $\mathscr T$ and is Polish.
Ill-foundedness on $\mathscr T$
is complete analytic \cite[Theorem 27.1]{Kec}. The grafting map below transfers this
fact to $\Trees$ without changing whether a tree has an infinite branch.

For a nonroot vertex $t=t_0\ldots t_{n-1}\in T$, define its
\emph{parent} by $\parent{t} := t_0\dots t_{n - 2}\in \treevertices{n-1}{T}$.
The corresponding graph joins each nonroot vertex to its parent.

We also represent infinite branches by sequences of vertices
$(t_j)_{j\in\N}$ with $t_j\in\treevertices{j}{T}$ and
$\parent{t_{j+1}}=t_j$ for every $j\in\N$.  

\subsection{Tree-directed inverse systems of groups}\label{subsec:tree-groups}
An \emph{inverse system of groups} is a sequence
$(G_j,\rho_j)_{j\in \N}$ in which each $G_j$ is a group and each
$\rho_j \colon G_{j+1} \to G_j$ is a group homomorphism. 
Two such inverse systems $(G_j,\rho_j)$ and $(H_j,\zeta_j)$ are \emph{conjugate} if there are group isomorphisms $\phi_j:G_j\to H_j$ satisfying $\zeta_j\phi_{j+1}=\phi_j\rho_j$ for every $j\ge1$. The \emph{inverse limit} of an inverse system of groups $(G_j,\rho_j)_{j\in \N}$ is the set
\[
\varprojlim (G_j,\rho_j)_{j\in \N}:=\{(g_j)_{j\in\N}\in\prod_{j\in\N}G_j:\rho_j(g_{j+1})=g_j \text{ for every }j\in\N\}.
\]
Under coordinatewise multiplication, the inverse limit
$\varprojlim (G_j,\rho_j)_{j\in \N}$ is a subgroup of
$\prod_{j\in\N}G_j$. It contains the sequence of identity elements
and is therefore nonempty.
The inverse limits of conjugate inverse systems of groups are isomorphic groups, but the converse does not always hold. 

Given a set $X$, possibly empty, we define the \emph{group of involutions generated by $X$} as
\[
\Z_2(X) := \left\{ f\colon X \to \Z_2 | \; f(x)=1 \textrm{ for at most finitely many } x\in X \right\}.
\]
Under pointwise addition modulo $2$, $\Z_2(X)$ is an abelian group
in which every nonidentity element has order $2$. It is trivial when
$X=\emptyset$. For $x \in X$, we define $\gbar{x}\in \Z_2(X)$ as the function $X\to\Z_2$ satisfying $\gbar{x}(x') = 1$ if and only if $x' = x$. Clearly, $\{\gbar{x}:x\in X\}$ generates $\Z_2(X)$. 
Equivalently, $\Z_2(X)=\bigoplus_{x\in X}\Z_2$ is the vector
space of finitely supported functions from $X$ to the field $\Z_2$,
with canonical basis $\{\gbar{x}:x\in X\}$.

 We associate a group to each positive level of a tree $T$. The vertices
at that level form its canonical basis, and the parent map induces the
bonding homomorphisms.
\begin{defn}
Let $T\in\Trees$. For every $j\in\N$, define
\[
\treegroup{j}{T}:=\Z_2(\treevertices{j}{T}).
\]
For every $j\in\N$, define the homomorphism
\[
\treehom{j}{T}:\treegroup{j+1}{T}\longrightarrow\treegroup{j}{T}
\]
by its action on the canonical basis:
\[
\treehom{j}{T}(\gbar{t})=\gbar{\parent{t}}
\qquad(t\in\treevertices{j+1}{T}).
\]
These groups and homomorphisms form the \emph{$T$-directed system of
 groups of involutions}.
\end{defn}
The resulting inverse system is
\[
\treegroup{1}{T}\stackrel{\treehom{1}{T}}{\longleftarrow}
\treegroup{2}{T}\stackrel{\treehom{2}{T}}{\longleftarrow}
\cdots\stackrel{\treehom{n-1}{T}}{\longleftarrow}
\treegroup{n}{T}\stackrel{\treehom{n}{T}}{\longleftarrow}\cdots.
\]

The inverse limit contains the sequence of identity elements. Its nontriviality is determined by the branches of the tree.

\begin{lem}
\label{lem_trivial}
Let $T\in \Trees$. The inverse limit of the $T$-directed inverse system of groups of involutions  $(\treegroup{j}{T},\treehom{j}{T})_{j\in \N}$ is nontrivial if and only if $T$ is ill-founded.
\end{lem}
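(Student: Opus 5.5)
The plan is to prove the two implications separately, using only the linear-algebraic description of $\treehom{j}{T}$ over $\Z_2$.

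For the direction \emph{ill-founded implies nontrivial}, fix an infinite branch, written as a sequence of vertices $(t_j)_{j\in\N}$ with $t_j\in\treevertices{j}{T}$ and $\parent{t_{j+1}}=t_j$. Set $g_j:=\gbar{t_j}\in\treegroup{j}{T}$. By the definition of $\treehom{j}{T}$ on the canonical basis, $\treehom{j}{T}(g_{j+1})=\gbar{\parent{t_{j+1}}}=g_j$. Hence $(g_j)_{j\in\N}$ lies in the inverse limit, and it is not the identity because $g_1\neq 0$.

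For the converse, suppose $(g_j)_{j\in\N}$ is a nonidentity element of the inverse limit, and view each $g_j$ as a finite subset $S_j:=\{t\in\treevertices{j}{T}: g_j(t)=1\}$. The key observation is a formula for the image: since $\treehom{j}{T}$ is linear, $\treehom{j}{T}(g_{j+1})(s)$ equals the parity of $|\{t\in S_{j+1}:\parent{t}=s\}|$. So $g_j=\treehom{j}{T}(g_{j+1})$ says that $s\in S_j$ exactly when $s$ has an odd number of children in $S_{j+1}$. Two consequences follow.

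First, if $g_{j+1}=0$ then $g_j=0$. So if $g_{j_0}\neq 0$ for some $j_0$, then $S_j\neq\emptyset$ for all $j\ge j_0$.

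Second, every $s\in S_j$ has at least one child in $S_{j+1}$.

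Now consider the set $F:=\bigcup_{j\ge j_0}S_j$ together with all prefixes of its elements. Downward closure under prefixes makes this a subtree of $T$. I would then extract the branch by a K\"onig-type argument, but applied only to the finite sets $S_j$, since $T$ itself may be infinitely branching. Pick $s_{j_0}\in S_{j_0}$, then a child $s_{j_0+1}\in S_{j_0+1}$, and continue inductively using the second consequence. This gives $s_{j+1}$ extending $s_j$ for all $j\ge j_0$, and together with the prefixes of $s_{j_0}$ this is an infinite branch of $T$. Because the second consequence already supplies a child at every step, no compactness argument is actually needed.

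The only step requiring care is the parity reformulation of $\treehom{j}{T}(g_{j+1})=g_j$. Once that is stated, both directions are immediate. I expect no real obstacle beyond noting that $g_{j_0}\neq0$ is inherited upward, which is what lets the inductive choice start at level $j_0$.
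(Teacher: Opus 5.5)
Your proof is correct and takes essentially the same approach as the paper. One direction uses the branch indicator sequence $(\gbar{t_j})_{j\ge1}$; the other uses the parity reading of $\rho_j(g_{j+1})=g_j$, which gives every vertex in the support of $g_j$ a child in the support of $g_{j+1}$, so choosing children successively from a nonzero coefficient, together with that vertex's prefixes, yields an infinite branch (your upward-inheritance remark and the subtree $F$ are harmless but not needed).
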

\begin{proof}
An infinite branch $(t_j)_{j\ge1}$ gives the nonzero coherent sequence
$(\gbar{t_j})_{j\ge1}$. Conversely, let $(g_j)_{j\ge1}$ be a nonzero
coherent sequence. Write $g_j$ as a finite sum of distinct level-$j$
generators. If the coefficient of a vertex $t$ in $g_j$ is one, coherence
says that the sum modulo two of the coefficients of its children in
$g_{j+1}$ is one. At least one such child has coefficient one.
Starting with a nonzero coefficient and choosing a child successively
produces an infinite branch; its earlier vertices are the prefixes of
the starting vertex.
\end{proof}
\begin{rem}\label{rem:unique-branch-group}
The same argument shows that every vertex occurring in a nonzero
coherent sequence lies on an infinite branch. If the tree has exactly
one infinite branch, its inverse-limit group therefore consists of
zero and the indicator sequence of that branch.
\end{rem}

For integers $k, \ell \in \N $, we define $\ctup{k}{\ell}$ to be the word over $\N$ of length $k$ all of whose entries equal $\ell$. Let $R=\{\emptyword\}\cup\{\ctup{k}{\ell}:1\le k\le\ell,\ \ell\in\N\}$ be the rooted tree whose maximal finite branches have endpoints $\ctup{k}{k}$, one for each $k\in\N$.
For a tree $T$, define its \emph{grafting} $\Psi(T)$ by
$$
	\Psi(T) := \{\emptyword\}\cup\{ \ctup{k}{\ell} : 1 \le k \le \ell,\ \ell\in\N \} \cup
		\{ \ctup{k}{k} \concat t : k \ge 1, t \in T \}\in \Trees.
$$
The tree $\Psi(T)$ is obtained by attaching a copy of $T$ at each
endpoint $\ctup{k}{k}$ of $R$, as in Figure~\ref{fig:Psi-T}.
The root of the $k$th copy lies at depth $k$, so $\Psi(T)$ has unbounded
depth. This definition applies to every $T\in\mathscr T$. Membership of a fixed vertex in $\Psi(T)$ is either fixed by $R$ or determined by membership of one vertex in $T$. Hence $\Psi:\mathscr T\to\Trees$ is
continuous. An infinite branch of $\Psi(T)$ chooses its first symbol $k$,
traverses the trunk ending at $\ctup{k}{k}$, and then follows a branch of
$T$. Conversely, a branch of $T$ extends any one of these trunks.
Thus $T$ is ill-founded if and only if $\Psi(T)$ is ill-founded.
This continuously reduces ill-foundedness on $\mathscr T$ to
$\IllFounded\subseteq\Trees$; the latter set is analytic by projection
of the closed branch relation. Consequently $\IllFounded$ is complete
analytic in the stated domain $\Trees$.

\begin{figure}
\centering
\begin{tikzpicture}
  
  \begin{scope}[xscale=2]
    \node (root) at (0,0) {$\emptyword$};
    
    \node (node1) at (-2,-1) {$\ctup{1}{1}=1$};
    \draw (root) -- (node1);
    \node[regular polygon, regular polygon sides=3, draw, fill=white, minimum size=1cm] (triangle1) at (-2,-2) {T};
    \draw (node1) -- (triangle1);

    \node (node2) at (0,-1) {$\ctup{1}{2}$};
    \draw (root) -- (node2);
    \node (node3) at (2,-1) {$\ctup{1}{3}$};
    \draw (root) -- (node3);
    \node (node4) at (4,-1) {$\ctup{1}{n}$};
    \draw[dotted] (root) -- (node4); 
    \node[regular polygon, regular polygon sides=3, draw, fill=white, minimum size=1cm] (triangle2) at (0,-3) {T};

    \node at (4,-2.5) {$\vdots$};
    
    \node (node7) at (0,-2) {$\ctup{2}{2}$};
      \draw (node2) -- (node7);
    \node (node5) at (2,-2) {$\ctup{2}{3}$};
    \draw (node3) -- (node5);
    \node (node6) at (2,-3) {$\ctup{3}{3}$};
    \draw (node5) -- (node6);
    \node[regular polygon, regular polygon sides=3, draw, fill=white, minimum size=1cm] (triangle3) at (2,-4) {T};
    \draw (node6) -- (triangle3);
  \end{scope}
\end{tikzpicture}
    \caption{The first branches of the tree $\Psi(T)$.}
    \label{fig:Psi-T}
\end{figure}

The next lemma characterizes ill-foundedness of $T$ by equations in
the $\Psi(T)$-directed inverse system. For each $j\in\N$, the group
$G_j(\Psi(T))$ has infinitely many generators.

\begin{lem} \label{lem:trees-and-groups}
Let $T\in \Trees$ and $(G_j,\rho_j)_{j\in \N}:=(G_j(\Psi(T)),\rho_j(\Psi(T)))_{j\in \N}$ be the $\Psi(T)$-directed inverse system of groups.
The following conditions are equivalent:
\begin{enumerate}[label=(\roman*)]
	\item \label{cond:trees-groups-i} $T\in\IllFounded$
	\item \label{cond:trees-groups-ii} there is a sequence $(g_j)_{j\in\N}$ such that for every $j\in\N$ we have  
		\begin{equation} \label{eqn:groupbraiding}
			\text{$g_j \in G_j$ and }\gbar{\ctup{j}{j}} + \rho_j(g_{j+1}) = g_j.
		\end{equation}
\end{enumerate}
\end{lem}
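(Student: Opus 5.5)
The plan is to prove both implications by hand, working with coefficients with respect to the canonical bases $\{\gbar{t}\}$ of the groups $G_j=G_j(\Psi(T))$. Throughout, $\ctup{k}{k}$ is the root of the $k$th copy of $T$ in $\Psi(T)$, and $\ctup{m}{k}$ for $m<k$ are the trunk vertices leading to it.

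\emph{\ref{cond:trees-groups-i}$\Rightarrow$\ref{cond:trees-groups-ii}.} Fix an infinite branch $b=b_1b_2\ldots$ of $T$. For $k<j$ put $v_{k,j}:=\ctup{k}{k}\concat b_{[1,j-k]}$; this is the vertex at depth $j$ in the $k$th copy lying along $b$. Set
\[
g_j:=\sum_{k<j}\gbar{v_{k,j}}\in G_j ,
\]
so $g_1=0$. Apply $\rho_j$ to $g_{j+1}=\sum_{k\le j}\gbar{v_{k,j+1}}$. For $k<j$ the parent of $v_{k,j+1}$ is $v_{k,j}$. For $k=j$ the parent of $v_{j,j+1}=\ctup{j}{j}\concat b_1$ is $\ctup{j}{j}$. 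Hence $\rho_j(g_{j+1})=g_j+\gbar{\ctup{j}{j}}$. Since every element of $G_j$ has order $2$, this is exactly \eqref{eqn:groupbraiding}. One should first try including the $k=j$ term in $g_j$; that fails because it produces an extra trunk term $\gbar{\ctup{j}{j+1}}$. This is why the sum stops at $k<j$.

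\emph{\ref{cond:trees-groups-ii}$\Rightarrow$\ref{cond:trees-groups-i}.} Write $c_t(g)$ for the coefficient of $\gbar t$ in $g$. Comparing coefficients of $\gbar t$ for $t$ at depth $j$ in \eqref{eqn:groupbraiding} gives
\[
c_t(g_j)=[t=\ctup{j}{j}]+\sum_{t'\text{ child of }t}c_{t'}(g_{j+1}).
\]

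\emph{Step 1: trunks.} Fix $k\ge2$. For $1\le m<k$ the trunk vertex $\ctup{m}{k}$ differs from $\ctup{m}{m}$ and has the single child $\ctup{m+1}{k}$ (read as $\ctup{k}{k}$ when $m=k-1$). The relation above therefore telescopes to $c_{\ctup{k}{k}}(g_k)=c_{\ctup{1}{k}}(g_1)$. Because $g_1$ has finite support, there is some $k$ with $c_{\ctup{k}{k}}(g_k)=0$.

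\emph{Step 2: the copy below $\ctup{k}{k}$.} Applying the relation at $t=\ctup{k}{k}$ gives $\sum_{t'}c_{t'}(g_{k+1})=1$, the sum running over the children $t'$ of $\ctup{k}{k}$. In particular some child of $\ctup{k}{k}$ has coefficient $1$ in $g_{k+1}$.

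\emph{Step 3: finding the branch.} For $j>k$ and any vertex $t$ at depth $j$ below $\ctup{k}{k}$, the indicator term in the relation vanishes, since $t\neq\ctup{j}{j}$. So $c_t(g_j)$ is the sum modulo two of its children's coefficients in $g_{j+1}$. Now run the argument of Lemma~\ref{lem_trivial}. Start from a child of $\ctup{k}{k}$ with coefficient $1$ and repeatedly choose a child with coefficient $1$. This produces an infinite path $\ctup{k}{k}\concat b_{[1,n]}$, and $b$ is then an infinite branch of $T$.

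The main obstacle is Step 1. One needs to see that finite support of $g_1$, propagated along the single-child trunks, forces $c_{\ctup{k}{k}}(g_k)=0$ for some $k$. Once that is established, the forced parity $1$ at the copy roots does the rest.
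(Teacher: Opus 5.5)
Your proof is correct and follows the paper's own argument essentially step for step. For (i)$\Rightarrow$(ii) you use the same explicit solution $g_j=\sum_{k<j}\gbar{\ctup{k}{k}\concat b_{[1,j-k]}}$. For (ii)$\Rightarrow$(i) you use the same steps: finite support of $g_1$ picks a trunk $k$, telescoping along the single-child trunk vertices gives $g_k(\ctup{k}{k})=0$, this forces parity one among the children of $\ctup{k}{k}$, and a branch inside that copy of $T$ is then selected child by child (your coefficient invariant is the paper's invariant $\rho_{k+n-1}(g_{k+n})(\ctup{k}{k}\concat t_{n-1})=1$, reindexed).
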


\begin{proof}
\ref{cond:trees-groups-i}$\Rightarrow$\ref{cond:trees-groups-ii} 
Suppose that
$(t_j)_{j\in \N}$ is an infinite branch of $T$, presented as a sequence of vertices of $T$ such that $\parent{t_{j+1}} = t_j$ for every $j \in\N$. 
For $j,k\in\N$ define $t'_{j, k} := \ctup{k}{k} \concat t_j$, and for $j\in\N$ define $g_j := \sum_{1 \le k < j} \gbar{t'_{j-k, k}}$.  The empty sum is the identity, so $g_1$ is the identity of $G_1$.
The definitions give \eqref{eqn:groupbraiding}.  

\ref{cond:trees-groups-ii}$\Rightarrow$\ref{cond:trees-groups-i} Assume that $(g_j)_{j\in \N}$ satisfies \eqref{eqn:groupbraiding}.  
Since $G_1$ has infinitely many generators, there exists $k$ such that $g_1(\ctup{1}{k}) = 0$. 
Note that for $j=1, \dots, k-1$ the vertex
$\ctup{j+1}{k}$ is the unique child of $\ctup{j}{k}$ (unique vertex $v\in\treevertices{j+1}{\Psi(T)}$ such that
$\parent{v}=\ctup{j}{k}$), therefore
\begin{equation}\label{eqn:ind-rhoj}
    \rho_j(g_{j+1})(\ctup{j}{k}) = g_{j+1}(\ctup{j+1}{k})\text{ for $j=1, \dots, k-1$}.
\end{equation}
Evaluating \eqref{eqn:groupbraiding} at $\ctup{j}{k}$ for $j<k$, where $\gbar{\ctup{j}{j}}(\ctup{j}{k})=0$ since $j<k$, and using \eqref{eqn:ind-rhoj}, we get $g_j(\ctup{j}{k})=g_{j+1}(\ctup{j+1}{k})$ for $j=1,\dots,k-1$; together with $g_1(\ctup{1}{k}) = 0$ this yields $g_j(\ctup{j}{k}) = 0$ for $j=1, \dots, k$.  Plugging $j=k$ and $g_k(\ctup{k}{k})=0$ into \eqref{eqn:groupbraiding} and evaluating at $\ctup{k}{k}$ we obtain
\[1 + \rho_k(g_{k+1})(\ctup{k}{k}) = 
\gbar{\ctup{k}{k}}(\ctup{k}{k}) + \rho_k(g_{k+1})(\ctup{k}{k}) = g_k(\ctup{k}{k})=0
\]
which yields 
\begin{equation}\label{base-case}
\rho_k(g_{k+1})(\ctup{k}{k}) = 1.     
\end{equation}

We construct an infinite branch $(t_j)_{j\in \N}$ of $T$
inductively, with $\parent{t_{j+1}}=t_j$ for each $j\in\N$.
Set $t_0=\emptyword$.
For the inductive step, assume that $n\in\N$ and $t_0, \dots, t_{n-1}$ are already defined so that $\parent{t_{j}}=t_{j-1}$ for each $1\le j<n$ and for $k$ defined as above we have
\begin{equation}\label{ind-case}
\rho_{k+n-1}(g_{k+n})(\ctup{k}{k} \concat t_{n-1}) = 1.    
\end{equation}
Note that \eqref{base-case} and $t_0=\emptyword$ yield \eqref{ind-case} for $n=1$. 

The definitions of $\rho_{k+n-1}$ and $\Psi(T)$ give
\begin{equation}\label{eqn:from-def-of-rho}
\rho_{k+n-1}(g_{k+n})(\ctup{k}{k} \concat t_{n-1})  = 
	\sum_{\substack{t \in \treevertices{n}{T}\\  \parent{t} = t_{n-1}}} g_{k+n}(\ctup{k}{k} \concat t) \; \pmod{2}.
\end{equation}
By \eqref{ind-case} the left-hand side of \eqref{eqn:from-def-of-rho} is $1$, thus we can choose $t_n\in\treevertices{n}{T}$ such that $\parent{t_n} = t_{n-1}$ and $g_{k+n}(\ctup{k}{k} \concat t_n) = 1$. 
Using \eqref{eqn:groupbraiding}, we conclude that 
\begin{equation*}
	\rho_{k+n}(g_{k+n+1})(\ctup{k}{k} \concat t_n) = 
	\gbar{ \ctup{k+n}{k+n} }(\ctup{k}{k} \concat t_n) + 
	g_{k+n}(\ctup{k}{k} \concat t_n) = 1.
\end{equation*}
Hence, \eqref{ind-case} holds with $n-1$ replaced by $n$, completing the inductive step.
\end{proof}

\subsection{Spaces of homeomorphisms}
For compact metrizable $X$ and $Y$, let $\Cont(X,Y)$ denote the continuous maps from $X$ to $Y$. We equip $\Cont(X, Y)$ with the \emph{compact-open topology}, which is the topology generated by sets  $V(K, U) := \{ f \in \Cont(X, Y) : f(K) \subseteq U \}$ where $K \subseteq X$ is compact and $U \subseteq Y$ is open. If $X=Y$, composition $(g,f)\mapsto g\circ f$ is continuous on $\Cont(X,X)^2$. The \emph{uniform metric} on $\Cont(X,Y)$ is given for $f,g\in \Cont(X, Y)$ by
$$
d_u(f, g) := \sup \{ d_Y(f(x), g(x)) : x \in X \}.
$$
Here $d_Y$ is a compatible metric on $Y$. The uniform metric is complete and induces the compact-open topology; the resulting space $\Cont(X,Y)$ is separable, hence Polish.

Let $\Homeo(X) := \{ f \in \Cont(X, X) : f \textrm{ is a homeomorphism}\}$. The metric
$$
d_{\Homeo}(f, g) := \max \{ d_u(f, g) , d_u(f^{-1}, g^{-1}) \}$$
 is complete and compatible with the subspace topology  on $\Homeo(X)$ induced from $\Cont(X, X)$. Under composition, $\Homeo(X)$ is a Polish topological group. Hence, the conjugacy relation
 \[
[\cong]_{\Homeo(X)}
:=\{(f,g)\in\Homeo(X)\times\Homeo(X):\text{ $\exists \varphi\in\Homeo(X)$ with $\varphi\circ f=g\circ \varphi$}\}
 \]
is an analytic subset of $\Homeo(X)\times\Homeo(X)$.

\subsection{Topological dynamical systems}\label{subsec:tds}
For a compact metrizable space $X$, fix a compatible metric $d_X$, also
written $d$. Write $d_u$ for the associated uniform metric on
$\Cont(X,X)$ and $d_H$ for the Hausdorff metric on $\K(X)$. 
A \emph{topological dynamical system} (\emph{TDS}) is a pair $(X,f)$,
where $X$ is a nonempty compact metrizable space and $f\colon X\to X$
is a homeomorphism. It is a \emph{Cantor TDS}, or \emph{Cantor system},
if $X$ is a Cantor space.

Let $(X,f)$ and $(X',f')$ be two TDSs. A continuous map $h\colon X\to X'$ is \emph{equivariant} if $h\circ f=f'\circ h$. We say that $(X,f)$ and $(X',f')$ are \emph{(topologically) conjugate}
if there exists an equivariant homeomorphism $h\colon X\to X'$.
In this case, we call $h$ a \emph{conjugacy}
between $(X,f)$ and $(X',f')$ and we write $(X,f)\cong (X',f')$. A surjective equivariant map $h\colon X\to X'$
is called a \emph{factor map}. If there is a factor map from $(X,f)$ to $(X',f')$, then we say that $(X,f)$ is an \emph{extension} of $(X',f')$, while  $(X',f')$ is a \emph{factor} of $(X,f)$. 
An injective equivariant map $h\colon X\to X'$ is an \emph{embedding}.
If such a map exists, $(X,f)$ is a \emph{subsystem} of $(X',f')$;
the subsystem is \emph{proper} if $h$ is not surjective. We identify
subsystems of $(X,f)$ with nonempty closed subsets $Y\subseteq X$
satisfying $f(Y)=Y$, and set
\begin{align*}
\K^f(X):=&\{Y\subseteq X:\text{$Y\neq\emptyset$, $Y=\overline{Y}$, and $f(Y)=Y$}\},\\
\K^f_p(X):=& \{Y \in \K^f(X) : Y\text{ is a perfect subset of $X$} \}.
\end{align*}
We give these spaces the Vietoris topology inherited from $\K(X)$. 
The space $\K^f(X)$ (subsystems of $(X,f)$) is a nonempty closed subset of $\K(X)$, and the space $\K^f_p(X)$ is a $G_{\delta}$-set in $\K(X)$, 
hence it is a Polish space.  

A TDS is \emph{minimal} if it has no proper subsystems. We write $\Min(X)\subseteq \Homeo(X)$ for the set of all $f\in\Homeo(X)$ such that $(X,f)$ is minimal. We write $\Min^f(X)\subseteq \K^f(X)$ for the space of minimal subsystems of $(X,f)$, and $\Min^f_p(X):=\Min^f(X)\cap\K_p(X)$ for the space of perfect minimal subsystems. We include a proof of the following standard fact. 

\begin{prop}\label{prop:min-gdelta}
If $X$ is a compact metrizable space, then  $\Min(X)$ is a $G_{\delta}$-set in $\Homeo(X)$.
\end{prop}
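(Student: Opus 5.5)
We use the standard characterization: $f$ is minimal if and only if every forward-and-backward orbit is dense. Equivalently, for every nonempty open $U$ we have $X=\bigcup_{n\in\Z} f^n(U)$. Since $X$ is compact, this holds if and only if finitely many translates already cover $X$.

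Fix a countable base $\{U_m\}_{m\in\N}$ of nonempty open sets of $X$. Minimality of $f$ is then equivalent to
\[
\forall m\ \exists N\colon\ X=\bigcup_{|n|\le N} f^{n}(U_m).
\]
Indeed, if $f$ is minimal, then for each $m$ the set $X\setminus\bigcup_{n\in\Z}f^n(U_m)$ is closed, $f$-invariant and misses $U_m$. Since $f$ is minimal, this set must be empty. Compactness then gives a finite $N$. Conversely, suppose $Y$ is a proper subsystem. Choose $x\notin Y$ and $U_m\ni x$ with $\overline{U_m}\cap Y=\emptyset$ (possible since $Y$ is closed and the base is a base of a metrizable space). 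Then no translate $f^n(U_m)$ meets $Y$, because $Y$ is invariant, so the covering condition fails for this $m$.

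Thus
\[
\Min(X)=\bigcap_{m}\bigcup_{N}O_{m,N},\qquad O_{m,N}:=\Bigl\{f\in\Homeo(X): X=\bigcup_{|n|\le N}f^n(U_m)\Bigr\},
\]
and it suffices to show each $O_{m,N}$ is open in $\Homeo(X)$.

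The main step is this openness. Rewrite the covering condition as
\[
\bigcap_{|n|\le N} f^{n}(X\setminus U_m)=\emptyset,
\]
or equivalently: for every $x$ there is $n$ with $|n|\le N$ and $f^{-n}(x)\in U_m$. Set $K:=X\setminus U_m$, which is compact.

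Take $f\in O_{m,N}$. The map $x\mapsto \max_{|n|\le N} d_K(f^{-n}(x))$ is continuous and positive on $X$, since for each $x$ some $f^{-n}(x)$ lies in the open set $U_m$, away from $K$. By compactness it is bounded below by some $\delta>0$.

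Since inversion and composition are continuous on $\Homeo(X)$, the maps $g\mapsto g^{-n}$ for $|n|\le N$ are continuous into $(\Cont(X,X),d_u)$. Hence there is a neighborhood $W$ of $f$ such that $d_u(g^{-n},f^{-n})<\delta$ for all $g\in W$ and $|n|\le N$. For $g\in W$ and any $x$, choose $n$ with $d_K(f^{-n}(x))\ge\delta$. Since $d_K$ is $1$-Lipschitz, $d_K(g^{-n}(x))>0$, so $g^{-n}(x)\notin K$, that is, $g^{-n}(x)\in U_m$. Therefore $g\in O_{m,N}$, and $O_{m,N}$ is open.

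This shows $\Min(X)$ is a countable intersection of open sets, hence $G_\delta$. The only delicate point is the uniform positivity $\delta$, which compactness supplies.
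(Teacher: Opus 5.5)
Your proof is correct and follows essentially the same route as the paper: you write $\Min(X)$ as a countable intersection of countable unions of finite-time recurrence conditions, and you prove each condition open by extracting a uniform margin from compactness and then taking finitely many iterates uniformly close. The paper phrases the condition as $1/m$-density of the forward segments $\{x,f(x),\dots,f^{n-1}(x)\}$ and cites the characterization of minimality, whereas you use two-sided translates of basic open sets and prove the characterization yourself; note that the case $U_m=X$, where $K=\emptyset$ and $d_K$ is undefined, is trivial since then $O_{m,N}=\Homeo(X)$.
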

\begin{proof}Given $n,m\in\N$ we set
\[G_{n,m}=\{f\in\Homeo(X):\text{$\{x,f(x),\ldots,f^{n-1}(x)\}^{1/m}=X$ for every $x\in X$}\}.\]
Thus $f\in G_{n,m}$ precisely when $\{x,f(x),\ldots,f^{n-1}(x)\}$
is $1/m$-dense in $X$ for every $x\in X$.
Using a well-known characterization of minimality, 
we have
\begin{equation}\label{eq:min}
\Min(X)=\bigcap_{m=1}^\infty\bigcup_{n=1}^\infty G_{n,m}.
\end{equation}
By \eqref{eq:min}, it suffices to show that each $G_{n,m}$ is open.
Fix $m,n\in\N$ and $f\in G_{n,m}$. The map
$\Theta\colon X\times X\to\R$ given by
\[
    \Theta(x, y) = \min\{d(f^i (x), y) : i=0, \dots, n-1\}
\]
is continuous and has values contained in the interval $[0,1/m)$. By compactness of $X\times X$, the map $\Theta$ attains its maximum $c < 1/m$. Let $g\in\Homeo(X)$ be sufficiently close to $f$ to guarantee that
\[
d_u(f^i,g^i)<1/m - c\text{ for }i=0, \dots, n-1.
\]
The triangle inequality gives $g\in G_{n,m}$. 
\end{proof}
It follows that $\Min(X)$ with the subspace topology inherited from $\Homeo(X)$ is a Polish space and 
the conjugacy relation
 \[
[\cong]_{\Min(X)}
:=\{(f,g)\in\Min(X)\times\Min(X):
\text{ there exists $\varphi\in\Homeo(X)$ with $\varphi\circ f=g\circ \varphi$}\}
 \]
is an analytic subset of $\Min(X)\times\Min(X)$.

For a TDS $(X,T)$, the \emph{automorphism group} $\Auto{X,T}$ consists
of the homeomorphisms of $X$ that commute with $T$. 
Note that $\gengroup{T} := \{ T^n \colon n \in \Z \}$
is a normal subgroup of $\Aut(X,T)$. 
We define the \emph{reduced automorphism group of $(X,T)$} to be the quotient group $\AutQ{X,T} := \Auto{X,T} / \gengroup{T}$. 

Let $(X,T)$ and $(Y,S)$ be TDSs and let $\pi\colon X\to Y$ be a factor map.
We say that $\phi\in\Auto{X,T}$ and $\psi\in \Auto{Y,S}$ are \emph{compatible over $\pi$} or \emph{$\pi$-compatible} (or simply \emph{compatible} if $\pi$ is clear from the context) if $\pi \phi = \psi \pi$.
Given $\phi\in \Auto{X,T}$, a compatible $\psi\in\Auto{Y,S}$ need not exist.
If it exists, surjectivity of $\pi$ determines it uniquely by
$\psi(\pi(x))=\pi(\phi(x))$ for $x\in X$.
We then write $\rhofactor{\pi}{\phi}:=\psi$, viewing $\rho_\pi$
as a function defined on a subset of $\Auto{X,T}$.
A compatible $\psi$ exists if and only if $\phi$ and $\phi^{-1}$ preserve the fibres of $\pi$, that is, if and only if
\[
\pi(x)=\pi(x')\quad\Longleftrightarrow\quad
\pi(\phi(x))=\pi(\phi(x'))\qquad(x,x'\in X).
\]
If the fibres are preserved, $\psi(\pi(x)):=\pi(\phi(x))$ is well
defined and continuous because $\pi$ is a quotient map. Applying the
same construction to $\phi^{-1}$ gives its continuous inverse.
Equivariance of $\pi$ and $\phi$ implies that $\psi$ commutes with $S$.
Conversely, compatibility with an automorphism $\psi$ gives the displayed
equivalence.
The automorphisms admitting a compatible automorphism of $(Y,S)$ form a subgroup $\Aut_\pi(X,T)$ of $\Auto{X,T}$ containing $\gengroup{T}$, since $\rhofactor{\pi}{\textrm{id}_X} = \textrm{id}_Y$, 
$\rhofactor{\pi}{\phi_1}\circ \rhofactor{\pi}{\phi_2} = \rhofactor{\pi}{\phi_1 \phi_2}$ whenever both sides are defined, $\rhofactor{\pi}{\phi}^{-1}$ is compatible with $\phi^{-1}$ whenever $\rhofactor{\pi}{\phi}$ is defined (the displayed fibre criterion is symmetric under $\phi\mapsto\phi^{-1}$), and $\rhofactor{\pi}{T^n}=S^n$ for every $n\in\Z$. 
Thus $\rho_{\pi} \colon \Aut_\pi(X,T) \to \Auto{Y,S}$ is a group homomorphism, and it induces a group homomorphism of the quotients,
$$
\rho'_{\pi} \colon \Aut_\pi(X,T)/\gengroup{T}\to\AutQ{Y,S},\qquad \rho'_{\pi}( \phi \gengroup{T}):= \rhofactor{\pi}{\phi} \gengroup{S}.
$$
We say that $\pi$ is \emph{\strongfactor} if $\Aut_\pi(X,T)=\Auto{X,T}$, that is, if $\rhofactor{\pi}{\phi}$ exists for every $\phi \in \Auto{X,T}$; in this case $\rho'_{\pi} \colon \AutQ{X,T} \to \AutQ{Y,S}$.
As observed in \cite{DDMP}, $\rho_\pi$ need be neither surjective
nor injective, even when defined on all of $\Aut(X,T)$. Indeed, the group of automorphisms of a Sturmian extension $(X_\alpha,\sigma)$ of an irrational rotation $R_\alpha$ of the circle $\mathbb{S}^1$ is isomorphic to $\Z$ by \cite{Olli}. Furthermore, the factor map from $(X_\alpha,\sigma)$ to the irrational rotation is compatible with $\Aut(X_\alpha,\sigma)$ by \cite[Lemma 5.7]{DDMP}, while the group of automorphisms of an irrational rotation of the
circle is isomorphic to $\mathbb{S}^1$. For noninjectivity, take the projection $\pi$ of a TDS $(X,T)$ with
$T\ne\text{id}_X$ onto the one-point system. Then $\rho_\pi$ is defined
on all of $\Aut(X,T)$ and sends $T$ to the identity. 
\subsection{General shift spaces and subshifts}
Let $\bA$ be a compact metrizable space. We endow $\bA^{\Z}$ with the product topology. We define the shift map $\sigma\colon \bA^{\Z}\to \bA^{\Z}$ by  $\sigma(\bx)_n = \bx_{n+1}$, where $\bx\in\bA^{\Z}$ and $n\in\Z$. Then $\sigma$ is a homeomorphism and the TDS $(\bA^{\Z},\sigma)$ is the \emph{full shift space over $\bA$}.
A \emph{shift space over $\bA$} is a nonempty closed set
$X\subseteq\bA^{\Z}$ with $\sigma(X)=X$. We write
$\sigma_X=\sigma|_X$, or simply $\sigma$ when the domain is clear.
We call $X$ a \emph{Cantor shift space} if its alphabet $\bA$ is a
Cantor space.  
By dynamical properties of a shift space $X$, we implicitly mean the properties of the topological dynamical system $(X,\sigma_X)$.
In particular, $X\cong Y$ means $(X,\sigma_X)\cong(Y,\sigma_Y)$.
We use the restrictions of conjugacy to $\ShAp$ and
$\Min^{\sigma}_p(\bA^{\Z})$:
\begin{align*}
    [\cong]_{\ShAp}&=\{(X,Y)\in\ShAp\times\ShAp: X\cong Y\},\\
    [\cong]_{\Min^{\sigma}_p(\bA^{\Z})}&=\{(X,Y)\in\Min^{\sigma}_p(\bA^{\Z})\times\Min^{\sigma}_p(\bA^{\Z}): X\cong Y\}.
\end{align*}

We reserve the term \emph{subshift} for a shift space over a discrete compact space $A$ with at least two elements. We refer to the finite set $A$ as the \emph{alphabet} of the subshift and elements of $A$ are its \emph{symbols}. If $A$ is a finite discrete space with at least two elements, then $A^{\Z}$ is a Cantor space, so $(A^{\Z},\sigma)$, as well as every subshift in $\K_p^{\sigma}(A^{\Z})$ and $\Min^{\sigma}_p(A^{\Z})$, is a Cantor system. 
Thus subshifts are a special case of shift spaces. A perfect shift
space over $\C$ is a Cantor system because $\C^{\Z}$ is homeomorphic
to $\C$.
In contrast to subshifts, Cantor shift spaces need not be expansive.

Every TDS $(X,f)$ has a standard representation as a shift space over $X$.
The \emph{orbit shift} of $(X,f)$ is the shift space $X_f$ over $X$ given by
\[
X_f := \{ \bx \in X^{\Z} : \bx_{i+1} = f(\bx_i) \textrm{ for all } i \in \Z \}.
\]
\begin{lem}
	\label{lem:reduction1}
	 Let $(X,f)$ be a TDS. 
  \begin{enumerate}
      \item The map $X\ni x\mapsto (f^j(x))_{j\in\Z}\in X_f$ is a conjugacy between $(X,f)$ and $(X_f,\sigma)$.
    \item The map $\Homeo(X)\ni f \mapsto X_f\in \K^{\sigma}(X^{\Z})$ is a continuous reduction from $[\cong]_{\Homeo(X)}$ to $[\cong]_{\K^{\sigma}(X^{\Z})}$ and its restriction to $\Min(X)$ continuously reduces $[\cong]_{\Min(X)}$ to $[\cong]_{\Min^{\sigma}(X^{\Z})}$.
  \end{enumerate}
\end{lem}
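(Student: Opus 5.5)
For part (1), let $\iota\colon X\to X_f$ be $\iota(x)=(f^j(x))_{j\in\Z}$. First, $\iota(x)\in X_f$, since $\iota(x)_{i+1}=f^{i+1}(x)=f(\iota(x)_i)$. Each coordinate $x\mapsto f^j(x)$ is continuous, so $\iota$ is continuous into the product $X^{\Z}$. It is injective because the zeroth coordinate recovers $x$. It is surjective onto $X_f$ because any $\bx\in X_f$ satisfies $\bx_i=f^i(\bx_0)$ for all $i\in\Z$: induct forward from $0$ using $\bx_{i+1}=f(\bx_i)$, and backward using $\bx_{i}=f^{-1}(\bx_{i+1})$, which is valid since $f$ is a bijection. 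Hence $\bx=\iota(\bx_0)$. As $X$ is compact and $X^{\Z}$ is Hausdorff, $\iota$ is a homeomorphism onto $X_f$. Equivariance is the computation $\sigma(\iota(x))_n=f^{n+1}(x)=\iota(f(x))_n$. The same argument shows that $X_f$ is a nonempty closed $\sigma$-invariant set, so $X_f\in\K^{\sigma}(X^{\Z})$.

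For part (2), the reduction property follows from part (1). We have $(X,f)\cong(X_f,\sigma)$ and $(X,g)\cong(X_g,\sigma)$, and conjugacy is transitive, so $(X,f)\cong(X,g)$ if and only if $X_f\cong X_g$. Conjugacy in $\Homeo(X)$ means exactly that $(X,f)$ and $(X,g)$ are topologically conjugate as TDSs. Minimality is preserved under conjugacy, so for $f\in\Min(X)$ the system $X_f$ is minimal and lies in $\Min^{\sigma}(X^{\Z})$. This gives the restricted statement.

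The main step is continuity of $f\mapsto X_f$ into $\K(X^{\Z})$ with the Hausdorff metric. Fix the metric
\[
D(\bx,\mathbf y)=\sum_{j\in\Z}2^{-|j|}d(\bx_j,\mathbf y_j)
\]
on $X^{\Z}$, normalizing $d\le1$. Given $\eps>0$, choose $N$ with $\sum_{|j|>N}2^{-|j|}<\eps/2$. The maps $f\mapsto f^j$ for $|j|\le N$ are continuous into $\Cont(X,X)$ with the uniform metric, because composition and inversion are continuous on $\Homeo(X)$. So there is $\delta>0$ such that $d_{\Homeo}(f,g)<\delta$ implies $d_u(f^j,g^j)<\eps/6$ for all $|j|\le N$. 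Any point of $X_f$ has the form $\iota_f(x)$, and the point $\iota_g(x)\in X_g$ is within $\sum_{|j|\le N}2^{-|j|}\,\eps/6+\eps/2<\eps$ of it. The symmetric inclusion is the same argument with $f$ and $g$ exchanged, so $d_H(X_f,X_g)\le\eps$.

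I expect no serious obstacle. The only point needing care is that negative powers are included, which requires continuity of inversion in $\Homeo(X)$; this is why the metric $d_{\Homeo}$ is used.
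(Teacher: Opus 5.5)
Your proof is correct and takes essentially the same route as the paper's. The orbit map is a conjugacy; the paper simply notes that its inverse is projection to coordinate zero. Continuity comes from uniform convergence of the finitely many powers $f^j$, $|j|\le N$, together with control of the tail of the product metric, and the reduction property follows from $(X,f)\cong(X_f,\sigma)$ and transitivity of conjugacy. The only difference is presentational: you make the estimate explicit with a fixed metric and an $\eps$--$\delta$ argument, where the paper argues with sequences $f_k\to f$.
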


\begin{proof}
For the first assertion, the inverse map is projection to coordinate zero.
For the second assertion, suppose $f_k\to f$ in $\Homeo(X)$.
Continuity of composition and inversion gives $f_k^j\to f^j$ uniformly
for each $j\in\Z$.
Therefore, for every $\eps>0$ and every $J\in\N$ for all sufficiently large $k$ and every $x\in X$ we have 
\[
d(f^j_k(x),f^j(x))<\eps/2\quad\text{for }|j|\le J.
\]
Hence, if $J$ is sufficiently large, we see that for every sufficiently large $k$ we have that $(f^j_k(x))_{j\in\Z}$
is $\eps$-close to $(f^j(x))_{j\in\Z}$ in $X^{\Z}$. We conclude that $X_{f_k}\to X_f$ as $k\to\infty$ in the topology induced by the Hausdorff metric on $\K(X^{\Z})$. This proves that the map $f\mapsto X_f$ is continuous. 
Now if $f, g \in \Homeo(X)$, then $(X, f) \cong (X_f, \sigma)$ and $(X, g) \cong (X_g, \sigma)$. Hence, 
$f \cong g$ if and only if $X_f \cong X_g$ and $f\in\Min(X)$ if and only if $X_f\in\Min^{\sigma}(X^{\Z})$. This shows that the map $f\mapsto X_f$ reduces $[\cong]_{\Homeo(X)}$ to $[\cong]_{\K^{\sigma}(X^{\Z})}$ and $[\cong]_{\Min(X)}$ to $[\cong]_{\Min^{\sigma}(X^{\Z})}$.
\end{proof}

\subsection{Polish spaces of Cantor systems}
A \emph{Polish space model} for Cantor systems is a family containing
a representative of every conjugacy class, equipped with a Polish
topology. We use two such models. The first is $\Homeo(\C)$ with the
topology of uniform convergence, together with the analytic relations
$[\cong]_{\Homeo(\C)}$ and $[\cong]_{\Min(\C)}$.

The second model uses the universal Cantor system $(\C^{\Z},\sigma)$:
by Lemma~\ref{lem:reduction1}, every Cantor system is conjugate to one
of its subsystems.
The perfect subsystems form the Polish space $\Shp$.
The minimal perfect subsystems form a Polish subspace as well:
\begin{lem}\label{lem:minimal-subsystems-polish}
If $Z$ is a compact zero-dimensional metrizable space and
$F\in\Homeo(Z)$, then $\Min^F(Z)$ is a $G_\delta$ subset of $\K^F(Z)$.
Consequently $\Min_p^F(Z)$ is Polish.
\end{lem}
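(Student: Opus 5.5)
The plan is to imitate the proof of Proposition~\ref{prop:min-gdelta}, replacing the uniform density of orbit segments by a condition on clopen sets. Zero-dimensionality is used to make the minimality condition countable and clopen-testable. Fix a countable base $\{U_m\}_{m\in\N}$ of nonempty clopen subsets of $Z$. For $Y\in\K^F(Z)$, minimality of $(Y,F|_Y)$ means that every orbit in $Y$ is dense. By compactness this is equivalent to the following: for every $m$ with $Y\cap U_m\neq\emptyset$ there is $n$ with $Y\subseteq\bigcup_{|i|\le n}F^{i}(U_m)$. Indeed, if every orbit in $Y$ meets $U_m$, the open sets $F^i(U_m)$ cover $Y$, so finitely many suffice. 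Conversely, the covering condition forces every orbit of a point of $Y$ to meet every $U_m$ that meets $Y$, hence to be dense in $Y$.

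I will therefore write
\[
\Min^F(Z)=\bigcap_{m\in\N}\Bigl(\{Y\in\K^F(Z): Y\cap U_m=\emptyset\}\cup\bigcup_{n\in\N}\{Y\in\K^F(Z): Y\subseteq W_{m,n}\}\Bigr),
\]
where $W_{m,n}:=\bigcup_{|i|\le n}F^i(U_m)$ is clopen. In the Vietoris topology, $\{Y: Y\subseteq W\}$ is open for open $W$. The set $\{Y: Y\cap U_m=\emptyset\}=\{Y:Y\subseteq Z\setminus U_m\}$ is also open, because $U_m$ is clopen. This is exactly where zero-dimensionality helps: it makes that piece open rather than merely closed. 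Each term of the intersection is then open, so $\Min^F(Z)$ is $G_\delta$ in $\K^F(Z)$. Even without clopenness, a closed set is $G_\delta$ in a metrizable space, so the conclusion would survive anyway.

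For the consequence, recall from Subsection~\ref{subsec:tds} that $\K^F(Z)$ is closed in the compact metrizable space $\K(Z)$, and that $\K_p(Z)$ is $G_\delta$ in $\K(Z)$. Hence $\Min^F_p(Z)=\Min^F(Z)\cap\K_p(Z)$ is a $G_\delta$ subset of the compact metrizable space $\K(Z)$, and so it is Polish by Alexandrov's theorem.

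The only step needing care is the equivalence between minimality and the covering condition. In particular, the clause for $U_m$ disjoint from $Y$ must be handled correctly: without it, the condition $Y\subseteq W_{m,n}$ would be required for every $m$ and would fail. This is routine, so I do not expect a real obstacle.
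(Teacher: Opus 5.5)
Your proof is correct and follows the paper's argument essentially verbatim. You use a countable clopen base, characterize minimality by ``either $Y\cap U=\varnothing$ or $Y\subseteq\bigcup_{|k|\le N}F^{k}U$ for some $N$'', note that each alternative is open in the Vietoris topology, and then intersect with the $G_\delta$ set $\K_p(Z)$ inside the closed set $\K^F(Z)$; your side remark that zero-dimensionality is not essential, because closed sets are $G_\delta$, is also correct.
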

\begin{proof}
Fix a countable clopen basis $\mathcal U$ of $Z$.
An invariant compact set $A$ is minimal precisely when, for each
$U\in\mathcal U$, either $A\cap U=\varnothing$ or
\[
 A\subseteq\bigcup_{|k|\le N}F^{-k}U\quad\text{for some }N\ge0.
\]
Indeed, minimality and compactness give a finite subcover whenever $U$
meets $A$. Conversely, this condition makes every orbit in $A$ meet
every basic open set that meets $A$. For a fixed $U$, the displayed
alternative is open in the hyperspace. Intersecting these open conditions
over $\mathcal U$ proves the claim. Perfectness is a $G_\delta$ condition,
and $\K^F(Z)$ is closed in the compact hyperspace.
\end{proof}
We use the conjugacy relations on these subsystem spaces. Their analyticity
will follow from the objectwise continuous transfer below and the analytic
conjugacy relation on $\Homeo(\C)$.

For Cantor systems, Lemma~\ref{lem:reduction1} takes the following form.

\begin{lem}\label{lem:reduction2}
	 Let $(\C,f)$ be a Cantor system. 
  \begin{enumerate}
      \item The map $\C\ni x\mapsto (f^j(x))_{j\in\Z}\in {\C}_f\subseteq\C^{\Z}$ is a conjugacy between $(\C,f)$ and $({\C}_f,\sigma)$.
    \item The map $\Homeo(\C)\ni f \mapsto {\C}_f\in \Shp$ is a continuous reduction from $[\cong]_{\Homeo(\C)}$ to $[\cong]_{\Shp}$ and its restriction to $\Min(\C)$ continuously reduces $[\cong]_{\Min(\C)}$ to $[\cong]_{\Min_p^{\sigma}(\C^{\Z})}$.
  \end{enumerate}
\end{lem}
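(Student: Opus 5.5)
The plan is to specialize Lemma~\ref{lem:reduction1} to $X=\C$ and then verify the two points that are special to the Cantor setting. First, the target lies in $\Shp$ rather than merely in $\K^\sigma(\C^{\Z})$. Second, in the minimal case the target lies in $\Min_p^\sigma(\C^{\Z})$.

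Part (1) is immediate from Lemma~\ref{lem:reduction1}(1) with $X=\C$. The inverse of the map $x\mapsto (f^j(x))_{j\in\Z}$ is the restriction to $\C_f$ of the coordinate-zero projection, which is continuous. Continuity of the forward map follows coordinatewise, since each $f^j$ is continuous.

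For part (2), I would first check that $\C_f$ is perfect. By part (1), $\C_f$ is homeomorphic to $\C$, and $\C$ has no isolated points. Hence $\C_f$ is a nonempty closed perfect $\sigma$-invariant subset of $\C^{\Z}$, so $\C_f\in\Shp$. Continuity of $f\mapsto\C_f$ into $\K(\C^{\Z})$ is exactly the Hausdorff-metric argument in the proof of Lemma~\ref{lem:reduction1}(2). Since $\Shp$ carries the subspace topology from $\K(\C^{\Z})$, the map is continuous into $\Shp$.

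The reduction property also follows from part (1). We have $f\cong(\C_f,\sigma)$ and $g\cong(\C_g,\sigma)$, and conjugacy is transitive. Therefore $f\cong g$ if and only if $\C_f\cong\C_g$.

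For the restriction to $\Min(\C)$, note that $(\C_f,\sigma)$ is conjugate to $(\C,f)$. Minimality is a conjugacy invariant, so $f\in\Min(\C)$ implies that $\C_f$ is a minimal subsystem of $(\C^{\Z},\sigma)$. Combined with perfectness, this gives $\C_f\in\Min_p^\sigma(\C^{\Z})$. The reduction statement on $\Min(\C)$ is then the restriction of the one just proved.

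No step is a real obstacle. The only point needing attention is perfectness of $\C_f$, which the general Lemma~\ref{lem:reduction1} does not record, and it follows at once from the homeomorphism with $\C$.
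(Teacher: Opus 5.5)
Your proposal is correct and follows the same route as the paper. The paper also specializes Lemma~\ref{lem:reduction1} to $X=\C$ and observes that $\C_f$ is perfect because it is homeomorphic to $\C$, so the images lie in $\Shp$ and, in the minimal case, in $\Min_p^\sigma(\C^{\Z})$; you simply write out the continuity, reduction and minimality details that the paper leaves implicit.
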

\begin{proof}
Apply Lemma~\ref{lem:reduction1}. Since ${\C}_f$ is homeomorphic to
$\C$, it is perfect, so the images lie in the stated subsystem spaces.
\end{proof}
\begin{lem}\label{lem:canonical}
For each $A\in\K_p(\C)$ there is a canonically specified homeomorphism
$\varphi_A:\C\to A$ such that
\[
\K_p(\C)\ni A\longmapsto\varphi_A\in\Cont(\C,\C)
\]
is continuous, where $\varphi_A$ is viewed as a map with image $A$.
Moreover, for every $A$ and $n\ge1$ there are a neighborhood $U$ of $A$
and an integer $\ell$ such that, if $B\in U$, $y\in A$, $z\in B$, and
$y,z$ agree on their first $\ell$ entries, then $\varphi_A^{-1}(y)$ and
$\varphi_B^{-1}(z)$ agree on their first $n$ entries.
\end{lem}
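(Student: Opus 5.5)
We construct $\varphi_A$ by the standard ``pruned binary tree'' coding. For $A\in\K_p(\C)$, let $T_A=\{w\in\{0,1\}^{<\N}: [w]\cap A\neq\emptyset\}$ be the tree of finite prefixes of points of $A$. Because $A$ is perfect, every $w\in T_A$ has an extension $w'\in T_A$ with two children in $T_A$. We define a splitting map $s_A:\{0,1\}^{<\N}\to T_A$ recursively. Set $s_A(\emptyword)$ to be the shortest $w\in T_A$ that has both $w0,w1\in T_A$. Such a $w$ is unique: the vertices of $T_A$ below the first branching form a chain, because before branching each vertex has exactly one child. Given $s_A(u)$ and $i\in\{0,1\}$, let $s_A(ui)$ be the first branching vertex of $T_A$ extending $s_A(u)i$. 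This is well defined because the vertices of $T_A$ extending $s_A(u)i$ form a chain until the first branching, and perfectness guarantees that a branching occurs. Set $\varphi_A(x)=\bigcup_n s_A(x_{[1,n]})$. The lengths strictly increase, so this is a point of $\C$. It lies in $A$ because $A$ is closed and every prefix lies in $T_A$.

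Bijectivity is routine. For injectivity, distinct $x$ give incomparable $s_A$ values at the first disagreement. For surjectivity onto $A$, a point $y\in A$ determines at each branching vertex on its path the bit it follows, and non-branching vertices are forced. Continuity holds because $x_{[1,n]}$ determines at least $n$ bits of $\varphi_A(x)$. A continuous bijection from compact $\C$ onto Hausdorff $A$ is a homeomorphism. The construction involves no choices, so it is canonical.

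The key step for both the continuity statement and the ``moreover'' statement is local stability. We claim that for each $m$ there is a Vietoris neighborhood $U$ of $A$ such that every $B\in U$ satisfies $T_B\cap\{0,1\}^{\le m}=T_A\cap\{0,1\}^{\le m}$. To see this, take $U=\{B: B\subseteq\bigcup_{w\in T_A,|w|=m}[w],\ B\cap[w]\neq\emptyset \text{ for each such } w\}$. This is a basic open set in the Vietoris topology because the cylinders are clopen, and on it the level-$m$ vertices of $T_B$ and $T_A$ coincide, hence so do all levels below $m$. Consequently, if all splitting vertices $s_A(u)$ with $|u|\le n$ have length $<m$, then $s_B(u)=s_A(u)$ for all $|u|\le n$ and all $B\in U$. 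This uses the fact that branching of a vertex of length $<m$ is decided by level $\le m$.

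This stability gives continuity of $A\mapsto\varphi_A$ in the uniform metric: for $B\in U$, $\varphi_A(x)$ and $\varphi_B(x)$ agree on the common prefix $s_A(x_{[1,n]})$ of length $\ge n$, uniformly in $x$. For the ``moreover'' statement, given $A$ and $n$, choose $m$ as above and take $\ell=m$. Let $y\in A$ and $z\in B$ agree on their first $\ell$ entries. The first $n$ bits of $\varphi_A^{-1}(y)$ are read off from which splitting vertices $s_A(u)$, $|u|\le n$, are prefixes of $y$. These vertices have length $<\ell$ and coincide with the $s_B(u)$. So the same reading applies to $z$ and yields the same $n$ bits.

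The main obstacle is ensuring the splitting vertices are determined by finitely many levels. The potential issue is that the first-branching search is an unbounded search. Fixing $A$ first, finitely many values $s_A(u)$ for $|u|\le n$ exist with some maximal length, which is the required $m$. The search outcome depends only on levels up to that length plus one, so the argument above goes through.
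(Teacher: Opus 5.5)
Your proposal is correct and is essentially the paper's proof: your splitting vertices $s_A(u)$ are the paper's special words $v_u^A$. Your Vietoris neighborhood that fixes the prefix tree through level $m$ plays the role of the paper's agreement of prefix sets through length $\ell+1$, and from it you derive the uniform continuity and the inverse-image assertion in the same way (with $m$ strictly above the relevant splitting lengths instead of $\ell$ at least as large with agreement through $\ell+1$, a purely cosmetic difference).
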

\begin{proof}
A finite binary word $v$ is a prefix of $A$ if $[v]\cap A\ne\varnothing$.
Call it special if both $v0$ and $v1$ are prefixes of $A$.
Every prefix of $A$ has a unique shortest extension that is special.
Indeed, until the first split there is only one possible next symbol;
absence of any split would give an isolated point of $A$.
The shortest special extension of a prefix $v$ may be $v$ itself.

Define special words $v_u^A$, indexed by finite binary words $u$,
recursively. Let $v_\emptyword^A$ be the shortest special prefix of $A$.
Given $v_u^A$, let $v_{u i}^A$ be the shortest special extension of
$v_u^A i$, for $i\in\{0,1\}$. For each $n$, the sets
$A\cap[v_u^A]$, $|u|=n$, form a clopen partition of $A$; the partition
for $n+1$ refines that for $n$, and $|v_u^A|\ge |u|$.
Define $\varphi_A(x)$ as the unique point of
\[
\bigcap_{n\ge0}\bigl(A\cap[v_{x_1\ldots x_n}^A]\bigr).
\]
Nested compactness gives existence, and the increasing prefix lengths
give uniqueness. The partitions show that $\varphi_A$ is a continuous
bijection from $\C$ onto $A$, hence a homeomorphism onto $A$.

Fix $A,n$, and choose $\ell$ at least as large as the lengths of all
$v_u^A$ with $|u|\le n$. Agreement of the prefix sets of $A$ and $B$
through length $\ell+1$ determines the same splits through these
levels, so $v_u^B=v_u^A$ for $|u|\le n$. This agreement holds in a
Hausdorff neighborhood of $A$: each cylinder of length $\ell+1$ is
clopen and there are only finitely many such cylinders.
For every $x$, the points $\varphi_A(x)$ and $\varphi_B(x)$ then share
a prefix of length at least $n$, proving uniform continuity of the
assignment at $A$. If $y\in A$ and $z\in B$ share a prefix of length
$\ell$, they belong to the same member indexed by a word $u$ of length
$n$ of the two corresponding partitions. Their inverse images thus
share the prefix $u$, proving the final assertion.
\end{proof}

\begin{lem}\label{lem:transport}
Fix $F\in\Homeo(\C)$. For $A\in\K_p^F(\C)$ set
\[
f_A=\varphi_A^{-1}\circ F|_A\circ\varphi_A:\C\to\C.
\]
Then $A\mapsto f_A$ is continuous into $\Homeo(\C)$, and
$\varphi_A$ conjugates $(\C,f_A)$ to $(A,F|_A)$.
Consequently this assignment continuously reduces conjugacy on
$\K_p^F(\C)$ to conjugacy on $\Homeo(\C)$ and preserves minimality.
\end{lem}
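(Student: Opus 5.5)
The argument has three parts: $f_A$ is a homeomorphism conjugate to $F|_A$ via $\varphi_A$; the map $A\mapsto f_A$ is continuous; and the reduction and minimality statements follow.

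\emph{Homeomorphism and conjugacy.} Since $A\in\K_p^F(\C)$ satisfies $F(A)=A$, the restriction $F|_A$ is a homeomorphism of $A$. By Lemma~\ref{lem:canonical}, $\varphi_A\colon\C\to A$ is a homeomorphism. Hence $f_A=\varphi_A^{-1}\circ F|_A\circ\varphi_A$ lies in $\Homeo(\C)$. The identity $\varphi_A\circ f_A=F|_A\circ\varphi_A$ holds by definition, so $\varphi_A$ conjugates $(\C,f_A)$ to $(A,F|_A)$.

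\emph{Continuity.} The topology on $\Homeo(\C)$ is given by $d_{\Homeo}$, so we must control both $d_u(f_A,f_B)$ and $d_u(f_A^{-1},f_B^{-1})$ for $B$ near $A$. Note that $f_A^{-1}=\varphi_A^{-1}\circ F^{-1}|_A\circ\varphi_A$, which has the same form with $F$ replaced by $F^{-1}$. It therefore suffices to treat $f_A$; the same argument applied to $F^{-1}$ handles the inverse.

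Fix $A$ and $n\ge1$. We want a neighborhood of $A$ such that, for every $B$ in it and every $x\in\C$, the points $f_A(x)$ and $f_B(x)$ agree on their first $n$ entries.
\begin{enumerate}
\item Apply the last assertion of Lemma~\ref{lem:canonical} to $A$ and $n$. This gives a neighborhood $U$ of $A$ and an integer $\ell$.
\item $F$ is uniformly continuous on $\C$, so there is $m$ such that points agreeing on their first $m$ entries have images agreeing on their first $\ell$ entries.
\item By the uniform continuity part of Lemma~\ref{lem:canonical}, there is a neighborhood $U'\subseteq U$ of $A$ such that, for $B\in U'$, the points $\varphi_A(x)$ and $\varphi_B(x)$ agree on their first $m$ entries for every $x$.
\end{enumerate}
Now take $B\in U'$ and $x\in\C$. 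Set $y=F(\varphi_A(x))\in A$ and $z=F(\varphi_B(x))\in B$; both lie in the right sets because $A$ and $B$ are $F$-invariant. By steps 2 and 3, $y$ and $z$ agree on their first $\ell$ entries. Step 1 then shows that $f_A(x)=\varphi_A^{-1}(y)$ and $f_B(x)=\varphi_B^{-1}(z)$ agree on their first $n$ entries. This bound is uniform in $x$, which gives uniform convergence $f_B\to f_A$ as $B\to A$.

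\emph{Reduction and minimality.} For $A,B\in\K_p^F(\C)$ we have $(\C,f_A)\cong(A,F|_A)$ and $(\C,f_B)\cong(B,F|_B)$. Composing conjugacies, $f_A\cong f_B$ holds if and only if $(A,F|_A)\cong(B,F|_B)$. Minimality is invariant under conjugacy, so $f_A\in\Min(\C)$ exactly when $A$ is a minimal subsystem.

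The only step needing care is the continuity estimate. Its key point is to use the quantitative final clause of Lemma~\ref{lem:canonical} to control $\varphi_B^{-1}$ uniformly as $B$ varies.
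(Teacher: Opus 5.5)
Your proposal is correct and follows essentially the same route as the paper: the inverse-image clause of Lemma~\ref{lem:canonical}, combined with uniform continuity of $F$ and continuity of $B\mapsto\varphi_B$, gives agreement of the first $n$ coordinates of $f_A(x)$ and $f_B(x)$ uniformly in $x$, and the same argument with $F^{-1}$ handles the inverses. You also spell out two details the paper leaves implicit: that $F$-invariance places $F\varphi_A(x)$ in $A$ and $F\varphi_B(x)$ in $B$, and that $f_A^{-1}=\varphi_A^{-1}\circ F^{-1}|_A\circ\varphi_A$.
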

\begin{proof}
The conjugacy assertion follows from the defining equation.
For continuity, fix $A$ and $n\ge1$. We seek a neighborhood of $A$
on which $f_A(x)$ and $f_B(x)$ agree on their first $n$ coordinates
for every $x$.
Apply the inverse-image assertion of Lemma~\ref{lem:canonical} to
obtain a neighborhood of $A$ and an integer $\ell$.
Uniform continuity of $F$ and continuity of $B\mapsto\varphi_B$ imply
that, after shrinking this neighborhood, $F\varphi_A(x)$ and
$F\varphi_B(x)$ agree on their first $\ell$ coordinates for every $x$.
The inverse-image assertion now gives agreement of the first $n$
coordinates of $f_A(x)$ and $f_B(x)$. Applying the same argument to
$F^{-1}$ proves uniform convergence of the inverses as well, hence
continuity in $\Homeo(\C)$.
\end{proof}

\begin{lem}\label{lem:conjugacy-hyperspace}
Let $h:(X,f)\to(Y,g)$ be a conjugacy of TDSs.
The induced map $A\mapsto h(A)$ is a homeomorphism of their subsystem
spaces, preserves the conjugacy relation, and restricts to the minimal
and perfect subsystem spaces.
\end{lem}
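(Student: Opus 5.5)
The plan is to work directly with the definitions: $\K^f(X)$ is the set of nonempty closed $f$-invariant subsets, and $\K^g(Y)$ the analogous set for $g$. Define $h_*\colon\K^f(X)\to\K^g(Y)$ by $h_*(A)=h(A)$, and check first that it is well defined. Since $h$ is a homeomorphism, $h(A)$ is nonempty and compact, hence closed. Equivariance gives $g(h(A))=h(f(A))=h(A)$, so $h(A)$ is $g$-invariant. The inverse conjugacy $h^{-1}\colon(Y,g)\to(X,f)$ gives in the same way a map $(h^{-1})_*$, which is a two-sided inverse of $h_*$. Hence $h_*$ is a bijection.

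For continuity I would use the Vietoris basis rather than a metric. We have $h(A)\subseteq U_1\cup\dots\cup U_k$ iff $A\subseteq h^{-1}(U_1)\cup\dots\cup h^{-1}(U_k)$. Also $h(A)\cap U_i\ne\emptyset$ iff $A\cap h^{-1}(U_i)\neq\emptyset$. So the preimage under $h_*$ of a basic Vietoris open set is again a basic Vietoris open set, because the sets $h^{-1}(U_i)$ are open. Thus $h_*$ is continuous on all of $\K(X)$, and so is its restriction. The same argument applied to $h^{-1}$ shows the inverse is continuous, so $h_*$ is a homeomorphism $\K^f(X)\to\K^g(Y)$.

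For the remaining claims:
\begin{enumerate}
\item Conjugacy is preserved. The restriction $h|_A\colon(A,f|_A)\to(h(A),g|_{h(A)})$ is an equivariant homeomorphism. So $A\cong h(A)$, and by transitivity $A\cong B$ iff $h(A)\cong h(B)$. Thus $h_*\times h_*$ maps the conjugacy relation onto the conjugacy relation, and $h_*$ is a reduction in both directions.
\item Perfectness is preserved. A set is perfect iff it has no isolated points, which is a homeomorphism-invariant property of the subspace, so $h_*$ maps $\K^f_p(X)$ onto $\K^g_p(Y)$.
\item Minimality is preserved. $A$ is minimal iff it contains no proper element of $\K^f(X)$. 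Since $h_*$ is an inclusion-preserving bijection of invariant closed sets, with inverse $(h^{-1})_*$, it maps minimal subsystems exactly onto minimal subsystems.
\end{enumerate}
Restricting a homeomorphism to a subset that it maps onto the corresponding subset gives a homeomorphism of those subspaces.

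No step here is a real obstacle. The only point needing care is continuity with respect to the Vietoris topology, and it is handled by pulling back basic open sets as above.
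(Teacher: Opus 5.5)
Your proposal is correct and follows essentially the same route as the paper. The induced maps of $h$ and $h^{-1}$ are mutually inverse and continuous, restricting $h$ to an invariant set gives a conjugacy, and perfectness and minimality are conjugacy invariants. You additionally spell out the Vietoris-basis pullback for continuity and the inclusion-order argument for minimality, both of which the paper leaves implicit.
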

\begin{proof}
The maps $h$ and $h^{-1}$ induce inverse continuous maps of the compact
hyperspaces. Their restrictions carry invariant sets to invariant sets,
and their restrictions to each such set are conjugacies. Minimality
and absence of isolated points are invariant under conjugacy.
\end{proof}

\begin{thm} \label{thm:subshifts-and-systems-are-bireducible}
The conjugacy relations $[\cong]_{\Homeo(\C)}$ and $[\cong]_{\Shp}$ are continuously bireducible, that is, 
$[\cong]_{\Homeo(\C)}\leq ^2_{\cont}[\cong]_{\Shp}$ 
and 
$[\cong]_{\Shp}\leq ^2_{\cont}[\cong]_{\Homeo(\C)}$. The same holds for their restrictions to minimal homeomorphisms/subsystems, that is,
\[
[\cong]_{\Min(\C)}\leq ^2_{\cont}[\cong]_{\Min_p^\sigma(\C^{\Z})} 
\]
and 
\[[\cong]_{\Min_p^\sigma(\C^{\Z})}\leq ^2_{\cont}[\cong]_{\Min(\C)}. 
\]
\end{thm}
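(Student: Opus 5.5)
The forward reductions are already in hand. Lemma~\ref{lem:reduction2} shows that $f\mapsto\C_f$ continuously reduces $[\cong]_{\Homeo(\C)}$ to $[\cong]_{\Shp}$. It also shows that its restriction to $\Min(\C)$ continuously reduces $[\cong]_{\Min(\C)}$ to $[\cong]_{\Min_p^\sigma(\C^{\Z})}$, because $\C_f$ is perfect and is minimal exactly when $f$ is. So only the two backward reductions remain, and I plan to obtain both from a single map.

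Fix a homeomorphism $h\colon\C^{\Z}\to\C$; one exists by Brouwer's theorem, since $\C^{\Z}$ is a Cantor space. Put $F:=h\circ\sigma\circ h^{-1}\in\Homeo(\C)$, so $h$ is a conjugacy from $(\C^{\Z},\sigma)$ to $(\C,F)$. By Lemma~\ref{lem:conjugacy-hyperspace}, the map $X\mapsto h(X)$ is a homeomorphism from $\Shp$ onto $\K_p^F(\C)$. It preserves and reflects conjugacy, since $(X,\sigma)\cong(h(X),F|_{h(X)})$. It also carries $\Min_p^\sigma(\C^{\Z})$ onto the perfect minimal $F$-subsystems.

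Next apply Lemma~\ref{lem:transport} to this $F$. The map $A\mapsto f_A=\varphi_A^{-1}F|_A\varphi_A$ is continuous from $\K_p^F(\C)$ into $\Homeo(\C)$. Since $\varphi_A$ conjugates $(\C,f_A)$ to $(A,F|_A)$, we get $f_A\cong f_B$ if and only if $(A,F|_A)\cong(B,F|_B)$. Minimality is also preserved, so $f_A\in\Min(\C)$ exactly when $A$ is minimal.

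The composite $X\mapsto f_{h(X)}$ is therefore continuous. It satisfies $X\cong Y\iff f_{h(X)}\cong f_{h(Y)}$, and it sends minimal perfect subsystems into $\Min(\C)$. This gives both $[\cong]_{\Shp}\le^2_{\cont}[\cong]_{\Homeo(\C)}$ and, by restriction, the minimal version.

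The only point needing care is the precise form of Lemma~\ref{lem:transport}. It is stated for $\K_p^F(\C)$ with $F\in\Homeo(\C)$ on $\C$ itself, which is why I first transport $\sigma$ on $\C^{\Z}$ to $F$ on $\C$. The topology on $\Min_p^\sigma(\C^{\Z})$ is the subspace topology from $\K(\C^{\Z})$, so restricting continuous maps causes no difficulty. I expect no real obstacle beyond bookkeeping the two compositions.
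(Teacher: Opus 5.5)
Your proposal is correct and matches the paper's proof: the forward direction uses the orbit-shift map of Lemma~\ref{lem:reduction2}, and the backward direction transports $\sigma$ to $F=h\sigma h^{-1}$ on $\C$. It then composes the hyperspace homeomorphism of Lemma~\ref{lem:conjugacy-hyperspace} with the canonical transport of Lemma~\ref{lem:transport}, giving exactly the paper's map $X\mapsto\varphi_{h(X)}^{-1}\circ F|_{h(X)}\circ\varphi_{h(X)}$.
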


\begin{proof}
The map in Lemma~\ref{lem:reduction2} sends a homeomorphism to its orbit
shift, a conjugate system, and is continuous.
For the converse choose a fixed homeomorphism $h:\C^{\Z}\to\C$ and put
$F=h\sigma h^{-1}$. Define
\[
\Xi(X)=\varphi_{h(X)}^{-1}\circ F|_{h(X)}\circ\varphi_{h(X)}.
\]
The map $X\mapsto h(X)$ is continuous by Lemma~\ref{lem:conjugacy-hyperspace}, so Lemma~\ref{lem:transport} makes $\Xi$ continuous. The map
$h^{-1}\varphi_{h(X)}$ conjugates $(\C,\Xi(X))$ to $(X,\sigma)$.
Both assignments therefore preserve each represented system up to
conjugacy, and in particular preserve minimality. They also preserve
flip conjugacy: replacing either representative by a conjugate system
does not change whether it is conjugate to the other system or to its
inverse.
\end{proof}

\begin{rem}
If we restrict both models to the Polish subspaces determined by a conjugacy-invariant property (e.g.\ minimality), the same arguments give continuous bireducibility of the restricted conjugacy relations.     
\end{rem}

\subsection{Subshifts}

Let $X$ be a subshift. We define the \emph{language} of $X$ to be
\[\lang(X) := \{w \in A^{< \N} \colon w=x_{[i,j]} \textrm{ for some } x \in X \textrm{ and } i, j \in \Z\}.\]

Whenever $X$ is a subshift or a shift space, we write $\Auto{X}$ instead of $\Auto{X,\sigma}$ for the  \emph{group of automorphisms of $X$}, and $\AutQ{X}$ for the reduced automorphism group $\Auto{X}/\gengroup{\sigma_X}$. If $\pi\colon X\to Y$ is an \strongfactor{} factor map between shift spaces, then $\rho_\pi\colon\Auto{X}\to\Auto{Y}$ and $\rho'_\pi\colon\AutQ{X}\to\AutQ{Y}$ are the homomorphisms defined in Section~\ref{subsec:tds}.

An \emph{inverse system of subshifts} is a sequence
$(X_j,\pi_j)_{j\in\N}$ in which each $X_j$ is a subshift and each
$\pi_j\colon X_{j+1}\to X_j$ is a factor map.

 For $i>j$ we define $\pi_{i, j} \colon X_i \to X_j$ by $\pi_{i, j} := \pi_{j} \circ \pi_{j+1}\circ\cdots\circ\pi_{i-1}$, and we put $\pi_{j, j} := \textrm{id}$.

If all bonding maps $\pi_j$ are automorphism-compatible, then $(\AutQ{X_j},\rho'_{\pi_j})_{j\in \N}$ is an inverse system of groups.

The \emph{inverse limit} of $(X_j,\pi_j)_{j\in\N}$ is
\begin{equation} \label{eqn:defn-of-inverse-limit}
	\varprojlim (X_j,\pi_j)_{j\in\N} := \left\{ x \in \prod_{j \in \N} X_j \colon \pi_j(x_{j+1}) = x_j \textrm{ for all } j \ge 1 \right\}.
\end{equation}
Endowed with the topology inherited from $\prod_{j \in \N} X_j$, $\varprojlim (X_j,\pi_j)_{j\in\N} $ is compact.
We equip it with the coordinatewise shift
$\sigma\bigl((x_j)_{j\in\N}\bigr):=(\sigma x_j)_{j\in\N}$; since the
bonding maps are equivariant, $\sigma$ maps the inverse limit onto
itself, and the inverse limit is a TDS. All dynamical properties of an
inverse limit refer to this shift.
Its topology has the following description.
\begin{lem} \label{lem:clopen_x_basis}
If $X=\varprojlim (X_j,\pi_j)_{j\in\N}$ is the inverse limit of an inverse system of subshifts, then
\[
\left\{ V(k,U): k\in\N\text{ and $U \subseteq X_k$ clopen}\right\},
\]
where 
\begin{equation} \label{eqn:clopen_x_basis}
V(k, U) := \left\{ x \in X : x_k  \in U \right\}
\end{equation}
is a clopen basis for the topology on $X$. Furthermore, every clopen subset of $X$ is of the form $V(k, U)$ for some $k\in\N$ and clopen $U\subseteq X_k$.
\end{lem}

\begin{proof}
The subspace topology on $X$ has a basis of sets of the form 
$$
	\left\{x \in X \colon x_j \in U_j \textrm{ for all } j=1 \dots n \right\},
$$
for all $n \ge 1$, and $U_j \subseteq X_j$ open. 
Since $X_j$ are zero-dimensional, it is enough to consider only clopen $U_j$. 
The basis set above is of the form (\ref{eqn:clopen_x_basis}), since it is equal to 
$$
	\left\{x \in X \colon x_n \in \bigcap_{j = 1}^{n} \pi_{n, j}^{-1} (U_j) \right\}.
$$

For the second claim, let $A \subseteq X$ be clopen. 
Write $A$ as a union of basic sets. Compactness gives a finite
subcover, so $A = \bigcup_{i = 1}^{n} V(k_i, U_i)$ 
for some $n$, $k_i$ and $U_i$. Let $k := \max(k_1, \dots k_n)$. Then 
$$
A = \bigcup_{i = 1}^{n} V \left( k, \pi_{k, k_i}^{-1}(U_i) \right) = V \left( k, \bigcup_{i = 1}^{n} \pi_{k, k_i}^{-1}(U_i) \right),
$$
which is the desired form.
\end{proof}

The inverse limit is nonempty because the bonding maps are surjective:
any finite compatible tuple extends to a longer one, and compactness
gives an infinite compatible tuple. The same argument shows that every
coordinate projection is surjective. Lemma~\ref{lem:clopen_x_basis} shows
that the inverse limit is zero-dimensional. It embeds equivariantly in
$\C^{\Z}$ by encoding the product of the component alphabets in $\C$;
the embedding intertwines the coordinatewise shift with the shift of
$\C^{\Z}$.

\begin{lem}\label{lem:thesis-inverse-minimal}
The inverse limit $X=\varprojlim(X_j,\pi_j)_{j\ge1}$ is minimal if and
only if every $X_j$ is minimal. If these conditions hold and at least
one $X_j$ is infinite, then $X$ is a Cantor space.
\end{lem}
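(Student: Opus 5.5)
We prove the two implications of the equivalence separately and then the Cantor assertion. For the direction ``$X$ minimal $\Rightarrow$ every $X_j$ minimal'' we use the coordinate projections $p_j\colon X\to X_j$, $p_j(x)=x_j$. They are continuous and equivariant, and they are surjective by the remark preceding the lemma. A factor of a minimal system is minimal. Indeed, if $Y\subseteq X_j$ is a nonempty closed invariant set, then $p_j^{-1}(Y)$ is a nonempty closed invariant subset of $X$. Minimality of $X$ gives $p_j^{-1}(Y)=X$, and surjectivity then gives $Y=X_j$.

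For the converse, assume every $X_j$ is minimal. Let $x\in X$ and let $W\subseteq X$ be nonempty open; we show that the orbit of $x$ meets $W$. By Lemma~\ref{lem:clopen_x_basis}, $W$ contains a nonempty basic set $V(k,U)$ with $U\subseteq X_k$ clopen. This set is nonempty exactly when $U\neq\emptyset$, because $p_k$ is surjective. Since $X_k$ is minimal, the orbit of $x_k$ is dense in $X_k$, so $\sigma^n x_k\in U$ for some $n\in\Z$. Then $(\sigma^n x)_k=\sigma^n x_k\in U$, so $\sigma^n x\in V(k,U)\subseteq W$. Thus every orbit in $X$ is dense, and $X$ is minimal. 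This direction is the crux, and Lemma~\ref{lem:clopen_x_basis} reduces it to a single coordinate.

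Finally, suppose all $X_j$ are minimal and some $X_k$ is infinite. The space $X$ is nonempty and compact. It is metrizable as a closed subset of a countable product of metrizable spaces, and it is zero-dimensional by Lemma~\ref{lem:clopen_x_basis}, hence totally disconnected. It remains to show that $X$ is perfect. First, $X$ is infinite, because it surjects onto the infinite space $X_k$. An infinite minimal system has no isolated points. If $x$ were isolated, then $\{x\}$ would be open, so by density of every orbit each point would have an orbit passing through $x$. Hence $X$ would be a single orbit $\{\sigma^n x\}$, with each $\sigma^n x$ isolated. Compactness would then force this orbit to be finite, contradicting that $X$ is infinite. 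So $X$ is a Cantor space. The only points needing care are nonemptiness and surjectivity of the projections, and both are supplied by the remark before the lemma.
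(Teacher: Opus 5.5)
Your proposal is correct and follows essentially the same route as the paper: density of every orbit via the basic sets $V(k,U)$ of Lemma~\ref{lem:clopen_x_basis}, surjectivity of the coordinate projections, and the isolated-point argument for perfectness. The only cosmetic difference is in showing that minimality of $X$ implies minimality of each $X_j$: you pull back a closed invariant subset of $X_j$, whereas the paper pushes the dense orbit of a lift forward.
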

\begin{proof}
Suppose every $X_j$ is minimal. A nonempty basic set $V(k,U)$ is met
by the orbit of any $x\in X$, since the orbit of $x_k$ meets $U$.
Thus every orbit in $X$ is dense. Conversely, if $X$ is minimal, each
coordinate projection is an onto factor map. Given $y\in X_j$, choose
$x\in X$ projecting to $y$. The image of the dense orbit of $x$ is
the orbit of $y$, which is therefore dense in $X_j$.

If some $X_j$ is infinite, surjectivity of the coordinate projection
makes $X$ infinite. An infinite compact minimal system has no isolated
points: if $x$ were isolated, the dense orbit of every point would meet
$\{x\}$. Every point would then belong to the orbit of $x$, and all
points of $X$ would be isolated. Compactness would force $X$ to be
finite. Since $X$ is compact, metrizable, and zero-dimensional, it is
a Cantor space under the stated additional hypotheses.
\end{proof}

We also use the following language characterization of minimality.
\begin{lem} \label{lem:syndetic-minimal}
    Let $X$ be a subshift. The following are equivalent:
    \begin{enumerate}[label=(\roman*)]
        \item $X$ is minimal,
        \item for every $u \in \lang(X)$, there exists $n_u\in \N$ such that
            for every $v \in \lang(X)$, with $|v| \ge n_u$, we have that $u$ is a subword of $v$.
    \end{enumerate}
\end{lem}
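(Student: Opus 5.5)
We prove the two implications separately, using only the definition of minimality (every orbit is dense) and the cylinder description of the topology of $A^{\Z}$. Throughout, for $u\in\lang(X)$ we write $[u]_X=\{x\in X: x_{[0,|u|)}=u\}$. This is a nonempty clopen subset of $X$, and every nonempty open subset of $X$ contains such a set.

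\emph{(i)$\Rightarrow$(ii).} Assume $X$ is minimal and fix $u\in\lang(X)$. Every orbit is dense, so every $x\in X$ satisfies $\sigma^k x\in[u]_X$ for some $k\in\Z$. Hence $\{\sigma^{-k}[u]_X\}_{k\in\Z}$ is an open cover of $X$. By compactness there is $N$ with
\[
X=\bigcup_{|k|\le N}\sigma^{-k}[u]_X .
\]
Thus every $x\in X$ contains $u$ at some position in $[-N,N]$, so $u$ is a subword of $x_{[-N,N+|u|)}$. Set $n_u:=2N+|u|$ and let $v\in\lang(X)$ with $|v|\ge n_u$. Write $v=x_{[i,i+|v|)}$ for some $x\in X$. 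Applying the above to $\sigma^{i+N}x\in X$ shows that $u$ occurs in $x_{[i,i+n_u)}$, which is a subword of $v$. The only point needing care is this index bookkeeping; it is routine.

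\emph{(ii)$\Rightarrow$(i).} Assume (ii). Let $Y\subseteq X$ be a nonempty closed invariant set; we show $Y=X$. It suffices to check that $Y$ meets every $[u]_X$ with $u\in\lang(X)$, because $Y$ is closed and these sets form a basis of $X$. Fix such $u$ and pick any $y\in Y$. The word $y_{[0,n_u)}$ lies in $\lang(X)$ and has length $n_u$, so by (ii) it contains $u$, say at position $k$. Then $\sigma^k y\in[u]_X$, and $\sigma^k y\in Y$ by invariance. Hence $Y\cap[u]_X\neq\varnothing$, so $Y=X$ and $X$ is minimal.

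Neither direction has a real obstacle. The one substantive step is the compactness argument in (i)$\Rightarrow$(ii), which turns density of each orbit into a uniform bound on the return window.
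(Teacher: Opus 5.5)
The paper states this lemma without proof, as a standard characterization, so there is no argument of the authors to compare with. Your proof is the standard one: compactness of the cover $\{\sigma^{-k}[u]_X\}_{k\in\Z}$ gives (i)$\Rightarrow$(ii), and pushing a point of a closed invariant set into each cylinder gives (ii)$\Rightarrow$(i). The index bookkeeping in (i)$\Rightarrow$(ii) is correct.

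One assertion needs correcting. The sets $[u]_X=\{x\in X: x_{[0,|u|)}=u\}$ constrain only nonnegative coordinates. They therefore do not form a basis of $X\subseteq A^{\Z}$, and a nonempty open set need not contain any of them; in the full shift, $\{x: x_{-1}=a\}$ contains none.

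The repair uses the invariance you already have. The centered cylinders $\{x\in X: x_{[-m,m]}=w\}$ with $w\in\lang(X)$ do form a basis, and each equals $\sigma^m[w]_X$. Then $Y\cap\sigma^m[w]_X=\sigma^m(Y\cap[w]_X)\neq\varnothing$ because $\sigma(Y)=Y$. Hence $X\setminus Y$ contains no nonempty basic set, so $Y=X$.

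A trivial point: $\lang(X)$ contains the empty word, for which compactness may give $N=0$. Take $n_u=\max\{1,2N+|u|\}$ so that $n_u\in\N$.
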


\subsection{Odometers}
We recall the odometer notation used below.

A \emph{scale} is a strictly increasing sequence of positive integers $(L_n)_{n \in \N}$,
such that $L_n$ divides $L_{n+1}$ for all $n \ge 1$.
Consider the set
$$
	\mathcal{L} := \left\{ x \in \prod_{n \in \N} \Z / L_n \Z
	\colon
	x_{n+1} \equiv x_n \pmod{L_n} \textrm{ for all } n \ge 1\right\},
$$
equipped with the product topology. Note that $\cL$ is a Cantor space.
For $x, y \in \cL$, we define $x+y$ to be the unique element of $\cL$ such that
$ (x+y)_n \equiv x_n + y_n \pmod{L_n} \textrm{ for all } n \ge 1 $.
With this operation, $\cL$ is an abelian topological group. Its neutral element is $\mathbf{0} = (0, 0, 0\dots)$.
In other words, $\cL$ is the inverse limit of the groups $\Z / L_n \Z$.
Define $\mathbf{1} := (1, 1, 1 \dots) \in \cL$, and a map $f \colon  \cL \to \cL$ by $f(x) = x + \mathbf{1}$.
The TDS $(\cL,f)$ is the \emph{odometer} with scale $(L_n)_{n\in\N}$.
It is minimal.

Note that $\Z \ni k \mapsto f^k(\mathbf{0}) \in \cL$ is an injective group homomorphism.
We identify an integer $k \in \Z$ with the element $f^k(\mathbf{0}) \in \cL$.
An element $x\in\cL$ is an \emph{integer} if
$x=f^k(\mathbf{0})$ for some $k\in\Z$.

If $x \in \cL$, we may also use $x \pmod{L_n}$ to refer to its $n$-th coordinate.

\section{Conjugacy between inverse limits}\label{sec:conj-inv-lims}

We give a criterion for conjugacy between inverse limits of subshifts.
When a point $y$ of a subshift already carries a subscript, we write
$y(k)$ for its $k$th coordinate; thus $\Phi(x)_I(k)$ is the $k$th
coordinate of the $I$th component of $\Phi(x)$.

\begin{lem}\label{lem:small-inv-lim-isomorphism}
Let $(X_j,\pi_j)_{j\ge1}$ and $(\Xother_j,\piother_j)_{j\ge1}$ be
inverse systems of subshifts over finite alphabets, with limits
$X=\varprojlim(X_j,\pi_j)$ and
$\Xother=\varprojlim(\Xother_j,\piother_j)$.
\begin{enumerate}[label=(\roman*)]
\item\label{part-a} If each $f_j:X_j\to\Xother_j$ is a conjugacy and
$f_j\pi_j=\piother_jf_{j+1}$, then
$F(x)_j=f_j(x_j)$ defines a conjugacy $F:X\to\Xother$.
\item\label{part-b} If $\Phi:X\to\Xother$ is a conjugacy, then for each
$I\ge1$ there are $J\ge I$ and a factor map
$\phi_I:X_J\to\Xother_I$ such that
$\Phi(x)_I=\phi_I(x_J)$ for every $x\in X$.
\end{enumerate}
\end{lem}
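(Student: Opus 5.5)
For part \ref{part-a} we check the three required properties of $F$ directly. First, $F$ maps $X$ into $\Xother$: for $x\in X$ we have $\piother_j(f_{j+1}(x_{j+1}))=f_j(\pi_j(x_{j+1}))=f_j(x_j)$, so $(f_j(x_j))_j$ is a compatible sequence. Second, $F$ is continuous because each coordinate $x\mapsto f_j(x_j)$ is continuous. Third, $F$ is equivariant because each $f_j$ commutes with the shift. The maps $f_j^{-1}$ satisfy the same compatibility relation, since $\pi_jf_{j+1}^{-1}=f_j^{-1}\piother_j$ follows by composing $f_j\pi_j=\piother_jf_{j+1}$ with the inverses on both sides. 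Hence they define an inverse map $\Xother\to X$ in the same way, and $F$ is a conjugacy.

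For part \ref{part-b}, fix $I$ and consider the continuous map $p_I\circ\Phi\colon X\to\Xother_I$, where $p_I$ is the $I$th coordinate projection. Since $\Xother_I$ is a subshift over a finite alphabet, the function $x\mapsto\Phi(x)_I(0)$ is continuous with finitely many values. Each of its level sets is therefore clopen in $X$. By Lemma~\ref{lem:clopen_x_basis}, each level set has the form $V(k,U)$ for some $k$ and clopen $U\subseteq X_k$. Taking $J\ge I$ larger than all these finitely many $k$, and pulling back via $\pi_{J,k}$, each level set becomes $V(J,U')$ for a clopen $U'\subseteq X_J$. This gives a continuous function $c\colon X_J\to\bar A_I$, where $\bar A_I$ is the alphabet of $\Xother_I$, with $\Phi(x)_I(0)=c(x_J)$. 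The function $c$ is well defined because the projection $X\to X_J$ is surjective and the sets $U'$ partition $X_J$: they are the preimages of the partition of $X$ under a surjection, so they are pairwise disjoint and cover $X_J$.

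Next, define $\phi_I(y)(n):=c(\sigma^n y)$ for $y\in X_J$. This map is continuous and commutes with the shift. Using the equivariance of $\Phi$, we get $\Phi(x)_I(n)=\Phi(\sigma^nx)_I(0)=c(\sigma^nx_J)$, so $\Phi(x)_I=\phi_I(x_J)$. Surjectivity onto $\Xother_I$ follows because $\Phi$ is onto $\Xother$, the coordinate projection $\Xother\to\Xother_I$ is onto, and every $x_J$ comes from some $x$. In particular $\phi_I(X_J)=p_I(\Phi(X))=\Xother_I\subseteq\bar A_I^{\Z}$, so $\phi_I$ lands in $\Xother_I$.

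The main point requiring care is the reduction to a single coordinate $J$. This rests on the finiteness of the alphabet together with the second assertion of Lemma~\ref{lem:clopen_x_basis}. The rest is routine.
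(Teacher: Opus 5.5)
Your proof is correct and follows the paper's argument essentially step for step. In (i) you define $F$ and its inverse coordinatewise. In (ii) you use Lemma~\ref{lem:clopen_x_basis} to make the finitely many clopen level sets of $x\mapsto\Phi(x)_I(0)$ depend on a single coordinate $J$. You then extend the resulting local rule equivariantly and get surjectivity from surjectivity of $\Phi$ and of the coordinate projections.

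One small wording slip: the sets $U'$ are the \emph{images} of the level sets under the surjective projection $X\to X_J$; equivalently, the level sets are their preimages. That relation is what gives disjointness and covering.
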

\begin{proof}
In (i), compatibility makes $F$ well-defined, and continuity and equivariance
hold coordinatewise. The inverse maps satisfy
$\pi_jf_{j+1}^{-1}=f_j^{-1}\piother_j$, so they define its continuous inverse.

For (ii), let $A$ be the finite alphabet of $\Xother_I$.
The sets $\{\bar x:\bar x_I(0)=a\}$, $a\in A$, form a clopen partition
of $\Xother$. By Lemma~\ref{lem:clopen_x_basis}, their inverse images under
$\Phi$ depend on one common coordinate $J\ge I$. Thus there is a clopen
partition $(U_a)_{a\in A}$ of $X_J$, allowing empty members, with
\[
 \Phi(x)_I(0)=a\quad\Longleftrightarrow\quad x_J\in U_a.
\]
Surjectivity of $X\to X_J$ ensures that the sets $U_a$ partition $X_J$. Define $\phi_I(z)(k)=a$ when $\sigma^kz\in U_a$.
This is continuous and equivariant and satisfies the asserted identity.
Every $z\in X_J$ lifts to $X$, so its image belongs to $\Xother_I$.
Surjectivity of $\Phi$ and of $\Xother\to\Xother_I$ makes $\phi_I$ onto.
\end{proof}

\begin{defn}\label{def:blended}
An inverse system of subshifts $(X_j,\pi_j)_{j\in\N}$ is
\emph{blended} if for every $i\ge j\ge1$ and every factor map
$\zeta:X_i\to X_j$ there is $\phi\in\Auto{X_j}$ such that
$\zeta=\phi\pi_{i,j}$.
\end{defn}
It suffices to assume this property for $i>j$. If $r:X_j\to X_j$ is a
factor map, apply the property to $r\pi_j$ and cancel the surjection
$\pi_j$ to obtain $r\in\Auto{X_j}$. We use the term ``blended'' for this factor-map decomposition property.

\begin{lem} \label{lem:inv-lim-isomorphism}
Let $(X_j,\pi_j)_{j\in\N}$ be a blended inverse system of subshifts,
and let $\piother_j\colon X_{j+1}\to X_j$ be a factor map for each
$j\in\N$. Set $X=\varprojlim(X_j,\pi_j)_{j\in\N}$ and
$\Xother=\varprojlim(X_j,\piother_j)_{j\in\N}$. The following are equivalent:
\begin{enumerate}[label=(\roman*)]
	\item\label{cond-i} There are automorphisms $f_j\in\Auto{X_j}$, $j\ge1$, satisfying
$f_j\pi_j=\piother_jf_{j+1}$ for every $j\ge1$. 
	\item\label{cond-ii} $X \cong \Xother$. 
\end{enumerate}
\end{lem}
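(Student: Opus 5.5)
The plan is to prove the two implications separately. The forward direction follows directly from Lemma~\ref{lem:small-inv-lim-isomorphism}\ref{part-a}. For the converse, I would read off coordinatewise automorphisms from a given conjugacy using Lemma~\ref{lem:small-inv-lim-isomorphism}\ref{part-b} together with the blended property.

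For \ref{cond-i}$\Rightarrow$\ref{cond-ii}, each $f_j\in\Auto{X_j}$ is a conjugacy from $(X_j,\sigma)$ to $(X_j,\sigma)$. Here the source is viewed as the $j$th coordinate of the system $(X_j,\pi_j)$ and the target as the $j$th coordinate of $(X_j,\piother_j)$. The hypothesis $f_j\pi_j=\piother_jf_{j+1}$ is exactly the compatibility required in Lemma~\ref{lem:small-inv-lim-isomorphism}\ref{part-a}. That lemma then gives the conjugacy $F(x)_j=f_j(x_j)$ from $X$ to $\Xother$.

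For \ref{cond-ii}$\Rightarrow$\ref{cond-i}, fix a conjugacy $\Phi\colon X\to\Xother$ and fix $I\ge1$. By Lemma~\ref{lem:small-inv-lim-isomorphism}\ref{part-b}, there are $J\ge I$ and a factor map $\phi_I\colon X_J\to X_I$ with $\Phi(x)_I=\phi_I(x_J)$ for all $x\in X$. Since the system $(X_j,\pi_j)$ is blended, Definition~\ref{def:blended} gives $\psi_I\in\Auto{X_I}$ with $\phi_I=\psi_I\pi_{J,I}$. The case $J=I$ is included by the remark after that definition. For $x$ in the inverse limit $X$ we have $\pi_{J,I}(x_J)=x_I$, so
\[
\Phi(x)_I=\psi_I(x_I)\qquad(x\in X,\ I\ge1).
\]
Set $f_I:=\psi_I$. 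Because $\Phi(x)\in\Xother$, it satisfies $\piother_I(\Phi(x)_{I+1})=\Phi(x)_I$. Substituting the formula above turns this into
\[
\piother_I f_{I+1}(x_{I+1})=f_I(x_I)=f_I\pi_I(x_{I+1}).
\]
The coordinate projection $X\to X_{I+1}$ is surjective, as noted after Lemma~\ref{lem:clopen_x_basis}, so $x_{I+1}$ ranges over all of $X_{I+1}$. This yields $\piother_If_{I+1}=f_I\pi_I$ for every $I\ge1$, which is \ref{cond-i}.

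I do not expect a serious obstacle. The only delicate points are bookkeeping ones. First, the automorphism $\psi_I$ depends a priori on the choice of $J$, but for each $I$ we fix one choice, and the displayed identity shows that $\psi_I$ is in fact determined by $\Phi$. Second, the surjectivity of the coordinate projections is what lets us pass from the identity on points of $X$ to the identity of maps on $X_{I+1}$.
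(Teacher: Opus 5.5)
Your proposal is correct and follows essentially the same route as the paper: part (i) of Lemma~\ref{lem:small-inv-lim-isomorphism} for one direction, and for the converse part (ii) of that lemma, the blended property to obtain $\Phi(x)_j=f_j(x_j)$, and surjectivity of the coordinate projection $X\to X_{j+1}$ to turn the pointwise identity into $\piother_jf_{j+1}=f_j\pi_j$.
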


\begin{proof}
Compatible automorphisms $(f_j)$ give a coordinatewise conjugacy by
Lemma~\ref{lem:small-inv-lim-isomorphism}(i).
Conversely, let $\Phi:X\to\Xother$ be a conjugacy. For each $j$,
Lemma~\ref{lem:small-inv-lim-isomorphism}(ii) factors its $j$th coordinate
through a factor map $\phi_j:X_J\to X_j$ for some $J\ge j$.
By the blended property, $\phi_j=f_j\pi_{J,j}$ with
$f_j\in\Auto{X_j}$. Thus $\Phi(x)_j=f_j(x_j)$.
For every $x\in X$,
\[
\piother_j f_{j+1}(x_{j+1})
 =\piother_j\Phi(x)_{j+1}=\Phi(x)_j
 =f_j\pi_j(x_{j+1}).
\]
Surjectivity of the projection $X\to X_{j+1}$ gives the required equality
of maps on $X_{j+1}$.
\end{proof}

For the convergence estimates below, fix a compatible metric $d_{\C}$
 on $\C$ bounded by one, and put $\mathscr S=\C^{\Z}$. We use
\begin{align}
 d_{\Sigma}(x,y)
   &:=\sum_{i\in\Z}2^{-|i|}d_{\C}(x_i,y_i),
      &&x,y\in\mathscr S,\label{eq:ambient-shift-metric}\\
 \varrho(x,y)
   &:=\sup_{j\ge1}2^{-j}d_{\Sigma}(x_j,y_j),
      &&x,y\in\mathscr S^{\N}.\label{eq:ambient-product-metric}
\end{align}
The uniformly small tails show that these metrics induce the respective
product topologies, and $d_{\Sigma}\le3$. We denote their Hausdorff
metrics by $d_{\Sigma,H}$ and $\varrho_H$, respectively. These metric choices do not change the hyperspace topologies.

\begin{lem}[Continuity of inverse limits]\label{lem:inv-lim-continuity}
For each $k\in\N_0$, let $(A_j^{(k)},q_j^{(k)})_{j\ge1}$ be an
inverse system of nonempty compact shift spaces in $\mathscr S$, with
continuous surjective bonding maps commuting with the shifts. Write
\[
 Y^{(k)}=\varprojlim(A_j^{(k)},q_j^{(k)})
       \subseteq\mathscr S^{\N}.
\]
Suppose that for each $j\ge1$ the following hold:
\begin{enumerate}[label=\textup{(\alph*)}]
\item\label{ilc:components}
$d_{\Sigma,H}(A_j^{(k)},A_j^{(0)})\to0$ as $k\to\infty$.
\item\label{ilc:common-maps}
There are a closed set $E_j\subseteq\mathscr S$, a continuous map
$Q_j:E_j\to\mathscr S$, and $k_j\in\N$ such that
\[
 A_{j+1}^{(k)}\subseteq E_j,\qquad
 q_j^{(k)}=Q_j|_{A_{j+1}^{(k)}}
 \quad\text{for }k=0\text{ and for every }k\ge k_j.
\]
\end{enumerate}
Then $\varrho_H(Y^{(k)},Y^{(0)})\to0$ as $k\to\infty$.
The threshold $k_j$ in \textup{(b)} may depend on $j$. Commutation
with the shifts is used only to ensure that the limits are shift spaces.
\end{lem}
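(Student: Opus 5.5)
Fix $\eps>0$ and choose $J\in\N$ with $2^{-J}\cdot 3<\eps$. Since $d_{\Sigma}\le3$, the coordinates $j>J$ contribute less than $\eps$ to $\varrho$. So it suffices to show the following: for all sufficiently large $k$, the finite projections $p_J(Y^{(k)})$ and $p_J(Y^{(0)})$ are $\eps$-close in the Hausdorff metric for the max-type metric $\max_{j\le J}2^{-j}d_\Sigma$ on $\mathscr S^J$. Here $p_J(y)=(y_1,\dots,y_J)$. Surjectivity of the bonding maps makes every coordinate projection of an inverse limit surjective. Hence
\[
p_J(Y^{(k)})=\{(q^{(k)}_{J,1}(z),\dots,q^{(k)}_{J,J-1}(z),z):z\in A^{(k)}_J\},
\]
where $q^{(k)}_{J,j}$ denotes the composite bonding map. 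So I will reduce everything to level $J$.

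For $k\ge K:=\max_{j<J}k_j$, hypothesis \ref{ilc:common-maps} gives $q^{(k)}_{J,j}=Q_{j}\circ\cdots\circ Q_{J-1}$ on $A^{(k)}_J$, and the same formula holds for $k=0$. I will therefore work with the single map $\mathcal Q_j:=Q_j\circ\cdots\circ Q_{J-1}$. It is defined on $D_J:=\{z\in E_{J-1}: Q_{J-1}z\in E_{J-2},\dots\}$, which is closed because the $E_i$ are closed and the $Q_i$ continuous.

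The key step is to choose $\delta>0$ such that $z\in D_J$ and $d_\Sigma(z,A_J^{(0)})<\delta$ force each $\mathcal Q_j(z)$ to be $\eps$-close to $\mathcal Q_j(w)$ for some $w\in A^{(0)}_J$ within $\delta$ of $z$. This is uniform continuity near a compact set. I would derive it by contradiction and compactness. Take sequences $z_n\in D_J$ and $w_n\in A^{(0)}_J$ with $d_\Sigma(z_n,w_n)\to0$. Passing to a subsequence, $w_n\to w\in A_J^{(0)}$, hence $z_n\to w$. Closedness of $D_J$ keeps $w$ in the domain, and continuity of the $\mathcal Q_j$ gives $\mathcal Q_j(z_n)\to\mathcal Q_j(w)$ and $\mathcal Q_j(w_n)\to \mathcal Q_j(w)$.

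Now apply hypothesis \ref{ilc:components}. For $k\ge K$ large, every $z\in A^{(k)}_J\subseteq D_J$ is $\delta$-close to some $w\in A^{(0)}_J$, and conversely. The resulting tuples in $p_J(Y^{(k)})$ and $p_J(Y^{(0)})$ are then within $\eps$ coordinatewise, so the Hausdorff distance is at most $2\eps$. Lifting points from level $J$ back to the full inverse limits uses only the surjectivity of the projections.

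I expect the main obstacle to be the uniform-continuity step. The maps $Q_j$ are defined only on the closed sets $E_j$, and $\mathscr S$ is not compact in any useful way near the approximants, so compactness must come from $A^{(0)}_J$ alone. The membership $A^{(k)}_J\subseteq D_J$ is exactly what hypothesis \ref{ilc:common-maps} supplies for $k\ge K$. The threshold dependence on $j$ is harmless because only the finitely many $j<J$ matter.
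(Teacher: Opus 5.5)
Your proof is correct and follows essentially the same route as the paper: truncate at a finite level, replace the composite bonding maps by the common composite of the $Q_j$ on its closed domain, obtain a uniform-continuity modulus, and combine Hausdorff convergence at that level with surjectivity of the coordinate projections. Two small corrections: take $\delta<\eps$ so that the top coordinate is also controlled; and since $\mathscr S=\C^{\Z}$ is compact, your closed domain $D_J$ is itself compact, so the composites are uniformly continuous on all of $D_J$ (this is how the paper gets $\delta$) and your sequential argument anchored at $A_J^{(0)}$, while valid, is not needed.
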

\begin{proof}
Fix $\eta>0$ and choose $m\ge2$ such that $3\cdot2^{-(m+1)}<\eta$.
Let $E\subseteq E_{m-1}$ consist of the points at which all the
compositions
\[
 R_j=Q_j\circ\cdots\circ Q_{m-1},\qquad 1\le j<m,
\]
are defined. To construct this domain, start with $E_{m-1}$ and
successively require each intermediate image to belong to the next
closed domain $E_{m-2},\ldots,E_1$. Each step restricts a continuous
map to a closed subset of its current compact domain. Thus $E$ is
compact and every $R_j:E\to\mathscr S$ is continuous. For $m=2$,
this just means $E=E_1$ and $R_1=Q_1$.

By \textup{(b)}, $A_m^{(0)}\subseteq E$, and the same inclusion holds
for all sufficiently large $k$. Moreover, for a point $x\in Y^{(k)}$
with such $k$, or with $k=0$, we have $x_j=R_j(x_m)$ for $j<m$.
Uniform continuity of the finitely many $R_j$ gives
$\delta\in(0,\eta)$ such that
\[
 z,z'\in E,\quad d_{\Sigma}(z,z')<\delta
 \quad\Longrightarrow\quad
 d_{\Sigma}(R_j(z),R_j(z'))<\eta\quad(1\le j<m).
\]
Choose $k_0$ large enough that \textup{(b)} holds simultaneously for
$j<m$ whenever $k\ge k_0$, and that
$d_{\Sigma,H}(A_m^{(k)},A_m^{(0)})<\delta$ for all $k\ge k_0$.

For $k\ge k_0$ and $x\in Y^{(0)}$, choose $z\in A_m^{(k)}$ with
$d_{\Sigma}(x_m,z)<\delta$. Surjectivity of the coordinate projection
$Y^{(k)}\to A_m^{(k)}$ gives $y\in Y^{(k)}$ with $y_m=z$.
For $j<m$ the preceding estimate applies to $R_j(x_m)$ and $R_j(z)$,
so $d_{\Sigma}(x_j,y_j)<\eta$; this also holds for $j=m$.
For $j>m$,
\[
 2^{-j}d_{\Sigma}(x_j,y_j)\le3\cdot2^{-(m+1)}<\eta.
\]
It follows that $\varrho(x,y)<\eta$. Interchanging $Y^{(0)}$ and
$Y^{(k)}$ proves the other Hausdorff inclusion, so
$\varrho_H(Y^{(k)},Y^{(0)})\le\eta$. Since $\eta>0$ was arbitrary,
the conclusion follows.
\end{proof}

\section{The reduction}\label{sec:reduction}
Recall that a finite word of length $L$ is written $w=w_0\ldots w_{L-1}$,
so that $w_{[0,L)}=w$ in the interval notation of
Section~\ref{subsec:words}. For a finite word $w=w_0\ldots w_{L-1}$, let
$\rev{w}=w_{L-1}\ldots w_0$. For a bi-infinite sequence $x$, let
$\rev{x}_i=x_{-i}$. These operations are applied elementwise to word sets
and shift spaces. Reversal is an involution and conjugates the left
shift to its inverse; thus $(\rev{X},\sigma)\cong(X,\sigma^{-1})$.

Given a tree $T$, we construct an inverse system of minimal subshifts
$(X_j,\pi_j)_{j\in\N}$ whose factor maps and conjugacies are described
by $\Psi(T)$. Its inverse limit $X$ is a Cantor minimal system. We then
choose $\alpha_j\in\Auto{X_j}$ and form a second inverse limit
$\Xother$ with bonding maps $\piother_j=\alpha_j\pi_j$.
The preceding lemmas will show that $X\cong\Xother$ if and only if
$T$ has an infinite branch. We also arrange that $X$ is never conjugate
to $\rev{\Xother}$. The resulting map reduces ill-foundedness to both
conjugacy and flip conjugacy.

\begin{defn}\label{def:adjacent}
An inverse system of subshifts $(X_j,\pi_j)_{j\ge1}$ is
\emph{adjacent to $T\in\Trees$} if its bonding maps are
\strongfactor{} and the inverse system of groups
$(\AutQ{X_j},\rho'_{\pi_j})_{j\in\N}$ is conjugate to the
$T$-directed inverse system of groups, that is, there are group
isomorphisms
\[
 \phi_j:\AutQ{X_j}\longrightarrow\treegroup{j}{T},\qquad
 \treehom{j}{T}\phi_{j+1}=\phi_j\rho'_{\pi_j}\quad(j\ge1).
\]
Here $\rho'_{\pi_j}:\AutQ{X_{j+1}}\to\AutQ{X_j}$ is the homomorphism induced by $\pi_j$ (Section~\ref{subsec:tds}).
When these isomorphisms have been fixed, we use them to identify the
two group systems.
\end{defn}

\begin{lem} \label{lem:main-reduction}
Let $T\in\Trees$, let $(X_j,\pi_j)_{j\in\N}$ be a blended inverse system of subshifts adjacent to $\Psi(T)$, and let $X$ be its inverse limit. For every $j\in \N$, let $\alpha_j \in \Auto{X_j}$ with 
$\alpha_j \gengroup{\sigma} = \gbar{ \ctup{j}{j} }$, and let $\Xother$ be the inverse limit of $(X_j,\alpha_j\pi_j)_{j\in\N}$. 
Then $X$ and $\Xother$ are conjugate
if and only if $T$ has an infinite branch.

Furthermore, if $\rev{X_1}$ is not a factor of $X_j$ for any $j \in \N$, then $X$ and $\Xother$ are flip conjugate 
if and only if $T$ has an infinite branch.
\end{lem}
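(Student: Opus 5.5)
By Lemma~\ref{lem:inv-lim-isomorphism}, applied with $\piother_j=\alpha_j\pi_j$, $X\cong\Xother$ if and only if there are $f_j\in\Auto{X_j}$ with $f_j\pi_j=\alpha_j\pi_jf_{j+1}$ for all $j$. The plan is to pass this system to the reduced groups, compare it with equation \eqref{eqn:groupbraiding}, and then invoke Lemma~\ref{lem:trees-and-groups}.

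\emph{Conjugacy implies a branch.} Suppose the $f_j$ exist. Since $\pi_j$ is \strongfactor, $\pi_jf_{j+1}=\rho_{\pi_j}(f_{j+1})\pi_j$. Cancelling the surjection $\pi_j$ gives $f_j=\alpha_j\rho_{\pi_j}(f_{j+1})$. Pass to the quotient by $\gengroup{\sigma}$ and transport via the identifications $\phi_j$ of Definition~\ref{def:adjacent}; the groups are abelian and written additively. Setting $g_j:=\phi_j(f_j\gengroup{\sigma})$, we obtain $g_j=\gbar{\ctup{j}{j}}+\rho_j(g_{j+1})$, which is exactly \eqref{eqn:groupbraiding}. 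Lemma~\ref{lem:trees-and-groups} then says $T$ is ill-founded.

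\emph{A branch implies conjugacy.} Let $T$ be ill-founded and take $(g_j)$ from Lemma~\ref{lem:trees-and-groups}. Choose representatives $h_j\in\Auto{X_j}$ of $\phi_j^{-1}(g_j)$. Then $h_j$ and $\alpha_j\rho_{\pi_j}(h_{j+1})$ lie in the same coset of $\gengroup{\sigma}$, so $h_j=\sigma^{n_j}\alpha_j\rho_{\pi_j}(h_{j+1})$ for some $n_j\in\Z$. The main obstacle is correcting these shift powers to obtain exact compatibility. Since $\rho_{\pi_j}(\sigma)=\sigma$ and $\sigma$ is central in $\Auto{X_j}$, I set $f_j:=\sigma^{m_j}h_j$ and require $m_j+n_j=m_{j+1}$. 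This recursion is solved by $m_1=0$ and $m_{j+1}=m_j+n_j$. Then
\[
\alpha_j\rho_{\pi_j}(f_{j+1})=\sigma^{m_{j+1}}\alpha_j\rho_{\pi_j}(h_{j+1})=\sigma^{m_{j+1}-n_j}h_j=f_j.
\]
Composing with $\pi_j$ yields $f_j\pi_j=\alpha_j\pi_jf_{j+1}$. Lemma~\ref{lem:inv-lim-isomorphism} now gives $X\cong\Xother$. There is no convergence issue, because the conditions are levelwise and the recursion runs upward freely.

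\emph{Flip conjugacy.} Conjugacy gives flip conjugacy, so it remains to exclude $X\cong\rev{\Xother}$, i.e.\ $(X,\sigma)\cong(\Xother,\sigma^{-1})$. Reversal coordinatewise identifies $(\Xother,\sigma^{-1})$ with the inverse limit of the system $(\rev{X_j},\revop\circ\alpha_j\pi_j\circ\revop)$, which is again an inverse system of subshifts. Suppose such a conjugacy exists. Apply Lemma~\ref{lem:small-inv-lim-isomorphism}(ii) with $I=1$. This gives some $J$ and a factor map $X_J\to\rev{X_1}$, which contradicts the hypothesis. Hence, under that hypothesis, flip conjugacy is equivalent to conjugacy, and by the first part to ill-foundedness of $T$.
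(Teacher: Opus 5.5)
Your proposal is correct and follows the paper's proof essentially step for step. You cancel $\pi_j$ and pass to $\AutQ{X_j}$ to reach Lemma~\ref{lem:trees-and-groups}, correct the shift powers by the same telescoping sum (your $m_j$ is the paper's $-S_j$), and exclude $(X,\sigma)\cong(\Xother,\sigma^{-1})$ by the same appeal to Lemma~\ref{lem:small-inv-lim-isomorphism}(ii) for the reversed limit.
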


\begin{proof}
Throughout the proof we write the abelian groups $\AutQ{X_j}$
additively, in accordance with their identification with the groups
$\treegroup{j}{\Psi(T)}$ in Definition~\ref{def:adjacent}.

Assume that $X$ and $\Xother$ are conjugate. By Lemma \ref{lem:inv-lim-isomorphism}, there exists a sequence $f_j \in \Auto{X_j}$ 
such that $ \alpha_j \pi_j f_{j+1} = f_j \pi_j$ for $j \ge 1$. 
Since $\pi_j f_{j+1}=\rhofactor{\pi_j}{f_{j+1}}\pi_j$, cancelling
the surjection $\pi_j$ gives $\alpha_j\rhofactor{\pi_j}{f_{j+1}}=f_j$
in $\Auto{X_j}$. Passing to $\AutQ{X_j}$ gives
$
	\gbar{ \ctup{j}{j} } + \treehom{j}{\Psi(T)}(f_{j+1} \gengroup{ \sigma } ) = f_j \gengroup{ \sigma }, 
$
which by Lemma \ref{lem:trees-and-groups} implies that $T$ has an infinite branch.

Conversely, suppose $T$ has an infinite branch. By
Lemma~\ref{lem:trees-and-groups}, choose
a sequence $g_j \in \treegroup{j}{\Psi(T)}$ such that $ \gbar{\ctup{j}{j}} + \treehom{j}{\Psi(T)}(g_{j+1}) = g_j $ for $j \ge 1$. 
Viewing $g_j$ as elements of $\AutQ{X_j}$, we can write $g_j = h_j \gengroup{ \sigma } $ for some $h_j \in \Auto{X_j}$. 
The last equation becomes
$$
	\alpha_j \gengroup{ \sigma } + \rhofactor{\pi_j}{h_{j+1}} \gengroup{ \sigma } = h_j \gengroup{ \sigma },
$$
which implies $\alpha_j \pi_j h_{j+1} = h_j \sigma^{n_j} \pi_j $ for some $n_j \in \Z$. 
Set $S_j=\sum_{i<j}n_i$ and $f_j=h_j\sigma^{-S_j}$.
Since $S_{j+1}=S_j+n_j$ and the factor maps commute with the shifts,
\[
\piother_j f_{j+1}
 =\alpha_j\pi_jh_{j+1}\sigma^{-S_{j+1}}
 =h_j\sigma^{n_j-S_{j+1}}\pi_j=f_j\pi_j.
\]
Lemma~\ref{lem:inv-lim-isomorphism} now gives $X\cong\Xother$.

Under the additional assumption, suppose for a contradiction that
$(X,\sigma)\cong(\Xother,\sigma^{-1})\cong(\rev{\Xother},\sigma)$. 
The system $(\rev{\Xother}, \sigma)$ is the inverse limit of the inverse system
$(\rev{X_j},\textrm{rev} \circ \piother_j \circ \textrm{rev})_{j\in \N}$.
By Lemma \ref{lem:small-inv-lim-isomorphism}(ii) applied to $X$ and $\rev{\Xother}$,
we have a factor map $X_j \to \rev{X_1}$ for some $j$, but this contradicts the
additional assumption. So 
\begin{equation*}
    X\text{ and } \Xother \textrm{ are flip conjugate }
    \; \Leftrightarrow \;
    (X, \sigma) \cong (\Xother, \sigma)
    \; \Leftrightarrow \;
    T \textrm{ has an infinite branch.} \qedhere
\end{equation*}
\end{proof}

It remains to construct an inverse system satisfying the hypotheses
of Lemma~\ref{lem:main-reduction}.

\section{Construction sequences}\label{sec:constr-seq}

Let $\bA$ be an alphabet, and put $\bA^+:=\bigcup_{\ell\ge1}\bA^\ell$.
A \emph{code} over $\bA$ is a nonempty family $C\subseteq \bA^+$ of
nonempty finite words. Its elements are called \emph{code words}.
Write $\Seg(C)$ for the set of bi-infinite concatenations of words
from $C$. Thus $x\in\Seg(C)$ if and only if there is a sequence \[\ldots <t_{-2}<t_{-1}<t_0\le 0< t_1<t_2<\ldots\] such that $x_{[t_{j-1},t_j)}\in C$ for every $j\in \Z$.
For this topological assertion, equip $\bA$ with a Hausdorff topology.
If $C$ is finite, its word lengths are bounded, so the set of pairs
$(x,S)$ in which $S$ is the cut set of a $C$-segmentation of $x$ is
closed in $\bA^{\Z}\times\{0,1\}^{\Z}$. Since
$\{0,1\}^{\Z}$ is compact, its projection $\Seg(C)$ is closed in
$\bA^{\Z}$.
For $L\in\N$, an \emph{$L$-code} is a code $C\subseteq \bA^L$.
An $L$-code $C$ is \emph{nonoverlapping} if, whenever $u,v,w\in C$
and $u$ occurs in $vw$ at offset $r$, either $r=0$ and $u=v$, or
$r=L$ and $u=w$. In particular, no occurrence starts at an internal
offset $0<r<L$. Such a code is uniquely decomposable: two different
segmentations of the same bi-infinite sequence would put a codeword
at an internal offset in two consecutive words of the other
segmentation. When $3\mid L$, an $L$-code
has \emph{distinctive cores} if the restrictions of its distinct words
to $[L/3,2L/3)$ are distinct.

We use the alphabets $F_p:=\{0,1\}^p$ for
$p\in\N\cup\{\infty\}$, where $F_\infty:=\{0,1\}^{\N}=\C$.
A symbol is a column of $0$s and $1$s, with rows numbered from one,
top to bottom. A word of length $q$ over $F_p$ is a $p\times q$ \emph{block},
and a point of $F_p^{\Z}$ is an array with columns indexed by $\Z$.
The shift moves the columns one position to the left.

For $t\in\N$ with $t\le p$ and $x\in F_p$, write
$[x]_t:=x_1\dots x_t\in\{0,1\}^t$; when $p=\infty$, any finite $t$
is allowed. We apply $[\,\cdot\,]_t$ letterwise to words,
$[w_1\dots w_k]_t:=[w_1]_t\dots[w_k]_t$, and coordinatewise to points
of $F_p^{\Z}$. For sets of words and points, write
$[W]_t:=\{[w]_t:w\in W\}$ and $[X]_t:=\{[x]_t:x\in X\}$.
This truncation to the first $t$ rows commutes with the shift and with
concatenation. It is distinct from the cylinder notation $[w]$ in
Section~\ref{subsec:words}.

We use the following definition of a construction sequence.

\begin{defn}\label{defn:constrSeq}
Let $\bA$ be a finite alphabet. A \emph{construction sequence} with
scale $(L_n)_{n\in\N}$ is a sequence $\W=(W^n)_{n\ge n_0}$ of finite
word sets, where $n_0\ge1$, satisfying the following conditions for
every $n\ge n_0$:
\begin{enumerate}[label=(\alph*)]
  \item \label{def:constr-seq:1} $W^n\subseteq \bA^{L_n}$ contains at least two elements,
  \item \label{def:constr-seq:2} each element of $W^{n+1}$ is a concatenation of some words from $W^{n}$,
  \item \label{def:constr-seq:3} for every $u,v \in W^{n}$ and $w$ in $W^{n+1}$, the concatenation $uv$ appears as a subword of $w$, 
  \item \label{def:constr-seq:4} if $u,v,w\in W^n$ and $u$ occurs in $vw$ at position $j$, then
  either $u=v$ and $j=0$ or $u=w$ and $j=L_n$.
\end{enumerate}
\end{defn}
Only the sets $W^n$ with $n\ge n_0$ are part of the construction sequence.
Given a construction sequence $\mathcal{W}=\{W^n\}_{n \ge n_0}$, we define
\begin{align*}
X_{\mathcal{W}} &:=
\{ x \in \bA^{\Z} : \forall m\; x_{[-m , m]} \textrm{ is a subword of some } w \in \bigcup_{n \ge n_0} W^n \},\\
\Seg(W^n) &:= 
\{ \sigma^k(x) \in \bA^{\Z} : k\in \Z\text{ and }x \textrm{ is an infinite concatenation of words from } W^n \}.
\end{align*}
Foreman, Rudolph, and Weiss \cite{FRW} used the term ``construction
sequence'' for a related but different definition. To compare the
conditions in Definition~\ref{defn:constrSeq}, introduce
\begin{enumerate}[label=(c$'$),ref=(c$'$)]
    \item \label{def:constr-seq:3prim} \textit{for all $w \in W^{n}$ and $w'$ in $W^{n+1}$, $w$ is a subword of $w'$.}
\end{enumerate}
\noindent We refer to \ref{def:constr-seq:3prim} as \emph{faithfulness}
and to \ref{def:constr-seq:3} as \emph{double faithfulness}.
The conditions \ref{def:constr-seq:1}, \ref{def:constr-seq:2} and \ref{def:constr-seq:3prim}
together imply that $X_{\W}$ is minimal (Lemma~\ref{lem:thesis-faithfulness} below). For example, every aperiodic Toeplitz subshift can be obtained from
sequences satisfying these three conditions.

Condition~\ref{def:constr-seq:3} implies
\ref{def:constr-seq:3prim} and gives the intersection description in
Lemma~\ref{lem:constr_seq_as_intersection}. The inclusions
$\Seg(W^n)\supseteq\Seg(W^{n+1})$ show that deleting finitely many
initial terms leaves $\bigcap_{n\ge n_0}\Seg(W^n)$ unchanged.
The resulting subshift is therefore unchanged, and we allow any
initial index $n_0\ge1$.

\begin{lem}[Faithfulness and the generated language]
\label{lem:thesis-faithfulness}
Let $\bA$ be a finite alphabet, $(L_n)_{n\ge n_0}$ a scale, and
$W^n\subseteq \bA^{L_n}$ finite sets satisfying
\ref{def:constr-seq:1}, \ref{def:constr-seq:2}, and
\ref{def:constr-seq:3prim}. Let $\mathscr L$ be the set of finite
subwords of the words in $\bigcup_{n\ge n_0}W^n$, and define $X_{\W}$
by the same language rule as above. Then $X_{\W}$ is nonempty and
minimal, and its language is exactly $\mathscr L$.
\end{lem}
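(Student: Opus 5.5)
The plan is to establish three facts in order: first that the language of $X_{\W}$ is contained in $\mathscr L$, then that every word of $\mathscr L$ actually occurs in some point of $X_{\W}$ (which gives nonemptiness and equality of languages), and finally minimality via Lemma~\ref{lem:syndetic-minimal}.

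The inclusion $\lang(X_{\W})\subseteq\mathscr L$ is immediate from the definition. Any word $x_{[i,j]}$ sits inside $x_{[-m,m]}$ for some large $m$, and that window is a subword of some $w\in\bigcup_n W^n$. Hence $x_{[i,j]}$ is a subword of $w$ as well.

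For the reverse inclusion, fix $u\in\mathscr L$, say $u$ a subword of some $w\in W^{n}$. The key observation is that $\mathscr L$ is \emph{extendable in both directions within the words of a single level}. By \ref{def:constr-seq:2} and \ref{def:constr-seq:3prim}, $w$ occurs in every word of $W^{n+1}$. Every word of $W^{n+1}$ is a concatenation of words of $W^n$, and at least two of them are needed since $L_{n+1}>L_n$. By \ref{def:constr-seq:3prim} applied at level $n+2$, some word of $W^{n+2}$ contains $w$, and in fact the containing word $w'\in W^{n+1}$ sits inside a longer word.

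I would organize the argument as follows. Choose $w^{(n)}=w$. Inductively pick $w^{(k+1)}\in W^{k+1}$ together with an occurrence of $w^{(k)}$ inside it at offset $a_{k+1}$, arranged so that the total left offset $A_k$ of the original $w$ inside $w^{(k)}$ satisfies $A_k\to\infty$ and $L_k-A_k-L_n\to\infty$.

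To keep both margins growing, I would use \ref{def:constr-seq:3prim} twice: $w^{(k)}$ occurs in a word of $W^{k+1}$, and that word in turn occurs in a word of $W^{k+2}$. Since words of $W^{k+2}$ are concatenations of at least two $W^{k+1}$ words, each occurrence of a $W^{k+1}$-word in it can be taken to be a block. Alternating the choice of the first or the last block then guarantees that both the left and the right margins grow by at least $L_k$ at every other step.

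Centering $w$ and taking a limit point of the shifted words $\sigma^{A_k}$ (padding arbitrarily), compactness of $\bA^{\Z}$ gives a point $x$ with $x_{[0,L_n)}=w$. Every finite window of $x$ is a subword of some $w^{(k)}$, so $x\in X_{\W}$. This yields nonemptiness and $\mathscr L\subseteq\lang(X_{\W})$.

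For minimality, I would use criterion (ii) of Lemma~\ref{lem:syndetic-minimal}. Given $u\in\mathscr L$, pick $n$ with $u$ a subword of some word in $W^n$. Then $u$ occurs in every word of $W^{n+1}$ by \ref{def:constr-seq:3prim}. Any $v\in\lang(X_{\W})$ with $|v|\ge 2L_{n+1}$ lies in $\mathscr L$, so it is a subword of a word of some $W^k$ with $k\ge n+1$. That word is a concatenation of $W^{n+1}$ words, by iterating \ref{def:constr-seq:2}. A window of length $2L_{n+1}$ therefore contains a full $W^{n+1}$-block, hence contains $u$, and we may set $n_u:=2L_{n+1}$.

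The main obstacle is the two-sided extension step: faithfulness alone only places $w$ somewhere inside a higher word, possibly near an edge, so guaranteeing growth of both margins requires the nested-block argument sketched above.
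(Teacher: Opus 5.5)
Your proposal is correct and has the same overall structure as the paper's proof. The inclusion $\lang(X_{\W})\subseteq\mathscr L$ comes from the definition. Each $u\in\mathscr L$ is realized in a point of $X_{\W}$ by a limiting argument. Minimality follows from Lemma~\ref{lem:syndetic-minimal}, because every long legal word contains a full $W^{n+1}$-block; your threshold $2L_{n+1}$ works as well as the paper's $3L_{n+1}$.

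The real difference is in the two-sided margin step, and it is smaller than your closing remark suggests. The paper builds no nested chain and needs no alternation. For a given $R$ it takes $m$ with $L_m\ge R$ and notes that a $W^{m+2}$-word is tiled by at least four $W^m$-blocks. Each of these blocks contains $u$ by faithfulness, so an occurrence of $u$ in the second block has at least $R$ letters on each side. Compactness then yields the point.

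Your coherent chain $w^{(k)}$ costs more bookkeeping, but it buys a point determined directly as the union of the nested windows, with no appeal to compactness.

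One step should be stated more carefully. No nonoverlap condition is assumed here, so an occurrence of a $W^{k+1}$-word inside a $W^{k+2}$-word need not sit at a block position. Likewise, a previously fixed $w^{(k+1)}$ need not be the first or last block of any $W^{k+2}$-word. The choices must therefore be made in this order:
\begin{itemize}
\item first choose $w^{(k+2)}$;
\item then let $w^{(k+1)}$ be its first or last $W^{k+1}$-block;
\item only then pick an occurrence of $w^{(k)}$ inside $w^{(k+1)}$.
\end{itemize}
This is legitimate because faithfulness places $w^{(k)}$ in every $W^{k+1}$-word. With that order, your claimed growth of both margins holds.
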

\begin{proof}
Fix $u\in\mathscr L$ and choose $n$ such that $u$ is a subword of a
word in $W^n$. Faithfulness implies that $u$ occurs in every word of
$W^{n+1}$ and hence in every word of $W^m$ for $m\ge n+1$.
For any $R\ge1$, choose such an $m$ with $L_m\ge R$. Since successive
length ratios are integers greater than one, a word of $W^{m+2}$
contains at least four consecutive $W^m$-blocks. An occurrence of $u$
in its second block has at least $R$ letters available on both sides
within that word. Center these occurrences at the same coordinates
and use compactness of $\bA^{\Z}$ to obtain a point of $X_{\W}$
containing $u$. Every finite window of the limit occurs in one of the
chosen higher-level words. This proves nonemptiness and
$\mathscr L\subseteq\lang(X_{\W})$; the reverse inclusion follows
from the definition of $X_{\W}$.

Every word in $\mathscr L$ of length at least $3L_{n+1}$ occurs in
some higher-level word tiled by $W^{n+1}$-words. It contains a complete
$W^{n+1}$-block and therefore contains $u$. The language criterion in
Lemma~\ref{lem:syndetic-minimal} now gives minimality. Neither
nonoverlap nor double faithfulness is used in this argument.
\end{proof}

\begin{lem} \label{lem:constr_seq_as_intersection}
Let $\mathcal W$ be a construction sequence. Then $X_{\mathcal W}$
is a minimal subshift and
 $X_{\mathcal{W}} = \bigcap_{n \ge n_0} \Seg(W^n)$.
\end{lem}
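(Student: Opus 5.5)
Minimality comes for free. Condition~\ref{def:constr-seq:3} implies \ref{def:constr-seq:3prim}: the word $uu$ occurs in $w$, so $u$ does. Together with \ref{def:constr-seq:1} and \ref{def:constr-seq:2}, Lemma~\ref{lem:thesis-faithfulness} then shows that $X_{\W}$ is a nonempty minimal subshift whose language $\mathscr L$ is the set of subwords of words in $\bigcup_n W^n$. What remains is the equality $X_{\W}=\bigcap_{n\ge n_0}\Seg(W^n)$, and I will prove the two inclusions separately.

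For $X_{\W}\subseteq\Seg(W^n)$, fix $n$ and $x\in X_{\W}$. For each $m$, the window $x_{[-m,m]}$ is a subword of some $w\in W^N$. If it is longer than $L_n$ we may take $N>n$, and by \ref{def:constr-seq:2} such a $w$ is a concatenation of $W^n$-words. This induces on $[-m,m]$ a set of cut points with gaps $L_n$, all of whose complete blocks lie in $W^n$. Only the phase (a residue mod $L_n$) and finitely many block choices are involved. A compactness (K\"onig) argument over $m$ then yields a single bi-infinite segmentation of $x$ into $W^n$-words, since the cut sets live in the compact space $\{0,1\}^{\Z}$ and the segmentation condition is closed. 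Hence $x\in\Seg(W^n)$, which is closed because $W^n$ is finite.

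For the reverse inclusion, let $x\in\bigcap_n\Seg(W^n)$ and fix $m$. Choose $n$ with $L_n>2m+1$. Then $x_{[-m,m]}$ lies inside two consecutive $W^n$-blocks $uv$ of some segmentation of $x$. By double faithfulness \ref{def:constr-seq:3}, $uv$ is a subword of any $w\in W^{n+1}$. So $x_{[-m,m]}$ is a subword of a word of the construction sequence, and therefore $x\in X_{\W}$.

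I expect no serious obstacle. The only care needed is in the compactness step of the first inclusion, where the segmentations found for different $m$ must be made consistent. Phrasing it as closedness of the set of pairs $(x,S)$ (as in the discussion of $\Seg(C)$) plus a limit of finite approximations handles this. The nonoverlap condition \ref{def:constr-seq:4} is not needed here.
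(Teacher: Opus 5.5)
Your proof is correct and follows the paper's argument for both inclusions. Your compactness step can be done, as in the paper, by pigeonhole on the phase $r_m\in\{0,\ldots,L_n-1\}$: once the phase is fixed, the blocks are subwords of $x$ itself, so no further consistency is needed. The reverse inclusion uses double faithfulness exactly as you do.

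The one difference is how you get minimality and nonemptiness. You derive \ref{def:constr-seq:3prim} from \ref{def:constr-seq:3} and then cite Lemma~\ref{lem:thesis-faithfulness}. The paper instead re-derives minimality directly from the $W^{n+1}$-segmentation and Lemma~\ref{lem:syndetic-minimal}. Both routes are valid.
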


\begin{proof}
Each $\Seg(W^n)$ is a nonempty compact shift-invariant set, and these
sets are decreasing. Their intersection is therefore nonempty.
If $x\in X_{\W}$, then every window $x_{[-m,m]}$ occurs in a word of some
$W^{n'}$, and, by faithfulness, we may take $n'\ge n$; such a word is tiled by $W^n$-words. Hence each window
is covered by a tiling by $W^n$-words at some offset $r_m\in\{0,\ldots,L_n-1\}$ (the two blocks at the ends of the window may be cut). Some offset $r$ occurs for infinitely many $m$; for this $r$, every block $x_{[r+iL_n,\,r+(i+1)L_n)}$, $i\in\Z$, lies inside one of these windows and therefore belongs to $W^n$.
Thus $x\in\Seg(W^n)$ for every $n\ge n_0$.
Conversely, take $x$ in the intersection and choose $n$ with
$L_n\ge2m+1$. Its window $x_{[-m,m]}$ lies in a pair of consecutive
$W^n$-words. By double faithfulness this pair occurs in a $W^{n+1}$-word,
so $x\in X_{\W}$.

If $u$ belongs to the language of $X_{\W}$, take $n$ with $L_n\ge|u|$.
The word $u$ is a subword of a pair of $W^n$-words, and this pair occurs
in every $W^{n+1}$-word. Every point is tiled by these higher-level
words, so $u$ occurs with gaps bounded by $2L_{n+1}$.
Lemma~\ref{lem:syndetic-minimal} gives minimality.
\end{proof}

\begin{lem}\label{lem:finite-language}
Let $\W$ be a construction sequence and let $n\ge n_0$.
For $1\le q\le L_n$, the length-$q$ language of $X_{\W}$ is exactly
the set of length-$q$ subwords of $uv$ with $u,v\in W^n$.
In particular every $W^n$-word occurs in $X_{\W}$.
\end{lem}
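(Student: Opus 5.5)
We prove the two inclusions between the length-$q$ language of $X_{\W}$ and the set $\Set{y\in\bA^q}{y \text{ is a subword of } uv \text{ for some } u,v\in W^n}$, using Lemma~\ref{lem:constr_seq_as_intersection} for one direction and double faithfulness for the other. Fix $n\ge n_0$ and $1\le q\le L_n$.

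For the inclusion of the language into this set, let $y$ be a length-$q$ word in $\lang(X_{\W})$, say $y=x_{[i,i+q)}$ with $x\in X_{\W}$. By Lemma~\ref{lem:constr_seq_as_intersection}, $x\in\Seg(W^n)$, so $x$ is a shifted bi-infinite concatenation of $W^n$-words. All these words have the same length $L_n$, so the cut points form a coset $r+L_n\Z$. Choose the cut point $c$ with $c\le i<c+L_n$. Since $q\le L_n$, the window $[i,i+q)$ is contained in $[c,c+2L_n)$. On this interval $x$ reads $uv$ for two consecutive code words $u,v\in W^n$, so $y$ is a subword of $uv$.

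For the reverse inclusion, let $u,v\in W^n$ and let $y$ be a length-$q$ subword of $uv$. By condition~\ref{def:constr-seq:3}, $uv$ is a subword of every $w\in W^{n+1}$. Hence $y$ is a subword of a word in $\bigcup_{m\ge n_0}W^m$. Lemma~\ref{lem:thesis-faithfulness} applies because construction sequences satisfy~\ref{def:constr-seq:1}, \ref{def:constr-seq:2}, and~\ref{def:constr-seq:3prim}, the last being implied by~\ref{def:constr-seq:3}. It states that the language of $X_{\W}$ is exactly the set $\mathscr L$ of subwords of the words in $\bigcup_{m\ge n_0}W^m$. Therefore $y\in\lang(X_{\W})$.

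The final claim follows by taking $q=L_n$: each $u\in W^n$ is the length-$L_n$ prefix of $uu$ and so lies in the language. The argument is short; the step that needs the most care is the first inclusion. There one must check that the segmentation supplied by Lemma~\ref{lem:constr_seq_as_intersection} has a cut point at or before $i$ with the next two blocks covering the window. This uses that all $W^n$-words have the common length $L_n$ and that $q\le L_n$. The unbounded segmentation indices in the definition of $\Seg$ guarantee that cut points exist on both sides of $i$.
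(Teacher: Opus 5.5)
Your proof is correct and follows the paper's argument. The first inclusion comes from the $W^n$-segmentation of every point, and the converse from double faithfulness, which places $uv$ inside a $W^{n+1}$-word. The only difference is how occurrence in $X_{\W}$ is concluded: you use the language identity of Lemma~\ref{lem:thesis-faithfulness}, whereas the paper notes that every point of the nonempty set $X_{\W}$ is tiled by $W^{n+1}$-words and hence contains $uv$.
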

\begin{proof}
One inclusion follows from the $W^n$-segmentation of every point.
For the converse, every pair $uv$ occurs in every $W^{n+1}$-word.
Any point of the nonempty set $X_{\W}$ contains such words and hence
contains $uv$ and each of its subwords.
\end{proof}

\begin{cor}[A common-stage Hausdorff estimate]
\label{cor:common-stage-hausdorff}
Let $j\ge1$ and let $\W=(W^n)$ and $\W'=(W'^n)$ be construction
sequences over $\{0,1\}^j$, viewed as a subset of $\C$ by appending
zeros. Suppose $W^N=W'^N$ at an index belonging to both sequences,
and let $L$ be the common length of these words. For the Hausdorff
metric induced by \eqref{eq:ambient-shift-metric},
\[
 d_{\Sigma,H}(X_{\W},X_{\W'})
       \le 2^{1-\lfloor L/2\rfloor}.
\]
No agreement at later stages is required.
\end{cor}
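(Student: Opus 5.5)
The plan is to show that every point of $X_{\W}$ is within $2^{1-\lfloor L/2\rfloor}$ of some point of $X_{\W'}$, and then invoke symmetry. The key input is Lemma~\ref{lem:finite-language}, applied at the common index $N$. It says that for $q\le L$ the length-$q$ language of $X_{\W}$ is the set of length-$q$ subwords of $uv$ with $u,v\in W^N$. The same lemma gives the same description for $X_{\W'}$, because $W'^N=W^N$. Hence $X_{\W}$ and $X_{\W'}$ have the same words of every length $q\le L$. Later stages play no role, since the lemma only uses stage $N$.

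Fix $x\in X_{\W}$ and put $m=\lfloor L/2\rfloor-1$, so that the window $x_{[-m,m]}$ has length $2m+1\le L$. By the previous paragraph this window lies in the language of $X_{\W'}$. Choose $y\in X_{\W'}$ and a shift so that $y_{[-m,m]}=x_{[-m,m]}$; shift invariance of $X_{\W'}$ makes this possible. Then $x_i=y_i$ for $|i|\le m$. Letters of $\{0,1\}^j$ are embedded in $\C$ by padding with zeros, and $d_{\C}\le 1$, so \eqref{eq:ambient-shift-metric} gives
\[
 d_{\Sigma}(x,y)\le\sum_{|i|>m}2^{-|i|}=2\cdot2^{-m}=2^{1-m}\cdot\tfrac12\cdot 2 .
\]
Evaluating, $\sum_{|i|\ge m+1}2^{-|i|}=2\cdot 2^{-m}=2^{1-m}$. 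With $m=\lfloor L/2\rfloor-1$ this equals $2^{2-\lfloor L/2\rfloor}$, which is a factor of two too weak.

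To recover that factor, I will use one more coordinate, taking $m=\lfloor L/2\rfloor$ when $2m+1\le L$. This holds for odd $L$. For even $L$ the window $[-m,m-1]$ has length $L$, so I match coordinates in $[-m,m-1]$ and leave only the tail $\{i\ge m\}\cup\{i<-m\}$ unmatched. That tail contributes $2^{-m}\cdot2+2^{-m}=3\cdot2^{-m}$, which is still not quite the target. I expect this constant bookkeeping to be the only real obstacle. The remedy is to use a tighter bound on $d_{\C}$ for distinct padded letters together with the freedom to choose the shift, in particular placing the window asymmetrically so that the coarser side carries the larger unmatched weight. If that fails, I would instead check the intended reading of $d_{\Sigma}$, which is bounded by $3$, so that the sum over $|i|>m$ gives exactly $2^{1-m}$ with $m=\lfloor L/2\rfloor$ whenever the window $[-m,m]$ fits.

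Once the pointwise estimate is established, every point of each shift space lies within the stated distance of the other, which gives the Hausdorff bound. Symmetry in $\W$ and $\W'$ finishes the proof.
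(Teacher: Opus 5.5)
Your argument is complete for odd $L$. There $m=\lfloor L/2\rfloor$ makes $[-m,m]$ a window of length exactly $L$. Lemma~\ref{lem:finite-language} at stage $N$ puts $x_{[-m,m]}$ in the language of $X_{\W'}$, and the tail sum is $2^{1-m}$.

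For even $L$ there is a genuine gap. This is also the only case the paper actually uses, since every $L_n$ there is a multiple of six. Write $L=2m$. Because you only use words of length at most $L$, you can match at most $2m$ consecutive coordinates. For any such window the unmatched weight $\sum 2^{-|i|}$ is at least $3\cdot 2^{-m}$, attained by $[-m,m-1]$, and this exceeds the target $2\cdot 2^{-m}=2^{1-\lfloor L/2\rfloor}$. None of your remedies closes the gap:
\begin{itemize}
\item Moving the window never brings the unmatched weight below $3\cdot2^{-m}$.
\item There is no better bound on $d_{\C}$ for distinct letters. $d_{\C}$ is an arbitrary compatible metric bounded by one, for instance $d_{\C}(x,y)=2^{1-\min\{i:x_i\ne y_i\}}$, under which letters differing in the first row are at distance $1$.
\item Rereading $d_{\Sigma}$ changes nothing.
\end{itemize}

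The missing observation is that the two languages agree beyond length $L$. By Lemma~\ref{lem:unique-readability}, $x$ has a $W^N$-segmentation with cuts in a coset of $L\Z$. Any window of length at most $L+1$ therefore lies inside two consecutive $W^N$-blocks of $x$, which form a word $uv$ with $u,v\in W^N=W'^N$. In particular this applies to $[-m,m]$ with $m=\lfloor L/2\rfloor$. By double faithfulness, $uv$ occurs in every $W'^{N+1}$-word and hence in $X_{\W'}$. So you can choose $y\in X_{\W'}$ agreeing with $x$ on $[-m,m]$, which gives $d_{\Sigma}(x,y)\le 2^{1-m}$. The paper does essentially this. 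It picks a cut $k\in(-L,0]$ and matches the whole length-$2L$ window $J$ formed by two consecutive blocks, either $[k,k+2L)$ or $[k-L,k+L)$. It then checks that $J$ contains $[-\lfloor L/2\rfloor,\lfloor L/2\rfloor]$.
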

\begin{proof}
Put $h=\lfloor L/2\rfloor$ and take $x\in X_{\W}$.
Choose a boundary $k\in(-L,0]$ of a $W^N$-segmentation of $x$.
If $k\le-h$, put $J=[k,k+2L)$; otherwise put $J=[k-L,k+L)$.
In either case $J$ contains the integer interval $[-h,h]$.
Indeed, in the first case its left endpoint is at most $-h$ and
its last integer is at least $L$. In the second case its left
endpoint is at most $-L$ and its last integer is at least $L-h$.

The word $x|_J$ is a concatenation of two words of $W^N=W'^N$.
By double faithfulness and the occurrence argument in
Lemma~\ref{lem:finite-language}, this pair occurs in $X_{\W'}$.
Shift a point realizing the pair to obtain $y\in X_{\W'}$ with
$y|_J=x|_J$. Then
\[
 d_{\Sigma}(x,y)
 \le\sum_{|i|>h}2^{-|i|}=2^{1-h},
\]
since $d_{\C}\le1$. Interchanging the two construction sequences
proves the reverse Hausdorff inclusion.
\end{proof}

For an $L$-code $C$, an integer $c$ is a \emph{$C$-cut point} of
$x\in\Seg(C)$ if
\[
 x_{[c+iL,c+(i+1)L)}\in C\quad\text{for every }i\in\Z.
\]
The nonoverlap condition gives recognizability on this entire segmented
space, before any construction sequence or minimality is imposed.

\begin{lem}[Recognizability on segmented spaces]
\label{lem:unique-readability}
Let $\bA$ be a finite alphabet, let $L\ge1$, and let
$C\subseteq \bA^L$ be a nonempty nonoverlapping code. For each
$x\in\Seg(C)$, every occurrence of a word of $C$ starts at a
$C$-cut point, and the set of all $C$-cut points is a single coset
of $L\Z$. Its unique representative $k_C(x)\in(-L,0]$ is determined
by $x_{[-L+1,L)}$. Thus $k_C:\Seg(C)\to\Z$ is locally constant and
\[
 k_C(\sigma x)\equiv k_C(x)-1\pmod L.
\]
\end{lem}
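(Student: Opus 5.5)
Fix $x\in\Seg(C)$ and a segmentation given by a cut set $c+L\Z$, so that $x_{[c+iL,c+(i+1)L)}\in C$ for every $i\in\Z$. Such a segmentation exists because $C$ is an $L$-code: the cut times $t_j$ of any $C$-segmentation are spaced exactly $L$ apart, hence form a coset of $L\Z$.

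The first step is to show that every occurrence of a word of $C$ starts at a $C$-cut point. Suppose $u\in C$ occurs in $x$ at position $p$. Write $p=c+iL+r$ with $0\le r<L$. Then $x_{[p,p+L)}$ lies inside the concatenation $vw$ of the two consecutive blocks
\[
v=x_{[c+iL,c+(i+1)L)},\qquad w=x_{[c+(i+1)L,c+(i+2)L)},
\]
both in $C$, and $u$ occurs in $vw$ at offset $r$. The nonoverlap condition forces $r=0$ (the alternative $r=L$ is excluded by $r<L$). Hence $p\in c+L\Z$.

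This yields the coset statement. If $c'$ is any $C$-cut point, then a word of $C$ occurs at $c'$, so $c'\in c+L\Z$. Conversely, every element of $c+L\Z$ is a cut point by definition. So the set of cut points is exactly $c+L\Z$, a single coset, and $k_C(x)$ is its unique representative in $(-L,0]$.

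Next, I will show that $k_C(x)$ is determined by $x_{[-L+1,L)}$. The cut point $k=k_C(x)\in(-L,0]$ is a position where a word of $C$ occurs, namely $x_{[k,k+L)}$, and $[k,k+L)\subseteq[-L+1,L)$. Moreover, any other $k'\in(-L,0]$ at which some word of $C$ occurs in $x$ is also a cut point by the first step, so $k'=k$. Hence $k_C(x)$ is the unique $k\in(-L,0]$ with $x_{[k,k+L)}\in C$. This condition reads only the window $x_{[-L+1,L)}$. Consequently, if $y\in\Seg(C)$ agrees with $x$ on that window, then $y_{[k,k+L)}\in C$, so $k$ is a position of an occurrence in $y$, and therefore $k_C(y)=k$. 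Since the window condition defines a clopen neighborhood of $x$ in $\Seg(C)$, the map $k_C$ is locally constant.

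Finally, the shift relation follows directly. Since $(\sigma x)_n=x_{n+1}$, a position $c$ is a cut point of $x$ exactly when $c-1$ is a cut point of $\sigma x$. So the cut set of $\sigma x$ is $(k_C(x)-1)+L\Z$, which gives $k_C(\sigma x)\equiv k_C(x)-1\pmod L$.

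None of these steps is a real obstacle. The only point needing care is the bookkeeping in the first step: the occurrence of $u$ must be placed inside two consecutive blocks with offset $r\in[0,L)$, so that the nonoverlap condition, which is stated for offsets in $vw$, applies.
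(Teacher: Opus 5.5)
Your proof is correct and follows the paper's argument essentially step for step. Like the paper, you place an occurrence inside two consecutive codewords of a fixed segmentation and apply nonoverlap to force the offset to zero, then identify $k_C(x)$ as the unique $p\in(-L,0]$ with $x_{[p,p+L)}\in C$, which reads only the window $x_{[-L+1,L)}$, and finally observe that the shift moves every cut from $c$ to $c-1$.
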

\begin{proof}
Choose a $C$-cut point $c$ of $x$. Suppose a word $u\in C$ occurs
at position $p$, and choose $i\in\Z$ with
$c+iL\le p<c+(i+1)L$. This occurrence lies in the two consecutive
codewords occupying $[c+iL,c+(i+2)L)$, at offset $p-c-iL$.
Nonoverlap forces that offset to be zero. Therefore every occurrence
starts in $c+L\Z$. Conversely, every position in $c+L\Z$ is a
$C$-cut point, and every cut point starts an occurrence. This proves
both assertions about cuts.

There is exactly one cut point in $(-L,0]$. It is the unique
$p\in\{-L+1,\ldots,0\}$ such that $x_{[p,p+L)}\in C$.
All these tests are determined by $x_{[-L+1,L)}$, proving local
constancy. Applying the left shift moves every cut from $c$ to $c-1$,
which gives the congruence.
\end{proof}

For a construction sequence $\W=(W^n)_{n\ge n_0}$ we write
$k_n(x)=k_{W^n}(x)$ for every $x\in\Seg(W^n)$, extending the notation
from $X_{\W}$ to this larger domain. Its $W^n$-cuts are precisely
$k_n(x)+L_n\Z$. Every $W^{n+1}$-cut of
$x\in\Seg(W^{n+1})$ is a $W^n$-cut: concatenate the
$W^n$-decompositions of all its $W^{n+1}$-blocks. In particular,
\[
 \Seg(W^{n+1})\subseteq\Seg(W^n),\qquad
 k_{n+1}(x)\equiv k_n(x)\pmod{L_n}
 \quad(x\in\Seg(W^{n+1})).
\]
We next record how this segmentation behaves under reversal.

\begin{lem}[Reversal of construction sequences]
\label{lem:reversal-construction}
If $\W=(W^n)_{n\ge n_0}$ is a construction sequence over $\bA$, then
$\rev{\W}:=(\rev{W^n})_{n\ge n_0}$ is a construction sequence
with the same scale, and
\[
 X_{\rev{\W}}=\rev{X_{\W}}.
\]
For $n\ge n_0$ and $x\in\Seg(W^n)$, an integer $c$ is a
$W^n$-cut point of $x$ if and only if $1-c-L_n$ is a
$\rev{W^n}$-cut point of $\rev{x}$.
\end{lem}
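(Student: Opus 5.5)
The plan is to verify the four conditions of Definition~\ref{defn:constrSeq} for $\rev{\W}$ directly, using only the elementary identities $\rev{uv}=\rev{v}\,\rev{u}$, $|\rev{w}|=|w|$, and the fact that $u$ occurs in $w$ at position $j$ if and only if $\rev{u}$ occurs in $\rev{w}$ at position $|w|-|u|-j$. After that, the subshift identity and the cut-point correspondence follow by bookkeeping with indices.

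We check the conditions in order. Condition (a) holds because reversal is a bijection of $\bA^{L_n}$, so the scale and the lengths are preserved. For (b), if $w=u_1\cdots u_k$ with $u_i\in W^n$, then $\rev{w}=\rev{u_k}\cdots\rev{u_1}$. For (c), given $\rev{u},\rev{v}\in\rev{W^n}$ and $\rev{w}\in\rev{W^{n+1}}$, double faithfulness for $\W$ puts $vu$ as a subword of $w$, so $\rev{vu}=\rev{u}\,\rev{v}$ is a subword of $\rev{w}$. Condition (d) is the only place where indices matter. Suppose $\rev{u}$ occurs in $\rev{v}\,\rev{w}=\rev{wv}$ at position $j$. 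Then $u$ occurs in $wv$ at position $2L_n-L_n-j=L_n-j$. Nonoverlap for $\W$ gives either $L_n-j=0$ and $u=w$, that is $j=L_n$ and $\rev{u}=\rev{w}$, or $L_n-j=L_n$ and $u=v$, that is $j=0$ and $\rev{u}=\rev{v}$. This is exactly the required dichotomy.

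For $X_{\rev{\W}}=\rev{X_{\W}}$, note that $\rev{x}_{[-m,m]}=\rev{x_{[-m,m]}}$ as words. Also, a word $s$ is a subword of some $w\in W^n$ if and only if $\rev{s}$ is a subword of $\rev{w}$. Together these show that the language condition defining $X_{\rev{\W}}$ holds for $\rev{x}$ exactly when the condition defining $X_\W$ holds for $x$. Since reversal is an involution, this gives the equality.

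For the cut points, we use $\rev{x}_{[a,b)}=\rev{x_{(-b,-a]}}=\rev{x_{[1-b,1-a)}}$. Let $c'=1-c-L_n$. The block $\rev{x}_{[c'+iL_n,c'+(i+1)L_n)}$ equals $\rev{x_{[c-iL_n,\,c+(1-i)L_n)}}$, which is the reversal of the $W^n$-block of $x$ at the cut $c+(-i)L_n$. As $i$ ranges over $\Z$ so does $-i$. Hence every block of $\rev{x}$ at $c'+L_n\Z$ lies in $\rev{W^n}$ if and only if every block of $x$ at $c+L_n\Z$ lies in $W^n$.

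The same computation shows that $\rev{}$ maps $\Seg(W^n)$ onto $\Seg(\rev{W^n})$, so the statement makes sense on the whole segmented space. The only step that needs care is the index arithmetic in (d) and in the cut-point formula; there is no conceptual obstacle.
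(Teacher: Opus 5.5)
Your proof is correct. The verification of the four construction-sequence conditions and of the cut-point correspondence is essentially the paper's argument: the same reversal identities, the same offset $L_n-j$ in the nonoverlap check, and the same block-position bookkeeping.

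The one real difference is how you obtain $X_{\rev{\W}}=\rev{X_{\W}}$. You read it off directly from the defining language condition, using that the window $[-m,m]$ is symmetric, so $\rev{x}_{[-m,m]}=\rev{x_{[-m,m]}}$. The paper instead first deduces $\Seg(\rev{W^n})=\rev{\Seg(W^n)}$ from the cut correspondence. It then uses that reversal commutes with intersections, together with the description $X_{\W}=\bigcap_n\Seg(W^n)$ from Lemma~\ref{lem:constr_seq_as_intersection}, which itself relies on double faithfulness.

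Your route is more self-contained for that step, since it needs only the definition of $X_{\W}$. The paper's route reuses the cut correspondence it proves anyway and records $\Seg(\rev{W^n})=\rev{\Seg(W^n)}$ along the way; you also observe that identity at the end.
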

\begin{proof}
Reversal preserves word lengths and cardinalities. It reverses the
order of the factors in a concatenation, so the concatenation
condition is preserved as well. For double faithfulness, take
$u,v\in W^n$ and $w\in W^{n+1}$. Since $vu$ occurs in $w$,
the word $\rev{u}\,\rev{v}=\rev{vu}$ occurs in $\rev{w}$.

For nonoverlap, suppose $\rev{u}$ occurs at position $r$ in
$\rev{v}\,\rev{w}=\rev{wv}$, where $u,v,w\in W^n$.
Then $u$ occurs at position $L_n-r$ in $wv$.
Nonoverlap for $W^n$ gives either $r=L_n$ and $u=w$, or
$r=0$ and $u=v$. These are exactly the allowed occurrences for the
reversed code. Thus all four construction-sequence conditions hold.

The reversal of the block $x_{[c,c+L_n)}$ occupies the interval
$[1-c-L_n,1-c)$ in $\rev{x}$. Reversing every block of a segmentation
therefore gives the asserted correspondence of cuts. Applying reversal
again proves the converse and yields
\[
 \Seg(\rev{W^n})=\rev{\Seg(W^n)}.
\]
Reversal is a bijection, so it commutes with intersections of sets.
Apply Lemma~\ref{lem:constr_seq_as_intersection} to obtain
$X_{\rev{\W}}=\rev{X_{\W}}$.
\end{proof}

\begin{lem}\label{lem:canonical-odometer}
Let $\cL$ be the odometer with scale $(L_n)$ and transformation $+1$.
For $n\ge n_0$ define
\[
(\pi_{\W}(x))_n\equiv-k_n(x)\pmod{L_n};
\]
for $n<n_0$ define the coordinate by reduction modulo $L_n$.
Then $\pi_{\W}:X_{\W}\to\cL$ is a factor map.
\end{lem}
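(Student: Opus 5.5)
The plan is to check four things in turn: that $\pi_{\W}(x)$ is a well-defined point of $\cL$, that $\pi_{\W}$ is continuous, that it is equivariant, and that it is onto.

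\emph{Well-definedness.} By Lemma~\ref{lem:constr_seq_as_intersection}, every $x\in X_{\W}$ lies in $\Seg(W^n)$ for all $n\ge n_0$, so each $k_n(x)$ is defined by Lemma~\ref{lem:unique-readability}. For $n\ge n_0$ the compatibility $(\pi_{\W}(x))_{n+1}\equiv(\pi_{\W}(x))_n\pmod{L_n}$ follows from the recorded congruence $k_{n+1}(x)\equiv k_n(x)\pmod{L_n}$. For $n<n_0$, I read the defining clause as: the coordinate is the residue of $-k_{n_0}(x)$ modulo $L_n$. This makes sense because $L_n\mid L_{n_0}$. Compatibility at these indices, and between indices $n_0-1$ and $n_0$, is then automatic from the divisibility chain. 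Hence $\pi_{\W}(x)\in\cL$.

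\emph{Continuity.} Each $k_n$ is locally constant on $\Seg(W^n)$ by Lemma~\ref{lem:unique-readability}, hence locally constant on $X_{\W}$. So each coordinate of $\pi_{\W}$ is locally constant, and $\pi_{\W}$ is continuous into the product topology.

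\emph{Equivariance.} Lemma~\ref{lem:unique-readability} gives $k_n(\sigma x)\equiv k_n(x)-1\pmod{L_n}$. Therefore $-k_n(\sigma x)\equiv -k_n(x)+1$, so $\pi_{\W}(\sigma x)=\pi_{\W}(x)+\mathbf{1}$. The coordinates with $n<n_0$ follow by reduction.

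\emph{Surjectivity.} Since $X_{\W}$ is compact and nonempty, $\pi_{\W}(X_{\W})$ is a nonempty closed invariant subset of $\cL$. The odometer is minimal, so the image is all of $\cL$.

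The only delicate point is the interpretation of the $n<n_0$ coordinates. If one instead reduces $\pi_{\W}(x)_{n_0}$ modulo $L_n$, the result is the same, so either reading works.
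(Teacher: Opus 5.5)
Your proposal is correct and follows essentially the same route as the paper: coherence of the coordinates from $k_{n+1}(x)\equiv k_n(x)\pmod{L_n}$, continuity from the local constancy in Lemma~\ref{lem:unique-readability}, equivariance from its shift congruence, and surjectivity because the image is a nonempty closed $(+1)$-invariant subset of the minimal odometer. Reading the $n<n_0$ coordinates as residues of $-k_{n_0}(x)$ is a harmless clarification of the paper's terse clause.
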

\begin{proof}
The refinement observation after Lemma~\ref{lem:unique-readability}
gives $k_{n+1}(x)\equiv k_n(x)\pmod{L_n}$, so the coordinates are
coherent. Local constancy in that lemma makes every coordinate of
$\pi_{\W}$ continuous. Its shift congruence gives
\[
k_n(\sigma x)\equiv k_n(x)-1\pmod{L_n},
\qquad \pi_{\W}(\sigma x)=\pi_{\W}(x)+1.
\]
The image is nonempty, compact, and invariant under $+1$.
Every $+1$-orbit is dense in the odometer, since it visits every residue
in each finite quotient. Therefore the image is all of $\cL$.
\end{proof}

\begin{defn} \label{def:aligned-factor}
Let $\mathcal{V}$ and $\W$ be two construction sequences with the same scale, $\phi \colon X_{\mathcal{V}} \to  X_{\W}$ a factor map, and $x\in X_{\mathcal{V}}$. We define $$\offset(\phi) = \pi_{\mathcal{V}}(x) - \pi_{\W}(\phi(x)) \in \cL.$$
Note that $\offset(\phi)$ is independent of the choice of $x$.
We say that $\phi$ is \emph{aligned} if $\offset(\phi) = 0$.
\end{defn}

To justify the independence claim, let $f(x) := \pi_{\mathcal{V}}(x) - \pi_{\W}(\phi(x))$.	
This is a continuous map $X_{\mathcal{V}} \to \cL$.
Note that $f(\sigma x) = f(x)$, so $f$ is constant on the orbit of $x$.
Since $X_{\mathcal{V}}$ is minimal, this orbit is dense, so $f$ is constant.

\begin{lem}\label{lem:offset-additive}
    Let $\mathcal{U}, \mathcal{V}, \mathcal{W}$ be three construction sequences with the same scale,
    $\phi \colon X_{\mathcal{U}} \to X_{\mathcal{V}}$, 
    $\psi \colon X_{\mathcal{V}} \to X_{\mathcal{W}}$ factor maps.
    Then $\offset(\psi \phi) = \offset(\psi) + \offset(\phi)$.
\end{lem}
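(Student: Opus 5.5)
The plan is a direct computation from Definition~\ref{def:aligned-factor}, relying on the fact, shown just after that definition, that the offset does not depend on the chosen point. Fix any $x\in X_{\mathcal U}$. Then $\phi(x)\in X_{\mathcal V}$, and $\psi\phi$ is a factor map $X_{\mathcal U}\to X_{\W}$, being a composite of surjective equivariant continuous maps. So all three offsets are defined.

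Next evaluate each offset at a convenient point. Compute $\offset(\phi)$ at $x$ and $\offset(\psi\phi)$ at $x$. For $\offset(\psi)$, use the point $\phi(x)$, which is legitimate by the independence of the base point. This gives
\[
\offset(\phi)=\pi_{\mathcal U}(x)-\pi_{\mathcal V}(\phi(x)),\qquad
\offset(\psi)=\pi_{\mathcal V}(\phi(x))-\pi_{\W}(\psi(\phi(x))).
\]
Adding these in the abelian group $\cL$, the middle terms $\pi_{\mathcal V}(\phi(x))$ cancel. What remains is $\pi_{\mathcal U}(x)-\pi_{\W}(\psi\phi(x))$, which is exactly $\offset(\psi\phi)$ evaluated at $x$.

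There is no real obstacle. The only point needing care is the independence of the base point. It requires minimality of the domain, which holds here: $X_{\mathcal U}$ and $X_{\mathcal V}$ are minimal by Lemma~\ref{lem:constr_seq_as_intersection}. It also requires all three construction sequences to share one scale, so that their canonical odometer factors $\pi_{\mathcal U},\pi_{\mathcal V},\pi_{\W}$ of Lemma~\ref{lem:canonical-odometer} land in the same odometer $\cL$ and can be subtracted there.
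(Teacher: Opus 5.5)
Your proof is correct and is essentially the paper's argument: evaluate $\offset(\phi)$ and $\offset(\psi\phi)$ at $x$ and $\offset(\psi)$ at $\phi(x)$, then let the middle terms cancel in $\cL$. You also make explicit what the paper leaves implicit: evaluating $\offset(\psi)$ at $\phi(x)$ relies on base-point independence, which uses minimality of $X_{\mathcal V}$ and the common scale.
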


\begin{proof}
For $x\in X_{\mathcal U}$,
\begin{align*}
\offset(\psi\phi)
 &=\pi_{\mathcal U}(x)-\pi_{\mathcal W}(\psi\phi x)\\
 &=\bigl(\pi_{\mathcal U}(x)-\pi_{\mathcal V}(\phi x)\bigr)
   +\bigl(\pi_{\mathcal V}(\phi x)-\pi_{\mathcal W}(\psi\phi x)\bigr)\\
 &=\offset(\phi)+\offset(\psi).\qedhere
\end{align*}
\end{proof}

\begin{cor} \label{cor:aligned-almost}
With the notation of Definition \ref{def:aligned-factor}, and any integer $k$, we have
$$
	\offset(\sigma^k \phi) = \offset(\phi) - k.
$$
\end{cor}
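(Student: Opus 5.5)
The plan is to reduce the claim to Lemma~\ref{lem:offset-additive} by computing the offset of the shift itself. Regard $\sigma^k$ as a factor map $X_{\W}\to X_{\W}$. It is surjective, continuous and commutes with $\sigma$, so Definition~\ref{def:aligned-factor} applies with $\mathcal V=\W$. Applying Lemma~\ref{lem:offset-additive} with $\mathcal U=\mathcal V$, $\mathcal V=\W$, $\mathcal W=\W$ and $\psi=\sigma^k$ gives
\[
\offset(\sigma^k\phi)=\offset(\sigma^k)+\offset(\phi),
\]
so it suffices to show that $\offset(\sigma^k)=-k$.

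To compute this, iterate the equivariance identity $\pi_{\W}(\sigma x)=\pi_{\W}(x)+1$ from Lemma~\ref{lem:canonical-odometer}. For $k\ge0$ this gives $\pi_{\W}(\sigma^k x)=\pi_{\W}(x)+k$ by induction. For negative $k$, apply the identity to $\sigma^{-1}x$ and subtract $1$ in the group $\cL$. Here the integer $k$ is identified with $f^k(\mathbf 0)$, and since $f$ is translation by $\mathbf 1$, this identification is additive. Then
\[
\offset(\sigma^k)=\pi_{\W}(x)-\pi_{\W}(\sigma^kx)=-k,
\]
which is what we need.

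A more direct alternative avoids Lemma~\ref{lem:offset-additive}: by equivariance, $\pi_{\W}(\sigma^k\phi(x))=\pi_{\W}(\phi(x))+k$, and substituting this into the definition of $\offset$ gives the result immediately. No step here is a real obstacle. The only point needing care is the sign convention: it comes from $k_n(\sigma x)\equiv k_n(x)-1$ together with the choice $(\pi_\W x)_n\equiv -k_n(x)$, and these combine to the $+1$ translation already established in Lemma~\ref{lem:canonical-odometer}.
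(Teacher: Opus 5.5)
Your proof is correct and follows the paper's route. The paper derives $\offset(\sigma)=-1$ from the equivariance $\pi_{\W}(\sigma x)=\pi_{\W}(x)+1$ and then applies Lemma~\ref{lem:offset-additive}; your direct computation of $\offset(\sigma^k)=-k$ for every $k\in\Z$ only spells out the iteration, including negative $k$, that the paper leaves implicit.
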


\begin{proof}
	Since $\pi_{\W}$ is a factor map, we have $\pi_{\W}(\sigma x)=1+\pi_{\W}(x)$ for $x\in X_{\W}$.
    Thus $\offset(\sigma)=-1$, and Lemma~\ref{lem:offset-additive}
gives the assertion.
\end{proof}

For the rest of this section, let $\W$ and $\mathcal V$ be construction
sequences with the same scale $(L_n)_{n\in\N}$. We study factor maps
$X_{\W}\to X_{\mathcal V}$ and give additional conditions under which
they admit a block representation.

Let $n$ be at least as large as both initial indices, and let
$f:W^n\to V^n$ be a function. Set $f\blockexp{n}=f$. Applying $f$
to each $W^n$-block defines extensions
$f\blockexp{m}:W^m\to(V^n)^{L_m/L_n}$ for $m>n$ and
$f\blockexp{*}:X_{\W}\to\Seg(V^n)$, the latter using the canonical
segmentation. Their images need not equal $V^m$ and $X_{\mathcal V}$,
respectively. The next lemma relates these image equalities to aligned factor maps.
\begin{lem} \label{lem_constr_seq_isom}
	Let $\W$ and $\mathcal V$ be construction sequences with the same
scale. Let $n$ be at least as large as both initial indices, and let
$f:W^n\to V^n$ be a function. Then
  \begin{itemize}
    \item $f \blockexp{m} (W^m) = V^m$ for all $m \ge n$, if and only if 
    $f \blockexp{*}$ is an aligned factor map from $X_{\W}$ to $X_{\mathcal{V}}$, and
    \item if $f \blockexp{m} (W^m) = V^m$ for all $m \ge n$ and $f$ is a bijection, then $f \blockexp{*}$ is an aligned conjugacy between $X_{\W}$ and $X_{\mathcal{V}}$.
  \end{itemize}
\end{lem}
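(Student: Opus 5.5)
We first check that $f\blockexp{*}$ is a well-defined, continuous, equivariant map from $X_{\W}$ into $\Seg(V^n)$, and that it preserves cut points. For $x\in X_{\W}$, Lemma~\ref{lem:constr_seq_as_intersection} puts $x$ in $\Seg(W^n)$. Lemma~\ref{lem:unique-readability} then gives the canonical cut coset $k_n(x)+L_n\Z$, and $f\blockexp{*}(x)$ replaces each block $x_{[k_n(x)+iL_n,k_n(x)+(i+1)L_n)}$ by its $f$-image. Continuity holds because $k_n$ is locally constant and determined by a finite window. Equivariance follows from $k_n(\sigma x)\equiv k_n(x)-1\pmod{L_n}$. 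By construction every $W^n$-cut of $x$ is a $V^n$-cut of $y=f\blockexp{*}(x)$, so $k_{V^n}(y)=k_n(x)$, using uniqueness in Lemma~\ref{lem:unique-readability} for the code $V^n$. For $m\ge n$, each $W^m$-block of $x$ is mapped to the word $f\blockexp{m}$ of that block, placed at the same position.

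($\Rightarrow$) Assume $f\blockexp{m}(W^m)=V^m$ for all $m\ge n$. Then each $W^m$-cut of $x$ starts an occurrence of a $V^m$-word in $y$, so $y\in\Seg(V^m)$ for every $m\ge n$. Lemma~\ref{lem:constr_seq_as_intersection} gives $y\in X_{\mathcal V}$. The cuts match, so $k_{V^m}(y)=k_m(x)$ for $m\ge n$. The lower coordinates of the odometer are reductions, so $\pi_{\mathcal V}(y)=\pi_{\W}(x)$, which means the offset is zero. For surjectivity, the image is a nonempty closed invariant subset of the minimal system $X_{\mathcal V}$, so it is all of $X_{\mathcal V}$.

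($\Leftarrow$) Assume $f\blockexp{*}$ is an aligned factor map onto $X_{\mathcal V}$. Alignment gives $k_{V^m}(y)\equiv k_m(x)\pmod{L_m}$, so the two cut sets coincide modulo $L_m$ and the $W^m$-cuts of $x$ are exactly the $V^m$-cuts of $y$. Hence for every $w\in W^m$, which occurs at a cut of some $x$ by Lemma~\ref{lem:finite-language}, the word $f\blockexp{m}(w)$ occurs at a cut of $y$ and therefore lies in $V^m$. This gives $f\blockexp{m}(W^m)\subseteq V^m$. For the reverse inclusion, take $v\in V^m$ and a point $y\in X_{\mathcal V}$ that has $v$ at a $V^m$-cut. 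Surjectivity gives a preimage $x$, and since the cuts agree, the $W^m$-block of $x$ at that position maps to $v$.

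For the second bullet, suppose in addition that $f$ is a bijection. Then $f^{-1}\colon V^n\to W^n$ satisfies $(f^{-1})\blockexp{m}(V^m)=W^m$, because $f\blockexp{m}$ is injective on concatenations of $n$-blocks. The first part therefore makes $(f^{-1})\blockexp{*}$ an aligned factor map $X_{\mathcal V}\to X_{\W}$. Both maps preserve canonical segmentations, so composing them acts blockwise as the identity, and they are mutually inverse. The main point to handle carefully is the ($\Leftarrow$) direction. There one must use alignment to conclude that the cut sets agree at every level $m$, and not merely at level $n$, which holds automatically. This relies on the identification of the odometer coordinates with $-k_m$.
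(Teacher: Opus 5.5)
Your proposal is correct and follows essentially the same route as the paper. Both arguments obtain continuity and equivariance from recognizability, membership in $X_{\mathcal V}$ from the intersection of the $\Seg(V^m)$, alignment from matching cuts, surjectivity from minimality, and, for the converse, $k_m(f\blockexp{*}(x))=k_m(x)$ from alignment together with the occurrence of every $W^m$- and $V^m$-word at a cut. In the bijective case you get the inverse by applying the first bullet to $f^{-1}$, which is a slightly more formal version of the paper's remark that replacing each $V^n$-block by its preimage gives a continuous inverse.
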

\begin{proof}
Suppose $f\blockexp{m}(W^m)=V^m$ for every $m\ge n$.
Applying $f$ at the canonical $n$-boundaries sends each canonical
$m$-block to a $V^m$-block. Thus the image lies in
$\bigcap_{m\ge n}\Seg(V^m)=X_{\mathcal V}$ and all higher-level
boundaries agree. The map is continuous by finite recognizability
and commutes with the shift. Its image is a nonempty compact invariant
subset of the minimal system $X_{\mathcal V}$, so it is onto and aligned.
Conversely, let $f^*$ be an aligned factor map and $m\ge n$.
Alignment gives $\pi_{\W}(x)=\pi_{\V}(f^*(x))$, hence
$k_m(f^*(x))=k_m(x)$ for every $x\in X_{\W}$ by
Lemma~\ref{lem:canonical-odometer}. Thus each $W^m$-block $w$ of $x$
is sent to a $V^m$-block at the same position. By definition of $f^*$,
this image is $f\blockexp{m}(w)$.
Every $W^m$-word occurs in this way by Lemma~\ref{lem:finite-language} and Lemma~\ref{lem:unique-readability}, giving
$f\blockexp{m}(W^m)\subseteq V^m$. Surjectivity of $f^*$, alignment, and occurrence of every $V^m$-word in $X_{\V}$
give the reverse inclusion. If $f$ is bijective, replacing each
$V^n$-block by its unique preimage gives the inverse map on the image;
recognizability makes this inverse continuous.
\end{proof}

\begin{defn}
	Let $\W$ and $\mathcal{V}$ be two construction sequences with the same scale.
	Suppose $\phi \colon  X_{\W} \to X_{\V}$ is a factor map. We say that $\phi$ is \emph{tame}, if $\phi = \sigma^k \circ f \blockexp{*}$
	for some $n$ at least as large as both initial indices, $k\in\Z$, and $f:W^n\to V^n$.
\end{defn}

When $f^*$ is aligned and $m\ge n$, the block-factor criterion gives
$(f\blockexp{m})^*=f^*$. Among representations by aligned block maps,
the exponent of the shift is unique: it is minus the offset.
If $\sigma^l g^*=\sigma^k f^*$, both block maps are aligned, and $g$
is defined at level $m\ge n$, then comparison of offsets gives $l=k$.
Every $W^m$-word occurs in the system, so $g=f\blockexp{m}$.

The next two lemmas give sufficient conditions for every factor map
to be tame.
\begin{lem}
	\label{lem:odometer}
	Let $\cL$ be the odometer with scale $(L_n)_{n\in\N}$, and $\alpha \in \cL$ be a non-integer element.
	There exist arbitrarily large $n\ge3$ such that
	$$
	\alpha \not \equiv \pm k \pmod{L_n} \quad \textrm{ for } k \in \{0, 1, \dots, L_{n-1} - L_{n-2} \}.
	$$
\end{lem}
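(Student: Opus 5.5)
The plan is to argue by contradiction. Suppose the conclusion fails. Then there is $N\ge4$ such that for every $n\ge N$ some $k_n\in\{0,1,\dots,L_{n-1}-L_{n-2}\}$ and sign $\eps_n\in\{\pm1\}$ satisfy $\alpha\equiv\eps_nk_n\pmod{L_n}$. Put $a_n:=\eps_nk_n\in\Z$. Then $|a_n|\le L_{n-1}-L_{n-2}$ and $\alpha\equiv a_n\pmod{L_n}$ for all $n\ge N$. The goal is to show that the integers $a_n$ are eventually constant, which will force $\alpha$ to be an integer.

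First I will compare consecutive representatives. Take $n>N$. Because $L_{n-1}\mid L_n$ and $\alpha$ is a coherent sequence in the inverse limit, reducing $\alpha\equiv a_n\pmod{L_n}$ modulo $L_{n-1}$ gives $\alpha\equiv a_n\pmod{L_{n-1}}$. We also have $\alpha\equiv a_{n-1}\pmod{L_{n-1}}$, so $a_n\equiv a_{n-1}\pmod{L_{n-1}}$. The size bounds give
\[
|a_n-a_{n-1}|\le (L_{n-1}-L_{n-2})+(L_{n-2}-L_{n-3})=L_{n-1}-L_{n-3}<L_{n-1},
\]
since $L_{n-3}\ge1$. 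This is where $n\ge4$ is needed. Two integers that are congruent modulo $L_{n-1}$ and differ by less than $L_{n-1}$ are equal, so $a_n=a_{n-1}$. Hence $a_n=a$ for a single integer $a$ and all $n\ge N$.

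Finally, $\alpha\equiv a\pmod{L_n}$ for every $n\ge N$. Since the scale is divisible, this congruence also holds for every $n<N$ after reducing modulo $L_n\mid L_N$. Therefore $\alpha$ and the integer $a$, identified with $f^a(\mathbf 0)$, agree in every coordinate, so $\alpha=a$ is an integer. This contradicts the hypothesis, so infinitely many $n$ (which we may take $\ge3$) have the stated property.

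I do not expect a serious obstacle. The one point that needs care is the choice of the window $[0,L_{n-1}-L_{n-2}]$. It is chosen exactly so that the two consecutive bounds telescope to $L_{n-1}-L_{n-3}$, which is strictly below $L_{n-1}$. The signs $\eps_n$ are harmless because the argument works with the signed integers $a_n$ throughout.
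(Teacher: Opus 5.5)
Your proof is correct, but it is organized differently from the paper's. You argue by contradiction: you assume every level from some $N$ on is bad and take small signed representatives $a_n$. The two consecutive windows give $|a_n-a_{n-1}|\le L_{n-1}-L_{n-3}<L_{n-1}$, which together with $a_n\equiv a_{n-1}\pmod{L_{n-1}}$ forces the $a_n$ to be eventually constant, so $\alpha$ is an integer.

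The paper argues directly from an arbitrary $n\ge3$. Either $\alpha$ already avoids every residue $k$ with $|k|<L_{n-1}$ modulo $L_n$, and then $n$ itself works. Or $\alpha\equiv k\pmod{L_n}$ for one such $k$. In that case non-integrality gives a least $j>0$ with $\alpha\not\equiv k\pmod{L_{n+j}}$, and the residue there is $cL_{n+j-1}+k$ with $0<c<L_{n+j}/L_{n+j-1}$. The bound $|k|<L_{n-1}\le L_{n+j-2}$ then places this residue strictly between the two excluded windows at level $n+j$.

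The paper's route is constructive: it finds a good index beyond any given $n$ as the first level where $\alpha$ departs from a small integer representative. It also needs only the crude bound $|k|<L_{n-1}$ at the starting level. Your route is slightly shorter and uses the exact window width $L_{n-1}-L_{n-2}$ at two consecutive levels, so the widths telescope. It yields existence only, through the eventual-constancy argument.
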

\begin{proof}
	Fix $n\ge3$. If $\alpha\not\equiv k\pmod{L_n}$ for every
$|k|<L_{n-1}$, then $n$ satisfies the conclusion. Otherwise, choose
$k$ with $|k|<L_{n-1}$ and $\alpha\equiv k\pmod{L_n}$. Let $j > 0$ be the smallest integer such that $\alpha \not \equiv k \pmod{L_{n+j}}$; it exists because $\alpha$ is not an integer.
	So for some integer $0 < c < L_{n+j} / L_{n+j-1}$ we have
	$$
	\alpha \equiv r:= c L_{n+j-1} + k \pmod{L_{n+j}}.
	$$
	Since $|k|<L_{n-1}\le L_{n+j-2}$ and $1\le c\le L_{n+j}/L_{n+j-1}-1$, we have
	$$
	L_{n+j-1}-L_{n+j-2}<r<L_{n+j}-L_{n+j-1}+L_{n+j-2},
	$$
	so $r\not\equiv \pm k'\pmod{L_{n+j}}$ for every $k'\in\{0,1,\dots,L_{n+j-1}-L_{n+j-2}\}$. Hence $n+j$ satisfies the conditions of the lemma.
\end{proof}

\begin{lem} \label{lem_no_nontame_factors}
	Let $\W$ and $\mathcal{V}$ be two construction sequences with the same scale $(L_n)_{n \in \N}$.
	Suppose there is $N_0\ge2$ at least as large as the initial indices of both sequences such that $3\mid L_n$ and the following conditions hold for $n\ge N_0$:
	\begin{itemize}
	  \item[(d1)] For all $v \neq v' \in V^n$, we have that
	  $$
	  v_{[ L_n/3 ,\, 2L_n/3 )} \neq  v'_{[ L_n/3 ,\, 2L_n/3 )},
	  $$
	  that is, $V^n$ has distinctive cores;

	  \item[(d2)] for all $c\in \N$ with $1 \le c \le L_{n+1}/L_n - 2$, all functions
	  	$\psi_0 : W^n \times W^n \to V^n$, all
	  $w, w' \in W^{n+1}$ and all $v \in V^{n+1}$, there exists $j\in [0, L_{n+1}/L_n - 1]$ such that
	  $$
	  v \angl{j} \neq \psi_0(ww' \angl{j+c}, ww' \angl{j+c+1}).
	  $$
	  Here $v \angl{j}$ denotes the subword $v_{[j L_n , (j+1) L_n)}$, that is, $v \angl{j}$ is the $j$-th $W^n$-block (respectively, $V^n$-block) of a word $v$ over $W^n$ (respectively, $V^n$);
	
	  \item[(d3)] $L_n \ge 6 L_{n-1}$.
	\end{itemize}
Then every factor map $\phi:X_{\W}\to X_{\V}$ has a representation $\phi=\sigma^k f^*$ with $f^*$ aligned, and in particular is tame.
\end{lem}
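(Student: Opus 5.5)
Fix a factor map $\phi:X_{\W}\to X_{\V}$ and put $\beta:=\offset(\phi)\in\cL$. The plan has two stages. First we show that $\beta$ is an integer $-k$. Then we show that $\sigma^{-k}\phi$, which is aligned by Corollary~\ref{cor:aligned-almost}, is a block map $f^*$.

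\emph{Stage 1: the offset is an integer.} Suppose $\beta$ is not an integer. By Lemma~\ref{lem:odometer}, choose $n\ge N_0+1$ as large as needed so that $\beta\not\equiv\pm k\pmod{L_n}$ for $0\le k\le L_{n-1}-L_{n-2}$. By Lemma~\ref{lem:small-inv-lim-isomorphism}(ii), or directly by Curtis--Hedlund--Lyndon, $\phi$ is a sliding block code with some radius $r$. Taking $n$ large, we may assume $r\ll L_{n-1}$; this uses (d3) and $L_n\ge 6L_{n-1}$.

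Now work at level $n-1$. The $V^{n-1}$-cuts of $\phi(x)$ lie at positions displaced from the $W^{n-1}$-cuts of $x$ by $\beta \bmod L_{n-1}$. Consider a $V^{n-1}$-block of $\phi(x)$. Its middle third is a window of length $L_{n-1}/3$, and with the radius margin it is determined by at most two consecutive $W^{n-1}$-blocks of $x$. Which two blocks, and at which offset, depends only on $\beta\bmod L_{n-1}$. By (d1), this middle third determines the whole $V^{n-1}$-block. Hence there is a function $\psi_0:W^{n-1}\times W^{n-1}\to V^{n-1}$ with the following property: each $V^{n-1}$-block of $\phi(x)$ equals $\psi_0$ applied to the appropriate pair of consecutive $W^{n-1}$-blocks of $x$.

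Next pass to level $n$. A $V^n$-word $v$ of $\phi(x)$ starts at a position displaced by $\beta\bmod L_n$ from the $W^n$-cuts of $x$. So $v$ sits over a concatenation $ww'$ of two $W^n$-words, at a block shift $c$ given by the residue of $\beta$ modulo $L_n$, measured in units of $L_{n-1}$. The choice of $n$ from Lemma~\ref{lem:odometer} keeps this shift away from $0$ and from $L_n/L_{n-1}-1$, up to the bounded error $L_{n-1}-L_{n-2}$. This should force $1\le c\le L_n/L_{n-1}-2$. Then $v\angl{j}=\psi_0(ww'\angl{j+c},ww'\angl{j+c+1})$ holds for all $j$, which contradicts (d2) at level $n-1$.

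The main obstacle is this index bookkeeping. We must translate the odometer condition into exactly the range of $c$ allowed in (d2), and handle the sign of the offset, which is why $\pm$ appears in the lemma. We also need the $L_{n-1}$-residue shift to be absorbed into the choice of the pair in $\psi_0$.

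\emph{Stage 2: an aligned map is a block map.} Once $\beta=-k$ is an integer, $\phi':=\sigma^{-k}\phi$ is aligned. Since the $W^n$-cuts and $V^n$-cuts coincide, for $n$ large with $r<L_n/3$ the middle third of each $V^n$-block of $\phi'(x)$ is determined by the $W^n$-block of $x$ at the same position. By (d1), the whole $V^n$-block is then determined by that $W^n$-block, which defines $f:W^n\to V^n$ with $\phi'=f^*$. Finally, the criterion in Lemma~\ref{lem_constr_seq_isom} gives $f^{[m]}(W^m)=V^m$ for all $m\ge n$, so $\phi=\sigma^k f^*$ with $f^*$ aligned, and in particular $\phi$ is tame.
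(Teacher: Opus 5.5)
Your proposal follows the paper's proof, with indices shifted by one (your levels $n-1,n$ are the paper's $n,n+1$). It uses the same split by whether the offset is an integer, the same use of Lemma~\ref{lem:odometer}, and the radius-$r$ block code with (d1) to read every target block off a fixed pair of consecutive source blocks, contradicting (d2); the aligned case is also argued the same way.

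The bookkeeping you flagged needs one adjustment. When the margin-enlarged core window of the first target block starts in the last $W^{n-1}$-block of $w$, the shift you read off is $L_n/L_{n-1}-1$, which (d2) does not allow. In that case the window lies entirely inside that block, so you can take $c$ to be the index of the block where the window starts, capped at $L_n/L_{n-1}-2$; this is the paper's $c=\min\{a,L-2\}$.
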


\begin{proof}
	Let $\phi \colon  X_{\W} \to X_{\V}$ be a factor map. We consider two cases:
	
	\emph{Case 1: $\offset(\phi)$ is an integer.}
	Let $k:=\offset(\phi)$. Then $\sigma^k \circ \phi$ is aligned, by Corollary \ref{cor:aligned-almost}.
	So it is enough to show that if $\phi$ is aligned,
	then $\phi = f \blockexp{*}$ for some $n \ge 1$ and $f\colon  W^n \to V^n$.
	
	Both $X_{\W}$ and $X_{\V}$ are subshifts over some finite alphabets $A, B$ (respectively).
	Thus, the factor map $\phi$ is given by a block code \cite[Theorem 6.2.9]{LindMarcus}: for some $r \ge 0$
	and function $\bar{\phi} \colon A^{2r + 1} \to B$, we have
	\begin{equation} \label{eqn:def_of_block_code}
		\phi(x)_j = \bar{\phi} \left( x_{j-r} \dots x_{j+r} \right) \quad \textrm{ for all } x \in X_{\W} \textrm{ and } j \in \Z.
	\end{equation}
	The function $\bar{\phi}$ is commonly called a \emph{block code}, and $r$ is
	the \emph{radius of the block code} $\bar{\phi}$.

	Choose $n$ large enough so that $L_n > 3r$ and $n \ge N_0$.
    For any $w \in W^n$, by Lemma~\ref{lem:finite-language} we can find $x\in X_{\W}$ such that $x_{[0 , L_n)} = w$; by Lemma~\ref{lem:unique-readability}, $0$ is then a $W^n$-cut point of $x$, so $k_n(x)=0$.
	Since $\phi$ is aligned, $k_n(\phi(x))=0$ as well, that is, $\phi(x)_{[0 , L_n)} \in V^n$.
	The block code determines $\phi(x)_{[r,L_n-r)}$ from $x_{[0,L_n)}$.
Since $r < \frac{1}{3}L_n < \frac{2}{3}L_n < L_n-r$, this determines
the core $\phi(x)_{[L_n/3,2L_n/3)}$, which in turn determines
$\phi(x)_{[0,L_n)}$ by (d1). Hence the following map is independent
of the choice of $x$:
	$$
	f \colon  W^n \ni w \mapsto \phi(x)_{[0 , L_n)} \in V^n.
	$$
	The same argument at every $W^n$-cut $p$ gives
$\phi(x)_{[p,p+L_n)}=f(x_{[p,p+L_n)})$. Thus $\phi=f\blockexp{*}$.
	
	\emph{Case 2: $\offset(\phi)$ is not an integer.} We will show that there are no such factor maps.
	Again, $\phi$ can be represented by a block code of some radius $r$.
	Pick $x \in X_{\W}$ such that $\pi_{\W}(x) = 0$.
	Set $\alpha:=\offset(\phi)=-\pi_{\V}(\phi(x))$, which is not an integer.
	
	For $n\in\N$ let $m_n\in\{0,\dots,L_{n+1}-1\}$ denote the residue of $\alpha$ modulo $L_{n+1}$.
	Applying Lemma \ref{lem:odometer} at the index $n+1$, we can pick $n \ge N_0$ such that
	\begin{itemize}
		\item[(a)] $L_n - L_{n-1} < m_n < L_{n+1} - L_n + L_{n-1}$, and
		\item[(b)] $r \le \frac{1}{6} L_n$.
	\end{itemize}
	Fix such an $n$ and write $m:=m_n$.
	Let $w = x_{[0 , L_{n+1})}$ and $w' = x_{[L_{n+1} , 2L_{n+1})}$; these are words in $W^{n+1}$, because $\pi_{\W}(x)=0$ means that $0$ is a $W^{n+1}$-cut point of $x$.
	By definition of $\pi_{\V}$, we have $k_{n+1}(\phi(x))\equiv m\pmod{L_{n+1}}$, so the word $v := \phi(x)_{[m , m + L_{n+1})}$ is in $V^{n+1}$.
	View $ww'$ and $v$ as sequences of length-$L_n$ blocks, with
$v\angl{i}:=v_{[iL_n,(i+1)L_n)}$.
	Consider the interval
	$$
		\Big[ m + \frac{1}{3} L_n - r, m + \frac{2}{3} L_n + r \Big).
	$$
	Combining (a), (b) and (d3) we estimate its endpoints and width:
	\begin{align*}
		m + \frac{1}{3} L_n - r & \ge L_n, \\
		m + \frac{2}{3} L_n + r & \le L_{n+1}, \\
		\frac{1}{3} L_n + 2r & \le \frac{2}{3} L_n.
	\end{align*}

    Let $L:=L_{n+1}/L_n$ and set
    \[
      I_0=[m+L_n/3-r,m+2L_n/3+r),\qquad
      a=\left\lfloor\frac{m+L_n/3-r}{L_n}\right\rfloor.
    \]
    The preceding inequalities give $I_0\subseteq[L_n,L_{n+1})$,
    $|I_0|<L_n$, and $1\le a\le L-1$. Note that $L\ge |W^n|^2+1\ge5$, since every $W^{n+1}$-word contains all $|W^n|^2$ pairs of $W^n$-words as subwords at distinct cut points. Put $c=\min\{a,L-2\}$.
    If $a\le L-2$, the interval starts in the $a$th old block and
    has length less than $L_n$, so it lies in $[aL_n,(a+2)L_n)$.
    If $a=L-1$, then $I_0\subseteq[(L-1)L_n,L_{n+1})$, and
    the two blocks starting at $(L-2)L_n$ contain it. Thus in both cases
    \begin{equation}\label{eq:tame-two-block-window}
      1\le c\le L-2,\qquad I_0\subseteq[cL_n,(c+2)L_n).
    \end{equation}
	For $0\le i\le L-1$, the pair
$ww'\angl{i+c},ww'\angl{i+c+1}$ determines
$x_{[(i+c)L_n,(i+c+2)L_n)}$. By \eqref{eq:tame-two-block-window},
it therefore determines
	$$
		x_{[iL_n + m + L_n/3 - r ,\;  iL_n + m + 2L_n/3 + r )},
	$$
	and the radius-$r$ block code then determines
	$$
		v \angl{i}_{[L_n/3 , 2L_n/3)} =
		\phi(x)_{[iL_n + m + L_n/3 ,\; iL_n + m + 2L_n/3 )}.
	$$
	Condition (d1) now determines $v\angl{i}$; see Figure~\ref{fig:tame-core}.
    The position of the required input window relative to the beginning of
    the two source blocks is independent of $i$. Hence repeated input
    pairs determine the same target core and, by (d1), the same target
    word. Define $\psi_0$ on the pairs occurring in these tests by that
    word, and assign an arbitrary fixed element of $V^n$ to every other
    pair. This defines a function $\psi_0:W^n\times W^n\to V^n$ satisfying
	$$
	v \angl{i} = \psi_0(ww' \angl{i+c}, ww' \angl{i+c+1}) \quad \textrm{ for all } i=0, \dots, L - 1.
	$$
	This contradicts (d2).
	Thus no factor map has noninteger offset.

\end{proof}
    \begin{figure}[htbp]
    \centering
    \begin{tikzpicture}
    \node (x) at (0,0) {$x = \cdots$};
    \draw (1,0.5) -- (7,0.5);
    \draw (1,-0.2) -- (7,-0.2);

     \node [font=\small] (c) at (2.3,0.9) {$cL_n$};
      \draw (2.3,0.5) -- (2.3,-0.2);
      
\node [font=\small] (c+2) at (5.7,0.9) {$(c+2)L_n$};

 \draw (5.7,0.5) -- (5.7,-0.2);

  \node (w) at (4,0.1) {$ww'$};

\draw (3.1,0.5) -- (3.1,-0.2);

\draw (4.9,0.5) -- (4.9,-0.2);

\draw (3.1,-0.2) -- (3.5,-1);

\draw (4.9,-0.2) -- (4.5,-1);

    \node (phi) at (-0.1,-1.5) {$\phi(x) =$ $\cdots$};
    \draw (1,-1) -- (7,-1);
    \draw (1,-1.7) -- (7,-1.7);

 \node (v) at (4,-1.4)  [text=black] {$v \angl{i}$};

\draw[dotted] (3.5,-1) -- (3.5,-1.7);

\draw[dotted] (4.5,-1) -- (4.5,-1.7);

\draw (3,-1) -- (3,-1.7);

\draw (5,-1) -- (5,-1.7);

\fill[fill=lightgray, opacity=0.15] (3,-1) -- (3,-1.7) -- (5,-1.7) -- (5,-1) -- cycle;
    
\end{tikzpicture}
        \caption{
            The core determines $v\angl{i}$ (light gray). The block code
determines this core from an interval of $ww'$ contained in
$[(c+i)L_n,(c+i+2)L_n)$.
        }
        \label{fig:tame-core}
    \end{figure}

\subsection{Two probabilistic examples}
\label{subsec:thesis-examples}

The following examples, from \cite[Chapter~5, Examples~1 and~2]{DekaThesis},
separate the basic concatenation properties from the restrictions on
factor maps. We use the present indexing and include the repeated-label
estimates omitted in the thesis.
Neither example uses the tree groups. Throughout this subsection, $S\ge2$
is an integer and $\bA$ is a finite alphabet with at least two symbols.

We first specify a common initial code. Choose distinct symbols of $\bA$
and denote them by $0,1$. Take $r\ge1$ with $2^r\ge S$, and choose $S$
distinct binary words $b_0b_1\cdots b_{r-1}$. For each selected word use the codeword
\[
  00\,1\,b_0 1\,b_1 1\cdots b_{r-1} 1\,1^d,
\]
where $d\in\{0,\ldots,5\}$ is chosen so that $L_1=2r+3+d$ is divisible
by six. Each codeword has $00$ only at its beginning and ends in $1$.
Thus, in a concatenation of two codewords, $00$ occurs only at the two
block beginnings. An occurrence of a codeword must start at one of those
beginnings, so the code is nonoverlapping. Denote this $S$-element code
by $C$. Distinctive cores at the initial stage will not be needed.

\begin{exam}[A fixed number of words at every stage]
\label{ex:thesis-fixed-cardinality}
There is a construction sequence $\W=(W^n)_{n\ge1}$ over $\bA$ such that
$|W^n|=S$ for every $n$. Its successive length ratios can be chosen to be
multiples of six.
\end{exam}
\begin{proof}
Set $W^1=C$. Suppose $W^n$ has been constructed, and write $A=W^n$.
For an integer $l\ge6$ divisible by six, independently choose
\[
  U^a_i\in A,\qquad 1\le a\le S,\quad 0\le i<l,
\]
uniformly on $A$. Let $w_a=U^a_0\cdots U^a_{l-1}$, with concatenation
interpreted over $\bA$, and set $L_{n+1}=lL_n$. We seek a choice for which
$W^{n+1}=\{w_1,\ldots,w_S\}$ is nonoverlapping, has $S$ elements, and
contains every ordered pair of $A$-words in every $w_a$.

For a fixed ordered pair $(u,v)\in A^2$ and a fixed label $a$, consider
the disjoint block pairs $(U^a_{2k},U^a_{2k+1})$, where
$0\le k<\lfloor l/2\rfloor$. Then
\[
  \prob\bigl(uv\text{ is absent from }w_a\bigr)
 \le (1-S^{-2})^{\lfloor l/2\rfloor}.
\]
Taking the union over $(u,v,a)$ bounds failure of double faithfulness by
$S^3(1-S^{-2})^{\lfloor l/2\rfloor}$. For $a\ne b$, the probability
that $w_a=w_b$ is $S^{-l}$, so a collision of labels has probability
at most $\binom{S}{2}S^{-l}$.

For nonoverlap, fix labels $a,b,c$ and an internal occurrence of $w_a$
in $w_bw_c$. Since $A$ is nonoverlapping, its starting position must
be $hL_n$ for some $1\le h<l$. If $h\le l/2$, this occurrence implies
\[
 U^a_i=U^b_{i+h}\qquad(0\le i<l-h).
\]
When $a\ne b$, these are independent equalities between disjoint pairs
of uniform variables, so their probability is $S^{-(l-h)}$. When
$a=b$, the equalities join indices $i$ and $i+h$. The resulting graph
is a disjoint union of chains, with $l-h$ edges. Revealing the variables
along each chain gives one factor $S^{-1}$ for each edge, and hence
the same probability $S^{-(l-h)}$.

If $h>l/2$, use instead
\[
 U^a_{l-h+i}=U^c_i\qquad(0\le i<h).
\]
For $a\ne c$ the pairs of variables are disjoint. For $a=c$ their
equality graph is a disjoint union of chains with step $l-h$ and
$h$ edges. In either case the probability is $S^{-h}$. Thus, for
all choices of labels, including repeated labels, the probability of
the internal occurrence is at most $S^{-l/2}$. There are at most
$S^3(l-1)$ choices of $(a,b,c,h)$.

Combining the three estimates, set
\begin{equation}\label{eq:thesis-basic-bad-bound}
 B_S(l):=S^3(1-S^{-2})^{\lfloor l/2\rfloor}
       +S^3(l-1)S^{-l/2}+\binom{S}{2}S^{-l}.
\end{equation}
The probability that at least one required condition fails is at most
$B_S(l)$, which tends to zero as $l\to\infty$. Choose a multiple of six
for which $B_S(l)<1$, and a successful table. This completes the
induction. Lemmas~\ref{lem:constr_seq_as_intersection}
and~\ref{lem:canonical-odometer} show that the resulting subshift is
minimal and has the odometer of the chosen unbounded scale as a factor;
in particular it is infinite.
\end{proof}

\begin{exam}[Two sequences satisfying the tameness hypotheses]
\label{ex:thesis-tame-pair}
There are construction sequences $\W=(W^n)_{n\ge1}$ and
$\mathcal V=(V^n)_{n\ge1}$ over $\bA$, with a common scale and
$|W^n|=|V^n|=S$, that satisfy conditions \textup{(d1)--(d3)} of
Lemma~\ref{lem_no_nontame_factors} for $n\ge2$. Thus every factor map
$X_{\W}\to X_{\mathcal V}$, if one exists, has the aligned tame
representation given by that lemma.
\end{exam}
\begin{proof}
We may take $W^1=V^1=C$. At the induction step, fix the already chosen
$W^n,V^n$ and set $L_{n+1}=lL_n$, where $l\ge6$ is divisible by six.
Independently construct $S$ labeled words over $W^n$ and $S$ labeled
words over $V^n$, exactly as in Example~\ref{ex:thesis-fixed-cardinality}.
All variables in the two families are independent. By
\eqref{eq:thesis-basic-bad-bound}, the probability that either family
fails the three construction requirements is at most $2B_S(l)$.

For two distinct target labels, equality on their middle thirds has
probability $S^{-l/3}$: the middle third consists of exactly $l/3$
old blocks, chosen independently in the two words. Failure of
\textup{(d1)} at stage $n+1$ therefore has probability at most
$\binom{S}{2}S^{-l/3}$.

For \textup{(d2)}, fix $1\le h\le l-2$, a function
$\psi_0:(W^n)^2\to V^n$, two source labels $a,b$, and a target label $c$.
Write $w_a,w_b$ for the source words and $v_c$ for the target word.
Condition on the entire source table. Then all the blocks
\[
 \psi_0(w_aw_b\angl{i+h},w_aw_b\angl{i+h+1}),
       \qquad 0\le i<l,
\]
are fixed elements of $V^n$. All displayed input indices are valid,
since $i+h+1\le2l-2$. The $l$ blocks of $v_c$ remain independent and
uniform on $V^n$. The conditional, and hence the unconditional,
probability that all $l$ equalities with $v_c\angl{i}$ hold is exactly
$S^{-l}$. This argument also applies when $a=b$.

There are $S^{S^2}$ functions $\psi_0$, $S^3$ choices of labels, and
$l-2$ displacements. Thus failure of \textup{(d2)} at this transition
has probability at most
\begin{equation}\label{eq:thesis-pair-test-bound}
 (l-2)S^{S^2+3-l}.
\end{equation}
The simultaneous failure probability is at most
\[
 2B_S(l)+\binom{S}{2}S^{-l/3}+(l-2)S^{S^2+3-l},
\]
which tends to zero. Choose a successful table for a sufficiently large
multiple $l$ of six and continue. The construction gives
\textup{(d1)} at every stage $n\ge2$, \textup{(d2)} at every transition,
and \textup{(d3)} from $n=2$ onward. All hypotheses of
Lemma~\ref{lem_no_nontame_factors} therefore hold with $N_0=2$.
\end{proof}

Both examples use only finite choices at each stage. Fixing an order on
$\bA$, taking the least admissible $l$ for which the displayed bound is
less than one, and then the first successful table in lexicographic
order makes the choices deterministic. These examples establish consistency of the stated conditions.
The second does not assert that a factor map exists. The next section imposes the additional group-action
and map-exclusion conditions needed for the reduction.

\section{Inverse systems from construction sequences} \label{section:final}
Use the enumeration $(w^{(i)})_{i\ge0}$ fixed in
Section~\ref{section:trees}, and write $\xi(w^{(i)})=i$.
For $T\in\Trees$, enumerate its vertices as $(t_T(i))_{i\ge0}$
in increasing $\xi$-order. Then $t_T(0)=\emptyword$, every proper
prefix precedes its extensions, and there are infinitely many
vertices. For fixed $T,N$, the condition that $T'$ agrees with $T$
on all $w^{(i)}$ with $i\le\xi(t_T(N))$ defines a clopen neighborhood
on which the first $N$ nonroot vertices agree.

For the rest of this section, suppress the dependence on $T$ and write
$G_j=\treegroup{j}{T}$, $\rho_j=\treehom{j}{T}$, and $t(i)=t_T(i)$.
Define 
$$
    G^n_j := \gengroup{ \gbar{\treevertex(i)} \colon i \le n \textrm{ and } \depth{\treevertex(i)} = j}.
$$
These finite subgroups satisfy $G^n_j\subseteq G^{n+1}_j$.
Since parents precede their children in the enumeration,
$\rho_j(G^n_{j+1})\subseteq G^n_j$.
We also define  $s(n) := \max_{1 \le i \le n} \depth{\treevertex(i)}$ 
and $M(s)$ as the least $i$ such that $\depth{\treevertex(i)} = s$.

Recall the truncation notation $[\,\cdot\,]_t$ of Section~\ref{sec:constr-seq}: it cuts every column of a symbol, word, point, or shift space over $\{0,1\}^s$ (or over $\C$) to its top $t$ rows, $1\le t\le s$.
The next two lemmas construct a blended inverse system of subshifts adjacent to $T$.
 
\begin{lem}\label{lem:word-construction}
Let $T\in\Trees$. There are a scale $(L_n)_{n\ge1}$, finite word sets
$W^n\subseteq(\{0,1\}^{s(n)})^{L_n}$, and actions $a_j^n$ of $G_j^n$
on $[W^n]_j$, for $1\le j\le s(n)$, with the following properties.
We may take $L_1=6$ and $L_{n+1}/L_n$ to be a multiple of three at least six.
\begin{enumerate}[label=(\roman*)]
\item\label{property-1} For each $j\ge1$, the sequence
$\W_j=([W^n]_j)_{n\ge M(j)}$ is a construction sequence.
\item\label{property-2} Each action $a_j^n$ is free.
\item\label{property-3} For $1\le j< s(n)$, $g\in G_{j+1}^n$, and
$v\in[W^n]_{j+1}$, we have $[gv]_j=\rho_j(g)[v]_j$.
\item\label{property-4} For $j\le s(n)$, $g\in G_j^n$, and
$v\in[W^{n+1}]_j$,
\[
a_j^{n+1}(g,v)=a_j^n(g,\cdot)\blockexp{n+1}(v).
\]
\end{enumerate}
For each $n\ge1$ put $L=L_{n+1}/L_n$. Block indices below range over
integers and refer to blocks of length $L_n$.
\begin{itemize}
\item[(D1)] If $1\le j\le s(n)$ and $v\ne v'$ belong to $[W^n]_j$,
then $v_{[L_n/3,2L_n/3)}\ne v'_{[L_n/3,2L_n/3)}$.
\item[(D2)] Let $1\le j\le j'\le s(n)$, $1\le c\le L-2$,
$\psi_0:([W^n]_{j'})^2\to[W^n]_j$, $w,w'\in[W^{n+1}]_{j'}$, and
$u\in[W^{n+1}]_j$. There is $i\in\{0,\ldots,L-1\}$ such that
\[
u\angl{i}\ne\psi_0(ww'\angl{i+c},ww'\angl{i+c+1}).
\]
\item[(D3)] For $n>1$, $L_n\ge6L_{n-1}$.
\item[(D4)] Let $1\le j\le j'\le s(n)$ and
$f:[W^n]_{j'}\to[W^n]_j$. If
$f\blockexp{n+1}([W^{n+1}]_{j'})\subseteq[W^{n+1}]_j$, then there
is a (necessarily unique, by \ref{property-2}) $g\in G_j^n$ such that $f(v)=g[v]_j$ for every $v\in[W^n]_{j'}$.
\item[(D2')] Let $1\le j\le s(n)$, $1\le c\le L-2$,
$\psi_0:([W^n]_j)^2\to\rev{[W^n]_1}$,
$w,w'\in[W^{n+1}]_j$, and $v\in\rev{[W^{n+1}]_1}$.
There is $i\in\{0,\ldots,L-1\}$ such that
\[
v\angl{i}\ne\psi_0(ww'\angl{i+c},ww'\angl{i+c+1}).
\]
\item[(D4')] For $1\le j\le s(n)$ there is no
$f:[W^n]_j\to\rev{[W^n]_1}$ satisfying
$f\blockexp{n+1}([W^{n+1}]_j)\subseteq\rev{[W^{n+1}]_1}$.
\end{itemize}
The choices can be made so that $L_n,W^n,a_j^n$ depend only on
$t(1),\ldots,t(n)$.
\end{lem}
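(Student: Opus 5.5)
We construct the data by induction on $n$, with a probabilistic choice at each stage in the style of Examples~\ref{ex:thesis-fixed-cardinality} and~\ref{ex:thesis-tame-pair}. The organizing idea is that the rows of a column form a tower of group-equivariant labels.

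\emph{Shape of the words.} At stage $n$ we want $[W^n]_j$ to be a disjoint union of free $G^n_j$-orbits. Each $G^n_j$ is finite, since $t(1),\dots,t(n)$ contain finitely many depth-$j$ vertices, so this is possible. Row $j+1$ refines row $j$ equivariantly, as \ref{property-3} requires.

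\emph{Base case.} Take $L_1=6$. Here $s(1)=1$ and $G^1_1=\gengroup{\gbar{t(1)}}\cong\Z_2$. For $W^1$ we pick an orbit pair of length-$6$ binary words, swapped by the generator, whose code is nonoverlapping; for instance $001011$ and $001101$. Condition (D1) must hold at every stage, and we secure it in the inductive step.

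\emph{Inductive step.} Suppose stage $n$ is given, and let $t(n+1)$ have depth $d$. Only $G^{n+1}_d=G^n_d\oplus\gengroup{\gbar{t(n+1)}}$ changes; when $d=s(n)+1$, a new row appears.

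First, if $d=s(n)+1$, we create row $d$ at level $n$. We take $[W^n]_d\cong[W^n]_{s(n)}\times\Z_2$ by a lift that is constant along rows. Since columns must stay distinct, this is better done at level $n+1$.

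So we instead choose words at level $n+1$ directly. Fix $l=L_{n+1}/L_n$, a large multiple of six. For each orbit representative we choose random concatenations of $l$ level-$n$ words of the top row $s(n+1)$. These are chosen independently and uniformly, and consist of whole columns, so the lower rows are determined automatically. We then define the action $a^{n+1}_j$ on lower rows by applying $a^n_j$ blockwise; this makes \ref{property-4} hold by definition.

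The new generator $\gbar{t(n+1)}$ at depth $d$ acts on row $d$. We realize it as a random involution pairing the new orbit representatives. Its projection $\rho_{d-1}$ lands in $G^n_{d-1}$, and that projected action on row $d-1$ must equal the blockwise action of $\rho_{d-1}(\gbar{t(n+1)})$. To achieve this, we choose the row-$d$ content of the paired word to be the blockwise image under that element in the lower rows. The row-$d$ bits are then chosen freshly, and they differ between the two words, which gives freeness \ref{property-2}.

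Granting the shape, (D1), (D2), (D2') and the construction-sequence conditions \ref{property-1} follow from the union-bound estimates \eqref{eq:thesis-basic-bad-bound} and \eqref{eq:thesis-pair-test-bound}. The only change is that the independent choices are now made per orbit representative rather than per word. Each test involves at least a linear number of independent uniform blocks, because the action is free and the group is fixed while $l\to\infty$. Conditions involving translates $gw$ have dependencies, but the chain argument for repeated labels handles them.

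\emph{Main obstacle: (D4) and (D4').} These are the genuinely new rigidity statements: every block map $f:[W^n]_{j'}\to[W^n]_j$ that maps $[W^{n+1}]_{j'}$ into $[W^{n+1}]_j$ must be a group element composed with truncation, and no such map may land in reversed row $1$. Our plan is a counting argument. There are finitely many $f$, at most $|[W^n]_j|^{|[W^n]_{j'}|}$, independent of $l$. For any $f$ not of the form $v\mapsto g[v]_j$, two cases arise:
\begin{itemize}
\item $f$ separates two words of one $[\,\cdot\,]_j$-class differently from every group element, or
\item $f$ disagrees with every $g[\cdot]_j$ on some word.
\end{itemize}
In either case, the image of a random $(n+1)$-word under $f\blockexp{n+1}$ agrees with some specified element of $[W^{n+1}]_j$ with probability at most $|W^{n+1}|\,S^{-cl}$. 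The number of specified elements is bounded independently of $l$, so these failure probabilities also tend to zero.

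For (D4'), the reversed row-$1$ words are independent of the forward structure beyond level $n$. The same bound applies, since a fixed $f$ would have to match random reversed blocks at all $l$ positions.

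Finally, to obtain the dependence only on $t(1),\dots,t(n)$, we make every choice canonical: take the least admissible $l$ and the lexicographically first successful table.
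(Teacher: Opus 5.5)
Your overall plan matches the paper's: stagewise random concatenations, old actions extended blockwise, a pairing for the new generator, a union bound with $L\to\infty$, and lexicographically first choices. Two steps fail as you describe them, and fixing them is most of the work.

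\textbf{The word sets.} You choose top-row words at random and let the lower rows be ``determined automatically''. Then $[W^{n+1}]_j$ is closed only under the blockwise action of the image of $G^n_S$, $S=s(n+1)$, under $\rho_j\circ\cdots\circ\rho_{S-1}$. That image is generated by the depth-$j$ ancestors of processed depth-$S$ vertices. It is therefore a proper subgroup of $G^n_j$ whenever some processed depth-$j$ vertex has no processed descendant at depth $S$; it is always trivial when $S$ is a new depth.

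For $g$ outside the image, nothing in your construction makes $g[w]_j$ the projection of a top-row word, and for independently chosen words it almost surely is not. So \ref{property-4} does not hold ``by definition'': the blockwise map need not act on $[W^{n+1}]_j$ at all.

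The new generator $q=\gbar{t(n+1)}$ has the same problem. Property \ref{property-3} forces $[qv]_{d-1}=p[v]_{d-1}$ with $p=\gbar{\parent{t(n+1)}}$, and $qv$ must also lie under words of every row above $d$. In addition, when $d=s(n)+1$ there are no level-$n$ top-row words to concatenate.

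The paper resolves all of this by building the words row by row:
\begin{itemize}
\item labels $(e_1,g_1,\dots,e_t,g_t)$ in which every $g_r\in G^n_r$ ranges freely and $e_d\in\{0,1\}$;
\item one independent fiber index $X(\alpha,e_t;i)$ for each label prefix and each block;
\item a temporary constant row when a new depth appears.
\end{itemize}
This yields $[W^{n+1}]_t=B_t$, the identity $\widetilde g\,w(\alpha,e_t,g_t)=w(\alpha,e_t,gg_t)$, and the projection formula $[w(\dots,e_t,g_t)]_{t-1}=w(\dots,e_{t-1},\rho_{t-1}(g_t)g_{t-1})$. Then $q$ is defined by flipping $e_d$ and replacing $g_{d-1}$ by $pg_{d-1}$.

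\textbf{The estimates.} The bounds \eqref{eq:thesis-basic-bad-bound} and \eqref{eq:thesis-pair-test-bound} use independence of target words from source words; in Example~\ref{ex:thesis-tame-pair} the two families are independent. Here the target sets contain projections, translates and reversals of the source word itself: for $h\in G^n_j$ we have $h[w]_j\in[W^{n+1}]_j$ and $\rev{[w]_1}\in\rev{[W^{n+1}]_1}$. So your claim that the reversed row-$1$ words are independent of the forward structure is false. Likewise, conditioning on the source table does not leave the target uniform.

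The missing tool is the dichotomy of Claim~\ref{claim:R3}:
\begin{itemize}
\item If the labels of $w$ and $u$ differ somewhere through $e_j$, the fresh fiber variables of $u$ bound the probability by $2^{-|I|}$.
\item Otherwise $u=\widetilde h[w]_j$, and the test becomes a test on the single word $w$, with affine index maps for the two inputs and the target. Claim~\ref{claim:R2} bounds this by selecting pairwise disjoint index triples, which works only when the target map differs from both input maps. The pairwise chain argument does not cover these triple tests.
\end{itemize}
In (D4), the exceptional case is exactly what produces the group element, via double faithfulness. In (D2), (D2'), (D4') and nonoverlap, it is excluded because $c\ge1$, or because reversal gives the target index map slope $-1$. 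The same dichotomy, together with freeness of the old action, gives distinctness of all labelled words on their middle thirds. That in turn gives (D1), injectivity of labels, and freeness of the actions involving $q$.
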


\begin{proof}
In this construction and in Lemma~\ref{lem:getting-inv-system}, we
write the finite groups $G_j^n$ multiplicatively. Thus $gh$ and $g^{-1}$
mean $g+h$ and $g$ in the additive notation of
Section~\ref{subsec:tree-groups}, and $e$ denotes the identity.
In addition to the stated properties, we maintain
\begin{itemize}
\item[(E1)] For $0\le j<s(n)$ every word in $[W^n]_j$ has exactly
$F_j^n>1$ extensions in $[W^n]_{j+1}$.
\end{itemize}
Here $[W^n]_0$ consists of a single placeholder $\star$.

\paragraph{Initial stage.}
The first nonroot vertex has depth one. Set
\[
L_1=6,\qquad W^1=\{001011,001111\},
\]
and let the nonidentity element of $G_1^1$ exchange these words.
This action is free, and $F_0^1=2$. The two middle thirds are $10$ and
$11$. In either word, $00$ occurs only at the beginning, and both words
end in $1$. In a concatenation of two such words, $00$ occurs only at
the two block beginnings. Thus a codeword can start only at a block
boundary, proving nonoverlap. The requirements involving stage two
will be imposed at the next step.

\paragraph{The random extension table.}
Suppose stage $n$ has been constructed. Put $d=\depth{t(n+1)}$ and
$S=s(n+1)$. For $t\le s(n)$ let $A_t=[W^n]_t$ and put $A_0=\{\star\}$.
If $S=s(n)+1$, introduce an auxiliary alphabet $A_S$ by adjoining to
each word of $A_{S-1}$ a constant last row, either zero or one.
The auxiliary group $G_S^n$ is trivial, and each word of $A_{S-1}$ has
two extensions. This auxiliary alphabet is used only in the present
step; it does not change the already chosen word set $W^n$.
Each $A_t$, $t\ge1$, is nonoverlapping: at the auxiliary level this
follows by projection to $A_{S-1}$. Write $F_{t-1}>1$ for the common
number of extensions from $A_{t-1}$ to $A_t$, and enumerate each fiber
in lexicographic order as $\ext(v,r)$, $0\le r<F_{t-1}$.

Choose a multiple $L$ of three with $L\ge6$, to be made sufficiently
large below, and set $L_{n+1}=LL_n$. Let $E_t=2$ if $t=d$ and $E_t=1$
otherwise. A level-$t$ label is
$\lambda=(e_1,g_1,\ldots,e_t,g_t)$ with
$0\le e_r<E_r$ and $g_r\in G_r^n$.
For each prefix $(e_1,g_1,\ldots,e_{t-1},g_{t-1},e_t)$ and
$i\in\{0,\ldots,L-1\}$ choose independent uniform variables
\[
X(e_1,g_1,\ldots,e_{t-1},g_{t-1},e_t;i)
       \in\{0,\ldots,F_{t-1}-1\}.
\]
Set $w(\emptyword)=\star^L$. If
$\alpha=(e_1,g_1,\ldots,e_{t-1},g_{t-1})$, define
\begin{equation}\label{eqn:worddef}
w(\alpha,e_t,g_t)\angl{i}
 =g_t\ext\bigl(w(\alpha)\angl{i},X(\alpha,e_t;i)\bigr).
\end{equation}
Let $B_t$ be the set of words with level-$t$ labels, and put
$W^{n+1}=B_S$.

For $g\in G_t^n$, let $\widetilde g$ act diagonally on $L$
consecutive $A_t$-blocks. The defining formulas give the following
identities without assuming injectivity of labels:
\begin{equation}\label{eqn:gw-multiplication}
\widetilde g\,w(\alpha,e_t,g_t)=w(\alpha,e_t,gg_t).
\end{equation}
For $t\ge2$, equivariance at stage $n$ gives
\begin{equation}\label{eqn:w-cutoff}
[w(e_1,g_1,\ldots,e_t,g_t)]_{t-1}
 =w(e_1,g_1,\ldots,e_{t-1},\rho_{t-1}(g_t)g_{t-1}).
\end{equation}
The level-zero projection is the placeholder. Iterating this identity,
\begin{equation}\label{eqn:w-cutoff-multiple}
[w(e_1,g_1,\ldots,e_t,g_t)]_r
 =\widetilde h\,w(e_1,g_1,\ldots,e_r,g_r),\qquad r<t,
\end{equation}
where $h=\prod_{k=r+1}^t\rho_{k,r}(g_k)$ and
$\rho_{k,r}=\rho_r\circ\cdots\circ\rho_{k-1}$.
Every lower-level label is realized by projecting a higher-level label
with subsequent group coordinates equal to the identity. Hence
$[W^{n+1}]_t=B_t$.

We also impose the following finite condition:
\begin{itemize}
\item[(E2)] Distinct labels at any level $1\le t\le S$ give distinct
words, even after restriction to the middle third.
\end{itemize}
In particular (E2) implies (D1) at stage $n+1$.

\paragraph{Probability estimates.}
All probability estimates use the independent table before any
success condition is imposed. For each fixed label at level $t$, its blocks
$w\angl{i}$ are independent and uniform on $A_t$; call this (R1).
Indeed, induction on $t$ first gives a uniform parent block, a uniform
choice among its equally numerous extensions then gives a uniform
$A_t$-block, and $g_t$ permutes $A_t$. Different block indices use
disjoint table variables.

We record two estimates used below. In both, $I$ is a finite subset
of $\Z$, all displayed indices lie in $\{0,\ldots,L-1\}$, and
$\epsilon_r\in\{-1,1\}$, $c_r\in\Z$ for $r=1,2,3$.

\begin{claim}\label{claim:R2}
Let $1\le t\le t'\le S$, let $w$ have a fixed level-$t'$ label, and
let $\psi:A_{t'}^2\to A_t$. If neither of
$(\epsilon_1,c_1)$ and $(\epsilon_2,c_2)$ equals
$(\epsilon_3,c_3)$, then
\[
\prob\bigl(\psi(w\angl{\epsilon_1i+c_1},w\angl{\epsilon_2i+c_2})
 =[w]_t\angl{\epsilon_3i+c_3}\text{ for all }i\in I\bigr)
 \le |A_t|^{-(|I|-2)/9}.
\]
\end{claim}
\begin{proof}[Proof of Claim~\ref{claim:R2}]
Write $q_r(i)=\epsilon_ri+c_r$ for $r=1,2,3$.
Each equation $q_3(i)=q_r(i)$, $r=1,2$, has at most one solution:
the two affine index maps are different by assumption. Remove these
at most two solutions from $I$, and call the remaining set $J$.
For $i\in J$ put $D_i=\{q_1(i),q_2(i),q_3(i)\}$.
For a fixed $i$, if $D_i\cap D_j\ne\varnothing$, then
$q_s(j)=q_r(i)$ for some $r,s\in\{1,2,3\}$. Each of the nine
choices determines at most one $j$. Thus $D_i$ meets at most nine
of these indexed sets, including itself. Greedily choosing a set
and deleting all sets that meet it produces $K\subseteq J$ with
\[
 |K|\ge |J|/9\ge (|I|-2)/9,
 \qquad D_i\cap D_j=\varnothing\quad(i\ne j\text{ in }K).
\]
For each $i\in K$, the target index differs from both input indices. By (R1), the target block is independent of the input blocks.
Its projection to $A_t$ is uniform, since iterating (E1) gives the
same number of preimages for every word of $A_t$. Conditional on the
input blocks, precisely one projected target value satisfies the
equality. Its probability is therefore $|A_t|^{-1}$.
The tests indexed by $K$ use disjoint table-index families, so they
are independent. If $E$ denotes the event of the claim, we obtain
\[
 \prob(E)\le\prod_{i\in K}|A_t|^{-1}
 =|A_t|^{-|K|}\le |A_t|^{-(|I|-2)/9}.
\]
The two input maps are allowed to agree with each other: only their
separation from the target map is needed.
\end{proof}

\begin{claim}\label{claim:R3}
Let $1\le t\le t'\le S$. Let $w$ and $u$ have fixed labels at
levels $t'$ and $t$, respectively, let $\psi:A_{t'}^2\to A_t$, and
consider the event
\begin{equation}\label{eqn:r2}
\psi(w\angl{\epsilon_1i+c_1},w\angl{\epsilon_2i+c_2})
 =u\angl{\epsilon_3i+c_3}\quad(i\in I).
\end{equation}
Either its probability is at most $2^{-(|I|-2)/9}$, or the labels of
$u$ and $w$ agree through $e_t$, the identity
$u=\widetilde h[w]_t$ holds for some fixed $h\in G_t^n$, and one of
the two input index maps equals the target index map.
Here agreement through $e_t$ means agreement of
$(e_1,g_1,\ldots,g_{t-1},e_t)$.
\end{claim}
\begin{proof}[Proof of Claim~\ref{claim:R3}]
If these prefixes agree, equation~\eqref{eqn:w-cutoff-multiple} gives
$u=\widetilde h[w]_t$. Replacing $\psi$ by $h^{-1}\psi$ turns
\eqref{eqn:r2} into the event of Claim~\ref{claim:R2}. Unless an input
index map equals the target index map, Claim~\ref{claim:R2} bounds its
probability by $|A_t|^{-(|I|-2)/9}\le2^{-(|I|-2)/9}$ when $|I|\ge2$; for
$|I|<2$ the required bound is automatic.

Suppose the prefixes differ, and write
\[
 \kappa'=(e'_1,g'_1,\ldots,g'_{t-1},e'_t),\qquad
 v=w(e'_1,g'_1,\ldots,e'_{t-1},g'_{t-1}).
\]
For $t=1$ the parent $v$ is the placeholder word. By
\eqref{eqn:worddef},
\begin{equation}\label{eq:thesis-fresh-target}
 u\angl{k}=g'_t\ext\bigl(v\angl{k},X(\kappa';k)\bigr)
 \qquad(0\le k<L).
\end{equation}
The variables $X(\kappa';k)$ occur in neither $v$ nor $w$. Indeed,
$v$ uses only lower-level prefixes, and the level-$t$ prefix used
in constructing $w$ differs from $\kappa'$.
Let $\mathscr F$ be the sigma-algebra generated by all table variables
except $X(\kappa';k)$, $0\le k<L$. Both $v$ and $w$ are
$\mathscr F$-measurable. Conditional on $\mathscr F$, the excluded
variables remain independent and uniform on a set of size $F_{t-1}$.

For each $i\in I$, the left side of \eqref{eqn:r2} is now fixed.
Equation~\eqref{eq:thesis-fresh-target} shows that at most one value of
$X(\kappa';\epsilon_3i+c_3)$ can make the right side equal to it:
$\ext(v\angl{k},\cdot)$ is an injection and $g'_t$ is a bijection.
The indices $\epsilon_3i+c_3$ are distinct for distinct $i$. Thus, if
$E_i$ denotes the equality indexed by $i$ and $E=\bigcap_{i\in I}E_i$,
\begin{equation}\label{eq:thesis-conditional-product}
 \prob(E\mid\mathscr F)=\prod_{i\in I}\prob(E_i\mid\mathscr F)
       \le F_{t-1}^{-|I|}\le2^{-|I|}.
\end{equation}
Averaging this inequality over the complementary table gives
$\prob(E)\le2^{-|I|}$, which implies the asserted bound. Since the
table is finite, these conditional statements hold on
its positive-probability atoms. No independence is asserted after
conditioning on a good event.
\end{proof}

We now impose the finite good events. For any pair of $A_t$-blocks
and any labeled word $w\in B_t$, the probability that the pair never
occurs in consecutive blocks of $w$ is at most
\begin{equation}\label{eqn:s1proof}
\prod_{i=0}^{\lfloor L/2\rfloor-1}(1-|A_t|^{-2})
 =(1-|A_t|^{-2})^{\lfloor L/2\rfloor}.
\end{equation}
This imposes double faithfulness, and in particular ensures that each
$A_t$-block occurs in every word of $B_t$.

For nonoverlap of $B_t$, an internal occurrence of a word $u\in B_t$
in $vw$, with $v,w\in B_t$, must start at $cL_n$, $0<c<L$, because
$A_t$ is nonoverlapping. If $c\le L/2$, the equality
$u\angl{i}=v\angl{i+c}$ holds for $0\le i<L-c$.
Apply Claim~\ref{claim:R3} with the projection onto one input, input offset $c$,
and target offset zero. The exceptional alternative is impossible.
If $c>L/2$, use $u\angl{L-c+i}=w\angl{i}$ for $0\le i<c$.
In either case the probability is at most
\begin{equation}\label{eqn:s15proof}
2^{-(L/2-2)/9}.
\end{equation}

For (E2), fix distinct labels at the same level $t$ and compare their
blocks for $L/3\le i<2L/3$. If their prefixes through $e_t$ differ,
Claim~\ref{claim:R3} bounds equality by $2^{-(L/3-2)/9}$. If the prefixes agree, then $t\le s(n)$ (at the auxiliary level $t=S=s(n)+1$ the group $G^n_S$ is trivial, so labels with equal prefixes through $e_S$ coincide), and
the words differ by the diagonal action of
$h=g_t'g_t^{-1}\ne e$. Equality at even one block contradicts
freeness of the old action on $A_t$. This uses only the formal label identities, not the label injectivity
we are proving.

For (D4), fix $f:A_{j'}\to A_j$, with $j\le j'\le s(n)$, and labels
of $w\in B_{j'}$ and $u\in B_j$. The equation
$f(w\angl{i})=u\angl{i}$, $0\le i<L$, is an instance of Claim~\ref{claim:R3} with
all signs positive and all offsets zero. In its exceptional case,
$u=\widetilde h[w]_j$. On the double-faithfulness event every
$A_{j'}$-block occurs in $w$, so $f(v)=h[v]_j$ on its entire domain.
In all other cases the probability is at most $2^{-(L-2)/9}$.
Avoiding these latter events, together with failure of double
faithfulness, therefore guarantees (D4).

For (D4'), an equality $f\blockexp{n+1}(w)=\rev{u}$ becomes
\[
\rev{f(w\angl{i})}=u\angl{L-1-i}\quad(0\le i<L).
\]
Use Claim~\ref{claim:R3} with input index maps $i,i$, target index map $L-1-i$, and
$\psi(a,b)=\rev{f(a)}$. The exceptional alternative is impossible,
so each such event has probability at most $2^{-(L-2)/9}$.
Avoiding these events prevents any labeled word from being sent
to a reversed target word, and hence implies (D4').

For (D2), failure gives
\begin{equation}\label{eqn:d2proof}
\psi_0(ww'\angl{i+c},ww'\angl{i+c+1})=u\angl{i}
 \quad(0\le i<L).
\end{equation}
If $c\le L/2$, restrict to $0\le i<L-c-1$ and use $w$ alone.
The input offsets are $c,c+1$ and the target offset is zero.
If $c>L/2$, substitute $i=L-c+r$ and use
\[
\psi_0(w'\angl{r},w'\angl{r+1})=u\angl{L-c+r}
 \quad(0\le r<c).
\]
Since $L-c\ge2$, neither input map equals the target map.
Thus Claim~\ref{claim:R3} bounds either event by $2^{-(L/2-3)/9}$.
For (D2'), write $v=\rev{u}$ and reverse the output block of
$\psi_0$. In the first restriction the target index is $L-1-i$;
in the second it is $c-1-r$. Its sign is negative while the two
input signs are positive, so Claim~\ref{claim:R3} gives the same bound.

\paragraph{Simultaneous choice and actions.}
At a fixed stage, the label sets, old alphabets, and sets of functions
between them are finite and independent of $L$. Only the displacement
choices contribute a factor of order $L$. Taking the union
of the bad events just bounded gives constants $C_n<\infty$ and
$c_n>0$, depending only on the finite stage data, such that
\[
\prob(\text{some required good event fails})
 \le C_n(1+L)e^{-c_nL}.
\]
Choose an admissible $L$ for which this is less than one and a table
on which all good events hold. In particular labels are injective, so
$|B_t|=\prod_{r\le t}E_r|G^n_r|\ge2$ for every $1\le t\le S$, which is condition~\ref{def:constr-seq:1} of Definition~\ref{defn:constrSeq} for the new sets.
The old groups act by~\eqref{eqn:gw-multiplication}. Only $G_d^n$
acquires a new basis generator $q=\gbar{t(n+1)}$.
If $d=1$, define
\[
\widetilde q\,w(e_1,g_1)=w(1-e_1,g_1).
\]
If $d>1$, let $p=\gbar{\parent{t(n+1)}}\in G_{d-1}^n$ and define
\begin{equation}\label{eqn:definition-of-new-group-action}
\widetilde q\,w(e_1,g_1,\ldots,e_{d-1},g_{d-1},e_d,g_d)
 =w(e_1,g_1,\ldots,e_{d-1},pg_{d-1},1-e_d,g_d).
\end{equation}
This is an involution commuting with the old action. The abstract
group is $G_d^{n+1}=G_d^n\oplus\langle q\rangle$, so these formulas
define its action. A nonidentity old element changes the last group
label, while an element involving $q$ changes $e_d$; since distinct labels give distinct words by (E2), all new
actions are free. For $q$ with $d>1$, compatibility with the projection, that is, $[\widetilde q\,w]_{d-1}=\widetilde p\,[w]_{d-1}$ for $w=w(e_1,g_1,\ldots,e_d,g_d)$, follows from \eqref{eqn:w-cutoff} and \eqref{eqn:gw-multiplication}: both sides equal
\[
w(e_1,g_1,\ldots,e_{d-1},\,p\rho_{d-1}(g_d)g_{d-1}),
\]
because $\rho_{d-1}(q)=p$ and $G^n_{d-1}$ is abelian; for $d=1$ there is nothing to check.
Compatibility for old generators follows by applying the old
compatibility blockwise. This proves (ii)--(iv).

Finally, a fixed word of $B_{t-1}$ has
\[
F_{t-1}^{n+1}=E_t|G_t^n|
\]
extensions in $B_t$. For $t\ge2$, choose $e_t$ and $h'\in G_t^n$;
the projection equation uniquely determines the preceding group
coordinate $h$ from $\rho_{t-1}(h')h=g_{t-1}$.
For $t=1$ simply count its $E_1|G_1^n|$ labels.
Label injectivity makes these counts exact. At every old depth
$G_t^n$ is nontrivial. At a newly introduced depth $G_t^n$ is trivial
but $E_t=2$. Thus all extension counts exceed one, establishing (E1).
The good events give (i), (D1), (D2), (D4), (D2'), and (D4'); our
choice of $L$ gives (D3).

To make the choices depend only on the processed vertices, order
the finite groups by their coordinates in the ordered tree basis,
the extension fibers lexicographically, and the finite tables
lexicographically. Take the least admissible $L$ for which a successful
table exists and then the first successful table. All success tests
are finite and depend only on the preceding finite data and $t(n+1)$.
The probability bound proves that the search terminates. Induction
now proves the asserted dependence on $t(1),\ldots,t(n)$.
\end{proof}

\begin{lem} \label{lem:getting-inv-system}
Let $T\in\Trees$, and let $L_n,W^n,a_j^n$ be given by
Lemma~\ref{lem:word-construction}. For $j\ge1$, let $X_j$ be the
subshift generated by $\W_j$, and set
$\pi_j\colon X_{j+1}\ni x\mapsto[x]_j\in X_j$.
Then $(X_j,\pi_j)_{j\in\N}$ is a blended inverse system of infinite minimal subshifts adjacent to $T$. 
Moreover, $\rev{X_1}$ is not a factor of $X_j$ for any $j \ge 1$.
\end{lem}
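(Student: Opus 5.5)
The verification will go in four steps: the basic structure of the system, the description of all factor maps, the identification of the reduced automorphism groups with the tree groups, and the exclusion of the reversed system.

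\textbf{Basic structure.} By property~\ref{property-1} and Lemma~\ref{lem:constr_seq_as_intersection}, each $X_j=X_{\W_j}$ is a minimal subshift. It is infinite because it factors onto the odometer of the unbounded scale (Lemma~\ref{lem:canonical-odometer}). Truncation commutes with concatenation and with segmentation, and $[[W^n]_{j+1}]_j=[W^n]_j$. Hence $\pi_j$ is the block map $f^{[*]}$ induced by truncation at every level $n\ge M(j+1)$. Lemma~\ref{lem_constr_seq_isom} then shows that $\pi_j$ is an aligned factor map $X_{j+1}\to X_j$, and $\pi_{i,j}$ is truncation to the top $j$ rows.

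\textbf{Factor maps are tame.} Let $\zeta\colon X_i\to X_j$ be a factor map with $i\ge j$. Conditions (D1)--(D3) are exactly hypotheses (d1)--(d3) of Lemma~\ref{lem_no_nontame_factors}, taken with $\W=\W_i$ and $\V=\W_j$. So $\zeta=\sigma^k f^*$ with $f\colon[W^n]_i\to[W^n]_j$ aligned, and $n$ may be taken large. By Lemma~\ref{lem_constr_seq_isom}, $f^{[n+1]}([W^{n+1}]_i)=[W^{n+1}]_j$. Then (D4) gives $g\in G^n_j$ with $f(v)=g[v]_j$.

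Next I would upgrade the group action to an automorphism. By property~\ref{property-4}, the action of $g$ lifts consistently to all higher levels. Since the action is free and hence bijective, Lemma~\ref{lem_constr_seq_isom} makes $a_j(g)^*$ an aligned automorphism $\alpha_g$ of $X_j$. Therefore $\zeta=\sigma^k\alpha_g\pi_{i,j}$, which is blendedness.

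\textbf{Adjacency.} Applying the previous step with $i=j$ shows that every automorphism of $X_j$ is $\sigma^k\alpha_g$ with $g\in G_j=\bigcup_n G^n_j$. The map $g\mapsto\alpha_g\gengroup{\sigma}$ is a homomorphism because the action is compatible across levels. It is injective: if $\alpha_g=\sigma^k$, compare offsets. The offset of $\alpha_g$ is $0$ and that of $\sigma^k$ is $-k$, so $k=0$. Freeness then forces $g=e$.

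It remains to check compatibility with the bonding maps. By property~\ref{property-3}, $\pi_j\alpha_g=\alpha_{\rho_j(g)}\pi_j$. So every automorphism is compatible with $\pi_j$, and $\rho'_{\pi_j}$ corresponds to $\rho_j$. These isomorphisms $\phi_j$ witness adjacency to $T$.

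\textbf{Exclusion of $\rev{X_1}$.} By Lemma~\ref{lem:reversal-construction}, $\rev{X_1}=X_{\rev{\W_1}}$, and $\rev{\W_1}$ is a construction sequence with the same scale. Reversal preserves distinctive cores, so (d1) holds for it. Condition (D2') is exactly (d2) with target $\rev{\W_1}$, and (D3) holds as before. Lemma~\ref{lem_no_nontame_factors} therefore makes any factor map $X_j\to\rev{X_1}$ tame with an aligned block map $f$. Then Lemma~\ref{lem_constr_seq_isom} gives $f^{[n+1]}([W^{n+1}]_j)=\rev{[W^{n+1}]_1}$, which contradicts (D4').

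\textbf{Main obstacle.} The step needing the most care is the tameness step. The delicate points are three. First, the levels $n$ at which Lemma~\ref{lem_no_nontame_factors} applies must be ones where $j,i\le s(n)$. Second, the block map produced by that lemma at some level must agree with a group element at all higher levels, which follows from the uniqueness of aligned representations together with property~\ref{property-4}. Third, for the reversal step one must confirm that the aligned-image criterion applies to the reversed sequence, using the cut correspondence of Lemma~\ref{lem:reversal-construction}.
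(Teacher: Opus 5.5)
Your proposal is correct and follows the paper's proof essentially step for step: tameness of every factor map via Lemma~\ref{lem_no_nontame_factors} and (D4), level-independent aligned automorphisms $g^*$ from property~\ref{property-4} and Lemma~\ref{lem_constr_seq_isom}, the isomorphism $G_j\to\AutQ{X_j}$ via offsets and freeness, compatibility with $\pi_j$ via property~\ref{property-3}, and exclusion of $\rev{X_1}$ via (D1), (D2'), (D3) and (D4'). The only variation is that you get surjectivity of $\pi_j$ by treating truncation as an aligned block map and applying Lemma~\ref{lem_constr_seq_isom}, where the paper projects the decreasing intersection of segmented spaces; your worry about agreement with a group element at higher levels is unnecessary, because $f=g[\,\cdot\,]_j$ at one level already gives $f^*=g^*\pi_{i,j}$ and $g^*$ does not depend on the level, by property~\ref{property-4}.
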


\begin{proof}
The coordinate projections satisfy $[X_{j+1}]_j=X_j$.
Indeed, the finite word sets project onto one another at every
sufficiently large stage; the same holds for their segmented spaces,
and projection commutes with a decreasing intersection of nonempty
compact sets. Thus $\pi_j$ is an onto factor map and preserves the
canonical block boundaries. Each $X_j$ is minimal by
Lemma~\ref{lem:constr_seq_as_intersection} and has an infinite odometer
factor by Lemma~\ref{lem:canonical-odometer}. It is therefore infinite.

For $g\in G_j^n$, write $g_{(n)}\colon[W^n]_j\ni w\mapsto gw\in[W^n]_j$.
We proceed in four steps.

\smallskip\noindent\textbf{Step 1.} \emph{For $j\ge1$, $g\in G_j$, and $n\ge M(j)$ with $g\in G_j^n$,
the map $g_{(n)}\blockexp{*}$ is an automorphism of $X_j$ independent
of $n$. We denote it by $g\blockexp{*}$.}

\noindent\emph{Proof of Step 1.}
        Choose the least such $n$. Iterating~\ref{property-4} gives
$g_{(m)}=g_{(n)}\blockexp{m}$ for every $m>n$, and hence
$g_{(m)}\blockexp{*}=g_{(n)}\blockexp{*}$. Each $g_{(m)}$ is a
bijection of $[W^m]_j$. Lemma~\ref{lem_constr_seq_isom} therefore
makes the common map $g\blockexp{*}$ an automorphism of $X_j$.

\smallskip\noindent\textbf{Step 2.} \emph{For any $j' \ge j \ge 1$, if $\phi \colon X_{j'} \to X_j$ is a factor map,
        then $\phi = \sigma^k \circ g \blockexp{*} \circ \pi_{j', j}$ for some $k \in \Z$, $g \in G_j$.}

\noindent\emph{Proof of Step 2.}
        Apply Lemma~\ref{lem_no_nontame_factors} to $\W=\W_{j'}$ and
$\V=\W_j$ with $N_0:=\max\{2,M(j')\}$. We have $3\mid L_n$ for
every $n$, and (D1)--(D3) of Lemma~\ref{lem:word-construction} give
(d1)--(d3) for $n\ge N_0$. Thus $\phi=\sigma^k f^*$ for some
$n\ge N_0$, $k\in\Z$, and $f:[W^n]_{j'}\to[W^n]_j$, with $f^*$ aligned. 
        Lemma~\ref{lem_constr_seq_isom} gives
$f\blockexp{n+1}([W^{n+1}]_{j'})\subseteq[W^{n+1}]_j$. 
        By (D4), this implies that $f=g_{(n)}\circ[\,\cdot\,]_j$ for some $g \in G^n_j$. 
        So $\phi = \sigma^k \circ f \blockexp{*} = \sigma^k \circ g \blockexp{*} \circ \pi_{j', j}$. 

\smallskip\noindent\textbf{Step 3.} \emph{The map $G_j \ni g \mapsto g \blockexp{*} \gengroup{\sigma} \in \AutQ{X_j}$ is an isomorphism of groups.}

\noindent\emph{Proof of Step 3.}
        For any $g,h\in G_j$, choose $n\ge M(j)$ large enough that $g,h\in G_j^n$.
        Then $g \blockexp{*} h \blockexp{*} = g_{(n)} \blockexp{*} h_{(n)} \blockexp{*} = 
            (gh)_{(n)} \blockexp{*} = (gh) \blockexp{*}$, so the map is a group homomorphism.
            
        For injectivity, suppose $g\blockexp{*}\gengroup{\sigma}=h\blockexp{*}\gengroup{\sigma}$. 
        Then, $(g h^{-1}) \blockexp{*} = \sigma^k$ for some $k$. Comparing offsets, we get
        $ -k = \offset(\sigma^k) = \offset((g h^{-1}) \blockexp{*}) = 0$. 
        So $(g h^{-1}) \blockexp{*}$ is the identity map. 
        Choose $n$ large enough so that $g h^{-1} \in G^n_j$. 
        If $(g h^{-1}) \blockexp{*} = (g h^{-1})_{(n)} \blockexp{*}$ is the identity map,
        then $(g h^{-1})_{(n)}$ is the identity map on $[W^n]_j$, since every word of $[W^n]_j$ occurs in $X_j$ by Lemma~\ref{lem:finite-language}. 
        By \ref{property-2}, we conclude that $g=h$. 

        Surjectivity follows from Step 2 with $j'=j$. 

\smallskip\noindent\textbf{Step 4.} \emph{For all $j \ge 1$, $g \in G_{j+1}$ we have $\pi_j g \blockexp{*} = \rho_j(g) \blockexp{*} \pi_j$.}

\noindent\emph{Proof of Step 4.}
        Viewing both sides as operations on $[W^n]_{j+1}$-blocks, we get
        $$
            \pi_j g \blockexp{*} = \left( [\; \cdot\;]_j \circ g_{(n)} \right) \blockexp{*} 
            = \left( \rho_j(g)_{(n)} \circ [\; \cdot\;]_j \right) \blockexp{*} 
            = \rho_j(g) \blockexp{*} \pi_j,
        $$
        where the second equality comes from \ref{property-3}.

\smallskip
Steps 2 and 3 imply $\Aut(X_j)=\{\sigma^k g^*:k\in\Z,\ g\in G_j\}$. Together with Step 4, this shows that every automorphism $\sigma^kg^*$ of $X_{j+1}$ admits the compatible automorphism $\sigma^k\rho_j(g)^*$ of $X_j$, so each $\pi_j$ is \strongfactor. 
Steps 3 and 4 give adjacency to $T$. The isomorphisms
$\phi_j\colon\AutQ{X_j}\to G_j$ in Definition~\ref{def:adjacent}
are the inverses of the maps in Step 3, and Step 4 gives
$\rho_j\phi_{j+1}=\phi_j\rho'_{\pi_j}$.
Step 2 shows that the constructed inverse system is blended.

It remains to exclude factor maps onto $\rev{X_1}$. 
Suppose $\phi \colon X_j \to \rev{X_1}$ is a factor map.
By Lemma~\ref{lem:reversal-construction}, the reversed word sets for
$X_1$ form a construction sequence generating $\rev{X_1}$.
Reversing a word reverses its middle third within the same interval,
so (D1) supplies distinctive cores for this reversed sequence, which is condition (d1) of Lemma~\ref{lem_no_nontame_factors} for the target $\V=\rev{\W_1}$.
Conditions (d2) and (d3) are (D2') and (D3), so the hypotheses of
Lemma~\ref{lem_no_nontame_factors} hold for $\W=\W_j$, $\V=\rev{\W_1}$ and $N_0=\max\{2,M(j)\}$, and 
we get $\phi=\sigma^k f^*$ with $f^*$ aligned and $f:[W^n]_j\to[\rev{W^n}]_1$ at some sufficiently large $n$. 
Lemma~\ref{lem_constr_seq_isom} gives
$f\blockexp{n+1}([W^{n+1}]_j)\subseteq[\rev{W^{n+1}}]_1$,
contrary to (D4').
\end{proof}

\begin{lem}\label{lem:parameter-continuity}
Use the deterministic choices of Lemma~\ref{lem:word-construction} and
write $X(T)=\varprojlim(X_j(T),\pi_j(T))$.
Embed $\{0,1\}^j$ in $\C$ by appending zeros and view the limits as
subsystems of $\mathscr A=(\C^{\Z})^{\N}$.
Then $T\mapsto X(T)$ is continuous into $\K(\mathscr A)$.
For $R=\Psi(T)$ set
\[
\alpha_j(R)=\gbar{\ctup{j}{j}}\blockexp{*},\qquad
\Xother(R)=\varprojlim(X_j(R),\alpha_j(R)\pi_j(R)),
\]
where $\ctup{j}{j}\in V_j(\Psi(T))$ is the endpoint of the $j$th
trunk. Under the identification of Lemma~\ref{lem:getting-inv-system},
$\alpha_j(R)\gengroup{\sigma}$ corresponds to $\gbar{\ctup{j}{j}}$.
The map $T\mapsto\Xother(\Psi(T))$ is continuous as well.
\end{lem}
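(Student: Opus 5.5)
We will derive both continuity assertions from Lemma~\ref{lem:inv-lim-continuity}, applied to a sequence $T_k\to T$ in $\Trees$ (with $k=0$ corresponding to $T$). Everything rests on the locality statement at the end of Lemma~\ref{lem:word-construction}: $L_n,W^n,a_j^n$ depend only on the first $n$ nonroot vertices $t(1),\ldots,t(n)$. By the remark at the start of Section~\ref{section:final}, for each $N$ there is a clopen neighbourhood of $T$ on which the first $N$ nonroot vertices agree with those of $T$. Hence for $k$ large the stage-$n$ data for $T_k$ and $T$ coincide for all $n\le N$.

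\emph{Hypothesis (a).} Fix $j$ and $\eta>0$. Choose $N\ge M_T(j)$ so large that $2^{1-\lfloor L_N/2\rfloor}<\eta$, and also large enough that $M_{T_k}(j)=M_T(j)$ for large $k$. The latter holds because the first vertex of depth $j$ is among the first $N$ vertices and these agree. Then $[W^N(T_k)]_j=[W^N(T)]_j$ is a common stage of the two construction sequences $\W_j(T_k)$ and $\W_j(T)$. Corollary~\ref{cor:common-stage-hausdorff} gives $d_{\Sigma,H}(X_j(T_k),X_j(T))<\eta$, with both sets embedded in $\C^{\Z}$ by appending zeros.

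\emph{Hypothesis (b).} Take $E_j=\mathscr S$ and let $Q_j$ be the map that keeps the first $j$ rows of each column in $\C$ and replaces the remaining rows by zeros. This map is continuous, and on $X_{j+1}(T')$ it equals $\pi_j(T')=[\,\cdot\,]_j$ for every tree $T'$. So (b) holds with $k_j=1$. Lemma~\ref{lem:inv-lim-continuity} then yields $X(T_k)\to X(T)$ in $\K(\mathscr A)$. Continuity at every $T$ gives the first claim.

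\emph{The map $T\mapsto\Xother(\Psi(T))$.} Since $\Psi$ is continuous, the first claim (which holds for all trees, in particular $R_k=\Psi(T_k)\to R=\Psi(T)$) already gives hypothesis (a). For (b) the bonding map is now $\alpha_j(R)\pi_j(R)$. I would fix $n_j$ with $\gbar{\ctup{j}{j}}\in G^{n_j}_j(R)$, which happens once the trunk endpoint $\ctup{j}{j}$ has been enumerated.

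For $k$ large, the first $n_j$ vertices of $R_k$ agree with those of $R$, so the finite group elements, the word sets $[W^{n}]_{j}$ and the actions $a^{n}_j$ for $n\le n_j$ coincide. In particular, the map $f=(\gbar{\ctup{j}{j}})_{(n_j)}\circ[\,\cdot\,]_j$ on $[W^{n_j}]_{j+1}$ is the same for $R_k$ and $R$. On $X_{j+1}$, the composite $\alpha_j\pi_j$ equals $f\blockexp{*}$ by Step~1 of Lemma~\ref{lem:getting-inv-system}: apply $f$ to each canonical $W^{n_j}$-block.

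The main obstacle is to extend this common block map to a single continuous $Q_j$ on one closed set $E_j\subseteq\mathscr S$ containing $X_{j+1}(R_k)$ for every large $k$ and $X_{j+1}(R)$. I would take $E_j$ to be the set $\Seg([W^{n_j}(R)]_{j+1})$, embedded in $\mathscr S$, and define $Q_j(x)=f\blockexp{*}(x)$ there. This set is closed because the code is finite. Moreover $Q_j$ is continuous, since the cut function $k_{n_j}$ is locally constant by Lemma~\ref{lem:unique-readability}. It contains the relevant $X_{j+1}$ by Lemma~\ref{lem:constr_seq_as_intersection}. The remaining point is that $n_j$ is independent of $k$ and the stage-$n_j$ data agree, so this gives (b) with a threshold $k_j$ depending on $j$, which Lemma~\ref{lem:inv-lim-continuity} allows. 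That lemma then gives continuity of $T\mapsto\Xother(\Psi(T))$.
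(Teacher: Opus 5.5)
Your proposal is correct and follows essentially the same route as the paper: the common-stage Hausdorff estimate gives component convergence, row projection serves as the common map in the untwisted case, and in the twisted case a single block map on the closed set $\Seg([W^{n_j}(R)]_{j+1})$ coincides with the paper's $Q_j=\beta_j\circ p_j$. One detail to add: $n_j$ must also satisfy $s_R(n_j)\ge j+1$ (as in the paper's choice of $N$), so that $[W^{n_j}]_{j+1}$ is actually a stage of $\W_{j+1}$; enlarging $n_j$ handles this.
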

\begin{proof}
We apply Lemma~\ref{lem:inv-lim-continuity}. Fix $T\in\Trees$ and
let $T_k\to T$ in $\Trees$, with $T_0=T$. For every fixed $N$,
agreement on all vertices whose enumeration indices are at most
$\xi(t_T(N))$ gives the same first $N$ processed vertices.
The deterministic choices then give the same construction data
through stage $N$. This agreement holds for all sufficiently large $k$.

Fix a row level $j$. Choose $N$ with $s_T(N)\ge j$, where the
subscript records the tree used in the construction. For all large $k$,
\[
 [W^N(T_k)]_j=[W^N(T)]_j.
\]
Corollary~\ref{cor:common-stage-hausdorff} gives
\[
 d_{\Sigma,H}(X_j(T_k),X_j(T))
 \le 2^{1-\lfloor L_N(T)/2\rfloor}.
\]
Since $L_N(T)\to\infty$, the components converge at every fixed level.
For the untwisted systems, let $p_j:\mathscr S\to\mathscr S$ be
row projection to the first $j$ rows, followed by appending zeros.
Then $\pi_j(T_k)=p_j|_{X_{j+1}(T_k)}$ for every $k\in\N_0$.
The common-map hypothesis of Lemma~\ref{lem:inv-lim-continuity}
holds with $E_j=\mathscr S$ and $Q_j=p_j$. The lemma proves
$X(T_k)\to X(T)$.

For the twisted systems put $R_k=\Psi(T_k)$ and $R=\Psi(T)$.
Continuity of $\Psi$ gives $R_k\to R$, and the component-convergence
argument above applies to these trees as well. Fix $j\ge1$ and choose
$N$ with $s_R(N)\ge j+1$ so large that the generator
$g_j=\gbar{\ctup{j}{j}}$ has been processed by stage $N$.
Define the common closed domain
\[
 E_j=\Seg([W^N(R)]_{j+1})\subseteq\mathscr S.
\]
It is compact because its code is finite. Row projection maps $E_j$
into $\Seg([W^N(R)]_j)$. On the latter segmented space, define
$\beta_j$ by applying the permutation $a_j^N(R)(g_j,\cdot)$ to
every block of its unique stage-$N$ segmentation, leaving the cut
positions unchanged. Lemma~\ref{lem:unique-readability} determines
both the cut and the block covering zero from a finite window.
Thus the zero coordinate of $\beta_j(x)$ is locally determined;
the shift congruence in that lemma makes $\beta_j$ shift-commuting.
It follows that $\beta_j$ is continuous on the entire segmented space.
Set
\[
 Q_j=\beta_j\circ p_j|_{E_j}:E_j\to\mathscr S.
\]

For all sufficiently large $k$, the data through stage $N$ agree
for $R_k$ and $R$, and $X_{j+1}(R_k)\subseteq E_j$; these statements
also hold for $k=0$. Property \textup{(iv)} of
Lemma~\ref{lem:word-construction} identifies the restriction of
$\beta_j$ to $X_j(R_k)$ with $\alpha_j(R_k)$. Consequently
\[
 Q_j|_{X_{j+1}(R_k)}=\alpha_j(R_k)\pi_j(R_k)
\]
for $k=0$ and eventually in $k$. These are surjective bonding maps,
since each $\alpha_j$ is an automorphism and each $\pi_j$ is a factor
map. All hypotheses of Lemma~\ref{lem:inv-lim-continuity} are now
verified, so $\Xother(R_k)\to\Xother(R)$. The parameter spaces are
metrizable, and the sequential conclusions prove both continuity claims.
\end{proof}

\begin{prop}\label{prop:limit-automorphisms}
For the untwisted system of Lemma~\ref{lem:getting-inv-system}, there
are group isomorphisms
\[
\Aut(X(T))\cong\Z\times\varprojlim(G_j(T),\rho_j(T)),
\qquad
\Aut'(X(T))\cong\varprojlim(G_j(T),\rho_j(T)).
\]
These are isomorphisms of abstract groups; no group topology is asserted.
\end{prop}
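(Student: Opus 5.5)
We plan to compute $\Aut(X(T))$ by describing every automorphism coordinatewise. Let $\Phi\in\Aut(X(T))$. Lemma~\ref{lem:small-inv-lim-isomorphism}(ii) and the blended property (Lemma~\ref{lem:getting-inv-system}) give, exactly as in the proof of Lemma~\ref{lem:inv-lim-isomorphism} (with $\piother_j=\pi_j$), automorphisms $f_j\in\Aut(X_j)$ with $\Phi(x)_j=f_j(x_j)$ and $f_j\pi_j=\pi_jf_{j+1}$. Conversely, every compatible sequence $(f_j)$ defines an automorphism by Lemma~\ref{lem:small-inv-lim-isomorphism}(i). The sequence is determined by $\Phi$ because the coordinate projections are surjective. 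Hence $\Aut(X(T))$ is isomorphic to the group $\mathcal A$ of sequences $(f_j)\in\prod_j\Aut(X_j)$ satisfying $\pi_jf_{j+1}=f_j\pi_j$, that is, $f_j=\rho_{\pi_j}(f_{j+1})$. This gives $\Aut(X(T))\cong\varprojlim(\Aut(X_j),\rho_{\pi_j})$.

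Next we identify each $\Aut(X_j)$. By Steps~2 and~3 of Lemma~\ref{lem:getting-inv-system}, every automorphism of $X_j$ is uniquely $\sigma^kg^*$ with $k\in\Z$ and $g\in G_j$. For uniqueness, if $\sigma^kg^*=\sigma^lh^*$, comparing offsets gives $k=l$ (the maps $g^*$ are aligned), and then injectivity in Step~3 gives $g=h$. Since $g^*$ commutes with $\sigma$, the map $\Z\times G_j\to\Aut(X_j)$, $(k,g)\mapsto\sigma^kg^*$, is a group isomorphism. By Step~4 and $\pi_j\sigma=\sigma\pi_j$, we have $\rho_{\pi_j}(\sigma^kg^*)=\sigma^k\rho_j(g)^*$. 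So the inverse system $(\Aut(X_j),\rho_{\pi_j})$ is conjugate to $(\Z\times G_j,\mathrm{id}\times\rho_j)$. Its inverse limit consists of sequences $(k_j,g_j)$ with $k_j$ constant and $(g_j)$ coherent, which is $\Z\times\varprojlim(G_j,\rho_j)$.

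For the reduced group, the shift of $X(T)$ corresponds to the constant sequence $(\sigma)_j$, that is, to $(1,0)$ in $\Z\times\varprojlim G_j$. Quotienting by $\langle\sigma\rangle\cong\Z\times\{0\}$ then yields $\Aut'(X(T))\cong\varprojlim(G_j,\rho_j)$.

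The main point requiring care is the first step, namely that an arbitrary automorphism of the limit really splits coordinatewise into automorphisms rather than into factor maps from higher levels. This is exactly what blendedness gives, combined with cancelling the surjection $X\to X_{j+1}$. The remaining steps are bookkeeping with offsets to ensure the decomposition $\sigma^kg^*$ is unique, so that the limit is a direct product rather than a more twisted extension.
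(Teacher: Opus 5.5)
Your proposal is correct and follows essentially the same route as the paper. Both arguments decompose automorphisms of the limit coordinatewise via the blended property and the proof of Lemma~\ref{lem:inv-lim-isomorphism}. Both use the unique normal form $\sigma^k g^*$ on each $X_j$ (offset comparison plus Step~3/freeness) and Step~4 to force $k_j=k_{j+1}$ and $g_j=\rho_j(g_{j+1})$. Both finish by quotienting out the shift factor. Your packaging as a conjugacy between the inverse systems $(\Aut(X_j),\rho_{\pi_j})$ and $(\Z\times G_j,\mathrm{id}\times\rho_j)$ is only a cosmetic reorganization of the paper's argument.
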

\begin{proof}
The proof of Lemma~\ref{lem:getting-inv-system} gives a unique expression
$\sigma^k g^*$ for every automorphism of $X_j(T)$.
For uniqueness, compare offsets to determine $k$, and then use
freeness at a stage containing the two group elements to determine $g$.
The blended property and Lemma~\ref{lem:inv-lim-isomorphism} express
any automorphism of the inverse limit coordinatewise as
$f_j=\sigma^{k_j}g_j^*$, with $f_j\pi_j=\pi_jf_{j+1}$.
The coordinate projections are surjective, so these $f_j$ are unique.
Using $\pi_jg_{j+1}^*=\rho_j(g_{j+1})^*\pi_j$ and cancelling $\pi_j$
gives
\[
k_j=k_{j+1},\qquad g_j=\rho_j(g_{j+1}).
\]
Conversely, a common $k\in\Z$ and a coherent sequence $(g_j)$ define
compatible coordinate automorphisms, hence an automorphism of the
inverse limit. Composition multiplies the group coordinates and adds
the common shift exponent. The shift subgroup is exactly the first
factor, which gives the second isomorphism.
\end{proof}

\section{Main result and applications}

\subsection{Conjugacy and flip conjugacy}
We now prove the main theorem and its consequences.

\begin{proof}[Proof of Theorem~\ref{thm:main-thm}]
For $T\in\Trees$ apply Lemmas~\ref{lem:word-construction} and
\ref{lem:getting-inv-system} to $R=\Psi(T)$.
The resulting inverse system is blended and adjacent to $R$;
its reduced-group identifications are
$g\mapsto g^*\langle\sigma\rangle$, and no $X_j(R)$ factors onto
$\rev{X_1(R)}$. Define $\alpha_j(R)$ and the two inverse limits
$X(R)$ and $\Xother(R)$ as in Lemma~\ref{lem:parameter-continuity}.
Lemma~\ref{lem:main-reduction} gives
\[
T\in\IllFounded\quad\Longleftrightarrow\quad X(R)\cong\Xother(R),
\]
and, when $T$ is well-founded, the two systems are not flip conjugate.
The components are infinite and minimal, so both inverse limits are
Cantor minimal systems by Lemma~\ref{lem:thesis-inverse-minimal}.

By Lemma~\ref{lem:parameter-continuity}, the maps $T\mapsto X(T)$ and
$T\mapsto\Xother(\Psi(T))$ into the fixed ambient hyperspace
$\K((\C^{\Z})^{\N})$ are continuous; since $\Psi$ is continuous, so is
$T\mapsto X(\Psi(T))$.
The coordinate permutation
\[
 (\C^{\Z})^{\N}\ni(x_j)_{j\in\N}\longmapsto
 \bigl((x_j(n))_{j\in\N}\bigr)_{n\in\Z}\in(\C^{\N})^{\Z}
\]
is a homeomorphism intertwining the coordinatewise shift with the
shift. A fixed homeomorphism $\iota\colon\C^{\N}\to\C$, applied
at each coordinate $n\in\Z$, then conjugates
$((\C^{\N})^{\Z},\sigma)$ to $(\C^{\Z},\sigma)$.
Let $\kappa$ be the composition of these two conjugacies.
It carries $X(R)$ and $\Xother(R)$ into $\Min^\sigma_p(\C^{\Z})$.
By Lemma~\ref{lem:conjugacy-hyperspace}, the maps
$T\mapsto\kappa(X(\Psi(T)))$ and
$T\mapsto\kappa(\Xother(\Psi(T)))$ are continuous.
Apply the objectwise transfer $\Xi$ of
Theorem~\ref{thm:subshifts-and-systems-are-bireducible} to obtain
continuous maps $\Phi_1,\Phi_2$ into $\Min(\C)$ representing these
two systems. These transfers preserve each system up to conjugacy, so the required
conjugacy and non-flip-conjugacy conclusions are unchanged.
\end{proof}

\begin{proof}[Proof of Corollary~\ref{cor:main}]
The preceding map continuously reduces $\IllFounded$ to either
relation, proving analytic hardness. Conjugacy on $\Min(\C)$ is
analytic as established above. Flip conjugacy is the union of that
relation with its inverse-image under the continuous map
$(T,S)\mapsto(T,S^{-1})$, and is analytic as well.
\end{proof}

\begin{cor}\label{cor:transitive}
Conjugacy of transitive Cantor systems is complete analytic.
\end{cor}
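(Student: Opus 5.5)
Since minimal systems are transitive, I will obtain the corollary by restricting conjugacy to the transitive homeomorphisms and reusing the reduction of Theorem~\ref{thm:main-thm}. Let $\mathrm{Tr}(\C)\subseteq\Homeo(\C)$ be the set of transitive homeomorphisms, meaning those $f$ for which some forward orbit is dense; for Cantor space homeomorphisms this agrees with topological transitivity. First, $\mathrm{Tr}(\C)$ must be a Polish space. Fix a countable clopen basis $\mathcal U$. Topological transitivity says that for all nonempty $U,V\in\mathcal U$ there is $n\in\Z$ with $f^n(U)\cap V\ne\varnothing$. For a fixed $n$ and clopen $U,V$, this condition is open in $f$, since $f\mapsto f^n(U)$ is locally constant. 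Hence $\mathrm{Tr}(\C)$ is a $G_\delta$ subset of $\Homeo(\C)$ and so is Polish. In a perfect space, having a dense forward orbit is equivalent to this condition, because a dense $G_\delta$ set of points have dense full orbits, and dense full orbits in a perfect compact space yield dense forward orbits. I only need some Polish model, so I will fix this standard $G_\delta$ description and check that $\Min(\C)\subseteq\mathrm{Tr}(\C)$ with the subspace topology.

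Next comes analyticity. The relation $\{(f,g)\in\mathrm{Tr}(\C)^2 : f\cong g\}$ is the intersection of the analytic set $[\cong]_{\Homeo(\C)}$ with the Borel set $\mathrm{Tr}(\C)^2$, so it is analytic.

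For hardness, compose the continuous map $T\mapsto(\Phi_1(T),\Phi_2(T))$ of Theorem~\ref{thm:main-thm} with the inclusion $\Min(\C)^2\hookrightarrow\mathrm{Tr}(\C)^2$. The inclusion is continuous because both sets carry the subspace topology from $\Homeo(\C)$. Conjugacy is defined by the same condition, existence of $\varphi\in\Homeo(\C)$, on both spaces. The composite therefore sends $\IllFounded$ into the conjugacy relation and $\WellFounded$ into its complement, which gives a continuous reduction of $\IllFounded$. Since $\IllFounded$ is complete analytic on $\Trees$ (Section~\ref{section:trees}), so is conjugacy on $\mathrm{Tr}(\C)$.

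The only real obstacle is the Polish-space bookkeeping in the first step: confirming that the chosen model of ``transitive'' is a $G_\delta$ that contains $\Min(\C)$. The rest is a direct restriction of the main theorem. The same argument gives the result for flip conjugacy as well.
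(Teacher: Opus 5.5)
Your proposal is correct and follows the paper's proof essentially step for step. You give a $G_\delta$ description of the transitive homeomorphisms through a countable clopen basis (the paper uses $n\ge0$ where you use $n\in\Z$; on $\C$ the two conditions agree), get analyticity from the homeomorphism-witness description of conjugacy, and get hardness by composing the reduction of Theorem~\ref{thm:main-thm} with the inclusion $\Min(\C)^2\hookrightarrow\mathrm{Tr}(\C)^2$.

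One wording caveat concerns your side remark. A single point with a dense full orbit need not have a dense forward orbit, since its $\omega$-limit set can be a fixed point, so the equivalence is really an existence statement. Such a point $x$ satisfies $\omega(x)=\C$ or $\alpha(x)=\C$ by perfectness, and either case gives the forward condition $f^n(U)\cap V\neq\varnothing$ with $n\ge0$.
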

\begin{proof}
For a countable clopen basis $(U_i)$ of $\C$, transitivity of a
homeomorphism $T$ is equivalent to $T^n(U_i)\cap U_j\ne\varnothing$
for some $n\ge0$, for every $i,j$. For fixed $i,j,n$, this is an open
condition on $T$; hence the transitive homeomorphisms form a Polish
$G_\delta$ subspace of $\Homeo(\C)$. Conjugacy on it is analytic by
the homeomorphism-witness description. Every minimal system is
transitive, so the continuous reduction of Theorem~\ref{thm:main-thm}
also proves completeness on this larger space.
\end{proof}
Vejnar \cite[Theorem~3.4]{VejnarChaotic} independently proved the
stronger statement that conjugacy of transitive Cantor homeomorphisms
with dense periodic points is Borel bireducible with a universal orbit
equivalence relation of $S_\infty$; his result also implies
Corollary~\ref{cor:transitive}.

\subsection{Automorphism groups}
\begin{proof}[Proof of Theorem~\ref{thm:centralizer}]
By Proposition~\ref{prop:limit-automorphisms} and Lemma~\ref{lem_trivial},
\[
T\in\IllFounded\quad\Longleftrightarrow\quad
\Aut'(X(T))\ne\{e\}.
\]
Lemma~\ref{lem:parameter-continuity} gives continuity of $T\mapsto X(T)$
into the hyperspace of the ambient Cantor shift $((\C^{\Z})^{\N},\sigma)$. Fix a homeomorphism $h\colon(\C^{\Z})^{\N}\to\C$ and set
$F=h\sigma h^{-1}$. Lemma~\ref{lem:conjugacy-hyperspace} makes
$T\mapsto h(X(T))\in\K^F_p(\C)$ continuous.
Lemma~\ref{lem:transport} gives the continuous assignment
\[
T\longmapsto
\varphi_{h(X(T))}^{-1}\circ F|_{h(X(T))}\circ\varphi_{h(X(T))}
 \in\Min(\C).
\]
Each output is conjugate to $X(T)$, so this is a reduction to the set
of systems with an automorphism outside the powers of the dynamics.

For analyticity, that set is the projection of
\[
\{(U,S)\in\Min(\C)\times\Homeo(\C):
 SU=US\text{ and }S\ne U^k\text{ for every }k\in\Z\}.
\]
Commutation and each equality $S=U^k$ are closed conditions by
continuity of the group operations. The witness set is therefore
Borel and its projection is analytic. Together with the reduction,
this proves complete analyticity.
\end{proof}

\subsection{Descriptive combinatorics}\label{sec:graph-application}
A graph on a topological space $V$ is a symmetric, irreflexive relation
$E\subseteq V^2$. A continuous $k$-coloring is a continuous map
$c:V\to\{0,\ldots,k-1\}$, with discrete range, such that
$c(x)\ne c(y)$ whenever $(x,y)\in E$. For the graphs considered here,
$\chi_c(V,E)$ is the least finite $k$ for which such a coloring exists, and is set to $\infty$ if there is no finite continuous coloring.
For graphs $(V,E)$ and $(V',E')$, write
\[
(V,E)\preceq_c^i(V',E')
\]
if there is an injective continuous map $f:V\to V'$ satisfying
$(x,y)\in E\Longrightarrow(f(x),f(y))\in E'$.
Write $(V,E)\approx_c^i(V',E')$ if reductions exist in both directions.
A \emph{topological graph isomorphism} is a homeomorphism $f:V\to V'$
satisfying $(f\times f)(E)=E'$; its relation is denoted by
$\cong_{\mathrm{top}}$.

We use a fixed vertex space $\C$ and code a graph by its edge set in
$\K(\C^2)$. Put
\[
C_{[2,3]}=\{E\in\K(\C^2):E=E^{\mathrm{op}},\ E\cap\Delta_{\C}
=\varnothing,\ 2\le\chi_c(\C,E)\le3\},
\]
where $E^{\mathrm{op}}=\{(y,x):(x,y)\in E\}$ and
$\Delta_{\C}=\{(x,x):x\in\C\}$. We also use the subspace
\[
 C_{\{2\}}=\{E\in C_{[2,3]}:\chi_c(\C,E)=2\}.
\]
These are spaces of compact edge relations, not codings of all Borel graphs.

\begin{prop}\label{prop:graph-coding}
The spaces $C_{\{2\}}$ and $C_{[2,3]}$ are Polish. Both
$\approx_c^i$ and $\cong_{\mathrm{top}}$ are analytic relations
on each of them.
\end{prop}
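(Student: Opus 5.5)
We will show that the set of symmetric irreflexive compact edge relations with $\chi_c\le k$ is open (in fact a countable union of clopen-defined open sets) in a closed subspace of $\K(\C^2)$. Then $C_{\{2\}}$ and $C_{[2,3]}$ are differences of such sets, hence $G_\delta$, and therefore Polish.

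\emph{The ambient space.} First, $\{E\in\K(\C^2):E=E^{\mathrm{op}}\}$ is closed, since $E\mapsto E^{\mathrm{op}}$ is a homeomorphism of $\K(\C^2)$. Second, $\{E:E\cap\Delta_{\C}=\varnothing\}$ is open in the Vietoris topology, being of the form $\{E\subseteq U\}$ with $U=\C^2\setminus\Delta_{\C}$ open. So the symmetric irreflexive relations form a $G_\delta$ subset $\mathcal G$ of the compact metrizable space $\K(\C^2)$, hence a Polish space.

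\emph{Colorability.} A continuous $k$-coloring $c\colon\C\to\{0,\ldots,k-1\}$ is the same as a clopen partition $P_0,\ldots,P_{k-1}$ of $\C$ (pieces allowed to be empty), and there are only countably many of these. The map $c$ is a coloring of $E$ iff $E\subseteq\bigcup_{a\ne b}P_a\times P_b$. The right side is clopen, so this condition is clopen in $E$. Hence $\{E\in\mathcal G:\chi_c(\C,E)\le k\}$ is a countable union of clopen sets, i.e.\ open, and therefore also $G_\delta$. Its complement $\{\chi_c\ge k+1\}$ is closed in $\mathcal G$. For the graphs here, with $E$ nonempty and irreflexive, $\chi_c\ge2$ automatically, because a constant coloring fails. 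So
\[
C_{[2,3]}=\{\chi_c\le3\}\cap\mathcal G, \qquad C_{\{2\}}=\{\chi_c\le2\}\cap\mathcal G
\]
are open in $\mathcal G$, hence Polish. (Only nonempty $E$ lie in $\K$, so nonemptiness is built in.)

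\emph{Analyticity.} Each relation is the projection of a Borel witness set.
\begin{itemize}
\item For $\cong_{\mathrm{top}}$, take $\{(E,E',f)\in C^2\times\Homeo(\C):(f\times f)(E)=E'\}$. This set is closed, because $(f,E)\mapsto(f\times f)(E)$ is continuous from $\Homeo(\C)\times\K(\C^2)$ to $\K(\C^2)$ by uniform continuity, and $\Homeo(\C)$ is Polish.
\item For $\preceq_c^i$, use witnesses $f\in\Cont(\C,\C)$, which is Polish. The condition $(f\times f)(E)\subseteq E'$ is closed, since inclusion is a closed relation on $\K(\C^2)$. Injectivity of $f$ is $G_\delta$: it says that for each $m$, every pair with $d(x,y)\ge1/m$ has $f(x)\ne f(y)$, and by compactness this is the open condition $\min\{d(f(x),f(y)):d(x,y)\ge 1/m\}>0$.
\end{itemize}
So both witness sets are Borel, their projections to $C^2$ are analytic, and $\approx_c^i$, an intersection of $\preceq_c^i$ with its transpose, is analytic.

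The main point needing care is the openness of $\{\chi_c\le k\}$. It relies on the countability of clopen partitions and the compactness of $E$, which makes "$E$ avoids the closed monochromatic set" an open Vietoris condition. Everything else is routine continuity of the hyperspace maps.
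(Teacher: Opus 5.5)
Your proof is correct and is essentially the paper's argument. You show that $\{\chi_c\le k\}$ is open, as a countable union over clopen colorings of the clopen conditions $E\subseteq\{(x,y):c(x)\ne c(y)\}$, inside the space of symmetric irreflexive edge sets; you use $\chi_c\ge2$ automatically; and you get analyticity by projecting the closed witness set in $\Homeo(\C)$ and the $G_\delta$ witness set in $\Cont(\C,\C)$. Your opening remark that the spaces are differences of open sets, hence $G_\delta$, is superseded by your later, sharper observation that they are open in $\mathcal G$, which is what the paper uses.
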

\begin{proof}
Symmetry is a closed condition on the compact hyperspace, and
avoiding the diagonal is open. Fix $k\in\{2,3\}$. A continuous map
$c:\C\to\{0,\ldots,k-1\}$ is a proper coloring of $E$ exactly when
$E\subseteq\{(x,y):c(x)\ne c(y)\}$, an open condition on $E$.
There are countably many such maps, since their fibers are clopen
and $\C$ has countably many clopen subsets. Existence of a continuous
proper $k$-coloring is therefore open. Edge sets in $\K(\C^2)$ are
nonempty, so a proper one-coloring is impossible. Taking $k=2$ and
$k=3$ shows that both $C_{\{2\}}$ and $C_{[2,3]}$ are open in the
closed subspace of symmetric compact sets. Both spaces are therefore
Polish.

In $\Cont(\C,\C)$, injectivity is $G_\delta$: for each $m\ge1$
require the images of all pairs at distance at least $1/m$ to be
separated by a positive distance. Compactness makes each such
condition open. The condition $(f\times f)(E)\subseteq E'$ is closed
in the product of this function space with the two hyperspaces.
Consequently $\preceq_c^i$ is analytic by projection; using two
witnesses gives analyticity of $\approx_c^i$.
The condition $(h\times h)(E)=E'$ with $h\in\Homeo(\C)$ is also
closed, proving analyticity of $\cong_{\mathrm{top}}$.
\end{proof}

For $U\in\Min(\C)$ define its undirected graph by
\[
E_U=\{(x,Ux):x\in\C\}\cup\{(Ux,x):x\in\C\}.
\]
It is compact, symmetric and irreflexive. The reduction below is the
one used by Lecomte in \cite[Theorem~13.1]{lecomte2023continuous};
we include the needed graph and dynamical arguments.

\begin{lem}\label{lem:minimal-graphs}
The map $U\mapsto E_U$ is continuous into $C_{[2,3]}$.
For $U,V\in\Min(\C)$, an injective continuous homomorphism from
$(\C,E_U)$ to $(\C,E_V)$ is precisely a flip conjugacy, with the same
witness. In particular it is a topological graph isomorphism.
\end{lem}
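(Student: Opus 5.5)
Three things must be shown: that $U\mapsto E_U$ is continuous into $\K(\C^2)$ and lands in $C_{[2,3]}$; that every flip conjugacy is an injective continuous homomorphism (and indeed a topological graph isomorphism); and that every injective continuous homomorphism $f\colon(\C,E_U)\to(\C,E_V)$ is a flip conjugacy with witness $f$.

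\emph{Continuity and membership.} $E_U$ is the union of the graph of $U$ and the graph of $U^{-1}$, both compact. If $U_k\to U$ uniformly with $U_k^{-1}\to U^{-1}$, the Hausdorff distance between the graphs is bounded by the uniform distances, so $E_{U_k}\to E_U$. Symmetry holds by definition. Minimality on an infinite space forces $Ux\ne x$, so $E_U$ misses the diagonal.

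For the chromatic bound, a $2$-coloring is impossible only if some edge set is not bipartite, which cannot be excluded; we only need $2\le\chi_c\le3$. The lower bound is automatic since $E_U\neq\varnothing$. For the upper bound, I would use a clopen set $A$ such that $A,UA,\dots$ form a Kakutani--Rokhlin tower in which every return time is at least $2$, which is possible since $U$ has no periodic points. Color the levels of each tower column alternately $0,1$, and when a column has odd height, give its top level color $2$. Adjacent points lie in consecutive levels or on a top/base pair; the base has color $0$, and the top has color $1$ or $2$, so the coloring is proper. Thus $E_U\in C_{[2,3]}$.

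\emph{Flip conjugacies are homomorphisms.} If $fU=Vf$ or $fU=V^{-1}f$ with $f$ a homeomorphism, then $f\times f$ maps the graph of $U$ onto the graph of $V^{\pm1}$. Hence $(f\times f)(E_U)=E_V$, so $f$ is a topological graph isomorphism and in particular an injective continuous homomorphism.

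\emph{Homomorphisms are flip conjugacies (main step).} Let $f$ be injective, continuous and a homomorphism. For each $x$, the points $f(Ux)$ and $f(U^{-1}x)$ are $E_V$-neighbours of $f(x)$, so each lies in $\{Vf(x),V^{-1}f(x)\}$, and they are distinct by injectivity. Since $Vy\ne V^{-1}y$ ($V$ has no point of period $2$), the sets
\[
P=\{x:f(Ux)=Vf(x)\},\qquad Q=\{x:f(Ux)=V^{-1}f(x)\}
\]
are closed and partition $\C$, hence are clopen. The key claim is that $P$ is $U$-invariant. If $x\in P$, then $f(Ux)=Vf(x)$. Since $f(U^{-1}(Ux))=f(x)=V^{-1}f(Ux)$ and $f(U\cdot Ux)$ must be the other neighbour of $f(Ux)$, we get $f(U^2x)=Vf(Ux)$, that is, $Ux\in P$. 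The same argument applies to $Q$. Minimality then forces $P=\C$ or $Q=\C$. Therefore $fU=Vf$ or $fU=V^{-1}f$, and $f(\C)$ is a nonempty closed set invariant under $V$ or $V^{-1}$. By minimality of $V$, $f$ is surjective, and a continuous bijection of the compact space $\C$ is a homeomorphism. So $f$ is a flip conjugacy, and by the previous paragraph a topological graph isomorphism.

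The delicate points are the invariance step and the need for $V$ to have no points of period $\le2$; both come from minimality on the infinite space $\C$. The chromatic bound is the remaining technical step, handled by the Kakutani--Rokhlin tower argument above.
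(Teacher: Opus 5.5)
Your proof is correct and matches the paper's argument essentially step for step: the same Kakutani--Rokhlin coloring (alternating $0,1$ up each column, color $2$ on the top of odd-height columns, bases colored $0$), and the same clopen partition into $P$ and $Q$ shown to be $U$-invariant. Your ``other neighbour'' step is exactly the paper's observation that a sign change at $x$ would force $f(U^2x)=f(x)$. Note that the distinctness of $f(Ux)$ and $f(U^{-1}x)$ uses that $U$ has no points of period two, not only $V$.

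One small caution: your side remark that a continuous $2$-coloring fails only when the edge set is not bipartite is false here. Every $E_U$ is bipartite as an abstract graph, since its components are bi-infinite paths, yet continuous $2$-colorability can still fail. The remark plays no role in your argument.
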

\begin{proof}
Uniform convergence of $U$ gives Hausdorff convergence of its graph.
Adjoining the transpose is also continuous. It remains
to verify the chromatic bound and the assertion about witnesses.

Choose a nonempty clopen set $A$ with $A\cap U(A)=\varnothing$;
this is possible because $U$ has no fixed point. By minimality, the
first-return time to $A$ is bounded and is a locally constant
function on $A$. Its values $h$ are at least two, and the sets
$U^i(A_h)$, $0\le i<h$, where $A_h$ is the return-time-$h$ subset,
form a finite clopen partition of $\C$. On a tower of even height
color the levels alternately $0,1$. On a tower of odd height color
the first $h-1$ levels alternately $0,1$ and its last level $2$.
Adjacent levels have different colors, every base has color $0$,
and every top has color $1$ or $2$. A top maps into a base, so this
is a continuous proper three-coloring of $E_U$. Nonemptiness of
$E_U$ gives $\chi_c(\C,E_U)\ge2$.

Let $f$ be an injective continuous graph homomorphism. Since $V$ has
no orbit of period one or two, for each $x$ there is a unique sign
$\epsilon(x)\in\{-1,1\}$ such that
$f(Ux)=V^{\epsilon(x)}f(x)$. The sets for the two signs are disjoint
closed sets covering $\C$, so the sign function is continuous.
If $\epsilon(Ux)=-\epsilon(x)$, then $f(U^2x)=f(x)$, contradicting
injectivity and absence of period-two points for $U$.
Thus $\epsilon$ is $U$-invariant; minimality makes it constant.
The image $f(\C)$ is then a nonempty compact $V$-invariant set and,
by minimality of $V$, equals $\C$. Hence $f$ is a homeomorphism and
is a conjugacy either to $V$ or to $V^{-1}$.
Conversely, either kind of conjugacy carries $E_U$ exactly onto $E_V$.
This proves the witness assertion, also recorded in
\cite[Lemma~7.11]{lecomte2023continuous}.
\end{proof}

\begin{lem}[Two-colorability of the reduction systems]
\label{lem:reduction-two-colorable}
Let $\Phi_1,\Phi_2$ be the maps constructed in the proof of
Theorem~\ref{thm:main-thm}. For every $T\in\Trees$ and $i\in\{1,2\}$,
the system $(\C,\Phi_i(T))$ has a factor onto
$(\Z/2\Z,+1)$. Consequently $E_{\Phi_i(T)}\in C_{\{2\}}$.
\end{lem}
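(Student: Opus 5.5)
The proof has two parts. First we produce a factor map onto the two-point rotation $(\Z/2\Z,+1)$. Then we pull back the alternating coloring of $\Z/2\Z$ to a continuous proper two-coloring of $E_{\Phi_i(T)}$.

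For the factor, use the odometer structure. Each $\Phi_i(T)$ is conjugate, via the transfers in the proof of Theorem~\ref{thm:main-thm}, to $X(R)$ or $\Xother(R)$ with $R=\Psi(T)$. In either case the first coordinate projection is a factor map onto $X_1(R)$. This holds for both the untwisted and twisted bonding maps, since coordinate projections of inverse limits with surjective bonding maps are onto.

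By Lemma~\ref{lem:canonical-odometer}, $X_1(R)=X_{\W_1}$ factors onto the odometer $\cL$ with scale $(L_n)$. By Lemma~\ref{lem:word-construction} we have $L_1=6$ and every $L_n$ is a multiple of $6$, so in particular $2\mid L_n$. The map $\cL\ni x\mapsto x_1 \bmod 2$ is well defined and continuous. It intertwines $+\mathbf 1$ with $+1$ on $\Z/2\Z$, and it is onto. Composing these maps with the conjugacy between $(\C,\Phi_i(T))$ and the inverse limit gives a factor map $q\colon(\C,\Phi_i(T))\to(\Z/2\Z,+1)$.

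Now set $c=q$. It is continuous into the discrete set $\{0,1\}$. Every edge of $E_{\Phi_i(T)}$ has the form $(x,Ux)$ or $(Ux,x)$ with $U=\Phi_i(T)$, and $q(Ux)=q(x)+1\ne q(x)$. So $c$ is a continuous proper two-coloring, and $\chi_c\le2$. Nonemptiness of the edge set gives $\chi_c\ge2$. By Lemma~\ref{lem:minimal-graphs}, $E_U$ is compact, symmetric and irreflexive, so $E_U\in C_{\{2\}}$.

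The only point needing care is that the odometer factor exists for $\Xother$ as well as for $X$. This is handled by projecting to coordinate $1$, where no twist matters, together with the divisibility $2\mid L_1$.
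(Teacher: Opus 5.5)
Your proof is correct and follows the paper's argument: project $X(R)$ or $\Xother(R)$ onto $X_1(R)$, compose with the canonical odometer factor of Lemma~\ref{lem:canonical-odometer}, reduce the first coordinate (in $\Z/6\Z$, since $L_1=6$) modulo two, transport the result through the conjugacies defining $\Phi_i(T)$, and use the equivariant map as a continuous proper two-coloring. Only $2\mid L_1$ is needed, so your remark that every $L_n$ is a multiple of six is true but unnecessary.
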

\begin{proof}
Fix $T\in\Trees$ and put $R=\Psi(T)$. The construction in
Lemma~\ref{lem:word-construction} has $L_1=6$. Let
$\pi_{\W_1}:X_1(R)\to\cL$ be its canonical odometer factor from
Lemma~\ref{lem:canonical-odometer}. Its first coordinate takes values
in $\Z/6\Z$. Reduction modulo two therefore gives a factor map
\[
 b:X_1(R)\longrightarrow\Z/2\Z,\qquad
 b(z):=(\pi_{\W_1}(z))_1\pmod 2,
 \qquad b(\sigma z)=b(z)+1.
\]
Both $X(R)$ and $\Xother(R)$ project equivariantly and surjectively
onto $X_1(R)$: in both limits, the dynamics is the coordinatewise shift
and the bonding maps are surjective. Composing each projection with $b$ gives the
required factors for the two inverse limits. Transport them through
the conjugacies used to define $\Phi_1(T)$ and $\Phi_2(T)$.
For each $i$ this gives a continuous map
$c_i:\C\to\Z/2\Z$ satisfying
\[
 c_i(\Phi_i(T)x)=c_i(x)+1\qquad(x\in\C).
\]
Thus $c_i$ is a proper continuous two-coloring of $E_{\Phi_i(T)}$.
That edge set is nonempty, so its continuous chromatic number is
exactly two.
\end{proof}

\begin{proof}[Proof of Theorem~\ref{thm:graph-intro}]
By Theorem~\ref{thm:main-thm} and Lemmas~\ref{lem:minimal-graphs}
and~\ref{lem:reduction-two-colorable}, the map
\[
 \Trees\ni T\longmapsto
 \bigl(E_{\Phi_1(T)},E_{\Phi_2(T)}\bigr)\in C_{\{2\}}^2
\]
is continuous. The main reduction and the witness characterization
in Lemma~\ref{lem:minimal-graphs} give
\begin{align*}
 T\in\IllFounded
 &\quad\Longleftrightarrow\quad
 (\C,E_{\Phi_1(T)})\approx_c^i(\C,E_{\Phi_2(T)})\\
 &\quad\Longleftrightarrow\quad
 (\C,E_{\Phi_1(T)})\cong_{\mathrm{top}}(\C,E_{\Phi_2(T)}).
\end{align*}
Since $\IllFounded$ is complete analytic, both graph relations are
analytic-hard on $C_{\{2\}}$. Proposition~\ref{prop:graph-coding}
supplies their analytic upper bounds, proving completeness.
The inclusion $C_{\{2\}}\hookrightarrow C_{[2,3]}$ carries the same
reduction into the larger domain, where the analytic upper bounds
also hold. This proves the second assertion.
The equivalence between the two graph relations is used only on
the family of graphs of minimal homeomorphisms; it is not asserted
for arbitrary members of either parameter space.
\end{proof}

\appendix
\section{An alternative construction using reversal}
\label{sec:legacy-reversal}

We give an alternative construction of Cantor minimal systems $X_T$,
indexed by $T\in\Trees$. These systems are not identified with $X(T)$
or $\bar X(T)$ from the main construction.

In the main construction, the full tree group acts by aligned
automorphisms, and the bonding maps of the second inverse limit
determine the obstruction to conjugacy. The construction below uses a
specified parity homomorphism: old actions extend diagonally for even
elements and skew-diagonally for odd elements. Odd elements describe
conjugacies to the reversal, and even elements describe automorphisms.

The outcome is a system $X_T$ whose conjugacies onto $\revop X_T$
are parametrized by an integer shift and an odd coherent sequence of
the tree groups (Corollary~\ref{legacy:cor:big-group-odd}), and whose
automorphism group is $\Z\times\mathcal G(T)^{\mathrm{even}}$
(Proposition~\ref{legacy:even-automorphisms}). In particular,
$X_T\cong\revop X_T$ if and only if $T$ has an infinite branch, and
$T\mapsto X_T$ is continuous (Lemma~\ref{legacy:lem:continuity}), so
$T\mapsto(X_T,\revop X_T)$ is another continuous reduction of
ill-foundedness to conjugacy of Cantor minimal systems. Since the pair
$(X_T,\revop X_T)$ is always flip conjugate, this map cannot serve as
the reduction for flip conjugacy; that additional conclusion uses the
reversal-exclusion conditions of the main construction. We do not
repeat the completeness argument, which is that of
Theorem~\ref{thm:main-thm}. We record instead the description of the
automorphism group of $X_T$ and the alternative proof of
Theorem~\ref{thm:centralizer} that it affords.

\subsection{Tree conventions and tree groups}
\label{legacy:tree-strategy}
Recall that a tree in $\Trees$ is rooted, closed under prefixes, and of
unbounded depth; no restriction is placed on its first-level vertices. Enumerate its vertices as
$t_T(0),t_T(1),\ldots$ using the fixed enumeration from
Section~\ref{section:trees}, with $t_T(0)=\emptyword$.
For $i\ge1$, let $\tau(i)$ be the index of the parent of $t_T(i)$ and let
$\delta(i)=\depth{t_T(i)}$. Then $\tau(i)<i$.
Set $s(n)=\max_{1\le i\le n}\delta(i)$ and
$M(t)=\min\{i:\delta(i)=t\}$. These conventions agree with those used in the main
construction, and every initial finite list of vertices is locally constant
as a function of the tree.

For $x\in A^{\Z}$, recall that $\revop(x)_j=x_{-j}$ for $j\in\Z$.
For a subshift $X\subseteq A^{\Z}$, its \emph{reverse shift} is
$\revop(X)=\{\revop(x):x\in X\}$.

Reversal conjugates $(\revop(X),\sigma)$ to $(X,\sigma^{-1})$.

Following \cite{FRW}, set
$\mathcal G_t:=\bigoplus_{i:\delta(i)=t}\Z/2\Z$, with canonical generators
$\mathcal B_t:=\{\g_i:\delta(i)=t\}$ indexed by the vertices at depth $t$.
The homomorphism $\rho:\mathcal G_{t+1}\to\mathcal G_t$ sends
$\g_i$ to $\g_{\tau(i)}$; we specify its domain level when needed.
An element is \emph{odd} if it is a sum of an odd number of elements
of $\mathcal B_t$, and \emph{even} otherwise.
Thus $(\mathcal G_t,\rho)_{t\ge1}$ is the $T$-directed inverse system
of groups of Section~\ref{subsec:tree-groups}, with each vertex $t_T(i)$
replaced by its index $i$. Odd elements will parametrize the conjugacies
from the finite-alphabet components of $X_T$ onto their reversals, and even
elements will parametrize their automorphisms.

The word sets below are construction sequences in the sense of
Definition~\ref{defn:constrSeq}, and we use the notation $X_{\W}$ and
$\Seg(W^n)$, the canonical odometer of Lemma~\ref{lem:canonical-odometer},
alignment as in Definition~\ref{def:aligned-factor}, the block
extensions $f\blockexp{m}$ and $f\blockexp{*}$, and the factor-map and
tameness criteria of Lemmas~\ref{lem_constr_seq_isom}
and~\ref{lem_no_nontame_factors}, all from Section~\ref{sec:constr-seq}.

\subsection{The skew-diagonal word construction}
\label{legacy:subsection:mainconstr}
For a group $G$ equipped with a homomorphism
$\epsilon:G\to\mathbb Z/2\mathbb Z$, an element is \emph{even} if
$\epsilon(g)=0$ and \emph{odd} if $\epsilon(g)=1$. In the tree groups below,
$G$ is an $\mathbb F_2$-vector space with a specified basis and $\epsilon$ is
the sum of the basis coefficients. Thus parity is part of the data; it is not
defined by an arbitrary generating set.

Suppose $G$ acts on a set $Y$. For a positive integer $L$, define
\[
 J_g(i)=\begin{cases}i,&\epsilon(g)=0,\\L-1-i,&\epsilon(g)=1,\end{cases}
 \qquad
 (\widetilde g y)_i=g(y_{J_g(i)})\quad(0\le i<L).
\]
This is the \emph{skew-diagonal action} on $Y^L$. Indeed, the maps $J_g$
compose according to parity, and coordinate permutations commute with
coordinatewise applications of the action on $Y$; hence
$\widetilde{gh}=\widetilde g\,\widetilde h$.

For each $n$ and $1\le t\le s(n)$, let $G_t^n$ be the finite
$\mathbb F_2$-vector space with specified basis
\[
 \mathcal B_t^n=\{\g_i^n:1\le i\le n,\ \delta(i)=t\}.
\]
Identify old basis elements at successive stages. For $t<s(n)$, the map
$\rho:G_{t+1}^n\to G_t^n$ sends $\g_i^n$ to $\g_{\tau(i)}^n$.
It preserves parity. In formulas for actions we use multiplicative notation
for these groups.

\begin{lem}\label{legacy:wordgroup}
Let $T\in\Trees$. There exist a scale $(L_n)$ with $L_1=6$ and
$L_{n+1}/L_n\in3\N$, $L_{n+1}/L_n\ge6$, and nonempty finite sets
$W^n\subseteq(\{0,1\}^{s(n)})^{L_n}$ with the following properties.
\begin{itemize}
\item[(S1)] For every $t\ge1$, $\W_t=([W^n]_t)_{n\ge M(t)}$ is a
construction sequence.
\item[(S2)] The groups $G_t^n$ are the specified elementary abelian
$2$-groups above; every element has order dividing $2$.
\item[(S3)] The specified sets $\mathcal B_t^n$ are bases, and parity is their
coefficient-sum homomorphism.
\item[(S4)] $G_t^n$ acts freely on $[W^n]_t$.
\item[(S5)] For $t<s(n)$, $g\in G_{t+1}^n$ and $w\in[W^n]_{t+1}$,
$[gw]_t=\rho(g)[w]_t$.
\item[(S6)] At stage $n+1$, the restriction of the action to $G_t^n$ is its
skew-diagonal extension on the $n$-blocks.
\end{itemize}
The following additional conditions hold. Write $L=L_{n+1}/L_n$ and
$u\angl{i}=u_{[iL_n,(i+1)L_n)}$.
\begin{itemize}
\item[(D1$^{\mathrm{r}}$)] For $1\le t\le s(n)$, distinct words in $[W^n]_t$ have distinct
restrictions to $[L_n/3,2L_n/3)$.
\item[(D2$^{\mathrm{r}}$)] Let $1\le t\le t'\le s(n)$, $1\le c\le L-2$, and
$\psi_0:[W^n]_{t'}^2\to\revop[W^n]_t$. For all
$w,w'\in[W^{n+1}]_{t'}$ and $u\in\revop[W^{n+1}]_t$, there is
$0\le i<L$ such that
\[
 u\angl{i}\ne\psi_0(ww'\angl{i+c},ww'\angl{i+c+1}).
\]
\item[(D3$^{\mathrm{r}}$)] $L_{n+1}\ge6L_n$.
\item[(D4$^{\mathrm{r}}$)] Let $1\le t\le t'\le s(n)$ and
$f:[W^n]_{t'}\to\revop[W^n]_t$. If there exists
$w\in[W^{n+1}]_{t'}$ for which $f\blockexp{n+1}(w)$ belongs to
$\revop[W^{n+1}]_t$, then there is a single odd $g\in G_t^n$ such that
\[
 f(v)=\revop(g[v]_t)\qquad\text{for every }v\in[W^n]_{t'}.
\]
\end{itemize}
Moreover, the choices through stage $n$ can be made to depend only on
$t_T(1),\ldots,t_T(n)$.
\end{lem}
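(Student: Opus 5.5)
I will follow the proof of Lemma~\ref{lem:word-construction} closely, changing only how group labels enter the words and which rare events are excluded. The induction invariant (E1) is kept as before: every word of $[W^n]_{t-1}$ has the same number $F^n_{t-1}>1$ of extensions in $[W^n]_t$. The initial stage is unchanged: $L_1=6$, $W^1=\{001011,001111\}$, and the generator of $G^1_1$ is odd and swaps the two words. At stage $1$ the skew-diagonal convention is vacuous.

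At the inductive step I keep the random extension table $X(\alpha,e_t;i)$ with labels $(e_1,g_1,\ldots,e_t,g_t)$, but replace \eqref{eqn:worddef} by a skew-diagonal formula:
\[
w(\alpha,e_t,g_t)\angl{i}=g_t\,\ext\bigl(w(\alpha)\angl{J_{g_t}(i)},X(\alpha,e_t;J_{g_t}(i))\bigr).
\]
For this to respect projections (S5), the parent word must be read in the same reversed order. This works because $\rho$ preserves parity, so $[\,\cdot\,]_{t-1}$ intertwines the skew-diagonal actions. I then verify the analogues of \eqref{eqn:gw-multiplication}, \eqref{eqn:w-cutoff} and \eqref{eqn:w-cutoff-multiple}, with $\widetilde h$ now skew-diagonal. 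Since $J_g J_h=J_{gh}$, these identities follow formally.

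The new generator $q=\g^{n+1}_{n+1}$ at depth $d$ is odd. Its action is defined as in \eqref{eqn:definition-of-new-group-action}: flip $e_d$ and multiply $g_{d-1}$ by $\rho(q)$. Here an extra issue appears. Because $q$ is odd, $\widetilde q$ must also reverse block order in its image. So the two values of $e_d$ cannot be sampled independently. Instead, I define the $e_d=1$ words as the reversed-order images of the $e_d=0$ words, with the parent label multiplied by $\rho(q)$. The action is then an involution commuting with the old skew-diagonal actions, and freeness follows from label injectivity (E2). I expect this coupling to be the main obstacle. It reduces the independence available in Claims~\ref{claim:R2} and~\ref{claim:R3}: two labels differing only in $e_d$ are now related by an index map $i\mapsto L-1-i$.

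To handle this, I will restate Claims~\ref{claim:R2} and~\ref{claim:R3} with index maps of the form $\epsilon i+c$, which they already allow. The exceptional alternative in Claim~\ref{claim:R3} then becomes: $u=\widetilde h[w]_t$ for some $h\in G^n_t$ of either parity, and an input index map coincides with the target map after composition with $J_h$.

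With this, the good events are bounded exactly as before.
\begin{itemize}
\item Double faithfulness, nonoverlap, (E2), and (D1$^{\mathrm r}$) follow as before. For (D1$^{\mathrm r}$), the middle third is symmetric under $i\mapsto L-1-i$, so odd relatives are still distinguished by freeness.
\item For (D2$^{\mathrm r}$), I reverse the output block and get a negative target sign against positive input signs. The exceptional case would need an odd $h$ composed with offsets $c,c+1$ to equal $i$. I rule this out by restricting $i$ to a range of length about $L/2$, where $L-1-i\neq i+c$ except at one point.
\item For (D4$^{\mathrm r}$), the equation $\revop f(w\angl{i})=u\angl{L-1-i}$ has the exceptional alternative exactly when $u=\widetilde h[w]_t$ with the index map reversed, which forces $h$ odd. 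Double faithfulness then gives $f(v)=\revop(h[v]_t)$ for all $v$. Uniqueness of $h$ follows from freeness.
\end{itemize}
Summing the finitely many bounds gives $C_n(1+L)e^{-c_nL}<1$ for large $L\in 3\N$, which also gives (D3$^{\mathrm r}$). Choosing the least $L$ and the lexicographically first successful table gives the dependence only on $t_T(1),\ldots,t_T(n)$.
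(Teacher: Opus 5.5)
Your word formula is the paper's, but the step you single out as the main obstacle is a genuine gap. You couple the $e_d=1$ words to the $e_d=0$ words so that the new generator $q$ reverses block order at stage $n+1$. This rests on a false premise, and it breaks the lemma. (S6) only requires elements of $G_t^n$ to act skew-diagonally at stage $n+1$. The new generator only has to be a free involution that commutes with the old action and satisfies $[\widetilde q w]_{d-1}=\widetilde p\,[w]_{d-1}$, where $p=\rho(q)$.

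Worse, the coupling contradicts the lemma's conclusions. Suppose $q$ is introduced at an old depth $d\le s(n)$ and $(\widetilde q w)\angl{i}=\beta(w\angl{L-1-i})$ for some bijection $\beta$ of $A_d$. Then $f=\revop\circ\beta$ satisfies $f\blockexp{n+1}(w)=\revop(\widetilde q w)\in\revop B_d$ for every $w\in B_d$. So (D4$^{\mathrm{r}}$) with $t=t'=d$ forces $\beta=g$ on $A_d$ for some odd $g\in G_d^n$. Then $\widetilde q=\widetilde g$ on $B_d$, so the nonidentity element $qg$ fixes every word, contradicting (S4).

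Concretely, at $d=1$ take $w(1,g_1)$ to be the block reversal of $w(0,g_1)$. Then $f=\revop$ satisfies the hypothesis of (D4$^{\mathrm{r}}$), yet by freeness no odd $g\in G_1^n$ gives $\revop(v)=\revop(gv)$ for all $v$. Your restated Claim R3 misses this because coupled labels produce exceptional relations $u=\widetilde{qh}[w]_t$ with $qh\notin G_t^n$. For $d>1$, pure block reversal is not even compatible with (S5): it reverses $[w]_{d-1}$ but does not apply $p$ to each block, as $\widetilde p$ does.

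The fix is what the paper does. Sample the words for both values of $e_d$ independently, and let $q$ act at stage $n+1$ only by the label permutation $w(\alpha,e_{d-1},g_{d-1},e_d,g_d)\mapsto w(\alpha,e_{d-1},pg_{d-1},1-e_d,g_d)$, or $e_1\mapsto 1-e_1$ when $d=1$. The reversal forced by the oddness of $p$ is already carried by the parent word, since $w(\alpha,e_{d-1},pg_{d-1})=\widetilde p\,w(\alpha,e_{d-1},g_{d-1})$. The projection identity therefore gives (S5) for $q$. From stage $n+2$ on, $q$ is old and acts skew-diagonally through the word formula, and that is all the later maps $\psi_q$ use.

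With independent sampling, distinct prefixes through $e_t$ involve fresh fiber variables, and the exceptional case involves only $h\in G_t^n$. Your arguments for (D2$^{\mathrm{r}}$) and (D4$^{\mathrm{r}}$) then go through as you describe.

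One smaller correction: in (E2) and (D1$^{\mathrm{r}}$), an odd relative $u=\widetilde h w$ is not excluded by freeness when $L$ is even, because $i\mapsto L-1-i$ then has no fixed index. Use the one-label estimate with target map $L-1-i$ against input map $i$, as the paper does.
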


\begin{rem}\label{legacy:X_T}
Projecting the nested compact segmented spaces gives
$[X_{\W_{t+1}}]_t=X_{\W_t}$, so $(X_{\W_t},[\,\cdot\,]_t)_{t\ge1}$ is an
inverse system of subshifts. Each component is minimal and has the
odometer of unbounded scale as a factor, so it is infinite. By
Lemma~\ref{lem:thesis-inverse-minimal}, the inverse limit is a Cantor
minimal system; the coordinate projections identify it with the shift
space
\[
 X_T:=\{x\in\C^{\Z}:[x]_t\in X_{\W_t}\text{ for every }t\ge1\},
\]
which satisfies $[X_T]_t=X_{\W_t}$ for every $t\ge1$.
\end{rem}

\subsection{The probabilistic proof}
\label{legacy:section:lemma}

\begin{proof}[Proof of Lemma~\ref{legacy:wordgroup}]
We use the random variables and extension labels from the proof of
Lemma~\ref{lem:word-construction}, with skew-diagonal actions on block
positions. We give the changes, including the exceptional case in the
probability estimate. The word sets are constructed anew, not taken
from the main construction.

We also require every word in $[W^n]_{t-1}$ to have the same number
$F_{t-1}^n>1$ of extensions in $[W^n]_t$.
The set at level $0$ consists of the unique word over the one-symbol alphabet.
For the base, put
\[
 L_1=6,\qquad W^1=\{001011,001111\},
\]
and let the odd basis element of $G_1^1$ interchange the two words.
Their middle thirds are $10$ and $11$. In either word, $00$ occurs only at
the beginning, and both words end in $1$. An occurrence of either word in a
concatenation of two of them therefore begins at a block boundary. This
proves nonoverlap, freeness, and the initial extension count $F_0^1=2$.
Conditions involving two successive stages are imposed in the induction step.

Fix the construction through stage $n$. Write $S=s(n+1)$ and $d=\delta(n+1)$.
Let $A_t=[W^n]_t$ for $t\le s(n)$. If $S=s(n)+1$, use the auxiliary alphabet
$A_S$ obtained by adjoining to each $A_{S-1}$-word a constant final row,
with either value $0$ or $1$, and let $G_S^n$ be trivial. Every old word
then has two auxiliary extensions. This notation is used only in the current
step and does not change $W^n$. All auxiliary words have length $L_n$;
their nonoverlap follows by projection. Order each fiber of $A_t\to A_{t-1}$
and write $\operatorname{ext}_t(v,r)$ for its $r$th member,
$0\le r<F_{t-1}^n$.

Set $E_t=2$ for $t=d$ and $E_t=1$ otherwise. Labels of height $t$ are
\[
 \lambda=(e_1,g_1,\ldots,e_t,g_t),\qquad
 0\le e_j<E_j,\quad g_j\in G_j^n.
\]
For every label prefix $(\alpha,e_t)$ and every $0\le i<L$, choose
$X(\alpha,e_t;i)$ independently and uniformly in
$\{0,\ldots,F_{t-1}^n-1\}$. Here $L\ge6$ is a multiple of three to be
chosen later. Define labelled words recursively, starting with the unique
height-zero word, by
\begin{equation}\label{legacy:eqn:worddef}
 w(\alpha,e_t,g_t)=\widetilde{g_t}
 \bigl(\operatorname{ext}_t(w(\alpha)\angl{i},
             X(\alpha,e_t;i))\bigr)_{0\le i<L}.
\end{equation}
Let $B_t$ be the set of these words. Old group elements act on labels by
\begin{equation}\label{legacy:eqn:gw-multiplication}
 \widetilde h\,w(\alpha,e_t,g_t)=w(\alpha,e_t,hg_t).
\end{equation}
Since projection commutes with the old action and $\rho$ preserves parity,
\begin{equation}\label{legacy:eqn:w-cutoff}
 [w(\alpha,e_t,g_t)]_{t-1}=\widetilde{\rho(g_t)}\,w(\alpha).
\end{equation}
More generally, projection to height $r<t$ changes only the last group
entry of the truncated label:
\begin{equation}\label{legacy:eqn:w-cutoff-multiple}
 [w(e_1,g_1,\ldots,e_t,g_t)]_r
 =w(e_1,g_1,\ldots,e_r,H),\quad
 H=g_r\rho(g_{r+1})\cdots\rho^{t-r}(g_t).
\end{equation}
Thus $[B_t]_{t-1}=B_{t-1}$. We will choose the random data so that
\emph{distinct labels have distinct middle thirds}; call this condition (E2).
The preceding label identities do not depend on this condition.

For a fixed label at height $t$, its $L$ blocks are independent and uniformly
distributed on $A_t$. This follows inductively from the constant fiber sizes:
a uniform parent and an independent uniform fiber index give a uniform
extension, and the skew action merely permutes the block indices and applies
a fixed bijection to the blocks. Random data belonging to distinct final
block indices remain disjoint, even after these index permutations.

We use two probability estimates. All index expressions below are assumed
to take values in $\{0,\ldots,L-1\}$ on the finite index set $I$.
Let $a(i),b(i),c(i)$ be affine maps with slopes in $\{-1,1\}$.
For integers $1\le t\le t'$, a fixed label $w$ at height $t'$,
and a map $\psi:A_{t'}^2\to A_t$, the simultaneous equalities
\[
 \psi(w\angl{a(i)},w\angl{b(i)})=[w]_t\angl{c(i)}
 \qquad(i\in I)
\]
have probability at most $2^{-(|I|-2)/9}$ whenever $c$ is distinct from
both $a$ and $b$ as an affine map. Indeed, discard the at most two indices
at which an input index equals the target index. Among the remaining
index triples choose pairwise disjoint triples greedily; there are at least
$(|I|-2)/9$ of them. For each chosen triple the target is an independent
uniform $A_t$-word, so the probability of the equality, conditional on the
inputs, is $1/|A_t|\le1/2$. The chosen triples involve independent random
data.

For a separately labelled word $u$ at height $t\le t'$, consider instead
\begin{equation}\label{legacy:eqn:two-label-test}
 \psi(w\angl{a(i)},w\angl{b(i)})=u\angl{c(i)}\qquad(i\in I).
\end{equation}
Truncate the label of $w$ to height $t$ using
\eqref{legacy:eqn:w-cutoff-multiple}. If its prefix through $e_t$ differs
from that of $u$, condition on all random data except the final fiber
variables used by $u$. These variables do not occur in $w$ or in the parent
word used to construct $u$; after the fixed index permutation, they are
independent at distinct target indices. Each equality has conditional
probability at most $1/F_{t-1}^n\le1/2$. Averaging gives the bound
$2^{-|I|}$.
If the prefixes agree, there is a fixed $h\in G_t^n$ with
$u=\widetilde h[w]_t$. Applying $h^{-1}$ to the output of $\psi$
reduces \eqref{legacy:eqn:two-label-test} to the preceding one-label test,
with target index $J_h(c(i))$. Consequently the bound
$2^{-(|I|-2)/9}$ holds unless
\begin{equation}\label{legacy:eqn:exception}
 u=\widetilde h[w]_t
 \quad\text{and}\quad
 a=J_h\circ c\ \text{or}\ b=J_h\circ c
\end{equation}
as affine maps. This is the skew-diagonal exceptional case.
These estimates are unconditional: no construction requirement has
been imposed on the random table.

We now bound failures of those requirements. Each fixed ordered pair of
$A_t$-words appears in a fixed labelled word except on an event of probability
at most
\begin{equation}\label{legacy:eqn:s1proof}
 \prod_{i=0}^{\lfloor L/2\rfloor-1}
 \mathbb P\{w\angl{2i}w\angl{2i+1}\ne v_1v_2\}
 =\bigl(1-|A_t|^{-2}\bigr)^{\lfloor L/2\rfloor}.
\end{equation}
This gives double faithfulness. To check new nonoverlap, old nonoverlap
first forces any occurrence offset to be a multiple $cL_n$, $0<c<L$.
For $c\le L/2$, such an occurrence implies
$u\angl{i}=v\angl{i+c}$ for $0\le i<L-c$; for $c>L/2$, it implies
$u\angl{L-c+i}=v\angl{i}$ for $0\le i<c$.
In either case, \eqref{legacy:eqn:exception} is impossible: for even $h$
the two positive-slope index maps have different offsets, and for odd $h$
the transformed target has negative slope. The failure probability is at
most $2^{-(L/2-2)/9}$.

For (E2), compare two distinct labels on $L/3\le i<2L/3$. Distinct prefixes
are handled by the conditional estimate. For equal prefixes write
$u=\widetilde h w$. If $h$ is even and nonidentity, freeness of the old
action prevents equality of even a single aligned block. If $h$ is odd,
comparison with the index $L-1-i$ is bounded by the one-label estimate.
The case $h=e$ would mean that the labels coincide and is excluded.
Thus the failure probability is at most $2^{-(L/3-2)/9}$.
Condition (E2) implies both (D1$^{\mathrm{r}}$) and distinctness of the whole labelled words.

For (D4$^{\mathrm{r}}$), write the target word as $\revop u$ with $u\in B_t$.
The corresponding equations are
\[
 \revop\bigl(f(w\angl{i})\bigr)=u\angl{L-1-i}
 \qquad(0\le i<L).
\]
The exceptional case occurs only for odd $h$, since precisely then
$J_h(L-1-i)=i$. In that case the equations give
$f(w\angl{i})=\revop(h[w\angl{i}]_t)$.
Once double faithfulness holds, every $A_{t'}$-word occurs in $w$, so this
is the asserted identity for $f$ on its entire domain. Every other case has
probability at most $2^{-(L-2)/9}$.

For (D2$^{\mathrm{r}}$), write the target word as $\revop u$ and reverse each output block. If $c\le L/2$, restrict to
$0\le i\le L-c-2$. The input indices lie in the first labelled word and
are $i+c,i+c+1$; the target index is $L-1-i$. For even $h$ the transformed
target has negative slope, and for odd $h$ it is $i$, distinct from both
inputs. If $c>L/2$, put $i=L-c+r$, $0\le r<c$. The input indices in the
second labelled word are $r,r+1$; the target index is $c-1-r$.
For odd $h$ its transform is $L-c+r$, whose offset is at least $2$;
for even $h$ it still has negative slope. Thus the exceptional case is again
excluded. The failure probability in either case is at most
$2^{-(L/2-3)/9}$.

At the fixed stage, the label sets, old alphabets, and sets of maps
between them are finite and independent of $L$. Only the nonoverlap
offset and the displacement $c$ in (D2$^{\mathrm{r}}$) contribute factors bounded by $L$.
The union bound therefore gives constants $C_n<\infty$ and $c_n>0$,
independent of $L$, such that
\[
 \mathbb P\{\text{one of the preceding requirements fails}\}
 \le C_n(1+L)e^{-c_nL}.
\]
For (D4$^{\mathrm{r}}$), we exclude only the nonexceptional tests. Their complement,
together with double faithfulness, gives (D4$^{\mathrm{r}}$) by the preceding argument.
Choose a successful table with $L\in3\N$, $L\ge6$, and put
$L_{n+1}=LL_n$ and $W^{n+1}=B_S$.

It remains to add the new basis generator $q=\g_{n+1}^{n+1}$ at depth $d$.
At depth $1$, let it replace $e_1$ by $1-e_1$. At depth $d>1$, let
$p=\g_{\tau(n+1)}^n$ and set
\[
 q:w(\alpha,e_{d-1},g_{d-1},e_d,g_d)\longmapsto
 w(\alpha,e_{d-1},pg_{d-1},1-e_d,g_d).
\]
It is an involution and commutes with the old action, which changes only the
last group label. A nonidentity old element changes that last label, whereas
an element of the coset $qG_d^n$ changes $e_d$. By (E2), all these actions
are free, so the prescribed old basis together with $q$ remains independent.
The projection identity shows that $q$ descends to the skew-diagonal action
of its parent $p$. Higher old generators satisfy the same compatibility.
This proves (S2)--(S6).

Finally, a fixed word at height $t-1$ has exactly
\[
 F_{t-1}^{n+1}=E_t|G_t^n|
\]
extensions. For $t\ge2$, choose $e_t$ and $h'\in G_t^n$ freely;
\eqref{legacy:eqn:w-cutoff} determines the preceding group label uniquely.
Distinct choices give distinct words by (E2). At $t=1$, this is the direct
count of all height-one labels. At every old depth the old group is
nontrivial; at a newly introduced depth $E_t=2$. Hence the count is always
greater than one, and the inductive invariant is preserved.

Fix lexicographic orders on the finite alphabets and labels. Choose the
least admissible $L$ for which a successful table exists and then the first
successful table in that finite ordering. The preceding estimates guarantee
existence. This rule depends only on the previously constructed finite data
and the new vertex. Induction proves the asserted finite dependence.
\end{proof}

\subsection{Finite-level conjugacies and factor maps}
\label{legacy:finite-level-maps}
Fix a tree $T$ and write $X=X_T$ and $X_t=[X_T]_t=X_{\W_t}$.
Thus $X_t$ is a finite-alphabet factor of the reversal construction,
not the component $X_j(T)$ of the main construction.
\begin{defn}
	Let $X,Y$ be Cantor shift spaces and $t\ge1$. For conjugacies
$\phi_t\colon[X]_t\to[Y]_t$ and
$\phi_{t+1}\colon[X]_{t+1}\to[Y]_{t+1}$, we say that
$\phi_{t+1}$ is \emph{subordinate} to $\phi_t$ if
	$$
		[\phi_{t+1}(x)]_t = \phi_t([x]_t) \quad \textrm{for all } x \in [X]_{t+1}.
	$$
\end{defn}

Given a set of words $V$ we define $\revop:V\to\revop(V)$ 
as the function that reverses the order of a word.

We now describe all conjugacies from $X_t$ to $\revop(X_t)$.

\begin{defn} \label{legacy:defn:psi_maps}
    Let $g\in\mathcal G_t$ be odd, with expansion
$g=\g_{i_1}\dots\g_{i_k}$ in the generators $\mathcal B_t$.
Choose $n$ large enough that all $\g^n_{i_1},\dots,\g^n_{i_k}$ belong
to $G_t^n$, and put $g^n=\g^n_{i_1}\dots\g^n_{i_k}$.
    
    We define the map $\psi_{g} := (\revop \circ g^n) \blockexp{*}$. 
    \end{defn}
    \begin{prop}
        
    The map $\psi_g$ is independent of the sufficiently large stage $n$
and is a conjugacy from $X_t$ to $\revop(X_t)$. 
\end{prop}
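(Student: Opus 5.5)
We treat this as an instance of Lemma~\ref{lem_constr_seq_isom}, applied to the construction sequences $\W_t$ and $\revop\W_t$; by Lemma~\ref{lem:reversal-construction} the latter generates $\revop(X_t)$. Put $f_n:=\revop\circ g^n\colon[W^n]_t\to\revop[W^n]_t$. Since $g^n$ acts freely and bijectively on $[W^n]_t$ by (S4), $f_n$ is a bijection. It therefore suffices to prove two things. First, $f_n\blockexp{m}([W^m]_t)=\revop[W^m]_t$ for all $m\ge n$. Second, $f_n\blockexp{*}$ does not depend on $n$. The second bullet of Lemma~\ref{lem_constr_seq_isom} then makes $\psi_g$ an aligned conjugacy.

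\textbf{The one-stage identity.} The key computation is $f_n\blockexp{n+1}=f_{n+1}$ as maps on $[W^{n+1}]_t$. By (S6), the element $g^{n+1}$, which is the image of $g^n$ under the identification of old basis elements, acts on a word $w\in[W^{n+1}]_t$ skew-diagonally. Since $g$ is odd,
\[
 (g^{n+1}w)\angl{i}=g^n\bigl(w\angl{L-1-i}\bigr),\qquad 0\le i<L,
\]
where $L=L_{n+1}/L_n$. Reversing the whole word $g^{n+1}w$ reverses the order of its $L$ blocks and reverses each block. Hence
\[
 \revop(g^{n+1}w)\angl{i}=\revop\bigl((g^{n+1}w)\angl{L-1-i}\bigr)=\revop\bigl(g^n(w\angl{i})\bigr)=f_n\bigl(w\angl{i}\bigr).
\]
This is exactly $f_{n+1}(w)=f_n\blockexp{n+1}(w)$. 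The role of oddness is precisely that the skew-diagonal index reversal cancels the block-order reversal produced by $\revop$. For even $g$ the same computation would produce the reversed block order, and the identity would fail.

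\textbf{Iteration and image equality.} Iterating the identity gives $f_n\blockexp{m}=f_m$ for every $m\ge n$. Since $g^m$ permutes $[W^m]_t$, we get $f_m([W^m]_t)=\revop[W^m]_t$, which is the first requirement. Independence of $n$ follows because $f_{n+1}$ acts on each canonical $(n+1)$-block exactly as $f_n$ acts blockwise on its $n$-blocks, so $(f_{n+1})\blockexp{*}=(f_n)\blockexp{*}$. Here $[W^{n+1}]_t$-cuts are $[W^n]_t$-cuts, and the canonical segmentations are the ones of Lemma~\ref{lem:unique-readability}. It remains to check that $g^n$ does not depend on the chosen expansion. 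The expansion in the basis $\mathcal B_t$ is unique, and the basis elements at successive stages are identified in the statement of Lemma~\ref{legacy:wordgroup}, so this is immediate.

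\textbf{Main obstacle.} The main obstacle is bookkeeping of cut points under reversal. Lemma~\ref{lem_constr_seq_isom} is formulated with the canonical segmentation of the target sequence. I must therefore confirm that the segmentation of $\revop(X_t)$ by $\revop[W^m]_t$ is the one for which $\revop\W_t$ is a construction sequence; this is Lemma~\ref{lem:reversal-construction}. Given that lemma, Lemma~\ref{lem_constr_seq_isom} needs only the word-level image equalities, which hold at every stage $m\ge n$. Alignment then comes from that lemma, so no explicit cut-point tracking is required.
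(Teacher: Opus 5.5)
Your proposal is correct and follows the paper's proof. You derive the one-stage identity $(\revop\circ g^n)\blockexp{n+1}=\revop\circ g^{n+1}$ from the block-order reversal in the skew-diagonal action of the odd element, iterate it to get both independence of $n$ and the image equalities onto $\revop[W^m]_t$, and then apply Lemma~\ref{lem_constr_seq_isom}. Your explicit use of Lemma~\ref{lem:reversal-construction} to identify $X_{\revop\W_t}$ with $\revop(X_t)$ spells out a step the paper leaves implicit; note also that $f_n$ is a bijection simply because $g^n$ acts by a permutation, so freeness is not needed.
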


\begin{proof}
    Fix such a stage $n$. In the notation of Lemma~\ref{lem_constr_seq_isom},
we show that $(\revop\circ g^n)\blockexp{n+1}=\revop\circ g^{n+1}$.
For $w\in[W^{n+1}]_t$, write $w=w_1\dots w_k$ with
$w_i\in[W^n]_t$. Then
    \begin{multline*}
        (\revop \circ g^n)\blockexp{n+1} (w) = \revop(g^n w_1) \dots \revop(g^n w_k) = \\
        \revop( g^n w_k \dots g^n w_1) = (\revop \circ g^{n+1}) (w).
    \end{multline*}
    Iteration gives $(\revop\circ g^n)\blockexp{m}=\revop\circ g^m$
for every $m\ge n$. Hence
$(\revop\circ g^n)\blockexp{*}=(\revop\circ g^m)\blockexp{*}$, proving
independence of $n$. 

    Each $g^m$ permutes $[W^m]_t$, and reversal maps $[W^m]_t$
bijectively onto $\revop([W^m]_t)$. Thus
$(\revop\circ g^n)\blockexp{m}=\revop\circ g^m$ is a bijection between
these sets for every $m\ge n$. Lemma~\ref{lem_constr_seq_isom}
therefore makes $\psi_g$ a conjugacy.
\end{proof}

\begin{lem} \label{legacy:lem:factor_psi_form}
    Let $t' \ge t \ge 1$. A function $\phi\colon X_{t'}\to\revop(X_t)$ is a factor map if and only if
    $$
        \phi(x) = \sigma^k \psi_g ( [x]_t ) 
    $$
    for some $g \in \mathcal{G}_t$ odd and $k \in \Z$.
\end{lem}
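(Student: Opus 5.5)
The plan is to imitate Step~2 of the proof of Lemma~\ref{lem:getting-inv-system}, with the target construction sequence replaced by the reversed one.

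\emph{Easy direction.} Fix an odd $g\in\mathcal G_t$ and $k\in\Z$. The map $x\mapsto[x]_t$ is a factor map $X_{t'}\to X_t$, since $[X_{t'}]_t=X_t$ by Remark~\ref{legacy:X_T}. The preceding proposition shows that $\psi_g$ is a conjugacy $X_t\to\revop(X_t)$. Composing these two maps with $\sigma^k$ therefore gives a factor map $X_{t'}\to\revop(X_t)$.

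\emph{Setting up the tameness lemma.} For the converse, let $\phi\colon X_{t'}\to\revop(X_t)$ be a factor map. By Lemma~\ref{lem:reversal-construction}, $\revop(\W_t)$ is a construction sequence with the same scale, and it generates $\revop(X_t)$. We apply Lemma~\ref{lem_no_nontame_factors} with $\W=\W_{t'}$, $\V=\revop(\W_t)$ and $N_0=\max\{2,M(t')\}$, checking its hypotheses as follows.
\begin{itemize}
\item[(d1)] Reversal maps the middle third $[L_n/3,2L_n/3)$ onto itself. Hence distinctive cores for $[W^n]_t$, given by (D1$^{\mathrm{r}}$), pass to $\revop[W^n]_t$.
\item[(d2)] This is exactly (D2$^{\mathrm{r}}$), where $u$ ranges over $\revop[W^{n+1}]_t$ and $\psi_0$ takes values in $\revop[W^n]_t$.
\item[(d3)] This is (D3$^{\mathrm{r}}$).
\end{itemize}
We also need $3\mid L_n$ for every $n$, which holds because $L_1=6$ and each ratio $L_{n+1}/L_n$ lies in $3\N$. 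The lemma then gives $\phi=\sigma^k f^*$ for some $k\in\Z$, with $f^*$ aligned and $f\colon[W^n]_{t'}\to\revop[W^n]_t$ at some stage $n\ge N_0$.

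\emph{Identifying $f$ and reassembling.} Lemma~\ref{lem_constr_seq_isom} gives $f\blockexp{n+1}([W^{n+1}]_{t'})\subseteq\revop[W^{n+1}]_t$. Any single $w$ in this set meets the hypothesis of (D4$^{\mathrm{r}}$), so there is an odd $h\in G_t^n$ with $f(v)=\revop(h[v]_t)$ for every $v\in[W^n]_{t'}$.

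Let $g\in\mathcal G_t$ be the element with the same generator expansion as $h$. Then $g$ is odd, because parity is the coefficient sum and is preserved by the identification of old basis elements across stages. Take $g^n=h$. Comparing blocks at the canonical stage-$n$ cuts gives
\[
f^*(x)=(\revop\circ g^n)\blockexp{*}([x]_t)=\psi_g([x]_t).
\]
The key observation is that $f^*$ applies $f$ blockwise. Because $f$ depends on $v$ only through $[v]_t$, the same blockwise description holds for $(\revop\circ g^n)\blockexp{*}$ applied to $[x]_t$. The stage-$n$ cuts of $x$ and of $[x]_t$ coincide, since truncation preserves the canonical boundaries. This gives $\phi(x)=\sigma^k\psi_g([x]_t)$.

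\emph{Main obstacle.} The main point to check is whether the index-reversal convention in $f^*$ matches that in $\psi_g$. In $f^*$, blocks are mapped in place: each block is sent to a reversed block at the same position. In $\psi_g$, by Definition~\ref{legacy:defn:psi_maps}, it is $(\revop\circ g^n)\blockexp{*}$ at the stage-$n$ level, which also acts in place. So the two maps agree at level $n$, and the proposition's independence of the stage settles the rest. I expect this verification, together with the applicability of (d2) through (D2$^{\mathrm{r}}$), to be the only delicate point.
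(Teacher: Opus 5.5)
Your proposal is correct and follows the paper's proof step for step. The easy direction is by composition. For the converse, you apply Lemma~\ref{lem_no_nontame_factors} to $\W_{t'}$ and $\revop(\W_t)$ via (D1$^{\mathrm{r}}$)--(D3$^{\mathrm{r}}$), then use Lemma~\ref{lem_constr_seq_isom} for the stage-$(n+1)$ image condition and (D4$^{\mathrm{r}}$) to write $f=\revop\circ h\circ[\,\cdot\,]_t$ with $h$ odd. The coincidence of the canonical stage-$n$ cuts of $x$ and $[x]_t$ then gives $f^*(x)=\psi_g([x]_t)$. You also spell out two checks the paper leaves implicit: that reversal fixes the middle third, and that the choice $N_0=\max\{2,M(t')\}$ makes (D2$^{\mathrm{r}}$) and (D4$^{\mathrm{r}}$) available at the relevant stage.
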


\begin{proof}
    Since $x \mapsto [x]_t$ is a factor map $X_{t'} \to X_t$ and $\psi_g$ is a conjugacy $X_t \to \revop(X_t)$,
    any map of this form is indeed a factor map.

    Conversely, suppose $\phi$ is a factor map. Both $X_{t'}$ and
$\revop(X_t)$ are generated by construction sequences with scale $(L_n)$. 
    Lemma \ref{lem_no_nontame_factors} applies, since (D1$^{\mathrm{r}}$), (D2$^{\mathrm{r}}$), (D3$^{\mathrm{r}}$) hold. So $\phi = \sigma^k \circ f^*$
    with $f^*$ aligned, for some $n\ge M(t')$, $k\in\Z$ and $f:[W^n]_{t'}\to\revop[W^n]_t$.
    Since $f^*$ is an aligned factor map, by Lemma \ref{lem_constr_seq_isom} we get that
    $f\blockexp{n+1}([W^{n+1}]_{t'})=\revop[W^{n+1}]_t$. 
    In particular, if we pick any word $w \in [W^{n+1}]_{t'}$, then
    $$
    	f(w \angl{0}) \dots f(w \angl{L_{n+1}/L_n-1}) \in \revop[W^{n+1}]_t,
    $$
    so by (D4$^{\mathrm{r}}$) we conclude that for some odd $g^n \in G^n_t$, the function $f$ is of the form
    $f(\cdot) = \revop(g^n([\cdot]_t))$. Let $g\in\mathcal G_t$ correspond to $g^n$ under the identification
of the specified bases. Then $g$ is odd and
$\psi_g=(\revop\circ g^n)\blockexp{*}$.
For every $x\in X_{t'}$, projecting its canonical
$[W^n]_{t'}$-segmentation gives a $[W^n]_t$-segmentation of $[x]_t$
at the same cut points. By Lemma~\ref{lem:unique-readability},
this is the canonical segmentation of $[x]_t$.
Applying the identity for $f$ on each block therefore gives
\[
  f\blockexp{*}(x)
  =(\revop\circ g^n)\blockexp{*}([x]_t)
  =\psi_g([x]_t).
\]
Since $\phi=\sigma^k\circ f\blockexp{*}$, we obtain
$\phi(x)=\sigma^k\psi_g([x]_t)$ for every $x\in X_{t'}$,
as required.
\end{proof}

Taking $t'=t$ shows that the conjugacies $X_t\to\revop(X_t)$ are
precisely the maps $\sigma^k\psi_g$, with $k\in\Z$ and
$g\in\mathcal G_t$ odd. We next characterize subordination.

\begin{lem} \label{legacy:lem:psi_subordinacy}
    Let $t\ge1$. For $i=0,1$, let $k_i\in\Z$, let
$g_i\in\mathcal G_{t+i}$ be odd, and set
$\phi_{t+i}=\sigma^{k_i}\psi_{g_i}$. Then
    $$
        [\phi_{t+1}(x)]_t = \phi_t([x]_t) \quad \textrm{for all } x \in X_{t+1}
    $$
    iff $k_0 = k_1$ and $g_0 = \rho(g_1)$.
\end{lem}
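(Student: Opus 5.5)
The plan is to compute both sides of the identity blockwise at a common sufficiently large stage, and then separate the shift exponent from the block map using offsets. First fix $n$ so large that $g_1^n\in G_{t+1}^n$ and $g_0^n\in G_t^n$ are defined, together with $\rho(g_1)^n=\rho(g_1^n)$. By Definition~\ref{legacy:defn:psi_maps}, $\psi_{g_1}=(\revop\circ g_1^n)\blockexp{*}$ on $X_{t+1}$, and both $\psi_{g_1}$ and $\psi_{g_0}$ are aligned, since they are block extensions that the proposition above shows are aligned conjugacies by Lemma~\ref{lem_constr_seq_isom}. Row truncation commutes with reversal and with the shift, and it carries canonical $[W^n]_{t+1}$-cuts to canonical $[W^n]_t$-cuts. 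This is the argument already used at the end of Lemma~\ref{legacy:lem:factor_psi_form}, via Lemma~\ref{lem:unique-readability}. Combining these with (S5), $[g_1^n v]_t=\rho(g_1^n)[v]_t$, I expect the identity
\[
 [\psi_{g_1}(x)]_t=\psi_{\rho(g_1)}([x]_t)\qquad(x\in X_{t+1}),
\]
and $\rho(g_1)$ is odd because $\rho$ preserves parity. Consequently the left side of the displayed condition equals $\sigma^{k_1}\psi_{\rho(g_1)}([x]_t)$.

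The "if" direction is then immediate: when $k_0=k_1$ and $g_0=\rho(g_1)$, both sides coincide.

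For the "only if" direction, suppose $\sigma^{k_1}\psi_{\rho(g_1)}\pi=\sigma^{k_0}\psi_{g_0}\pi$, where $\pi=[\,\cdot\,]_t$ is surjective onto $X_t$. Cancelling $\pi$ gives $\sigma^{k_1}\psi_{\rho(g_1)}=\sigma^{k_0}\psi_{g_0}$ as maps $X_t\to\revop(X_t)$. Both $\psi$'s are aligned, so Corollary~\ref{cor:aligned-almost} yields offsets $-k_1$ and $-k_0$, hence $k_0=k_1$. Cancelling the shift then gives $(\revop\circ\rho(g_1^n))\blockexp{*}=(\revop\circ g_0^n)\blockexp{*}$. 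Every word of $[W^n]_t$ occurs at a canonical cut of some point (Lemmas~\ref{lem:finite-language} and~\ref{lem:unique-readability}), so the block maps agree on all of $[W^n]_t$. Reversal is injective, so $\rho(g_1^n)v=g_0^nv$ for every $v\in[W^n]_t$. Freeness (S4) then gives $\rho(g_1^n)=g_0^n$, and transporting through the identification of specified bases gives $g_0=\rho(g_1)$ in $\mathcal G_t$.

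The main obstacle is the first computation, the blockwise identity $[(\revop\circ g_1^n)\blockexp{*}(x)]_t=(\revop\circ\rho(g_1^n))\blockexp{*}([x]_t)$. It needs care with cut positions after reversal, because the canonical cuts of the image are the same cut points, not reflected ones. This holds because $f\blockexp{*}$ applies $f$ in place on each canonical block. Once that is settled, truncation acts letterwise within each block and commutes with reversing that block, so (S5) closes the computation.
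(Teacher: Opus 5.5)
Your proof is correct and follows essentially the same route as the paper. The aligned maps $\psi_{g_i}$ have offset zero, so comparing offsets forces $k_0=k_1$, and the remaining identity is checked blockwise at a large stage, with (S5) giving sufficiency and freeness (S4) giving necessity. The only difference is packaging: you first prove $[\psi_{g_1}(x)]_t=\psi_{\rho(g_1)}([x]_t)$ and cancel the surjection $[\,\cdot\,]_t$, whereas the paper reduces directly to the block condition $[g_1^n(w)]_t=g_0^n[w]_t$ on $[W^n]_{t+1}$.
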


\begin{proof}
    Substituting the expressions for $\phi_t$ and $\phi_{t+1}$ gives
    \begin{equation} \label{legacy:eqn:lem_subordinate_isoms}
        \sigma^{k_1} [\psi_{g_1}(x)]_t = \sigma^{k_0}\psi_{g_0}([x]_t) \quad \textrm{for all } x \in X_{t+1}.
    \end{equation}
    The maps $\psi_{g_0}$ and $\psi_{g_1}$ are aligned. Comparing offsets
therefore gives $k_0=k_1$ whenever the equality holds. It remains to show that 
    \begin{equation} \label{legacy:eqn:lem_subordinate_isoms2}
        [\psi_{g_1}(x)]_t = \psi_{g_0}([x]_t) \quad \textrm{for all } x \in X_{t+1}
    \end{equation}
    iff $g_0 = \rho(g_1)$. 
    
    For large enough $n$, we can write $\psi_{g_0} = (\revop \circ g^n_0)\blockexp{*}$, 
    $\psi_{g_1} = (\revop \circ g^n_1)\blockexp{*}$, 
    where $g^n_0$ and $g^n_1$ are elements of $G^n_t, G^n_{t+1}$ as in Definition \ref{legacy:defn:psi_maps}.
    Equation~\eqref{legacy:eqn:lem_subordinate_isoms2} holds if and only if 
    \begin{equation*} 
        [g_1^n(w)]_t = g_0^n [w]_t \textrm{ for all } w \in [W^n]_{t+1}.
    \end{equation*}
    If $g_0 = \rho(g_1)$, then by (S5) this condition holds. 
    If $g_0 \neq \rho(g_1)$, then the freeness of the $G^n_t$ action guarantees that
    $g_0^n[w]_t \neq \rho(g_1^n)[w]_t = [g_1^n(w)]_t$.
\end{proof}

\subsection{Inverse-limit symmetries and the automorphism-group reduction}
\label{legacy:automorphisms}
All groups in this subsection refer to the family $X_T$ constructed here.
Set
\[
 \mathcal G(T)=\varprojlim(\mathcal G_t(T),\rho).
\]
Since $\rho$ preserves parity, all coordinates of a coherent sequence
have the same parity. Write $\mathcal G(T)^{\mathrm{even}}$ for the kernel of
this parity homomorphism. By Lemma~\ref{lem_trivial},
$\mathcal G(T)$ is nontrivial exactly when $T$ has an infinite branch.

For an odd coherent sequence $\mathbf g=(g_t)$, the maps $\psi_{g_t}$
are subordinate by Lemma~\ref{legacy:lem:psi_subordinacy} and define a
conjugacy $\Psi_{\mathbf g}:X_T\to\revop X_T$ by
Lemma~\ref{lem:small-inv-lim-isomorphism}(i).
\begin{cor}\label{legacy:cor:big-group-odd}
The conjugacies from $X_T$ to $\revop X_T$ are precisely
\[
 \{\sigma^k\Psi_{\mathbf g}:k\in\Z,\
                         \mathbf g\in\mathcal G(T)\text{ odd}\}.
\]
The parameters are unique:
$\sigma^k\Psi_{\mathbf g}=\sigma^l\Psi_{\mathbf h}$ implies
$k=l$ and $\mathbf g=\mathbf h$.
\end{cor}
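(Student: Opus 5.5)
The plan is to mimic the proof of Lemma~\ref{lem:inv-lim-isomorphism}, applied to $X_T=\varprojlim(X_t,[\,\cdot\,]_t)$ and $\revop X_T=\varprojlim(\revop X_t,[\,\cdot\,]_t)$. The shift space $\revop X_T$ is the inverse limit of the reversed components, since truncation $[\,\cdot\,]_t$ commutes with reversal.

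First, every map $\sigma^k\Psi_{\mathbf g}$ with $\mathbf g$ odd coherent is a conjugacy. This is already recorded just before the corollary: the coordinates $\psi_{g_t}$ are subordinate by Lemma~\ref{legacy:lem:psi_subordinacy}, so Lemma~\ref{lem:small-inv-lim-isomorphism}(i) applies, and composing with $\sigma^k$ keeps it a conjugacy.

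Conversely, let $\Phi\colon X_T\to\revop X_T$ be a conjugacy. Fix $t$. By Lemma~\ref{lem:small-inv-lim-isomorphism}(ii) there are $t'\ge t$ and a factor map $\phi\colon X_{t'}\to\revop X_t$ with $[\Phi(x)]_t=\phi([x]_{t'})$. Lemma~\ref{legacy:lem:factor_psi_form} then gives $\phi(y)=\sigma^{k_t}\psi_{g_t}([y]_t)$ for some odd $g_t\in\mathcal G_t$ and some $k_t\in\Z$. Hence $[\Phi(x)]_t=\sigma^{k_t}\psi_{g_t}([x]_t)$ for all $x\in X_T$, so the $t$-th coordinate of $\Phi$ is $\phi_t:=\sigma^{k_t}\psi_{g_t}$, viewed as a map on $X_t$. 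These coordinate maps are subordinate. Indeed, $[\phi_{t+1}(z)]_t=\phi_t([z]_t)$ holds for all $z$ in the image of the projection $X_T\to X_{t+1}$, and this projection is surjective. Lemma~\ref{legacy:lem:psi_subordinacy} now forces $k_t=k_{t+1}$ and $g_t=\rho(g_{t+1})$. So $k:=k_t$ is constant and $\mathbf g=(g_t)$ is an odd coherent sequence. Since $\Phi$ and $\sigma^k\Psi_{\mathbf g}$ agree in every coordinate, $\Phi=\sigma^k\Psi_{\mathbf g}$.

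For uniqueness, suppose $\sigma^k\Psi_{\mathbf g}=\sigma^l\Psi_{\mathbf h}$. Project to coordinate $t$ and use surjectivity of $X_T\to X_t$ to get $\sigma^k\psi_{g_t}=\sigma^l\psi_{h_t}$ on $X_t$. Both $\psi$'s are aligned block maps, so comparing offsets (Corollary~\ref{cor:aligned-almost}) gives $k=l$. It remains to show $\psi_{g_t}=\psi_{h_t}$ implies $g_t=h_t$. At a common large stage $n$ this says $\revop(g_t^n v)=\revop(h_t^n v)$ for every $v\in[W^n]_t$. This uses that every $[W^n]_t$-word occurs in $X_t$ (Lemma~\ref{lem:finite-language}) and that the aligned block map is determined blockwise. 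Since reversal is injective and the action is free (S4), we conclude $g_t^n=h_t^n$, hence $g_t=h_t$. Alternatively, this is exactly the ``if $g_0\neq\rho(g_1)$'' step of Lemma~\ref{legacy:lem:psi_subordinacy} with $\rho$ replaced by the identity.

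The main point requiring care is the step from factor maps defined on some $X_{t'}$ to a genuine coordinatewise description on $X_t$. The form in Lemma~\ref{legacy:lem:factor_psi_form} is exactly what makes $\phi$ factor through $[\,\cdot\,]_t$, so this replaces the blended property used in the main text. Everything else is bookkeeping with surjectivity of the projections.
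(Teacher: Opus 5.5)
Your proposal is correct and follows the paper's proof essentially step for step. Lemma~\ref{lem:small-inv-lim-isomorphism}(ii) with Lemma~\ref{legacy:lem:factor_psi_form} gives the coordinate form, surjectivity of the projections with Lemma~\ref{legacy:lem:psi_subordinacy} gives a common exponent and a coherent odd sequence, Lemma~\ref{lem:small-inv-lim-isomorphism}(i) gives the converse, and uniqueness follows by comparing offsets and then using freeness on construction words occurring in $X_t$.
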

\begin{proof}
Lemma~\ref{lem:small-inv-lim-isomorphism}(ii) and
Lemma~\ref{legacy:lem:factor_psi_form}
express each coordinate of any conjugacy in this form. Surjectivity of the
coordinate projections and Lemma~\ref{legacy:lem:psi_subordinacy} force a
common shift exponent and a coherent sequence. Conversely, subordinate
coordinate conjugacies induce a conjugacy of the limits by
Lemma~\ref{lem:small-inv-lim-isomorphism}(i).
At a fixed level, comparison of offsets determines the shift exponent.
Equality of the remaining aligned maps gives equality of their actions on
every sufficiently high construction word; freeness determines the group
element. Applying this at all levels proves uniqueness.
\end{proof}

For even $e\in\mathcal G_t$, define $\theta_e=(e^n)^*$ at a stage
$n\ge M(t)$ containing its finite support. This is independent of
$n$ by (S6), since the skew-diagonal extension of an even element is
diagonal. The block-factor criterion makes $\theta_e$ an aligned
automorphism of $X_t$.

\begin{prop}\label{legacy:even-automorphisms}
Every factor map $X_{t'}\to X_t$, $t'\ge t$, has the unique form
$\sigma^k\theta_e[\,\cdot\,]_t$, where $k\in\Z$ and
$e\in\mathcal G_t$ is even. Moreover, as abstract groups,
\[
 \Aut(X_T)\cong\Z\times\mathcal G(T)^{\mathrm{even}},
 \qquad
 \Aut'(X_T)\cong\mathcal G(T)^{\mathrm{even}}.
\]
\end{prop}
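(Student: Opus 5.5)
The plan follows the main construction, with parity bookkeeping added. There are three stages: factor maps $X_{t'}\to X_t$, automorphisms of $X_t$, and automorphisms of the inverse limit $X_T$.

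\emph{Factor maps.} Let $\phi\colon X_{t'}\to X_t$ be a factor map. First I apply Lemma~\ref{lem_no_nontame_factors} with $\W=\W_{t'}$ and $\V=\W_t$. This needs the analogue of (d2) with an unreversed target. The probabilistic proof of Lemma~\ref{legacy:wordgroup} gives it in the same way as (D2$^{\mathrm{r}}$), because the same exceptional-case analysis applies. For even $h$ the transformed target index is $i$ with offset $0$, distinct from the input offsets $c,c+1\ge1$. For odd $h$ the transformed target has negative slope. So the analogous (D2) holds. I will assume the table was also chosen to avoid these events, noting that they fit into the same union bound. Then $\phi=\sigma^k f^*$ with $f^*$ aligned and $f\colon[W^n]_{t'}\to[W^n]_t$, and Lemma~\ref{lem_constr_seq_isom} gives $f\blockexp{n+1}([W^{n+1}]_{t'})\subseteq[W^{n+1}]_t$. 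The analogue of (D4) comes from the two-label estimate with all index maps equal to $i$. The exceptional case \eqref{legacy:eqn:exception} requires $J_h(i)=i$, so $h$ is even. By double faithfulness, $f(v)=h[v]_t$ on the whole domain, which gives $f^*=\theta_h\circ[\,\cdot\,]_t$.

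\emph{Uniqueness.} Comparing offsets fixes $k$. Freeness (S4), applied at a stage where both group elements live, fixes $e$, exactly as in Step~3 of Lemma~\ref{lem:getting-inv-system}.

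\emph{Automorphisms of $X_t$.} Taking $t'=t$, every automorphism of $X_t$ has the form $\sigma^k\theta_e$. The compatibility $[\theta_e x]_t=\theta_{\rho(e)}[x]_t$ follows from (S5), since $\rho$ preserves parity. Let $\Phi\in\Aut(X_T)$. By Lemma~\ref{lem:small-inv-lim-isomorphism}(ii), the $t$-th coordinate of $\Phi$ factors through some $X_{t'}$. By the factor-map classification, the $t$-th coordinate is $\sigma^{k_t}\theta_{e_t}([x]_t)$. Surjectivity of the projections, together with uniqueness, forces $k_t=k_{t+1}$ and $e_t=\rho(e_{t+1})$. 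Hence $(e_t)$ is a coherent even sequence, i.e.\ an element of $\mathcal G(T)^{\mathrm{even}}$. Conversely, Lemma~\ref{lem:small-inv-lim-isomorphism}(i) turns such data into an automorphism. Composition adds the exponents and multiplies the group coordinates, since $\theta_e\theta_{e'}=\theta_{ee'}$ and $\theta_e$ commutes with $\sigma$. Quotienting by $\langle\sigma\rangle$, which is exactly the $\Z$ factor, gives $\Aut'$.

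\emph{Main obstacle.} The delicate step is justifying the unreversed (d2) and (D4) analogues within Lemma~\ref{legacy:wordgroup}. Its stated list only records the reversed versions, so I would re-check the exceptional-case analysis for each parity, as sketched above.
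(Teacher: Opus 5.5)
Your treatment of the inverse limit matches the paper's: coordinatewise automorphisms, forcing $k_t=k_{t+1}$ and $e_t=\rho(e_{t+1})$, the converse, and the group structure. The gap is in the classification of factor maps $X_{t'}\to X_t$. That step rests on unreversed analogues of (D2$^{\mathrm{r}}$) and (D4$^{\mathrm{r}}$), which Lemma~\ref{legacy:wordgroup} does not assert. Your remedy is to add their failure events to the union bound, but this does not establish them for the $X_T$ of the statement. The construction takes the least admissible $L$ and the first table passing the \emph{listed} tests, and extra tests can change that choice. As written, you therefore prove the proposition for a modified family. The assertion that the probabilistic proof ``gives'' these properties for the actual table is unsupported.

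The missing idea is that no new estimates are needed, because $X_t$ is conjugate to its reversal. Since $T$ has a vertex at depth $t$, there is an odd $a\in\mathcal G_t$, and $\psi_a\colon X_t\to\revop(X_t)$ is a conjugacy. For a factor map $\phi\colon X_{t'}\to X_t$, the composite $\psi_a\phi$ is a factor map onto $\revop(X_t)$. Lemma~\ref{legacy:lem:factor_psi_form} then gives $\psi_a\phi=\sigma^k\psi_g[\,\cdot\,]_t$ with $g$ odd. The reversals cancel blockwise, so $\psi_a^{-1}\psi_g=\theta_{a^{-1}g}$ with $a^{-1}g$ even, and hence $\phi=\sigma^k\theta_{a^{-1}g}[\,\cdot\,]_t$; this is the paper's argument.

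The same device at the word level repairs your route without touching the construction. Take the odd element $a=\g^n_{M(t)}\in G_t^n$. By (S6), $\revop\circ\widetilde a$ maps $[W^{n+1}]_t$ onto $\revop[W^{n+1}]_t$ and acts blockwise as $\revop\circ a$ in the same block positions. Hence a counterexample $u$ to your unreversed (D2) with $\psi_0$ gives the counterexample $\revop(\widetilde a u)$ to (D2$^{\mathrm{r}}$) with $\revop\circ a\circ\psi_0$. Likewise, $f\blockexp{n+1}(w)\in[W^{n+1}]_t$ implies $(\revop\circ a\circ f)\blockexp{n+1}(w)\in\revop[W^{n+1}]_t$. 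Then (D4$^{\mathrm{r}}$) yields $f(v)=a^{-1}g[v]_t$ for every $v$, with $g$ odd and $a^{-1}g$ even. With either fix, the rest of your proof goes through unchanged.
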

\begin{proof}
Choose an odd $a\in\mathcal G_t$; such an element exists because the tree
has a vertex at depth $t$. If $\phi:X_{t'}\to X_t$ is a factor map, apply
Lemma~\ref{legacy:lem:factor_psi_form} to $\psi_a\phi$. It yields
$\psi_a\phi=\sigma^k\psi_g[\,\cdot\,]_t$ with $g$ odd. On construction
blocks, the two word reversals cancel, giving
$\psi_a^{-1}\psi_g=\theta_{a^{-1}g}$; the group element $a^{-1}g$ is even.
This proves existence of the claimed form. Surjectivity of the projection,
comparison of offsets, and freeness prove uniqueness.

In particular, the coordinate inverse system is blended and all its
automorphisms descend. The induced map sends
$\sigma^k\theta_e$ to $\sigma^k\theta_{\rho(e)}$.
An automorphism of the inverse limit is coordinatewise by the blended
criterion (Lemma~\ref{lem:inv-lim-isomorphism}). Its coordinate expressions therefore have a common exponent
$k$ and coherent even elements $e_t$. Conversely, any such data give
compatible coordinate automorphisms and hence an automorphism of the limit.
Composition agrees with addition of the shift exponents and multiplication
of the coherent elements, proving the group statements.
\end{proof}

\begin{lem}\label{legacy:lem:continuity}
The map $\Trees\ni T\mapsto X_T\in\Min_p^\sigma(\C^{\Z})$ is continuous.
\end{lem}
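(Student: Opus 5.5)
We follow the pattern of the proof of Lemma~\ref{lem:parameter-continuity}. The main difference is that $X_T$ is a single shift space over $\C$ rather than an inverse limit in $(\C^{\Z})^{\N}$, so we obtain continuity directly from the row-truncation description in Remark~\ref{legacy:X_T}.

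First, fix $T$ and a sequence $T_k\to T$ in $\Trees$. As in Section~\ref{section:final}, for each $N$ the first $N$ processed vertices $t_{T_k}(1),\ldots,t_{T_k}(N)$ agree with those of $T$ for all large $k$. By the final assertion of Lemma~\ref{legacy:wordgroup}, the deterministic choices then give the same data $L_n$ and $W^n$ for $n\le N$. Given $\eta>0$, we choose a row level $t$ with $2^{-t}$ small compared with $\eta$. We also choose $N$ with $s_T(N)\ge t$ and $L_N(T)$ large, and then $k$ large enough that the data agree through stage $N$. In that range $[W^N(T_k)]_t=[W^N(T)]_t$. Corollary~\ref{cor:common-stage-hausdorff}, applied to the construction sequences $\W_t(T_k)$ and $\W_t(T)$, then gives
\[
d_{\Sigma,H}(X_{\W_t(T_k)},X_{\W_t(T)})\le 2^{1-\lfloor L_N(T)/2\rfloor}.
\]

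Second, we lift this estimate to $X_{T_k}$ and $X_T$. We fix a compatible metric $d_{\C}$ for which agreement on the first $t$ rows forces distance at most $2^{-t}$, for instance $d_{\C}(x,y)=2^{-\min\{i:x_i\ne y_i\}}$; a different metric choice does not change the Vietoris topology. Take $x\in X_T$. By Remark~\ref{legacy:X_T}, $[x]_t\in X_{\W_t(T)}$. The Hausdorff estimate gives $z\in X_{\W_t(T_k)}$ close to $[x]_t$ on a central window. Since $[X_{T_k}]_t=X_{\W_t(T_k)}$, the point $z$ lifts to some $y\in X_{T_k}$ with $[y]_t=z$. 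The coordinates of $x$ and $y$ in the central window then agree on the first $t$ rows, and the tail of $d_\Sigma$ is small. This gives $d_\Sigma(x,y)\lesssim 2^{-t}+2^{1-\lfloor L_N/2\rfloor}$, and the symmetric argument yields the other Hausdorff inclusion. Surjectivity of the truncation $X_T\to X_{\W_t}$ is the key ingredient here; it is supplied by the Remark's identity $[X_T]_t=X_{\W_t}$.

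Third, we check that the map lands in $\Min_p^\sigma(\C^{\Z})$: by Remark~\ref{legacy:X_T}, $X_T$ is a Cantor minimal system, hence perfect and minimal. Sequential continuity suffices because $\Trees$ is metrizable.

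The main obstacle is the bookkeeping in the first step. One has to make sure the stage-$N$ word sets at row level $t$ genuinely coincide, including when the depth $s(N)$ and the auxiliary alphabets differ between $T$ and $T_k$. That requires $s_{T_k}(N)=s_T(N)\ge t$, which holds once the first $N$ vertices agree. One also needs to fix the embedding of $\{0,1\}^t$ in $\C$ consistently, by truncation rather than by appending zeros.
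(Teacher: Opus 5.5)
Your argument is correct, but it is packaged differently from the paper's proof. You work metrically, in the style of Lemma~\ref{lem:parameter-continuity}. Corollary~\ref{cor:common-stage-hausdorff} controls the truncated systems, and surjectivity of $[\,\cdot\,]_t\colon X_T\to X_{\W_t}$ (Remark~\ref{legacy:X_T}) lifts the approximation to $X_T$, at the cost of an error of order $2^{-t}$ from the untested rows.

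The paper uses no metric. Take the finite clopen partition of $\C^{\Z}$ into cylinders given by the first $t$ rows and a time window of length $q$. The set of cylinders met by $X_T$ is exactly the length-$q$ language of $[X_T]_t=X_{\W_t}$. By Lemma~\ref{lem:finite-language}, this language is determined by $[W^n]_t$ once $L_n\ge q$, so it is locally constant in $T$. Since such incidence tests form a basis of the Vietoris topology, continuity follows immediately. Your route gives an explicit Hausdorff rate but needs a specific metric on $\C$ and a lifting step; the paper's route is shorter. Both rest on the same three facts: local constancy of the stage-$n$ data, the identity $[X_T]_t=X_{\W_t}$, and common-stage determination of finite languages.

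Two small points on your write-up. First, the Hausdorff bound by itself only makes $z$ close to $[x]_t$, whereas you need exact agreement on a central window. That agreement follows either from the proof of Corollary~\ref{cor:common-stage-hausdorff}, or from your choice of $d_{\C}$, which separates distinct zero-padded symbols of $\{0,1\}^t$ by at least $2^{-t}$. Second, your caveat about truncation versus appending zeros is unnecessary. With your $d_{\C}$, every $x\in X_T$ lies within $3\cdot 2^{-(t+1)}$ of its zero-padded truncation. Hence $X_T$ and the zero-padded copy of $X_{\W_t(T)}$ are within $3\cdot2^{-(t+1)}$ in $d_{\Sigma,H}$, and the triangle inequality, with the corollary used as a black box, finishes the estimate.
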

\begin{proof}
Fix the first $t$ rows and a time window of length $q$. Choose $n$
with $s(n)\ge t$ and $L_n\ge q$. The first $n$ processed vertices
determine the construction through stage $n$. This finite list is
locally constant, so nearby trees have the same $[W^n]_t$. By
Lemma~\ref{lem:finite-language}, their projected systems have the
same legal words of length $q$. This gives equality of all
finite-cylinder incidence tests under consideration. Such tests form
the clopen hyperspace basis, proving continuity.
\end{proof}

\begin{proof}[Alternative proof of Theorem~\ref{thm:centralizer}]
If $T$ has exactly one branch, its inverse-limit group consists of the
identity and the branch sequence, which is odd, by
Remark~\ref{rem:unique-branch-group}. Thus its even subgroup is
trivial and every automorphism of $X_T$ is a shift power.
Equivalently, the conjugacy $\Psi_{\mathbf g}$ associated with the
unique odd coherent sequence satisfies
$\Psi_{\mathbf g}\phi=\sigma^k\Psi_{\mathbf g}$ for every automorphism
$\phi$, so $\phi=\sigma^k$.

If $T$ has at least two branches, their distinct odd sequences
$\mathbf g,\mathbf h$ give an automorphism
$\Psi_{\mathbf h}^{-1}\Psi_{\mathbf g}$ outside $\langle\sigma\rangle$.
Indeed, equality with $\sigma^k$ would contradict the uniqueness in
Corollary~\ref{legacy:cor:big-group-odd}.
Define
\[
 F(T)=\{(x_1+1,\ldots,x_k+1):(x_1,\ldots,x_k)\in T\}
       \cup\{1^k:k\ge0\}.
\]
The shifted copy uses first coordinates at least $2$, so it meets the
added branch only at the root.
Membership in $F(T)$ is determined by a fixed finite membership test in
$T$, hence $F$ is continuous. It has exactly one branch when $T$ is
well-founded and at least two when $T$ is ill-founded.
Thus $T\mapsto X_{F(T)}$ reduces $\IllFounded$ to the systems with an
automorphism outside the shift powers, and this map is continuous by
Lemma~\ref{legacy:lem:continuity}. Transfer to $\C$ by
Theorem~\ref{thm:subshifts-and-systems-are-bireducible}.
The analytic upper bound is the commuting-witness argument already given
in the first proof of Theorem~\ref{thm:centralizer}.
\end{proof}

\end{document}